\markboth{\bf}{\tt  \today}
\numberwithin{equation}{section}
\newcommand{\U}{H^1(\Omega ; \R^2)} \newcommand{\Z}{H^1 (\Omega ; [0,1])}
\renewcommand{\U}{\mathcal{U}} \renewcommand{\Z}{\mathcal{Z}}
\newcommand{\F}{\mathcal{F}}\newcommand{\E}{\mathcal{E}}\newcommand{\D}{\mathcal{D}}
\renewcommand{\P}{\mathcal{P}}
\begin{document}

\newtheorem{theorem}{Theorem}[section]
\newtheorem{proposition}[theorem]{Proposition}
\newtheorem{corollary}[theorem]{Corollary}
\newtheorem{lemma}[theorem]{Lemma}
\newtheorem{definition}[theorem]{Definition}
\newtheorem{remark}[theorem]{Remark}
\newtheorem{example}[theorem]{Example}

\renewcommand{\proof}{\noindent \textbf{Proof. }}
\renewcommand{\qed}{ \hfill {\vrule width 6pt height 6pt depth 0pt} \medskip }

\newcommand{\separe}{\medskip \centerline{\tt -------------------------------------------- } \medskip}
\renewcommand{\separe}{}
\newcommand{\note}[1]{\medskip \noindent \framebox{\begin{minipage}[c]{\textwidth} {\tt #1} \end{minipage}}\medskip}

\newcommand{\eps}{\varepsilon} 
\renewcommand{\det}{\mathrm{det}} \newcommand{\0}{ {\mbox{\tiny 0}} }
\newcommand{\stress}{\boldsymbol{\sigma}} \newcommand{\strain}{\boldsymbol{\epsilon}}
\newcommand{\I}{\boldsymbol{I}}
\newcommand{\argmin}{ \mathrm{argmin} \,}\newcommand{\argmax}{ \mathrm{argmax} \,}
\newcommand{\weakto}{\rightharpoonup}\newcommand{\weakstarto}{\stackrel{*}{\rightharpoonup}}
\newcommand{\R}{\mathbb{R}}\newcommand{\N}{\mathbb{N}}
\newcommand{\calH}{\mathcal{H}}
\newcommand{\Om}{\Omega}
\newcommand{\di}{\mathrm{d}}
\newcommand{\coloneq }{\hspace{1pt}\raisebox{0.74pt}{\scalebox{0.8}{:}}\hspace{-2.2pt}=}
\newcommand{\FF}{\boldsymbol{F}}\newcommand{\EE}{\strain}
\newcommand{\II}{\boldsymbol{I}}
\def\Xint#1{\mathchoice
 {\XXint\displaystyle\textstyle{#1}}%
 {\XXint\textstyle\scriptstyle{#1}}%
 {\XXint\scriptstyle\scriptscriptstyle{#1}}%
 {\XXint\scriptscriptstyle\scriptscriptstyle{#1}}%
 \!\int}
\def\XXint#1#2#3{{\setbox0=\hbox{$#1{#2#3}{\int}$}
 \vcenter{\hbox{$#2#3$}}\kern-.5\wd0}}
 \def\ddashint{\Xint=} 
 \def\dashint{\Xint-}  
\def\interior{\mathaccent'27}




\newcommand{\tr}{\text{tr} \,}

%




\thispagestyle{empty} 

\phantom{1}

\vspace{0.5cm}
{\LARGE \noindent 
{\bf Analysis of staggered evolutions for nonlinear energies \\[3mm] in phase field fracture} }

\vspace{36pt}

\begin{small}

{\bf S.~Almi}

{Fakult\"at f\"ur Mathematik - TUM}

{Boltzmannstr.~3 - 85748 Garching bei M\"unchen - Germany}

{almi@ma.tum.de}

\vspace{24pt}

{\bf M.~Negri}

{Department of Mathematics -  University of Pavia} 

{Via A.~Ferrata 1 - 27100 Pavia - Italy}

{matteo.negri@unipv.it} 


\vspace{36pt}
\noindent {\bf Abstract.} We consider a class of separately convex phase field energies employed in fracture mechanics, featuring non-interpenetration
and a general softening behavior. We analyze the time-discrete evolutions generated by a staggered minimization scheme, where the fracture irreversibility is modeled by a monotonicity constraint on the phase field variable. We characterize the time-continuous limits of the discrete solutions in terms of balanced viscosity evolutions, parametrized by their arc-length with respect to the $L^2$-norm (for the phase field) and the $H^1$-norm (for the displacement field). By a careful study of the energy balance we deduce that time-continuous evolutions may still exhibit an alternate behavior in discontinuity times.

\bigskip
\noindent {\bf AMS Subject Classification.} 
49M25, 
49J45,  
74B05, 
74R05, 
74R10. 

\end{small}

\section{Introduction} \label{Intro}

In the last decades the use of phase field models in computational fracture mechanics has been constantly increasing and has found many interesting applications. In the original formulation of~\cite{MR1745759} for quasi-static evolution of brittle fracture in linearly elastic bodies, the propagation of a crack, here represented by a phase field function~$z$, is described in terms of equilibrium configuration (or critical points) of the \emph{Ambrosio-Tortorelli functional}
\begin{equation}\label{intro1}
\mathcal{G}_{\varepsilon}(u,z):= \tfrac{1}{2} \int_{\Om} (z^{2}+\eta_{\varepsilon}) \stress(u){\,:\,}\strain(u)\,\di x + G_{c} \int_{\Om} \varepsilon |\nabla{z}|^{2} + \tfrac{1}{4\varepsilon} (1-z)^{2}\,\di x \,, 
\end{equation}
where~$\Om$ is an open bounded subset of~$\R^{n}$ with Lipschitz boundary~$\partial\Om$, $u\in H^{1}(\Om;\R^{n})$ is the displacement field,~$\strain(u)$ denotes the symmetric part of the gradient of~$u$,~$\stress(u):=\mathbb{C}\strain(u)$ is the stress,~$\mathbb{C}$ being the usual elasticity tensor, $\varepsilon$ and~$\eta_{\varepsilon}$ are two small positive parameters, and~$G_{c}$  is the toughness, a positive constant related to the physical properties of the material under consideration (from now on we impose $G_{c}=1$). In~\eqref{intro1} the function~$z\in H^{1}(\Om)$ is supposed to take values in~$[0,1]$, where~$z(x)=1$ if the material is safe at~$x$, while $z(x)=0$ means that the elastic body~$\Om$ presents a crack at~$x$. Hence, the zero level set of~$z$ represents the fracture and~$z$ can be interpreted as a regularization of a crack set.

The advantage in using phase field models like~\eqref{intro1} lies in their ability to handle the complexity of moving cracks, making the numerical implementation of the fracture process feasible even in rather involved geometrical settings. Indeed, energies of the form~$\mathcal{G}_{\varepsilon}$, defined on Sobolev spaces, can be easily discretized in finite element spaces or by finite differences. Furthermore, equilibrium configurations for~$\mathcal{G}_{\varepsilon}$ can be efficiently computed by means of \emph{alternate minimization} algorithms (see, e.g.,~\cite{MR2341850, MR1745759, MR2669398}), where~$\mathcal{G}_{\varepsilon}$ is iteratively minimized first w.r.t.~$u$ and then w.r.t.~$z$. This implies, in view of the quadratic nature of the functional, that at each step of the algorithm only a linear system has to be solved.

Starting from the seminal paper~\cite{MR1075076}, the connection between~\eqref{intro1} and brittle fracture mechanics has been drawn from a theoretical point of view by studying the~$\Gamma$-convergence as~$\varepsilon\to 0$ of~$\mathcal{G}_{\varepsilon}$ in BV-like spaces. A first result has been obtained in~\cite{MR2074682} in an~$SBD^{2}$ setting, while the generalization to~$GSBD^{2}$~\cite{MR3082250} has been presented in~\cite{ChambolleCrismale_D, MR3247391}. In this context, the limit functional~$\mathcal{G}_{0}$ is defined as
\begin{equation}\label{intro2}
\mathcal{G}_{0}(u):= \tfrac{1}{2} \int_{\Om} \stress(u){\,:\,}\strain(u)\,\di x + \mathcal{H}^{n-1}(J_{u}) \qquad\text{for $u\in GSBD^{2}(\Om)$},
\end{equation} 
where~$J_{u}$ denotes the approximate discontinuity set of~$u$ and therefore represents, in a suitable sense, a crack set.

While the $\Gamma$-convergence analysis ensures the convergence of minimizers of~\eqref{intro1} to minimizers of~\eqref{intro2}, and hence provides a rigorous justification of the phase field model~\eqref{intro1} at a static level, not so much is known for the convergence of evolutions, in particular for those obtained by alternate minimization schemes. 
A first analysis of convergence of these algorithms has been carried out in the recent paper~\cite{KneesNegri_M3AS17}, together with a full description of the limit evolutions in the language of rate-independent processes (see, e.g.,~\cite{MR2182832, MielkeRoubicek} and reference therein). The techniques developed in~\cite{KneesNegri_M3AS17} have then been applied to a finite dimensional approximation of~\eqref{intro1} in~\cite{Almi2017}. 

Let us briefly discuss the result obtained in~\cite{KneesNegri_M3AS17}. In dimension $n=2$, let~$[0,T]$ be a time interval and consider, for instance, a time dependent boundary condition $u = g(t) $ on~$\partial \Omega$ and initial conditions~$u_0$ and~$z_0$, with $0 \le z_0 \le 1$. Proceeding by time discretization, for every $k\in\mathbb{N}\setminus\{0\}$ let $\tau_k \coloneq T/k$ be a time increment and denote $t^{k}_{i} \coloneq i \tau_k$, for $i=0,...,k$. A discrete in time evolution is constructed using the following procedure: at time~$t^{k}_{i}$ for $j=0$  we define the sequences~$u^{k}_{i,j}$ and~$z^{k}_{i,j}$ setting $u^{k}_{i,0}:=u^{k}_{i-1}$, $z^{k}_{i,0}:=z^{k}_{i-1}$, and
\begin{eqnarray}
 && u^{k}_{i,j+1}:=\argmin\,\{ \mathcal{G}_{\varepsilon}(u,z^{k}_{i,j}):\, u\in H^{1}(\Om;\R^{2}),\, u=g(t^{k}_{i}) \text{ on~$\partial\Om$} \}\,, \label{altmin}\\[1mm]
&& z^{k}_{i,j+1}:=\argmin\,\{\mathcal{G}_{\varepsilon}(u^{k}_{i,j+1},z):\,z\in H^{1}(\Om),\, z\leq z^{k}_{i,j}\}\,. \label{altmin2}
\end{eqnarray}
In the limit $j\to \infty$, the algorithm~\eqref{altmin}-\eqref{altmin2} computes a limit pair~$(u^{k}_{i},z^{k}_{i})\in H^{1}(\Om;\R^{2})\times H^{1}(\Om)$, which turns out to be an equilibrium configuration of~$\mathcal{G}_{\varepsilon}$. We notice here that in the minimization~\eqref{altmin2} a \emph{strong irreversibility} is imposed, which forces the phase field variable~$z$ to decrease at each iteration.
A complete convergence result for the scheme~\eqref{altmin}-\eqref{altmin2} with the \emph{weaker} constraint~$z\leq z^{k}_{i-1}$ is still out of reach in our quasi static setting. We mention that a first result in this direction has been obtained in the work~\cite{A-B-N17} in the context of gradient flows, i.e., adding to the minimum problem~\eqref{altmin2} an~$L^{2}$-penalization of the distance between~$z^{k}_{i,j}$ and~$z^{k}_{i-1}$. 
Clearly the two above constraints are equivalent if we consider the simpler scheme with only one iteration of~\eqref{altmin}-\eqref{altmin2}, that has been employed in many mathematical papers (see, for instance,~\cite{MR3249813, MR2106765, MR3021776, MR3332887, MR3893258, Thomas13}). 
We also point out that the restriction to a two dimensional setting is rather technical, and is due to Sobolev embeddings that hold only in~$\Om\subseteq \R^{2}$.

In order to study the limit as the time step~$\tau_{k}$ tends to~$0$, it is not technically convenient to investigate the limit of each configuration~$(u^{k}_{i,j}, z^{k}_{i,j})$. On the contrary, in~\cite{KneesNegri_M3AS17} the authors provide a global description of the evolution by introducing an arc-length reparametrization of time, that is, a reparametrization based on the distance between the steps of the scheme~\eqref{altmin}-\eqref{altmin2}. This reminds of the usual approach to viscous approximation (see, e.g.,~\cite{MR3021776, MR3332887, MR3893258} in the context of phase field). The crucial point in~\cite{KneesNegri_M3AS17} is the choice of the norms used to compute the arc-length of the algorithm: while in the viscous setting it is natural to employ the viscosity norm, in~\eqref{altmin}-\eqref{altmin2} it is not clear whether there are preferable norms. Nevertheless, by its \emph{quadratic structure}, the functional~$\mathcal{G}_{\varepsilon}$ induces two weighted $H^{1}$-norms for~$u$ and~$z$, respectively, that are therefore referred to as \emph{energy norms}. With respect to these particular norms, it turns out that the affine interpolation curves between two consecutive states of the algorithm~\eqref{altmin}-\eqref{altmin2} are actually gradient flows of~$\mathcal{G}_{\varepsilon}$, whose lengths can be uniformly bounded. Gluing together all the interpolations and reparametrizing time, we obtain a piecewise linear curve with bounded velocity connecting all the states of the minimizing scheme and satisfying a discrete energy balance. As $k\to\infty$, the limit of these interpolation curves is a \emph{parametrized Balanced Viscosity evolution} complying with an equilibrium condition and an energy-dissipation balance (we refer to~\cite{MR3531671, MielkeRoubicek} for more details on this kind of solutions).

Despite the sound mathematical result, when reading~\cite{KneesNegri_M3AS17} one immediately notices that the whole convergence analysis strongly depends on the specific structure of the functional~\eqref{intro1}. This remark becomes clear if we try to repeat the above strategy with a different phase field energy, such as
\begin{equation}\label{intro3}
\mathcal{I}_{\varepsilon}(u,z):= \tfrac{1}{2} \int_{\Om}  W(z, \strain(u))\,\di x + \int_{\Om} \varepsilon |\nabla{z}|^{2} + f_{\varepsilon}(z) \,\di x\,,
\end{equation}
where some nonlinearities~$W$ and~$f_{\varepsilon}$ have been introduced, which make the functional~$\mathcal{I}_{\varepsilon}$ not separately quadratic. In this new context, there is no clear notion of energy norms. Hence, when trying to define an arc-length reparametrization of time, we would be forced to choose \emph{a priori} some norms in order to estimate the distance between two steps of the alternate minimization algorithm. Moreover, being~$\mathcal{I}_{\varepsilon}$ strongly nonlinear, we can not anymore expect that the linear interpolation between two consecutive states of the minimization algorithm can represent a gradient flow of~$\mathcal{I}_{\varepsilon}$. Therefore, the convergence of a numerical scheme of the form~\eqref{altmin}-\eqref{altmin2} for~$\mathcal{I}_{\varepsilon}$ does not trivially follow from the results of~\cite{KneesNegri_M3AS17} and needs further analysis, which is indeed the goal of the present paper.

More precisely, we focus, always in a two dimensional setting, on the phase field model introduced in~\cite{A-M-M09, ComiPerego_IJSS01}. The basic idea of the model is that an elastic material behaves differently under tension or compression, and a crack can appear or evolve only under tension. This means that the presence of a phase field should not affect the ability of the elastic body to store energy under compression. Hence, differently from~\eqref{intro1}, the factor~$z^{2}+\eta_{\varepsilon}$ can not pre-multiply the whole stress~$\stress(u)$. On the contrary, a splitting of~$\strain(u)$ into its volumetric $\strain_{v}(u):= \tfrac{1}{2} \tr \strain(u)\I$ and deviatoric~$\strain_{d}(u):= \strain(u) - \strain_{v}(u)$ components has to be considered, where the symbol $\tr$ stands the trace of a matrix. In order to further distinguish between tension and compression, we introduce~$\strain^{\pm}_{v}(u):= \tfrac{1}{2} (\tr \strain(u))_{\pm}\I$,~$(\cdot)_{\pm}$  denoting the positive and negative part, respectively. With this notation at hand, the elastic energy density~$W$ in~\eqref{intro3} takes the form
\begin{equation}\label{intro5}
W(z,\strain):= h(z) \big(\mu |\strain_{d}|^{2}+\kappa|\strain_{v}^{+}|^{2} \big) +\kappa|\strain_{v}^{-}|^{2} \qquad\text{for every~$z\in\R$ and every~$\strain\in\mathbb{M}^{2}_{s}$} ,
\end{equation}
where~$h$ is a suitable \emph{degradation function},~$\mu,\kappa>0$ are two positive constants related to the Lam\'e coefficients of the material, and~$\mathbb{M}^{2}_{s}$ denotes the space of symmetric matrices of order two with real coefficients. Introducing a time dependent boundary condition~$g\colon [0,T]\to H^{1}(\Om;\R^{2})$, the complete energy functional reads, for $t\in[0,T]$, $u\in H^{1}_{0}(\Om;\R^{2})$, and~$z\in H^{1}(\Om)$, as
\begin{equation}\label{intro6}
\F_{\varepsilon}(t, u,z):= \tfrac{1}{2} \int_{\Om}  W(z, \strain(u + g(t)))\,\di x + \int_{\Om} \varepsilon |\nabla{z}|^{2} +  f_{\varepsilon}(z) \,\di x \,.
\end{equation} 
We note that the explicit time dependence in~$\F_{\varepsilon}$ has been introduced in order to fix once and for all the ambient space~$H^{1}_{0}(\Om;\R^{2})$ for the displacement variable~$u$. This means that the real displacement will be $u+g(t)$, but the unknown of the problem is only~$u$. The advantage of this choice will be clear in the discussions of Section~\ref{s.gf}. More important, we again remark that in~\eqref{intro5} and~\eqref{intro6} we allow for nonlinearities~$h$ and~$f_{\varepsilon}$ different from the usual $z^{2}+\eta_{\varepsilon}$ and $\tfrac{1}{4\varepsilon}(1-z)^{2}$. This freedom is well justified by the existing literature on phase field fracture mechanics (see, for instance,~\cite{ MR3304294, PhysRevLett.87.045501, Wu_JMPS17}), where the modeling of different phenomena, such as brittle or cohesive fracture growth, results in the choice of different degradation profiles. Here, we will assume~$h \in C^{1,1}_{\mathrm{loc}}(\R)$, convex, positive, non-decreasing in~$[0,+\infty)$, and with minimum in~$0$, and $f_{\varepsilon} \in C^{1,1}_{\mathrm{loc}}(\R)$ strongly convex, non-negative, and with minimum in~$1$. We refer to Section~\ref{s.setting} for the precise setting.

The asymptotic behavior of~$\F_{\varepsilon}$ has been recently investigated in~\cite{MR3780140}. In dimension~$n=2$ and with the usual degradation functions $h(z) = z^{2}+\eta_{\varepsilon}$ and $f_{\varepsilon}(z)= \tfrac{1}{4\varepsilon}(1-z)^{2}$, it has been shown that~$\F_{\varepsilon}$ $\Gamma$-converges as~$\varepsilon\to 0$ to the functional
\begin{displaymath}
\F_{0}(u):= \left\{ \begin{array}{ll}
 \displaystyle \tfrac{1}{2}\int_{\Om} \big( 2\mu|\strain_{d}(u)|^{2} + \kappa |\strain_{v}(u)|^{2}\big) \,\di x + \mathcal{H}^{1}(J_{u}) & \text{if $u\in SBD(\Om)$, $[ u ] {\,\cdot\,}\nu_{u} \geq 0$ $\mathcal{H}^{1}$-a.e. in~$J_{u}$,}\\ 
 \displaystyle \vphantom{\int}+\infty &\text{otherwise},
 \end{array}\right.
\end{displaymath}
where~$ [u]$ stands for the approximate jump of~$u$ on~$J_{u}$ and~$\nu_{u}$ is the approximate unit normal to~$J_{u}$. The condition $[ u ] {\,\cdot\,}\nu_{u} \geq 0$ on~$J_{u}$ represents a \emph{linear non-interpenetration} constraint which, in the fracture mechanics language, forces the lips of the crack set~$J_{u}$ to not interpenetrate.

In this paper we are interested in the study of the convergence of alternate minimization algorithms for the evolution problem of the phase field model~\eqref{intro6}. To simplify the notation, we fix $\varepsilon:=\tfrac{1}{2}$ and denote with~$\F$ the functional~$\F_{\frac{1}{2}}$. Given $T>0$, $\tau_{k}:= T/k$, and $t^{k}_{i}:= i\tau_{k}$, we consider the following iterative procedure, similar to~\eqref{altmin}-\eqref{altmin2}: at time $t^{k}_{i}$, we set $u^{k}_{i,0}:= u^{k}_{i-1}$, $z^{k}_{i,0}:= z^{k}_{i-1}$ and, for $j\geq 1$, we define
\begin{eqnarray}
&& u^{k}_{i,j} := \argmin\,\{ \F (t^{k}_{i}, u, z^{k}_{i,j-1}): \, u\in  H^{1}_{0}(\Om;\R^{2}) \}\,, \label{altmin3}\\[1mm]
&& z^{k}_{i,j} := \argmin\,\{ \F (t^{k}_{i}, u^{k}_{i,j}, z): \, z\in H^{1}(\Om),\, z\leq z^{k}_{i,j-1} \}\,. \label{altmin4}
\end{eqnarray}
In the limit as $j\to\infty$ we detect a critical point $(u^{k}_{i}, z^{k}_{i})$ of~$\F$. In order to analyze the limit of the time-discrete evolution~$(u^{k}_{i}, z^{k}_{i})$ as the time step~$\tau_{k}\to 0$, we follow the general scheme of~\cite{KneesNegri_M3AS17}. First, we want to interpolate between all the steps of the scheme~\eqref{altmin3}-\eqref{altmin4} and reparametrize time w.r.t.~an arc-length parameter. As already mentioned, we have to face here the fact that the energy~$\F$ is highly nonlinear and not separately quadratic. This implies that there are no intrinsic norms stemming out from the functional, as it happens in~\cite{KneesNegri_M3AS17}. In our framework, instead, we a priori fix the $H^{1}$-norm for the displacement field~$u$ and the $L^{2}$-norm for the phase field~$z$. Our choices, made clear in Section~\ref{s.gf}, are guided by the possibility to construct suitable gradient flows connecting consecutive states of our alternate minimization algorithm. In particular, being~$\F$ differentiable w.r.t.~$u$, by classical results we get the existence of a gradient flow of~$\F$ in the $H^{1}$-norm starting from~$u^{k}_{i,j-1}$ and ending in~$u^{k}_{i,j}$. When constructing a gradient flow for~$z$ connecting~$z^{k}_{i,j-1}$ and~$z^{k}_{i,j}$, instead, we have to deal with the irreversibility condition~$z\leq z^{k}_{i,j-1}$ which forces us to work with the weaker $L^{2}$-norm (we refer to Theorem~\ref{t.3} for more details). As a byproduct of our construction, the total length of the scheme is uniformly bounded in~$k$. Hence, gluing all the gradient flows together and reparametrizing time we obtain a sequence of curves~$(t_{k}, u_{k}, z_{k})$ with bounded velocity interpolating between all the states of the minimization scheme and satisfying, once again, discrete in time equilibrium and energy balance.

In the limit as $k\to\infty$, we prove the convergence to a parametric BV evolution $(t,u,z)\colon [0,S]\to [0,T]\times H^{1}_{0}(\Om;\R^{2})\times H^{1}(\Om)$, which we characterize in terms of \emph{equilibrium} and \emph{energy-dissipation balance} as follows (see Theorem~\ref{t.1} for further details):
\begin{itemize}
\item[$(i)$] for every $s\in(0,S)$ such that $t'(s)>0$
\begin{displaymath}
| \partial_{u}\F| (t(s), u(s), z(s))=0\qquad\text{and}\qquad | \partial_{z}^{-}\F| (t(s), u(s), z(s))=0\,,
\end{displaymath}
\item[$(ii)$] for every $s\in[0,S]$
\begin{displaymath}
\begin{split}
\F(t(s),& \ u(s),z(s))  = \F(0,u_0,z_0)-\int_{0}^{s}|\partial_{u}\F|(t(\sigma),u(\sigma),z(\sigma))\, \| u' (s) \|_{H^1} \di\sigma\\
&-\int_{0}^{s} \!\! |\partial_{z}^{-}\F|(t(\sigma),u(\sigma),z(\sigma))\, \| z' (s) \|_{L^2} \di\sigma +\int_{0}^{s} \!\! \P( t(\sigma),u(\sigma), z(\sigma)) \,t'(\sigma)\,\di\sigma\,,
\end{split}
\end{displaymath}
\end{itemize}
where $| \partial_{u}\F|$ and~$| \partial_{z}^{-}\F|$ denotes the slopes of~$\F$ w.r.t.~$u$ and~$z$, respectively (see Definition~\ref{d.slope}) and~$\P$ is the power expended by the external forces (boundary datum~$g$ in our case), and is defined in~\eqref{e.power}.

Roughly speaking, the equilibrium condition~$(i)$ says that at continuity times, i.e., when $t'(s)>0$, the pair~$(u(s), z(s))$ is an equilibrium configuration for~$\F$, while the energy-dissipation balance~$(ii)$ gives us a complete description of the behavior of a solution at discontinuity times. As it was already noticed in~\cite{KneesNegri_M3AS17}, the characterization~$(i)$-$(ii)$ is very similar to the one obtained in~\cite{MR3021776, MR3332887, MR3893258} with a vanishing viscosity approach. The main advantage of the iterative minimization~\eqref{altmin3}-\eqref{altmin4} is that we do not have to add a fictitious viscosity term. Moreover, our constructive scheme is closer to the numerical applications, where alternate minimization schemes are usually adopted.

We conclude with a short description of main steps of the proof of~$(i)$ and~$(ii)$. The convergence of~$(t_{k}, u_{k}, z_{k})$ is obtained by a compactness argument. By the nonlinearity of~$\F$, we actually need a pointwise strong convergence of~$u_{k}$ in~$H^{1}(\Om;\R^{2})$, which is shown in Proposition~\ref{p.compactness} by studying convergence of gradient flows. The equilibrium~$(i)$ and the lower energy-dissipation inequality follow then from lower semicontinuity of the functional~$\F$ and of the slopes~$|\partial_{u} \F|$ and~$|\partial_{z}^{-} \F|$, discussed in Section~\ref{prop:slopes}. The technically hard part comes with the upper energy-dissipation inequality, where we pay the choice of the $L^{2}$-norm to estimate the arc-length of the algorithm~\eqref{altmin3}-\eqref{altmin4} w.r.t.~$z$. Comparing with~\cite{KneesNegri_M3AS17}, indeed, we can not employ a chain rule argument, since the evolution~$z$ is qualitatively the reparametrization of an $L^{2}$-gradient flow, instead of an $H^{1}$-gradient flow. For this reason, we need to exploit a Riemann sum argument (see, e.g.,~\cite{MR2186036, Negri_ACV}). In this respect, we have to face the lack of summability of the slope $|\partial_{z}^{-} \F| (t(\cdot), u(\cdot), z(\cdot))$, which does not follow from energy estimates, since we are only able to control $|\partial_{z}^{-} \F| (t(\cdot), u(\cdot), z(\cdot)) \| z'(\cdot) \|_{L^{2}}$. This problem is overcome by a careful analysis of the evolution of~$z$. The idea is to gain the summability of~$|\partial_{z}^{-} \F| (t(\cdot), u(\cdot), z(\cdot))$ outside the set $\{\| z' \|_{L^{2}}=0 \}$. This allows us to perform a further change of variable and employ a Riemann sum argument in the new variable. As a byproduct of our analysis, we also show that a limit evolution~$(t,u,z)$ may still exhibit an alternate behavior in discontinuity times. We refer to Section~\ref{s.inequality} and Appendix~\ref{AppB} for the full details.

\tableofcontents

\section{Setting and statement of the main result}\label{s.setting} 


\subsection{Elastic energy density with anisotropic softening}

Let us first introduce some notation. We denote by~$\mathbb{M}^{2}$ the space of squared matrices of order~$2$ (with real entries) and by~$\mathbb{M}^{2}_{s}$ the subspace of symmetric matrices. For every $\FF\in\mathbb{M}^{2}$, its volumetric and deviatoric part, respectively, are denoted by 
\begin{equation*} 
\FF_{v}\coloneq \tfrac{1}{2}(\tr \FF) \II\qquad \text{and} \qquad \FF_{d}\coloneq \FF-\FF_{v}\,,
\end{equation*}
where $\tr \FF$ stands for the trace of~$\FF$ and~$\II$ is the identity matrix. We notice that $\FF_{v}{\,:\,}\FF_{d}=0$, where the symbol~$:$ indicates the usual scalar product between matrices. As a consequence, we have that
\begin{displaymath}
|\FF|^{2}=|\FF_{v}|^{2}+|\FF_{d}|^{2}\qquad\text{for every $\FF\in\mathbb{M}^{2}$}\,,
\end{displaymath}
where $| \cdot |$ denotes Frobenius norm. Furthermore, we set 
\begin{displaymath}
\FF_{v}^{\pm}\coloneq \tfrac{1}{2}(\tr \FF)_{\pm}\II\,,
\end{displaymath}
where~$(\cdot)_{+}$ and~$(\cdot)_{-}$ denote positive and negative part, respectively. Clearly, $| \FF_v |^2 = | \FF^+_v |^2 + | \FF^-_v |^2$. 

With this notation, for a (strain) matrix $\EE\in \mathbb{M}^{2}_{s}$ we first rewrite the linear elastic energy density as
\begin{align}
\Psi(\EE)&= \tfrac{\lambda}{2} |\tr\EE|^{2}+\mu|\EE|^{2}=\lambda(|\EE_{v}^{+}|^{2}+|\EE_{v}^{-}|^{2})+\mu(|\EE_{v}^{+}|^{2}+|\EE_{v}^{-}|^{2}+|\EE_{d}|^{2}) \nonumber\\
&=\mu|\EE_{d}|^{2}+\kappa|\EE_{v}^{+}|^{2}+\kappa|\EE_{v}^{-}|^{2}=\Psi_{+}(\EE)+\Psi_{-}(\EE)\,,\label{e.density}
\end{align} 
where we have set $\kappa\coloneq\lambda+\mu$, $\Psi_{+}(\EE)\coloneq \mu|\EE_{d}|^{2}+\kappa|\EE_{v}^{+}|^{2}$, and $\Psi_{-}(\EE)\coloneq \kappa|\EE_{v}^{-}|^{2}$. We assume that $\mu >0$ and that $\kappa >0$.

\bigskip
Our phase field model, inspired by~\cite{A-M-M09, MR3780140}, does not allow for fracture under compression, i.e.~where $( \tr \EE)_-\neq 0$; this is obtained employing 
an \emph{elastic energy density} is of the form 
\begin{equation}\label{e.enelden}
W(z,\EE)\coloneq h(z) \Psi_{+}(\EE)+ \Psi_{-}(\EE)\qquad\text{for $z\in\R$ and  $\EE\in\mathbb{M}^{2}_{s}$}\,,
\end{equation}
where $z$ is the phase field variable and  $h$ is the {\it softening} or {\it degradation function}. We assume that $h$ is convex, of class $C^{1,1}_{\mathrm{loc}}(\R)$ and that
\begin{equation}
 \text{ $h(z)\geq h(0) >0$ for every $z\in\R$.}
\label{hp2} 
\end{equation}
Note that, under these assumptions, $h$ is non-decreasing in $[0,+\infty)$. 
We denote $\eta : = h (0) > 0$. Note that $W ( z, \cdot) $ is differentiable w.r.t.~$\strain$ and that 
\begin{equation} \label{stress}
\partial_{\strain} W(z,\strain ) = 2 h(z) \big(  \mu \strain_{d} + \kappa \strain_{v}^{+} \big) - 2\kappa \strain_{v}^{-} \,.
\end{equation} Further properties of the energy density $W$ are provided in Section~\ref{prop:energy}.

\subsection{Energy, slopes and power}

The reference configuration $\Omega$ is assumed to be a bounded, connected, open subset of~$\R^2$ with Lipschitz boundary~$\partial\Om$. We denote by $\partial_D \Omega$ a non-empty subset of $\partial \Omega$ made of finitely many,  relatively open, connected components. We consider a time interval $[0,T]$ and, for every $t \in [0,T]$, admissible displacements of the form $u + g(t)$ where $u$ belongs to $\U\coloneq \{u\in H^{1}(\Om;\R^{2}):\, u=0 \text{ on~$\partial_{D}\Om$}\}$ while the ``boundary datum'' $g$ belongs to $W^{1,q}([0,T];W^{1,p}(\Om;\R^{2}))$ with $q\in (1,+\infty)$ and $p\in(2,+\infty)$. The phase field $z$ belongs instead to $\Z\coloneq H^{1}(\Om)\cap L^{\infty}(\Om)$ (even though in the evolution it will take value in $[0,1]$). 


For $p\in[1,+\infty]$, we denote by~$\|\cdot\|_{W^{1,p}}$ and by~$\|\cdot\|_{L^p}$ the usual $W^{1,p}$ and~$L^{p}$-norms, respectively; we use also the notation~$\|\cdot\|_{H^{1}}$ for the $H^{1}$-norm.

Then, for every $(t,u,z) \in [0,T] \times \U \times \Z$ we define the \emph{elastic energy} as
\begin{equation}\label{e.E} 
\begin{split}
	 \E ( t , u , z ) &\coloneq\int_{\Om}W\big(z,\strain(u+g(t))\big)\,\di x  = \int_\Omega h(z) \Psi_{+}\big(\strain(u+g(t))\big)+ \Psi_{-}\big( \strain(u+g(t))\big)   \,\di x \\
	 &= \int_{\Om}h(z)\big(\mu |\strain_{d}(u+g(t))|^{2}+\kappa|\strain_{v}^{+}(u+g(t))|^{2}\big)\,\di x+\int_{\Om}\kappa|\strain_{v}^{-}(u+g(t))|^{2}\,\di x\,, 
\end{split}
\end{equation}
where $\strain(u+g(t))$ denotes the symmetric part of the gradient of the displacement $u+g(t)\in H^{1}(\Om;\R^{2})$. We introduce the \emph{dissipation pseudo-potential} for the phase field $z\in\Z$ as
\begin{equation}\label{e.D}
\D(z)\coloneq\tfrac{1}{2}\int_{\Om} |\nabla{z}|^{2}+f(z) \,\di x\,.
\end{equation}
We assume that $f : \mathbb{R} \to \mathbb{R}$ is strongly convex, of class $C^{1,1}_{\mathrm{loc}}$ and that $0 \le f(1) \le f(z)$ for every $z \in \mathbb{R}$.
The \emph{total energy} of the system $\F\colon[0,T]\times\U\times\Z\to[0,+\infty)$ is defined as the sum of elastic energy and dissipation pseudo-potential. Hence, for every $t\in[0,T]$, every $u\in\U$, and every $z\in\Z$ we set
\begin{equation}\label{e.F}
\F(t,u,z)\coloneq \E(t,u,z)+\D(z)\,.
\end{equation}



In our study of quasi-static evolutions we will often employ the following slopes for the functional~$\F$, w.r.t.~the displacement~$u$ and the phase field~$z$.

\begin{definition}\label{d.slope}
Let $(t,u,z) \in [0,T] \times \U \times \Z$. We define
\begin{align}
 |\partial_{u}\F|(t,u,z)&\coloneq 
\limsup_{ w\to u \text{ \rm in~$H^1$}}
\frac{(\F(t,w,z)-\F(t,u,z))_{-}}{\|w-u\|_{H^{1}}}\,,\label{e.slope1}\\
 |\partial_{z}^{-}\F|(t,u,z)&\coloneq 
\limsup_{ v \to z^- \text{ \rm in $L^2$}}
 \frac{(\F(t,u,v)-\F(t,u,z))_{-}}{\|v-z\|_{L^2}}\,,\label{e.slope2}
\end{align}
where $v \to z^-$ in $L^2$ means that $v \le z$ and $v \to z$ in $L^2$ (with $v \in \Z$).
\end{definition}

For the properties of the slopes we refer to Section~\ref{prop:slopes}.

\begin{remark} Note that here we employ a unilateral $L^2$-slope while in \cite{KneesNegri_M3AS17} we used a unilateral $H^1$-slope. \end{remark}

%

In order to simplify the notation later on, for a.e.~$t \in [0,T]$, every $u\in\U$, and every $z\in\Z$, we define the {\it power functional}
\begin{equation}\label{e.power} 
\P(t, u,z)\coloneq \int_{\Om}\partial_{\EE}W(z, \EE (u+g(t))) {\,:\,} \EE (\dot{g}(t)) \, \di x \,, 
\end{equation}
where $\dot{g}$ denotes the time derivative of $g$. We notice that for a.e.~$t\in[0,T]$, every $u\in \U$, and every $z\in\Z$ we have
\begin{equation}\label{e.pw}
\partial_{t} \F(t, u, z) = \P(t,u,z)\,.
\end{equation}

\subsection{Time-discrete evolutions and their time-continuous limit}

First, let us briefly describe the discrete alternate minimization scheme, without entering into the technical details. Let the initial condition be $u_0 \in \U$ and $z_0 \in\Z$ with $0 \le z_0 \le 1$ and
\begin{equation}\label{e.14.18}
 u_{0} = \min\,\{ \F(0,u,z_{0}):\, u \in \U\} \qquad\text{and}\qquad z_{0} = \min\,\{ \F(0,u_{0},z):\, z\in \Z,\, z\leq z_{0}\}\,.
\end{equation}
For $k \in \mathbb{N}$, $k \neq 0$, consider a time step $\tau_{k}:=T/k$ and let $t^{k}_{i}:= i\tau_{k}$ for every $i=0,\ldots, k$. The time-discrete evolution $( u^k_i , z^k_i)$ is defined by induction w.r.t.~the index $i\in\mathbb{N}$, as follows. We set $u^{k}_{0}\coloneq u_{0}$, $z^{k}_{0}\coloneq z_{0}$. In order to define~$u^k_{i}$ and~$z^k_{i}$, known~$u^k_{i-1}$ and~$z^k_{i-1}$, we need the auxiliary sequences~$u^{k}_{i,j}$ and~$z^k_{i,j}$ defined in this way: set $u^{k}_{i,0}:= u^{k}_{i-1}$ and $z^{k}_{i,0}:=z^{k}_{i-1}$, and, by induction w.r.t.~the index $j\in\mathbb{N}$, define by alternate minimization
\begin{eqnarray*}
&&\displaystyle u^{k}_{i,j+1}:=\argmin\,\{\F(t^{k}_{i},u,z^{k}_{i,j}):\,u\in \U \}\,,
\\[2mm]
&&\displaystyle z^{k}_{i,j+1}:=\argmin\,\{\F(t^{k}_{i},u^{k}_{i,j + 1},z):\, z\in \Z,\, z\leq z^{k}_{i,j}\}\,.
\end{eqnarray*}
We set $z^k_{i}=\lim_{j \to \infty} z^k_{i,j}$ and $u^k_{i}=\lim_{j \to \infty} u^k_{i,j}$ (existence of these sequences and of their limits will be proven in the sequel). 

In order to study the limit as $k \to \infty$, i.e.~as the time step $\tau_k$ vanishes, it will be technically convenient to interpolate all the configurations $u^k_{i,j}$ and $z^k_{i,j}$ by suitable rescaled gradient flows; this will ultimately provide, for every index $k$, an ``arc-length'' parametrization $s  \mapsto ( t_k (s) , u_k (s) , z_k (s))$ from a fixed inteval $[0,S]$ to  $[0,T] \times \U \times \Z$ which interpolates all the configuration $u^k_{i,j}$ and $z^k_{i,j}$. 

In the parametrized framework, 

\begin{definition}\label{d.continuitypoint}
A point $s \in [0,S]$ is a \emph{continuity point} for $(t,u,z)$ if for every $\delta>0$ there exists $s_\delta$ such that $| s_\delta -s | < \delta$ and $t( s_\delta ) \neq t (s)$. On the contrary, $s\in[0,S]$ is a \emph{discontinuity point} of~$(t,u,z)$ if~$t$ is constant in a neighborhood of~$s$.
\end{definition}

We are now ready to give the main result of this paper.

\begin{theorem}\label{t.1}
Up to subsequences, not relabelled, 
the parametrizations $(t_k, u_k, z_k)$ converge to a parametrization $(t,u,z)\colon[0,S]\to[0,T]\times\U\times\Z$ with $(t(0) , u(0) , z(0)) = (0,u_0,z_0)$, which satisfies the following properties: 
\begin{itemize}

\item [$(a)$] \emph{Regularity}: $(t,u,z)\in W^{1,\infty}([0,S];[0,T]\times H^1 ( \Omega ; \R^2) \times L^{2}(\Om))$, $z\in L^{\infty}([0,S];H^1(\Omega))$, and, for a.e.~$s\in[0,S]$,
\begin{displaymath}
|t'(s)|+\|u'(s)\|_{H^{1}}+\|z'(s)\|_{L^2}\leq 1\,,
\end{displaymath}
where the symbol~$'$ denotes the derivative w.r.t.~the parametrization variable $s$;

\item [$(b)$] \emph{Time parametrization}: the function $t\colon [0,S]\to[0,T]$ is non-decreasing and surjective; 

\item [$(c)$] \emph{Irreversibility}: the function $z\colon[0,S]\to\Z$ is non-increasing and $0 \le z(s) \leq 1$ for every $0\leq s \leq S$;

\item [$(d)$] \emph{Equilibrium}: for every continuity point $s\in[0,S]$ of $(t,u,z)$ 
\begin{displaymath}
|\partial_{u}\F|(t(s),u(s),z(s))=0\qquad\text{and}\qquad|\partial_{z}^{-}\F|(t(s),u(s),z(s))=0 \,;
\end{displaymath}

\item [$(e)$] \emph{Energy-dissipation equality}: for every $s\in[0,S]$
\begin{equation}\label{e.eneq}
\begin{split}
\F(t(s),& \ u(s),z(s))  = \F(0,u_0,z_0)-\int_{0}^{s}|\partial_{u}\F|(t(\sigma),u(\sigma),z(\sigma))\, \| u' (s) \|_{H^1} \di\sigma\\
&-\int_{0}^{s} \!\! |\partial_{z}^{-}\F|(t(\sigma),u(\sigma),z(\sigma))\, \| z' (\sigma) \|_{L^2} \di\sigma +\int_{0}^{s} \!\! \P( t(\sigma),u(\sigma), z(\sigma)) \,t'(\sigma)\,\di\sigma\,,
\end{split}
\end{equation}
where we intend that $|\partial_{z}^{-}\F|(t(\sigma),u(\sigma),z(\sigma))\, \| z' (s) \|_{L^2}=0$ whenever~$\| z'(\sigma)\|_{L^{2}}=0$ (including the case $|\partial_{z}^{-}\F|(t(\sigma),u(\sigma),z(\sigma))=+\infty$).
\end{itemize}
Any evolution satisfying the above properties, will be called \emph{parametrized Balanced Viscosity evolution}~\cite{MR3531671}.
\end{theorem}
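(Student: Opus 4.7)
The plan is to carry out the standard program for parametrized balanced viscosity evolutions: construct an arc-length parametrization of the discrete scheme, establish compactness, then pass to the limit in the equilibrium and in the energy balance. The interpolation step is to use, inside each inner index $j$, \emph{continuous} gradient flows connecting the consecutive iterates: the $H^1$-gradient flow of $\F(t^k_i,\cdot,z^k_{i,j-1})$ from $u^k_{i,j-1}$ to $u^k_{i,j}$ and the constrained $L^2$-gradient flow of $\F(t^k_i,u^k_{i,j},\cdot)$ from $z^k_{i,j-1}$ to $z^k_{i,j}$ (staying below $z^k_{i,j-1}$). Since along such flows $\F$ decreases at rate $|\partial_u\F|\,\|u'\|_{H^1}$ and $|\partial^-_z\F|\,\|z'\|_{L^2}$ respectively, the total length of the concatenation is bounded by the total drop of $\F$ plus the power contribution, hence uniformly in $k$. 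Reparametrizing to unit total $(|t'|+\|u'\|_{H^1}+\|z'\|_{L^2})$-speed on a common interval $[0,S]$ yields curves $(t_k,u_k,z_k)$ that interpolate all the alternating configurations, satisfy the Lipschitz bound in $(a)$, and fulfil a discrete energy identity.

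Compactness via Arzel\`a--Ascoli gives weak-$*$ limits $(t,u,z)$ and yields $(a)$--$(c)$ at once: monotonicity of $t$ and $z$ pass to the limit, the Lipschitz bound is preserved, surjectivity of $t$ onto $[0,T]$ follows from $t_k(S)=T$. The delicate point is the \emph{pointwise strong} $H^1$-convergence of $u_k(s)$, required to pass to the limit in the nonlinear elastic density; this is precisely the content of Proposition~\ref{p.compactness}, exploiting that for frozen $z$ the energy $\F(t,\cdot,z)$ is strongly convex in $u$ and that $u_k$ is (essentially) the reparametrization of an $H^1$-gradient flow, so that limits of such flows are characterised by Minty-type arguments.

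The equilibrium condition $(d)$ at a continuity point $s$, where $t'(s)>0$, follows from the lower semicontinuity of the slopes developed in Section~\ref{prop:slopes}: on an interval where the parametric time is strictly increasing, each $(u_k,z_k)$ sits close to the end-configuration $(u^k_i,z^k_i)$ of a fully converged alternating sweep, where both slopes vanish; lsc of $|\partial_u\F|$ and $|\partial^-_z\F|$ forces them to vanish also at the limit. The lower energy-dissipation inequality in $(e)$ is a standard $\liminf$ passage in the discrete energy identity, combining lsc of $\F$, Fatou on the slope terms (using strong $H^1$-convergence of $u_k$), and continuity of the power $\P$.

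The main obstacle is the upper energy-dissipation inequality. Because the arc-length for $z$ is measured in $L^2$, the map $s\mapsto \F(t(s),u(s),z(s))$ is not amenable to a direct chain-rule argument (as in~\cite{KneesNegri_M3AS17}, where the $H^1$-norm allowed a chain rule along $H^1$-gradient flows). Following the approach of~\cite{MR2186036, Negri_ACV}, the strategy is a Riemann sum argument: partition $[0,s]$ along ``good'' points and telescopically estimate each energy increment by the slope times the norm of the increment, letting the mesh go to zero. The serious obstruction is the lack of $L^1$-summability of $\sigma\mapsto |\partial^-_z\F|(t(\sigma),u(\sigma),z(\sigma))$ on all of $[0,S]$, since only the product with $\|z'\|_{L^2}$ is controlled by the energy estimates. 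This is resolved by proving summability of the slope on the set $\{\|z'\|_{L^2}>0\}$ and then performing a \emph{further} change of variable that reparametrizes only the moving-$z$ portion of the evolution; there the Riemann sum argument applies cleanly, while on $\{\|z'\|_{L^2}=0\}$ the integrand in~\eqref{e.eneq} is zero by convention. The analogous $u$-contribution is easier, since $u\in W^{1,\infty}([0,S];H^1)$ and $\F$ is smooth in $u$, so a standard chain rule applies. Combining the two inequalities yields~$(e)$ and completes the proof.
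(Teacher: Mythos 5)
Your overall route is the paper's route (gradient--flow interpolation in the $H^1$/$L^2$ norms, pointwise strong $H^1$ compactness of $u_k$, lower semicontinuity of the slopes for the equilibrium and the lower inequality, and a Riemann--sum argument after a change of variable concentrating the set $\{\|z'\|_{L^2}=0\}$ for the upper inequality). However, there is a genuine gap at the very first quantitative step: the claim that ``the total length of the concatenation is bounded by the total drop of $\F$ plus the power contribution'' is false. Along a gradient flow the energy drop controls $\int |\partial_u\F|\,\|u'\|_{H^1}\,\di l=\int \|u'\|_{H^1}^2\,\di l$, not the length $\int \|u'\|_{H^1}\,\di l$; since the slope vanishes at the limiting equilibrium, Cauchy--Schwarz goes the wrong way on $[0,+\infty)$, and in fact for small increments the length scales like $\|u^0-\overline{u}\|_{H^1}$ while the energy drop scales like $\|u^0-\overline{u}\|_{H^1}^2$, so summing energy drops over all $i,j$ gives no control whatsoever on $S_k$. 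Without a uniform bound on $S_k$ there is no common parameter interval $[0,S]$ and the whole construction collapses.

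What is actually needed, and what the paper does, is a two-step argument: first a \L ojasiewicz-type estimate, obtained from the strong (separate) convexity of the energy, bounding the length of each individual flow by the distance between its endpoints, $L(\omega^k_{i,j})\le \overline{C}\,\|u^k_{i,j}-u^k_{i,j+1}\|_{H^1}$ and $L(\zeta^k_{i,j})\le \overline{C}(1+\|u^k_{i,j}+g(t^k_i)\|_{W^{1,\tilde p}})\|z^k_{i,j}-z^k_{i,j+1}\|_{H^1}$ (Theorem~\ref{t.2,5}, item $(f)$ of Theorem~\ref{t.3}, Remarks~\ref{r.3} and~\ref{r.4}); second, a summation of these increments over $j$ and $i$ via the continuous-dependence (contraction-type) estimates for the minimizers in $u$ and in $z$ (Corollary~\ref{c.3} and Proposition~\ref{p.5}, giving $\|z^k_{i,j}-z^k_{i,j+1}\|_{H^1}\le C\|z^k_{i,j}-z^k_{i,j-1}\|_{L^\nu}$), which is the argument of Proposition~\ref{p.finitelength} following \cite[Theorem~4.1]{KneesNegri_M3AS17}. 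Your sketch contains neither ingredient, so the uniform bound $S_k\le\overline{S}$ is unproved in your proposal; the rest of your outline (compactness, equilibrium, lower inequality, and the reparametrized Riemann-sum argument with the convention at $\|z'\|_{L^2}=0$) is consistent with the paper once this bound is in place.
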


The proof of this theorem is contained in Section~\ref{s.prooft1}.

\begin{remark}
We note that the equilibrium condition~\eqref{e.14.18} is not strictly necessary. 
However, it allows to shorten some proofs, without affecting the convergence analysis and the behavior of solutions. 
\end{remark}

\begin{remark}
The convention $|\partial_{u}\F|(t(\sigma),u(\sigma),z(\sigma))\, \| u' (\sigma) \|_{H^1}=0$ when~$\| u'(\sigma)\|_{H^{1}}=0$ is not necessary, since~$u(\sigma)\in H^{1}(\Om;\R^{2})$, which implies that $|\partial_{u}\F|(t(\sigma), u(\sigma), z(\sigma)) < +\infty$ for every~$\sigma\in[0,S]$.
\end{remark}

\begin{remark} 
In Section \ref{s.inequality} we prove a refined energy-dissipation identity which implies (see Appendix~\ref{AppB}) that the limit evolution may still present an alternate behavior in discontinuity points.
\end{remark}

\section{Lemmata} \label{s.lemmata}

In this section we collect some technical results that will be useful in the forthcoming discussions.

\subsection{Properties of the energy}

We first show some basic properties of the elastic energy density $W$.

\begin{lemma}\label{l.HMWw}
The function $W\colon\R\times\mathbb{M}^{2}_{s}\to[0,+\infty)$ is of class~$C^{1,1}_{\mathrm {loc}}$. Moreover, there exist two positive constants $c,C$ such that for every $z\in [0,1]$ and every $\strain_{1},\strain_{2} \in\mathbb{M}^{2}_{s}$ the following holds:
\begin{itemize}
\item[$(a)$] $\big(\partial_{\strain} W(z,\strain_{1})-\partial_{\strain}W(z,\strain_{2})\big){\,:\,}(\strain_{1}-\strain_{2})\geq c|\strain_{1}-\strain_{2}|^{2}$;\label{e.2}
\item[$(b)$] $\big|\partial_{\strain} W(z,\strain_{1})-\partial_{\strain}W(z,\strain_{2})\big|\leq C |\strain_{1}-\strain_{2}|$.\label{e.3} 
\end{itemize}
Since $\partial_{\strain} W ( z , 0 )  = 0$ it follows also that for every $\strain \in \mathbb{M}^2_s$ we have 
\begin{itemize}
\item[$(c)$] $| \partial_{\strain} W ( z , \strain ) | \leq C | \strain | $. \label{e.4}
\end{itemize}
\end{lemma}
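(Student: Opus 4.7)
The plan is to deduce everything from the explicit formula~\eqref{stress} for $\partial_{\strain}W$ and the structural decomposition $\strain = \strain_d + \strain_v^+ - \strain_v^-$ into mutually orthogonal components (the deviatoric part is trace-free, while $\strain_v^\pm$ are scalar multiples of~$\II$). The $C^{1,1}_{\mathrm{loc}}$ regularity of~$W$ is an immediate consequence of the $C^{1,1}_{\mathrm{loc}}$ regularity of~$h$ together with the observation that $\strain \mapsto |\strain_d|^2$, $\strain \mapsto |\strain_v^+|^2 = \tfrac{1}{2}((\tr\strain)_+)^2$, and $\strain \mapsto |\strain_v^-|^2 = \tfrac{1}{2}((\tr\strain)_-)^2$ are themselves $C^{1,1}$, being compositions of smooth quadratic maps with the scalar $C^{1,1}$ functions $a \mapsto (a_\pm)^2$.

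For the strong monotonicity in~$(a)$, I would expand $(\partial_{\strain}W(z,\strain_1) - \partial_{\strain}W(z,\strain_2)) : (\strain_1 - \strain_2)$ using~\eqref{stress}. The cross terms between the deviatoric and volumetric components vanish by orthogonality, leaving a sum of three contributions. The deviatoric term equals exactly $2h(z)\mu\,|(\strain_1)_d - (\strain_2)_d|^2$ by linearity of the deviatoric projection, while the $\strain_v^\pm$ contributions reduce to scalar computations in $a = \tr\strain_1$ and $b = \tr\strain_2$. Here I would invoke the elementary inequalities
\[
(a_+ - b_+)(a - b) \geq (a_+ - b_+)^2, \qquad -(a_- - b_-)(a - b) \geq (a_- - b_-)^2,
\]
both consequences of the sign relation $(a_+ - b_+)(a_- - b_-) \leq 0$, itself checked by a two-line case analysis on the signs of $a$ and~$b$. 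Using $h(z) \geq \eta = h(0) > 0$ on $[0,1]$, this yields a lower bound of the form $c_0 \bigl(|(\strain_1)_d - (\strain_2)_d|^2 + |(\strain_1)_v^+ - (\strain_2)_v^+|^2 + |(\strain_1)_v^- - (\strain_2)_v^-|^2 \bigr)$. To return to $|\strain_1 - \strain_2|^2$, I would combine the orthogonal splitting $|\strain_1 - \strain_2|^2 = |(\strain_1)_d - (\strain_2)_d|^2 + |(\strain_1)_v - (\strain_2)_v|^2$ with the triangle-type estimate $|(\strain_1)_v - (\strain_2)_v|^2 \leq 2|(\strain_1)_v^+ - (\strain_2)_v^+|^2 + 2|(\strain_1)_v^- - (\strain_2)_v^-|^2$, proving $(a)$ with $c = c_0/2$.

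For~$(b)$, I would apply the triangle inequality to~\eqref{stress} and use the $1$-Lipschitz character of each projection $\strain \mapsto \strain_d,\,\strain_v^{\pm}$ (since $|\strain_d| \leq |\strain|$, $|\strain_v^\pm| \leq |\strain_v| \leq |\strain|$), together with the bound $h(z) \leq h(1)$ on $[0,1]$, which follows from convexity of~$h$ and the fact that its minimum is attained at~$0$, so that $h$ is non-decreasing on $[0,+\infty)$. Finally, $(c)$ is immediate from~$(b)$ by taking $\strain_2 = 0$ and observing that $\partial_{\strain}W(z,0) = 0$ directly from~\eqref{stress}.

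The main obstacle is essentially bookkeeping in part~$(a)$: one has to keep track of which cross terms vanish by the trace-free versus volumetric orthogonality, and correctly reduce the $\strain_v^\pm$ contributions to the scalar positive/negative part identities. No deep technique is needed; the resulting constants $c$ and~$C$ depend only on $\mu$, $\kappa$, $\eta = h(0)$ and $h(1)$.
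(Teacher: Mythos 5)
Your proposal is correct and follows essentially the same route as the paper: expand the increment of $\partial_{\strain}W$ from \eqref{stress}, use deviatoric/volumetric orthogonality to eliminate cross terms, reduce the volumetric contribution to scalar inequalities for $(\cdot)_{\pm}$ applied to the traces, and obtain $(b)$ and $(c)$ from the boundedness of $h$ on $[0,1]$ and $\partial_{\strain}W(z,0)=0$. The only (immaterial) variation is in the scalar step: you use the pointwise bounds $(a_+-b_+)(a-b)\geq (a_+-b_+)^2$ and $-(a_--b_-)(a-b)\geq (a_--b_-)^2$ and recombine via $|\FF_v|^2\leq 2|\FF_v^+|^2+2|\FF_v^-|^2$, whereas the paper bounds each trace term below by $c$ times the same product, using monotonicity of $(\cdot)_+$ and of $-(\cdot)_-$, and sums directly to $c\,(a-b)^2$.
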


\begin{proof} 
Write
\begin{displaymath}
	\partial_{\strain}W(z,\strain_{1})-\partial_{\strain}W(z,\strain_{2})= 2 h(z)\big( \mu (\strain_{1,d}-\strain_{2,d}) + \kappa (\strain_{1,v}^{+}-\strain_{2,v}^{+})\big) - 2\kappa(\strain_{1,v}^{-}-\strain_{2,v}^{-})\,.
\end{displaymath}
By linearity and orthogonality, to prove~$(a)$ it is enough to check that
\begin{eqnarray*}
	& h(z) \mu  (\strain_{1,d}-\strain_{2,d}) {\,:\,} (\strain_{1,d}-\strain_{2,d}) \geq \eta \mu | \strain_{1,d}-\strain_{2,d}|^{2}\,, \\[1mm]
& h(z)\kappa (\strain_{1,v}^{+}-\strain_{2,v}^{+}){\,:\,} (\strain_{1,v}-\strain_{2,v}) - \kappa(\strain_{1,v}^{-}-\strain_{2,v}^{-}){\,:\,}(\strain_{1,v}-\strain_{2,v}) \geq c | \strain_{1,v}  -   \strain_{2,v} |^{2}\,.
\end{eqnarray*}
The first inequality is straightforward. For the second we can write the left hand side  in terms of traces as
\begin{equation} \label{e.piuemeno}
\begin{split} 
\tfrac12 h(z)\kappa &\big((\tr\strain_{1})_{+}-(\tr\strain_{2})_{+}\big)(\tr\strain_{1}-\tr\strain_{2})- \tfrac12 \kappa \big((\tr\strain_{1})_{-}-(\tr\strain_{2})_{-}\big)(\tr\strain_{1}-\tr\strain_{2}) . 
\end{split}
\end{equation}
Let $c = \tfrac12 \kappa \min \{ h(z) , 1 \} \ge \tfrac12 \kappa \eta >0 $.
Since $(\cdot)_+$ is monotone non-decreasing we get 
$$
	\tfrac12 h(z)\kappa \big((\tr\strain_{1})_{+}-(\tr\strain_{2})_{+}\big)(\tr\strain_{1}-\tr\strain_{2}) \ge c \big((\tr\strain_{1})_{+}-(\tr\strain_{2})_{+}\big)(\tr\strain_{1}-\tr\strain_{2}) .
$$
Using the fact that $-(\cdot)_-$ is non-decreasing, we can argue in a similar way for the second term in \eqref{e.piuemeno} and get 
$$
	-	\tfrac12 \kappa \big((\tr\strain_{1})_{-}-(\tr\strain_{2})_{-}\big)(\tr\strain_{1}-\tr\strain_{2}) \ge - c \big((\tr\strain_{1})_{-}-(\tr\strain_{2})_{-}\big)(\tr\strain_{1}-\tr\strain_{2}) .
$$
Taking the sum of the last two inequalities gives the required estimate. 


Finally, $(b)$ follows from \eqref{stress} thanks to the fact that $z \in [0,1]$ and $h$ is continuous. \qed 
\end{proof}

\separe

We notice that for every $t\in[0,T]$, every $u,\varphi\in\U$, and every $z,\psi\in\Z$ we can express the partial derivatives of~$\F ( t, \cdot, \cdot)$ w.r.t.~$u$ and~$z$ as
\begin{align*}
\partial_{u}\F(t,u,z)[\varphi]&=\int_{\Om} \partial_{\strain}W(z,\strain(u + g(t) )){\,:\,}\strain(\varphi)\,\di x\\
&=2\int_{\Om}h(z)\big(\mu\strain_{d}(u + g(t)){\,:\,}\strain_{d}(\varphi)+\kappa\strain_{v}^{+}(u + g(t)) {\,:\,}\strain_{v}(\varphi)\big)\,\di x-2\kappa\int_{\Om}\strain_{v}^{-}(u + g(t)) {\,:\,}\strain_{v}(\varphi)\,\di x\,,\\[1mm]
 \partial_{z}\F(t,u,z)[\psi]&=\int_{\Om} \partial_z W(z,\strain(u+ g(t))) \psi \,\di x+\int_{\Om}\nabla{z}{\,\cdot\,}\nabla\psi\,\di x + \int_{\Om} f'(z) \psi\,\di x \\
	& = \int_{\Om}h'(z) \psi \Psi_+ \big( \strain (u + g(t)) \big)  \,\di x +\int_{\Om}\nabla{z}{\,\cdot\,}\nabla\psi\,\di x + \int_{\Om} f'(z) \psi\,\di x\,.
\end{align*}

\begin{remark} \label{r.sepstrconv}\textnormal{
It is important to note that the energy $\F ( t , \cdot ,\cdot)$ is separately strongly convex in $\U \times \Z$, with respect to the $H^1$-norms. More precisely, there exists $C>0$ such that, uniformly w.r.t.~$t \in [0,T]$ and $u \in \U$, it holds
\begin{equation} \label{e.strconv-z}
	\partial_z \F ( t, u , z_2 ) [ z_2 - z_1]  - \partial_z \F ( t , u , z_1)  [z_2 - z_1]  \ge C \| z_2 - z_1 \|^2_{H^1} \,,
\end{equation}
indeed, by convexity of $h$ and strong convexity of $f$, we can write the left hand side as 
\begin{align*}
	\int_\Omega [ h' (z_2) - h'(z_1) ]   (z_2 - z_1) \Psi_+ \big( \strain (u + g(t)) \big) &  + |  \nabla (z_2  -z_1) |^2 +  [ f' (z_2) - f'(z_1)] ( z_2 - z_1) \, dx  \ge \\
	& \ge \int_\Omega |  \nabla (z_2  -z_1) |^2 +  c \, ( z_2 - z_1)^2 \, dx  . 
\end{align*}
In a similar way, there exists $C>0$ such that, uniformly w.r.t.~$t \in [0,T]$ and $z \in \Z$, it holds
\begin{equation} \label{e.strconv-u}
		\partial_u \F ( t, u_2 , z ) [ u_2 - u_1]  - \partial_u \F ( t , u_1 , z)  [u_2 - u_1]  \ge C \| u_2 - u_1 \|^2_{H^1} \,,
\end{equation}
indeed, by (a) in Lemma \ref{l.HMWw} the left hand side reads
\begin{align*}
	\int_{\Om} \big(  \partial_{\strain}W(z,\strain(u_2 + g(t) ))  -  \partial_{\strain} W(z,\strain(u_1 + g(t) )) \big) {\,:\,}\strain(u_2 - u_1)\,\di x 
	 & \ge C \| \strain (u_2 + g(t) )   - \strain (u_1 + g(t)) \|^2_{L^2}  \\
	& \ge C \| \strain (u_2 - u_1) \|^2_{L^2} \ge C' \| u_2 - u_1 \|^2_{H^1} ,
\end{align*}
where we used Korn inequality for the last estimate. In particular, the elastic energy $\E (t , \cdot , z)$ is strongly convex.}
\end{remark}

\separe

\begin{lemma} \label{l.lscFE} Let $(t_m , u_m , z_m) \in [0,T] \times \U \times \Z$. If $t_m \to t$, $u_m \rightharpoonup u$ in $H^1(\Omega , \mathbb{R}^2)$, and $z_m \weakto z$ in $H^1(\Omega)$ then 
\begin{equation}\label{e.lscFE}
	\E ( t , u ,z) \le \liminf_{m \to \infty}  \E ( t_m , u_m ,z_m) , \qquad 
	\F ( t , u ,z) \le \liminf_{m \to \infty}  \F ( t_m , u_m ,z_m) .
\end{equation}
\end{lemma}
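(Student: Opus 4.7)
The plan is to reduce everything to standard lower semicontinuity principles, the only genuine difficulty being the mixed term $h(z)\,\Psi_{+}(\strain)$, which is \emph{not} jointly convex in $(z,\strain)$.

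First I would handle the time variable. Since $g \in W^{1,q}([0,T]; W^{1,p}(\Omega;\R^2))$ with $q>1$, the embedding $W^{1,q}([0,T];X) \hookrightarrow C^0([0,T];X)$ yields $g(t_m) \to g(t)$ in $W^{1,p}(\Omega;\R^2)$; in particular $\strain(g(t_m)) \to \strain(g(t))$ strongly in $L^p(\Omega;\mathbb{M}^2_s)$ with $p>2$. Combining this with the weak convergence $u_m \weakto u$ in $H^1$, I obtain $\strain(u_m + g(t_m)) \weakto \strain(u + g(t))$ in $L^2(\Omega;\mathbb{M}^2_s)$. In parallel, weak $H^1$-convergence $z_m \weakto z$ gives, by the Rellich–Kondrachov theorem, strong convergence $z_m \to z$ in $L^r(\Omega)$ for every $r<\infty$ and hence (up to a not relabelled subsequence) pointwise a.e.\ and in measure.

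Next I would split the elastic energy into the two terms of \eqref{e.density}. For $\int_\Omega \Psi_{-}(\strain(u+g(t)))\,\di x$ the integrand $\strain \mapsto \kappa|\strain_v^-|^2$ is convex, continuous, and non-negative, so weak $L^2$-convergence of the strains yields the lower semicontinuity directly. For the mixed term $\int_\Omega h(z)\,\Psi_{+}(\strain(u+g(t)))\,\di x$ I would invoke Ioffe's theorem on the normal integrand
\begin{equation*}
  \Phi(x,z,\strain) \coloneq h(z)\bigl(\mu|\strain_d|^2 + \kappa|\strain_v^+|^2\bigr),
\end{equation*}
which is non-negative, continuous in $(z,\strain)$, and convex in $\strain$ (since $h(z)\ge\eta>0$ and $\Psi_+$ is convex). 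Combined with $z_m \to z$ in measure and $\strain(u_m+g(t_m)) \weakto \strain(u+g(t))$ in $L^2$, Ioffe's theorem provides
\begin{equation*}
  \int_\Omega h(z)\,\Psi_{+}(\strain(u+g(t)))\,\di x \le \liminf_{m\to\infty}\int_\Omega h(z_m)\,\Psi_{+}(\strain(u_m+g(t_m)))\,\di x.
\end{equation*}
Summing the two inequalities yields the first estimate in \eqref{e.lscFE}.

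To extend the bound to $\F$, I treat the two summands of $\D(z)$ separately. The Dirichlet term $\tfrac12\int_\Omega |\nabla z|^2\,\di x$ is convex and so weakly $H^1$-lower semicontinuous. For $\tfrac12\int_\Omega f(z)\,\di x$ I use $f\ge 0$, the continuity of $f$, and pointwise a.e.\ convergence $z_m \to z$ together with Fatou's lemma. Adding this to the bound on $\E$ closes the proof. The single delicate point is the mixed term $h(z)\Psi_{+}(\strain)$: without Ioffe's theorem (or an equivalent strong/weak continuity trick exploiting $h(z_m)\to h(z)$ a.e.\ together with the equi-integrability of $\Psi_{+}(\strain(u_m+g(t_m)))$ in $L^1$), one cannot separate the strongly and weakly convergent factors.
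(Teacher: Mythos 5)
Your proof is correct and follows essentially the same route as the paper: the paper applies the general weak--strong lower semicontinuity theorem of Fonseca--Leoni (Theorem~7.5), which is precisely the Ioffe-type result you invoke, exploiting convexity of $W(z,\cdot)$ in the strain variable together with the strong ($L^r$/in measure) convergence of $z_m$ and weak $L^2$ convergence of the strains, and then treats $\D$ by standard weak $H^1$ lower semicontinuity. Your splitting of $\E$ into the $\Psi_-$ term and the mixed $h(z)\Psi_+$ term, and of $\D$ into the Dirichlet and $f$ terms, is only a cosmetic refinement of the same argument.
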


\begin{proof}
 Recalling the definition~\eqref{e.enelden} of the elastic energy~$W(z,\cdot)$ is convex in~$\mathbb{M}^{2}$ for every~$z \in \R$. Hence, we are in a position to apply~\cite[Theorem~7.5]{Fonseca2007} in order to deduce the first inequality in~\eqref{e.lscFE}. The second inequality follows immediately since the dissipation pseudo-potential~$\mathcal{D}$ is lower semicontinuous w.r.t.~weak convergence in~$H^{1}(\Om)$.\qed
\end{proof}
\separe

\subsection{Higher integrability and continuity of the displacement field} \label{prop:energy}

We now establish a uniform, continuous dependence estimates for the minimizer of the functional~$\F(t,\cdot,z)$ which follows from~\cite[Theorem~1.1]{HerzogMeyerWachsmuth_JMAA11}. 

In the following, for every $\beta \in(1,+\infty)$ we denote
\begin{displaymath}
W^{1,\beta}_{D}(\Om;\R^{2})\coloneq\{u\in W^{1,\beta}(\Om;\R^{2}):\,u=0\text{ on~$\partial_{D}\Om$}\}
\end{displaymath}
and let $W^{-1,\beta'}_{D}(\Om;\R^{2})$ be its dual. Furthermore, given $z\in\Z$ and $g\in W^{1,\beta}(\Om;\R^{2})$, we define the operator $A_{z,g}\colon W^{1,\beta}_{D}(\Om;\R^{2})\to W^{-1,\beta}_{D}(\Om;\R^{2})$ as 
\begin{equation}\label{e.53}
\langle A_{z,g}(u),\varphi\rangle\coloneq \int_{\Om}\partial_{\strain}W(z,\strain(u+g)){\,: \,}\strain(\varphi)\,\di x, \text{ for $\varphi\in W^{1,\beta'}_{D}(\Om;\R^{2})$.}
\end{equation}
With this notation, if $\xi \in W_D^{-1,\beta}(\Omega;\R^2)$ then $u = A_{z,g}^{-1}(\xi)$ if and only if  $ u \in W^{1,\beta}_D(\Om;\R^{2})$ is the solution of the variational problem
$$
	\int_{\Om}\partial_{\strain}W(z,\strain(u+g)){\,: \,}\strain(\varphi)\,\di x = \langle \xi , \varphi \rangle ,
	\text{ for every $\varphi \in W^{1,\beta'}_D(\Om;\R^{2}) $.}
$$

\begin{lemma} \label{l.HMWTh}
Let us fix $p>2$ and $M>0$. Then, there exists $\tilde{p}\in(2,p)$ such that the operator $A_{z,g} \colon W^{1,\beta}_{D}(\Om;\R^{2})\to W^{-1,\beta}_{D}(\Om;\R^{2})$ is invertible for every $\beta\in[2,\tilde{p}]$, every $g\in W^{1,p}(\Om;\R^{2})$, and every $z\in\Z$ with $\|z\|_{\infty}\leq M$. In particular, there exist two constants $C_{1},C_{2}>0$ (independent of~$g$, $z$, and $\beta \in [2,\tilde{p}]$) such that
\begin{equation}\label{e.54}
\|A_{z,g}^{-1}(\xi)\|_{W^{1,\beta}}\leq C_{1}(\|\xi\|_{W_D^{-1,\beta}} + \| g \|_{W^{1,\beta}}) \quad\text{and}\quad \|A^{-1}_{z,g}(\xi_{1}) - A^{-1}_{z,g}(\xi_{2})\|_{W^{1,\beta}}\leq C_{2}\|\xi_{1}-\xi_{2}\|_{W_D^{-1,\beta}}
\end{equation}
for every $\xi,\xi_{1},\xi_{2}\in W^{-1,\beta}_{D}(\Om;\R^{2})$.
\end{lemma}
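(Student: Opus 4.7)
The plan is to verify that the operator $A_{z,g}$ satisfies the hypotheses of \cite[Theorem 1.1]{HerzogMeyerWachsmuth_JMAA11} with strong monotonicity and Lipschitz constants that are uniform in $z$ (over the set $\{\|z\|_\infty \le M\}$) and in $g$, and then invoke that theorem to obtain the bijection and the estimates.

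First I would extend Lemma~\ref{l.HMWw} from the case $z \in [0,1]$ to the case $|z| \le M$. An inspection of the proof shows that the only properties of $h$ used are the continuity, the lower bound $h \ge \eta > 0$, and the local Lipschitz character; hence the same argument yields constants $c_M, C_M > 0$ (depending on $M$, $\eta$, $\mu$, $\kappa$, and the Lipschitz constant of $h$ on $[-M,M]$) such that $(a)$ and $(b)$ hold for every $z$ with $\|z\|_\infty \le M$. Combined with Korn's inequality, this gives, uniformly in such $z$ and in $g$,
\begin{equation*}
\langle A_{z,g}(u_1) - A_{z,g}(u_2), u_1 - u_2\rangle \geq c'_M \|u_1 - u_2\|_{H^1}^2,
\qquad \|A_{z,g}(u_1) - A_{z,g}(u_2)\|_{W^{-1,\beta'}_D} \leq C_M \|u_1 - u_2\|_{W^{1,\beta}}
\end{equation*}
for every $\beta \in [2,p]$, where the Lipschitz bound extends trivially from $\beta=2$ to larger $\beta$ by H\"older's inequality on the defining integral.

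With these uniform estimates in hand, \cite[Theorem~1.1]{HerzogMeyerWachsmuth_JMAA11} (applied to strongly monotone, Lipschitz operators on $W^{1,2}_D(\Om;\R^2)$ with mixed boundary conditions on a Lipschitz domain) furnishes an exponent $\tilde p \in (2,p)$, depending only on $\Omega$, $\partial_D \Om$, $c'_M$, $C_M$, and $p$, such that $A_{z,g}$ extends to a bijection $W^{1,\beta}_D(\Om;\R^2) \to W^{-1,\beta}_D(\Om;\R^2)$ for every $\beta \in [2,\tilde p]$, with a Lipschitz inverse of constant $C_2$ independent of $z$, $g$ and $\beta$. This immediately yields the second estimate in~\eqref{e.54}. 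For the first estimate I would argue as follows: since $A^{-1}_{z,g}(A_{z,g}(0)) = 0$ in $W^{1,\beta}_D$, the Lipschitz bound on $A^{-1}_{z,g}$ gives
\begin{equation*}
\|A^{-1}_{z,g}(\xi)\|_{W^{1,\beta}} \leq C_2 \bigl(\|\xi\|_{W^{-1,\beta}_D} + \|A_{z,g}(0)\|_{W^{-1,\beta}_D}\bigr),
\end{equation*}
and by Lemma~\ref{l.HMWw}$(c)$ applied to $\strain = \strain(g)$,
$\|A_{z,g}(0)\|_{W^{-1,\beta}_D} \le C_M \|\strain(g)\|_{L^\beta} \le C_M \|g\|_{W^{1,\beta}}$, which yields the claim with $C_1 = C_2 \max(1, C_M)$.

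The main technical obstacle is verifying that the exponent $\tilde p$ and the constant $C_2$ produced by the HMW machinery genuinely depend only on the monotonicity and Lipschitz constants $c'_M, C_M$ (and the geometry of $\Om, \partial_D \Om$) — not on finer structural features of $A_{z,g}$. This is the typical uniformity issue in Gehring-Giaquinta-Modica-type reverse H\"older arguments underlying higher integrability results: the gap $\tilde p - 2$ is controlled by the ellipticity ratio $C_M/c'_M$, so once that ratio is uniformly bounded across $\{\|z\|_\infty \le M\}$ and $g$, one obtains a single $\tilde p$ valid for all choices. The nonlinearity introduced by the volumetric $\pm$-splitting poses no additional difficulty, since Lemma~\ref{l.HMWw} already handles it at the level of monotonicity and Lipschitz continuity.
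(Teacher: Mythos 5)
Your proposal is correct and follows essentially the same route as the paper: the paper's proof is precisely a direct application of \cite[Theorem~1.1 and Remark~1.3]{HerzogMeyerWachsmuth_JMAA11}, whose hypotheses (uniform strong monotonicity and Lipschitz continuity of $A_{z,g}$, via Lemma~\ref{l.HMWw} together with $h\ge\eta>0$ on all of $\R$ and Korn's inequality) are exactly what you verify. Your extra details — extending Lemma~\ref{l.HMWw} to $\|z\|_\infty\le M$ and deducing the first estimate in~\eqref{e.54} from the Lipschitz bound applied to $\xi$ and $A_{z,g}(0)$ — are a legitimate expansion of what the paper leaves to the cited theorem and its Remark~1.3.
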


\begin{proof}
The inequalities~\eqref{e.54} follow from a direct application of~\cite[Theorem~1.1 and Remark~1.3]{HerzogMeyerWachsmuth_JMAA11}, whose hypotheses are satisfied in view of Lemma~\ref{l.HMWw}. \qed
\end{proof}

By a direct application of Lemma~\ref{l.HMWTh}, for $M=1$, we deduce the next corollary.

\begin{corollary}\label{c.3}
Let $g\in W^{1,q}([0,T];W^{1,p}(\Om;\R^{2}))$ for $q\in(1,+\infty)$ and $p\in(2,+\infty)$. Let $\tilde{p} \in (2,p)$ be as in Lemma \ref{l.HMWTh}. 
%
Then, there exist a positive constant~$C_1$ such that for every
for every $\beta \in[2,\tilde{p}]$, $t\in [0,T]$, and $z \in \Z$ with $0 \le z \le 1$ it holds
\begin{equation}\label{e.pbound}
\| u \|_{W^{1,\beta}}\leq C_{1} \| g(t)\|_{W^{1,p}}\,,
\end{equation}
where $u :=\argmin\,\{\F(t,w,z):\,w\in\U\}$.

Moreover, there exists $\nu\in(2,+\infty)$ and a positive constant $C_2$ such that for every $\beta\in[2,\tilde{p})$, $t_{1},t_{2}\in[0,T]$, and $z_{1},z_{2}\in\Z$ with $0 \le z_i \le 1$ (for $i=1,2$) it holds 
\begin{equation}\label{e.25}
\|u_{1}-u_{2}\|_{W^{1,\beta}}\leq C_{2} (\|g(t_{1})-g(t_{2})\|_{W^{1,p}}+\|z_{1}-z_{2}\|_{L^\nu})\,,
\end{equation}
where $u_{i}=\argmin\,\{\F(t_i,w,z_i):\,w\in\U\}$ (for $i=1,2$) and $\frac{1}{\nu} = \frac{1}{\beta} - \frac{1}{\tilde{p}}$. 

\end{corollary}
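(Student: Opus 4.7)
The starting point is the observation that, by the expression for $\partial_u \mathcal{F}$ recorded just before Remark~\ref{r.sepstrconv}, the Euler-Lagrange equation for the minimizer $u = \argmin\,\{\mathcal{F}(t,w,z) : w \in \mathcal{U}\}$ reads $A_{z,g(t)}(u) = 0$ in $W^{-1,\beta'}_D(\Omega;\R^2)$ for every admissible $\beta$, where $A_{z,g}$ is the operator defined in~\eqref{e.53}. Since $0 \le z \le 1$, Lemma~\ref{l.HMWTh} applies and yields a value $\tilde p \in (2,p)$ such that $A_{z,g(t)}$ is invertible on $W^{1,\beta}_D(\Omega;\R^2)$ for all $\beta \in [2, \tilde p]$, with the two quantitative estimates in~\eqref{e.54}.

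For the uniform bound~\eqref{e.pbound}, I would simply apply the first estimate in~\eqref{e.54} with $\xi = 0$, which gives $\|u\|_{W^{1,\beta}} = \|A_{z,g(t)}^{-1}(0)\|_{W^{1,\beta}} \le C_1 \|g(t)\|_{W^{1,\beta}} \le C_1 \|g(t)\|_{W^{1,p}}$, using the trivial embedding $W^{1,p} \hookrightarrow W^{1,\beta}$ for $\beta \le p$. Note that $\|g(t)\|_{W^{1,p}}$ is finite for every $t$ by the continuous embedding $W^{1,q}([0,T];W^{1,p}(\Omega;\R^2)) \hookrightarrow C([0,T]; W^{1,p}(\Omega;\R^2))$.

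For the continuous dependence estimate~\eqref{e.25}, the idea is to use $u_1$ as a test point in the operator $A_{z_2,g(t_2)}$ and compare with $A_{z_1,g(t_1)}(u_1) = 0$. The Lipschitz bound in~\eqref{e.54} gives
\begin{equation*}
\|u_1 - u_2\|_{W^{1,\beta}} = \|A_{z_2,g(t_2)}^{-1}(A_{z_2,g(t_2)}(u_1)) - A_{z_2,g(t_2)}^{-1}(0)\|_{W^{1,\beta}} \le C_2 \,\|A_{z_2,g(t_2)}(u_1) - A_{z_1,g(t_1)}(u_1)\|_{W^{-1,\beta}_D}.
\end{equation*}
I would then split the residual on the right-hand side as $[A_{z_2,g(t_2)}(u_1) - A_{z_2,g(t_1)}(u_1)] + [A_{z_2,g(t_1)}(u_1) - A_{z_1,g(t_1)}(u_1)]$. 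For the first bracket, testing against $\varphi \in W^{1,\beta'}_D$ and invoking Lemma~\ref{l.HMWw}(b) produces the bound $C\|g(t_2)-g(t_1)\|_{W^{1,\beta}}\|\varphi\|_{W^{1,\beta'}} \le C\|g(t_2)-g(t_1)\|_{W^{1,p}}\|\varphi\|_{W^{1,\beta'}}$. For the second bracket, using formula~\eqref{stress} one sees that $\partial_\strain W(z_2,\strain) - \partial_\strain W(z_1,\strain) = 2(h(z_2)-h(z_1))(\mu \strain_d + \kappa \strain_v^+)$, and since $h$ is locally Lipschitz and $0 \le z_i \le 1$ this is pointwise bounded by $C|z_2-z_1|\,|\strain|$.

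The technical heart of the argument, and where the exponent $\nu$ appears, is the triple Hölder inequality applied to this last integral:
\begin{equation*}
\int_\Omega |z_2 - z_1|\,|\strain(u_1 + g(t_1))|\,|\strain(\varphi)|\,\di x \le \|z_2-z_1\|_{L^\nu}\, \|u_1 + g(t_1)\|_{W^{1,\tilde p}}\, \|\varphi\|_{W^{1,\beta'}},
\end{equation*}
which requires the exponent balance $\tfrac{1}{\nu} + \tfrac{1}{\tilde p} + \tfrac{1}{\beta'} = 1$, i.e.~precisely $\tfrac{1}{\nu} = \tfrac{1}{\beta} - \tfrac{1}{\tilde p}$ as in the statement; this is where the restriction $\beta < \tilde p$ becomes necessary (otherwise $\nu$ would be non-positive). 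The factor $\|u_1 + g(t_1)\|_{W^{1,\tilde p}}$ is then controlled uniformly, combining~\eqref{e.pbound} applied at $t=t_1$ with $\beta = \tilde p$ and the continuous embedding $g \in C([0,T]; W^{1,p})$. Collecting the two contributions and dividing by $\|\varphi\|_{W^{1,\beta'}}$ yields~\eqref{e.25}. The main subtlety, beyond the bookkeeping of exponents, is ensuring that the higher-integrability estimate of Lemma~\ref{l.HMWTh} is invoked at a Sobolev exponent strictly larger than the target $\beta$, which is exactly why Lemma~\ref{l.HMWTh} is stated on the closed interval $[2,\tilde p]$ while the continuous dependence is obtained only on the open interval $[2,\tilde p)$.
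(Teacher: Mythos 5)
Your proposal is correct and follows essentially the same route as the paper: Euler--Lagrange plus Lemma~\ref{l.HMWTh} for \eqref{e.pbound}, and for \eqref{e.25} the interpretation of one minimizer as $A^{-1}$ of a residual, the Lipschitz estimate in \eqref{e.54}, and a H\"older splitting with $\tfrac{1}{\nu}=\tfrac{1}{\beta}-\tfrac{1}{\tilde p}$ using the uniform $W^{1,\tilde p}$ bound on the displacement. The only (immaterial) differences are that the paper applies $A_{z_1,g(t_1)}$ to $u_2$ rather than $A_{z_2,g(t_2)}$ to $u_1$ and measures the residual in $L^{\beta}$ via a two-factor H\"older inequality instead of your direct three-factor estimate in $W^{-1,\beta}_D$.
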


\begin{proof}
Inequality~\eqref{e.pbound} is a direct consequence of Lemma~\ref{l.HMWTh}. Indeed, being $1 \le \beta' \le 2$ the Euler-Lagrange equation
\begin{displaymath}
\int_{\Om}\partial_{\strain}W(z,\strain(u +g(t))){\,:\,}\strain(\varphi)\,\di x=0\qquad\text{for every $\varphi\in\U = W^{1,2}_D(\Omega ; \R^2) \supset W^{1,\beta'}_D ( \Omega ; \R^2)$}
\end{displaymath}
gives $u = A^{-1}_{z,g(t)}(0)$. 
Applying Lemma~\ref{l.HMWTh} we deduce that $u\in W^{1,\beta}_{D}(\Om;\R^{2})$ for every $\beta \in [2, \tilde{p})$ and~\eqref{e.pbound} is satisfied.

Let us now show~\eqref{e.25}. 
Using the Euler-Lagrange equation for $u_2$, we get 
\begin{align*}
\int_{\Om} \partial_{\strain} & W(z_{1},\strain(u_{2}+g(t_{1}))){\,:\,}\strain(\varphi)\,\di x = \\
&=\int_{\Om} h(z_{1}) \big( \mu\strain_{d}(u_{2}+g(t_{1})){\,:\,}\strain_{d}(\varphi)+\kappa\strain_{v}^{+}(u_{2}+g(t_{1})){\,:\,}\strain_{v}(\varphi) \big) - \kappa  \strain_{v}^{-}(u_{2}+g(t_{1})){\,:\,}\strain_{v}(\varphi)\,\di x  \\
& \quad - \int_{\Om} h(z_{2}) \big( \mu\strain_{d}(u_{2}+g(t_{2})){\,:\,}\strain_{d}(\varphi)+\kappa\strain_{v}^{+}(u_{2}+g(t_{2})){\,:\,}\strain_{v}(\varphi) \big) - \kappa  \strain_{v}^{-}(u_{2}+g(t_{2})){\,:\,}\strain_{v}(\varphi)\,\di x \\
&=\int_{\Om}(h(z_{1})-h(z_{2}))(\mu\strain_{d}(u_{2}+g(t_{1})){\,:\,}\strain_{d}(\varphi)+\kappa\strain_{v}^{+}(u_{2}+g(t_{1})){\,:\,}\strain_{v}(\varphi))\,\di x \nonumber\\
&\quad +\mu\int_{\Om}h(z_{2})\strain_{d}(g(t_{1})-g(t_{2})){\,:\,}\strain_{d}(\varphi)\,\di x\\
&\quad + \kappa\int_{\Om} h(z_{2}) ( \strain_{v}^{+} ( u_{2} + g(t_{1}) ) - \strain_{v}^{+} ( u_{2} + g(t_{2})) ) {\,:\,} \strain_{v} ( \varphi ) \, \di x \nonumber \\
&\quad +\kappa \int_{\Om} (\strain_{v}^{-}(u_{2}+g(t_{2}))-\strain_{v}^{-}(u_{2}+g(t_{1})){\,:\,}\strain_{v}(\varphi)\,\di x =:\langle \xi,\varphi\rangle\,, \nonumber 
\end{align*}
where $\xi\in L^{\beta}(\Om;\mathbb{M}^{2})$ for every $\beta\in[2,\tilde{p})$. Therefore, $u_2 = A^{-1}_{z_1, g(t_1)} ( \xi)$ while $u_1 = A^{-1}_{z_1, g(t_1)} (0)$; applying the second  estimate of Lemma~\ref{l.HMWTh}, we get that there exists a positive constant~$C_{2}$ (independent of~$z_{i}$, $t_{i}$, and $\beta \in [2, \tilde{p})$) such that
\begin{equation} \label{e.56}
\|u_{1}-u_{2}\|_{W^{1,\beta}}\leq C_{2}\|\xi\|_{L^\beta}\,.
\end{equation}
Let $\tfrac{1}{\beta} = \tfrac{1}{\nu} + \tfrac{1}{\tilde{p}}$, then by H\"older inequality we have 
\begin{equation}\label{e.57}
\|\xi\|_{L^\beta}\leq \|h(z_{1})-h(z_{2})\|_{L^\nu}\|u_{2}+g(t_{1})\|_{L^{\tilde{p}}}+ C(1 + \|h\|_{L^\infty(0,1)} ) \|g(t_{1})-g(t_{2})\|_{W^{1,\beta}}\,.
\end{equation}
Since $0 \le z_{i} \le 1$ and $h\in C^{1}(\R)$, we have that $\| h(z_{1}) - h(z_{2})\|_{L^\nu}\leq C\|z_{1}-z_{2}\|_{L^\nu}$ for some positive constant $C$. Combining~\eqref{e.56} and~\eqref{e.57} we obtain~\eqref{e.25}, and the proof is concluded. \qed
\end{proof}

\subsection{Continuous dependence of the phase field} \label{prop:phase}

\begin{proposition}\label{p.5}
Let $\tilde{p}\in(2,+\infty)$ be as in Lemma~\ref{l.HMWTh}. Let $t_{1}, t_{2} \in[0,T]$, $u_{1},u_{2}\in W^{1,\tilde{p}}(\Om;\R^{2})$, and $z_{0},z_{1},z_{2}\in H^{1}(\Om;[0,1])$ be such that
\begin{equation}\label{e.26}
z_{i}=\argmin\,\{\F(t_{i} ,u_{i},z):\,z\in \Z ,\,z_{i}\leq z_{i-1}\}\qquad\text{for $i=1,2$}\,.
\end{equation}
Then there exist a positive constant~$C$, independent of~$t$,~$u_{i}$, and~$z_{i}$, such that
\begin{equation}\label{e.27}
\|z_{1}-z_{2}\|_{H^{1}}\leq  C(\|u_{1}+ g(t_{1}) \|_{W^{1,\tilde{p}}} + \| u_{2} + g(t_{2}) \|_{W^{1,\tilde{p}}})( \| u_{1} - u_{2} \|_{H^{1}} + \| g(t_{1}) - g(t_{2}) \|_{H^{1}}) \,.
\end{equation}

\end{proposition}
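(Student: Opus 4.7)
The plan is to exploit the first-order optimality conditions associated to the two constrained minimum problems, combine them into a single inequality, and then use the strong convexity of $\F(t,u,\cdot)$ in $H^{1}$ (Remark~\ref{r.sepstrconv}) to bound $\|z_{1}-z_{2}\|_{H^{1}}^{2}$ from below by a mixed term involving the difference of the partial derivatives in $u$ and $t$. The key structural observation is that, since $z_{2}\leq z_{1}\leq z_{0}$, the function $z_{2}$ is admissible for the minimization defining $z_{1}$ and, trivially, $z_{1}$ is admissible for the minimization defining $z_{2}$.

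First, from \eqref{e.26} and convexity of the constraints I would write the two variational inequalities
\begin{displaymath}
\partial_{z}\F(t_{1},u_{1},z_{1})[z_{2}-z_{1}]\geq 0\,,\qquad \partial_{z}\F(t_{2},u_{2},z_{2})[z_{1}-z_{2}]\geq 0\,,
\end{displaymath}
and sum them. After rewriting and inserting $\pm\partial_{z}\F(t_{2},u_{2},z_{1})[z_{1}-z_{2}]$, I obtain
\begin{displaymath}
\bigl(\partial_{z}\F(t_{2},u_{2},z_{1})-\partial_{z}\F(t_{2},u_{2},z_{2})\bigr)[z_{1}-z_{2}]\leq \bigl(\partial_{z}\F(t_{2},u_{2},z_{1})-\partial_{z}\F(t_{1},u_{1},z_{1})\bigr)[z_{1}-z_{2}]\,.
\end{displaymath}
By \eqref{e.strconv-z} in Remark~\ref{r.sepstrconv} the left-hand side dominates $C\|z_{1}-z_{2}\|_{H^{1}}^{2}$.

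Next I would estimate the right-hand side. Since the terms involving $\nabla z_{1}$ and $f'(z_{1})$ cancel, we are left with
\begin{displaymath}
\bigl(\partial_{z}\F(t_{2},u_{2},z_{1})-\partial_{z}\F(t_{1},u_{1},z_{1})\bigr)[z_{1}-z_{2}]=\int_{\Om}h'(z_{1})(z_{1}-z_{2})\bigl[\Psi_{+}(\strain(u_{2}+g(t_{2})))-\Psi_{+}(\strain(u_{1}+g(t_{1})))\bigr]\,\di x\,.
\end{displaymath}
Since $z_{1}\in[0,1]$, $h'(z_{1})$ is uniformly bounded. The density $\Psi_{+}$ is quadratic, so it satisfies the pointwise Lipschitz-type bound $|\Psi_{+}(\EE_{1})-\Psi_{+}(\EE_{2})|\leq C(|\EE_{1}|+|\EE_{2}|)|\EE_{1}-\EE_{2}|$. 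Applying this with $\EE_{i}=\strain(u_{i}+g(t_{i}))$ and H\"older inequality with exponents $\tilde{p}$, $2$, and $q$ such that $\tfrac{1}{q}=\tfrac{1}{2}-\tfrac{1}{\tilde{p}}$ (note $q<\infty$ since $\tilde{p}>2$), I get
\begin{displaymath}
\bigl|\bigl(\partial_{z}\F(t_{2},u_{2},z_{1})-\partial_{z}\F(t_{1},u_{1},z_{1})\bigr)[z_{1}-z_{2}]\bigr|\leq C\bigl(\|u_{1}+g(t_{1})\|_{W^{1,\tilde{p}}}+\|u_{2}+g(t_{2})\|_{W^{1,\tilde{p}}}\bigr)\|z_{1}-z_{2}\|_{L^{q}}\,A\,,
\end{displaymath}
with $A\coloneq\|\strain(u_{1}-u_{2})+\strain(g(t_{1})-g(t_{2}))\|_{L^{2}}\leq C(\|u_{1}-u_{2}\|_{H^{1}}+\|g(t_{1})-g(t_{2})\|_{H^{1}})$.

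To finish, since $\Om\subset\R^{2}$ the Sobolev embedding $H^{1}(\Om)\hookrightarrow L^{q}(\Om)$ holds for every finite $q$, so $\|z_{1}-z_{2}\|_{L^{q}}\leq C\|z_{1}-z_{2}\|_{H^{1}}$; dividing both sides by $\|z_{1}-z_{2}\|_{H^{1}}$ (the case $z_{1}=z_{2}$ being trivial) yields \eqref{e.27}. The only delicate point of the argument is the H\"older/Sobolev step: we need a factor of $\|z_{1}-z_{2}\|_{H^{1}}$ on the right, which is only possible because the higher integrability $W^{1,\tilde{p}}$ with $\tilde{p}>2$ given by Corollary~\ref{c.3} leaves a strictly subcritical exponent $q<\infty$ for $z_{1}-z_{2}$ in two space dimensions; this is precisely where the two-dimensional restriction and the regularity result of Lemma~\ref{l.HMWTh} enter in an essential way.
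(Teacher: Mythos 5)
Your proposal is correct and follows essentially the same route as the paper's proof: first-order optimality conditions for the two constrained minimizations combined into one inequality, strong convexity of $\F(t,u,\cdot)$ (as in Remark~\ref{r.sepstrconv}) to bound $\|z_1-z_2\|_{H^1}^2$ from below, and then the quadratic-growth/Lipschitz estimate on $\Psi_+$ together with H\"older (exploiting the $W^{1,\tilde p}$ bound) and the two-dimensional Sobolev embedding. The only differences are cosmetic: you pivot at $\partial_z\F(t_2,u_2,z_1)$ instead of $\partial_z\F(t_1,u_1,z_2)$ and invoke \eqref{e.strconv-z} rather than rederiving it, which changes nothing of substance.
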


\begin{proof} We adapt the proof of \cite[Lemma A.2]{KneesNegri_M3AS17}.
By~\eqref{e.26}, for every~$\varphi\in H^{1}(\Om)$, $\varphi\leq0$, we have
\begin{equation}\label{e.28}
\partial_{z}\F(t_{i}, u_{i},z_{i})[\varphi]\geq0\qquad\text{for~$i=1,2$}\,.
\end{equation}
Moreover, for $\varphi=z_{2}-z_{1}$ we get
\begin{equation}\label{e.29}
\partial_{z}\F(t_{2},u_{2},z_{2})[z_{2}-z_{1}]=0\,.
\end{equation}
Therefore, combining~\eqref{e.28} and~\eqref{e.29}, we obtain
\begin{equation*}\label{e.30}
\big(\partial_{z}\F(t_{2}, u_{2}, z_{2}) - \partial_{z} \F ( t_{1}, u_{1}, z_{1} ) \big ) [ z_{2} - z_{1} ] \leq 0 \,.
\end{equation*}
Adding and subtracting the term $\partial_{z}\F(t_{1},u_{1},z_{2})[z_{2}-z_{1}]$ to the previous inequality, we get
\begin{equation}\label{e.31}
\big(\partial_{z}\F(t_{1},u_{1},z_{2})-\partial_{z}\F(t_{1},u_{1},z_{1})\big)[z_{2}-z_{1}]\leq\big(\partial_{z}\F(t_{1},u_{1},z_{2})-\partial_{z}\F(t_{2},u_{2},z_{2})\big)[z_{2}-z_{1}]\,.
\end{equation}
The left-hand side of~\eqref{e.31} reads as
\begin{align*}
\| \nabla{z_{1}} - \nabla{z_{2}}\|_{L^2}^{2} & + \int_{\Om}(h'(z_{2})-h'(z_{1}))(z_{2}-z_{1})\big(\mu|\strain_{d}(u_{1} + g(t_{1}) )|^{2}+\kappa|\strain_{v}^{+}(u_{1}+g(t_{1}))|^{2}\big)\, \di x \\ & + \int_\Omega ( f'(z_2) - f'(z_1) ) ( z_2 - z_1) \, \di x  \,.
\end{align*}
Being $h$ convex, the second term in the previous expression is positive. By the strong convexity of $f$ we have
$$
	( f' (z_2) - f'(z_1) )  ( z_2 - z_1) \ge C | z_2 - z_1 |^2 .
$$
Thus, we can continue in~\eqref{e.31} with
\begin{equation*}
\begin{split}
C' \|z_{1}-z_{2}\|_{H^{1}}^{2} & \leq \big(\partial_{z}\F(t_{1},u_{1},z_{2})-\partial_{z}\F(t_{1},u_{1},z_{1})\big)[z_{2}-z_{1}] \le 
\big(\partial_{z}\F(t_{1},u_{1},z_{2})-\partial_{z}\F(t_{2},u_{2},z_{2})\big)[z_{2}-z_{1}] 
\end{split}
\end{equation*}
where the right hand side reads
\begin{align*}
& \big(\partial_{z}\F(t_{1},u_{1},z_{2}) -\partial_{z}\F(t_{2},u_{2},z_{2})\big)[z_{2}-z_{1}]  \\ 
 & = \int_{\Om} h'(z_{2}) (z_{2}-z_{1}) \big(\mu |\strain_{d}(u_{1}+ g(t_{1}))|^2 - \mu |\strain_{d}( u_{2} + g(t_{2}))|^2 + \kappa|\strain_{v}^{+}(u_{1}+ g(t_{1}))|^2 - \kappa |\strain_{v}^{+}(u_{2}+ g(t_{2}))|^2 \big)\, \di x \,.
\end{align*}
Since $z_{2}\in H^{1}(\Om;[0,1])$, we have that~$h'(z_{2})$ is bounded. Moreover, 
\begin{align*}
	|\strain_{v}^{+}(u_{1}+ g(t_{1}))| - |\strain_{v}^{+}(u_{2}+ g(t_{2}))| & =  ( \tr ( \strain (u_{1}+ g(t_{1})) ) )_+  - ( \tr  ( \strain (u_{2}+ g(t_{2})) ) )_+ \\
	& \le  | \tr ( \strain ( u_1 +g(t_{1})- u_2 - g(t_{2})) ) | \\ & = |\strain_{v} (u_{1} + g(t_{1}) - u_2 - g(t_{2}) )| .
\end{align*}
Thus, there exists a positive constant $C$ such that
\begin{align*}\label{e.33}
\|z_{1}-z_{2}\|_{H^{1}}^{2}  \leq & \ C \int_{\Om} |z_{2}-z_{1}|\big( |\strain_{d}(u_{1}+ g(t_{1}))| + |\strain_{d}(u_{2} + g(t_{2})) | \big)  | \strain (u_{1} + g(t_{1}) - u_{2} - g(t_{2}))| \, \di x\\
& + C\int_{\Om}| z_{2} - z_{1} | \big( | \strain_{v}^{+}(u_{1} + g(t_{1}))| + |\strain_{v}^{+}(u_{2} + g(t_{2})) | \big) | \strain (u_{1} + g(t_{1}) - u_{2} - g(t_{2}))| \, \di x .
\end{align*}

By hypothesis, we have $u_{1},u_{2}\in W^{1,\tilde{p}}(\Om;\R^{2})$. Hence, applying H\"older inequality with $ \tfrac{1}{\alpha} + \tfrac{1}{\tilde{p}} + \tfrac12 = 1$ 
we get that
\begin{equation*}\label{e.34}
\|z_{1}-z_{2}\|_{H^{1}}^{2} \leq C \| z_{2} - z_{1} \|_{L^\alpha} ( \| u_{1} + g(t_{1}) \|_{ W^{ 1, \tilde{p} } } + \| u_{2} + g(t_{2}) \|_{ W^{1,\tilde{p}}} ) \| u_{1} + g(t_{1}) - u_{2} - g(t_{2}) \|_{H^{1}} .
\end{equation*}
Inequality~\eqref{e.27} follows by triangle inequality and by Sobolev embedding in dimension~$2$. \qed


\end{proof}

\begin{proposition}\label{p.6}
Let $\tilde{p}\in(2,+\infty)$ be as in Lemma~\ref{l.HMWTh}. Let $( t_{k}, u_k , z_k) \in [0,T] \times \U \times \Z$ with $0 \le z_k \le 1$ and 
\begin{displaymath}
u_{k}=\argmin\,\{\F(t_{k},v,z_{k}):\,v\in\U\} \qquad \text{for every~$k$}\,.
\end{displaymath}
If $t_{k}\to t$, $z_{k}\rightharpoonup z$ in~$H^{1}(\Om)$, and $u\coloneq\argmin\,\{\F(t,v,z):\, v\in\U\}$, then 
$u_{k}\to u$ in $W^{1,\beta}(\Om;\R^{2})$ for every $\beta\in[2,\tilde{p})$.
\end{proposition}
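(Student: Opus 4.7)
The plan is to deduce this convergence as an immediate consequence of the continuous dependence estimate from Corollary~\ref{c.3}, combined with standard compactness arguments for the weakly convergent data.

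First I would observe that the hypotheses of Corollary~\ref{c.3} are satisfied: both $u_k$ and $u$ are minimizers of $\F(t_k,\cdot,z_k)$ and $\F(t,\cdot,z)$ respectively, with $z_k, z \in \Z$ satisfying $0 \le z_k, z \le 1$ (the bound $z \le 1$ passes to the limit in the weak $H^1$ convergence, and $z \ge 0$ is likewise preserved). Fix any $\beta \in [2, \tilde{p})$ and set $\tfrac{1}{\nu} = \tfrac{1}{\beta} - \tfrac{1}{\tilde{p}}$, so that $\nu \in (1, +\infty)$. Then Corollary~\ref{c.3} directly yields
\begin{equation*}
\| u_k - u \|_{W^{1,\beta}} \le C_2 \bigl( \| g(t_k) - g(t) \|_{W^{1,p}} + \| z_k - z \|_{L^\nu} \bigr) \, .
\end{equation*}

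Second, I would show that both terms on the right-hand side vanish as $k \to \infty$. For the boundary datum, since $g \in W^{1,q}([0,T]; W^{1,p}(\Om;\R^2))$ with $q \in (1,+\infty)$, the Sobolev embedding in one dimension of time gives $g \in C^0([0,T]; W^{1,p}(\Om;\R^2))$, so $t_k \to t$ implies $g(t_k) \to g(t)$ strongly in $W^{1,p}$. For the phase field, the weak convergence $z_k \weakto z$ in $H^1(\Om)$, combined with the Rellich--Kondrachov compact embedding $H^1(\Om) \hookrightarrow L^\nu(\Om)$ (which holds for every finite exponent $\nu$ since $\Om \subset \R^2$), yields strong convergence $z_k \to z$ in $L^\nu(\Om)$. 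Hence $\| u_k - u \|_{W^{1,\beta}} \to 0$, which is the desired conclusion.

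There is no real obstacle here: the entire content of the proposition is already packaged into the quantitative estimate~\eqref{e.25} of Corollary~\ref{c.3}; the only conceptual step is to check that the Sobolev regularity of $g$ in time is strong enough to yield continuity into $W^{1,p}$, and that two-dimensionality of $\Om$ allows the compact embedding $H^1 \hookrightarrow L^\nu$ for every finite $\nu$, in particular for the specific $\nu$ dictated by the choice of $\beta$.
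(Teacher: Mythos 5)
Your proposal is correct, and it rests on the same key tool as the paper, namely the continuous dependence estimate~\eqref{e.25} of Corollary~\ref{c.3} together with the compact embedding $H^{1}(\Om)\hookrightarrow L^{\nu}(\Om)$ in dimension two and the time-continuity of $g$ coming from $W^{1,q}\hookrightarrow C^{0,1-1/q}$. The only structural difference is that you apply~\eqref{e.25} directly to the pair $(u_{k},u)$, with data $(t_{k},z_{k})$ and $(t,z)$, which is legitimate since $z\in\Z$ with $0\le z\le 1$ (the constraint set is convex and strongly closed in $H^{1}(\Om)$, hence weakly closed), whereas the paper applies the estimate to pairs $(u_{k},u_{m})$ to get a Cauchy sequence in $W^{1,\beta}(\Om;\R^{2})$ and then must identify the strong limit with $u$ through minimality and uniqueness of the minimizer. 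Your route buys a slightly shorter argument, as the identification step is built into the estimate; the conclusion and the quantitative content are the same. One cosmetic remark: with $\tfrac{1}{\nu}=\tfrac{1}{\beta}-\tfrac{1}{\tilde p}$ and $\beta\ge 2<\tilde p$ one actually has $\nu>2$, not merely $\nu>1$, but all that matters is that $\nu$ is finite, which you use correctly.
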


\begin{proof}
In view of the hypotheses of the proposition and of Corollary~\ref{c.3}, we have that the sequence~$u_{k}$ is a Cauchy sequence in~$W^{1,\beta}(\Om;\R^{2})$ for every $\beta\in[2,\tilde{p})$. We denote by~$\overline{u}$ the limit function. By the strong convergence in~$W^{1,\beta}(\Om;\R^{2})$, it is easy to see that~$\overline{u}$ is the solution of
$\min\,\{\F(t,v,z):\,v\in\U\}$.
Hence, by uniqueness of minimizer we have $\overline{u}=u$.\qed
\end{proof}


\subsection{Properties of the slopes} \label{prop:slopes}

Now, we can give a convenient characterization of the slopes introduced in Definition~\ref{d.slope}.  

\begin{remark}\label{p.1}
Let $(t, u, z) \in [0,T] \times \U \times \Z$, then
\begin{align}
|\partial_{u}\F|(t,u,z)&=\max \,\{-\partial_{u}\F(t,u,z)[\varphi]:\,\varphi\in \U,\, \|\varphi\|_{H^{1}}\leq 1\}\,,\label{e.slope3}\\[1mm]
|\partial_{z}^{-}\F|(t,u,z)&=\sup\,\{-\partial_{z}\F(t,u,z)[\psi]:\,\psi\in \Z,\,\psi\leq 0,\,\|\psi\|_{L^2}\leq 1\}\label{e.slope4}\,.
\end{align}
For the proof of \eqref{e.slope3} we refer for instance to \cite[Proposition 1.4.4]{AGS}. For the proof of \eqref{e.slope4} it is sufficient to employ the arguments of~\cite[Lemma~2.3]{A-B-N17} or \cite[Lemma 2.2]{Negri_ACV}.
\end{remark}

Next two lemmata are devoted to lower semicontinuity and continuity of the slopes.

\begin{lemma}\label{l.2}
Let $(t_k, u_k, z_k) \in [0,T] \times \U \times \Z$ such that $t_{k} \to t$ in $[0,T]$, $u_{k}\rightharpoonup u$ weakly in~$H^1( \Omega , \mathbb{R}^2)$, and $z_{k}\rightharpoonup z$ weakly in~$H^{1}(\Om)$ with $0\leq z_{k}\leq 1$, for every~$k$. Then
\begin{displaymath}
|\partial_{z}^{-}\F|(t,u,z)\leq\liminf_{k \to \infty}|\partial_{z}^{-}\F|(t_{k},u_{k},z_{k})\,.
\end{displaymath}
\end{lemma}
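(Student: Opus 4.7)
My plan is to reduce the claim to a termwise bound using the dual characterisation of the slope in Remark~\ref{p.1}: since
$|\partial^{-}_{z}\F|(t,u,z)=\sup\{-\partial_{z}\F(t,u,z)[\psi]:\psi\in\Z,\ \psi\leq 0,\ \|\psi\|_{L^{2}}\leq 1\}$
and the admissible set of $\psi$ does not depend on $k$, it suffices to prove that for each such fixed $\psi$
\begin{equation*}
\limsup_{k\to\infty}\partial_{z}\F(t_{k},u_{k},z_{k})[\psi]\ \le\ \partial_{z}\F(t,u,z)[\psi];
\end{equation*}
the supremum in $\psi$ then gives the lemma. I would split $\partial_{z}\F$ into the three integrals $\int h'(z)\psi\Psi_{+}(\strain(u+g(t)))\,\di x$, $\int\nabla z\cdot\nabla\psi\,\di x$ and $\int f'(z)\psi\,\di x$ and treat them separately.

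The last two terms pass to the limit with equality. The Dirichlet term does so by testing the weak convergence $z_{k}\weakto z$ in $H^{1}$ against the fixed $\nabla\psi\in L^{2}$; the $f'$-term does so because $z_{k}\to z$ in $L^{2}$ by Rellich, $f'$ is Lipschitz on $[0,1]$ and $\psi\in L^{\infty}$. The hard term is the $h'$-one: $\Psi_{+}(\strain(u_{k}+g(t_{k})))$ is merely bounded in $L^{1}$, while the weight $h'(z_{k})\psi$ converges only in every $L^{p}$ with $p<\infty$ (not in $L^{\infty}$), so a naive Hölder decomposition into a strongly convergent weight and a weakly convergent integrand cannot close the estimate.

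To bypass this, I would exploit the signs. Since $0\le z_{k}\le 1$ and $h$ is non-decreasing on $[0,+\infty)$, $h'(z_{k})\ge 0$, and as $\psi\le 0$ the weight $\omega_{k}\coloneq -h'(z_{k})\psi$ is non-negative and uniformly bounded in $L^{\infty}$. Because $\Psi_{+}$ is convex on $\mathbb{M}^{2}_{s}$ (both $|\strain_{d}|^{2}$ and $|\strain_{v}^{+}|^{2}=\tfrac{1}{4}(\tr\strain)_{+}^{2}$ are convex), multiplying its subdifferential inequality at $\strain(u+g(t))$ by $\omega_{k}\ge 0$ and integrating yields
\begin{equation*}
\int_{\Omega}\omega_{k}\Psi_{+}(\strain(u_{k}+g(t_{k})))\,\di x\ \ge\ \int_{\Omega}\omega_{k}\Psi_{+}(\strain(u+g(t)))\,\di x+\int_{\Omega}\omega_{k}\,\partial_{\strain}\Psi_{+}(\strain(u+g(t))){\,:\,}\strain(u_{k}+g(t_{k})-u-g(t))\,\di x.
\end{equation*}
Passing $k\to\infty$, the first right-hand term converges by dominated convergence ($\omega_{k}\to -h'(z)\psi$ a.e.\ along a subsequence, uniformly $L^{\infty}$-bounded, paired with $\Psi_{+}(\strain(u+g(t)))\in L^{1}$); the second vanishes by a strong–weak $L^{2}$ pairing, since $\omega_{k}\,\partial_{\strain}\Psi_{+}(\strain(u+g(t)))\to -h'(z)\psi\,\partial_{\strain}\Psi_{+}(\strain(u+g(t)))$ strongly in $L^{2}$ (DCT with the envelope $C\,|\partial_{\strain}\Psi_{+}(\strain(u+g(t)))|\in L^{2}$), while $\strain(u_{k}+g(t_{k}))-\strain(u+g(t))\weakto 0$ in $L^{2}$ (using $u_{k}\weakto u$ in $H^{1}$ and the continuity $g(t_{k})\to g(t)$ in $W^{1,p}$ from $g\in W^{1,q}([0,T];W^{1,p})$).

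Combining the three limits yields the required termwise upper bound, and taking the supremum over $\psi$ finishes the proof. The main technical obstacle is precisely the $h'$-term: the low integrability of $\Psi_{+}(\strain(u_{k}+g(t_{k})))$ forbids any weak–strong product decomposition with the variable weight $h'(z_{k})\psi$, and it is the convexity of $\Psi_{+}$ combined with the a priori non-negativity of $-h'(z_{k})\psi$ that unlocks the estimate.
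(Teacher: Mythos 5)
Your proof is correct, and its skeleton is the same as the paper's: both start from the dual characterization of Remark~\ref{p.1}, fix an admissible $\psi\le 0$ with $\|\psi\|_{L^2}\le 1$, pass to the limit term by term in $-\partial_z\F(t_k,u_k,z_k)[\psi]$, and take the supremum over $\psi$ at the end; the gradient and $f'$ terms are handled identically. The only real difference lies in the critical weighted elastic term: the paper disposes of it by observing that $-h'(z_k)\psi\ge 0$ converges strongly in every $L^r$, $r<\infty$, and then invoking a general weak lower semicontinuity theorem for convex integrands (Fonseca--Leoni, Theorem~7.5), whereas you prove the needed liminf inequality by hand, multiplying the subgradient inequality for the convex function $\Psi_+$ by the nonnegative, uniformly bounded weight $\omega_k=-h'(z_k)\psi$ and using the strong--weak $L^2$ pairing with $\strain(u_k+g(t_k))\weakto\strain(u+g(t))$. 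This buys a self-contained, elementary argument that makes transparent exactly where the sign conditions ($\psi\le 0$, $h$ non-decreasing on $[0,1]$) enter — they are equally essential in the paper's route, since the cited theorem also needs a nonnegative integrand. Two immaterial slips: $|\strain_v^+|^2=\tfrac12(\tr\strain)_+^2$ (not $\tfrac14$, since $|\I|^2=2$ in dimension two), which does not affect convexity; and the a.e.-convergence/dominated-convergence steps along subsequences should be closed with the standard subsequence-of-a-subsequence argument when asserting the limsup bound for the full sequence, as you implicitly do.
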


\begin{proof}
By Remark~\ref{p.1}, for every~$\psi\in\Z$ such that $\psi\leq0$ and $\|\psi\|_{L^2}\leq1$ we have that
\begin{equation}\label{e.10}
\begin{split}
|\partial_{z}^{-}\F|(t_{k},u_{k},z_{k})\geq-\int_{\Om} & h' (z) \psi \Psi_+ \big( \strain(u_{k} + g(t_{k})) \big) \,\di x 
-\int_{\Om} \nabla{z_{k}}{\,\cdot\,}\nabla{\psi} + f'( z_{k} ) \psi \,\di x\,.
\end{split}
\end{equation}
Since~$h\in C^{1,1}_{\mathrm{loc}}(\R)$ is non-decreasing in~$[0,+\infty)$ and $z_{k} \to z$ in~$L^{r}( \Om )$ for every $r<+ \infty$ with $0\leq z_{k} \leq 1$, we deduce that $-h'(z_{k})\psi\geq0$ for every~$k$ and that~$h'(z_{k})\psi\to h'(z)\psi$ in~$L^{r}(\Om)$ for every $r<+\infty$. In a similar way $f'(z_k) \psi \to f'(z) \psi$ in $L^1(\Omega)$. Hence, passing to the liminf in~\eqref{e.10} as $k\to+\infty$ and applying for instance~\cite[Theorem~7.5]{Fonseca2007} we deduce that
\begin{displaymath}
\liminf_{k \to \infty}|\partial_{z}^{-}\F|(t_{k},u_{k},z_{k})  \geq-\int_{\Om} h' (z) \psi \Psi_+ \big( \strain(u + g(t)) \big) \,\di x -\int_{\Om} \nabla{z}{\,\cdot\,}\nabla{\psi} + f'(z) \psi \,\di x  = -\partial_{z} \F (t, u, z)[\psi] \,.
\end{displaymath}
We conclude by taking the supremum over~$\psi$ in the previous inequality. \qed
\end{proof}

\begin{lemma}\label{l.3} Let $(t_k, u_k, z_k) \in [0,T] \times \U \times \Z$ such that
$t_{k}\to t$ in~$[0,T]$, $u_{k} \to u$ in~$H^1( \Omega , \mathbb{R}^2)$, and $z_{k}\rightharpoonup z$ in~$H^1(\Omega)$ with $0\leq z_{k}\leq 1$, for every~$k$. Then
\begin{displaymath}
|\partial_{u}\F|(t,u,z)=\lim_{k \to \infty}\,|\partial_{u}\F|(t_{k},u_{k},z_{k})\,.
\end{displaymath}
\end{lemma}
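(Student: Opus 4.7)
The plan is to prove the two inequalities
\begin{displaymath}
	|\partial_{u}\F|(t,u,z) \;\le\; \liminf_{k \to \infty} |\partial_{u}\F|(t_{k},u_{k},z_{k})
	\quad \text{and} \quad
	\limsup_{k \to \infty} |\partial_{u}\F|(t_{k},u_{k},z_{k}) \;\le\; |\partial_{u}\F|(t,u,z),
\end{displaymath}
using the variational characterization \eqref{e.slope3} of the slope and the explicit form of $\partial_u \F$ recalled before Remark \ref{r.sepstrconv}.

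The crucial technical step is the following continuity property: if $\varphi_k \weakto \varphi_\infty$ in $\U$, then
\begin{displaymath}
	\partial_u \F(t_k, u_k, z_k)[\varphi_k] \;\longrightarrow\; \partial_u \F(t, u, z)[\varphi_\infty].
\end{displaymath}
To see this I would argue as follows. Since $g \in W^{1,q}([0,T];W^{1,p}(\Omega;\R^2)) \hookrightarrow C^0([0,T];W^{1,p})$, we have $g(t_k) \to g(t)$ in $W^{1,p}$; combined with $u_k \to u$ strongly in $H^1$ this yields $\strain(u_k + g(t_k)) \to \strain(u+g(t))$ strongly in $L^2$. Because $(\cdot)_{\pm}$ are $1$-Lipschitz, the corresponding volumetric parts $\strain_v^{\pm}(u_k + g(t_k))$ and the deviatoric part $\strain_d(u_k+g(t_k))$ also converge strongly in $L^2$. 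For the $z$-dependence, note that only $h(z_k)$ enters $\partial_u \F$; the weak $H^1$ convergence of $z_k$ with $0 \le z_k \le 1$ yields, by the compact embedding $H^1(\Omega) \hookrightarrow L^r(\Omega)$ in dimension two, the strong $L^r$ convergence $z_k \to z$ for every $r<\infty$, and $h(z_k)$ is uniformly bounded and converges strongly in $L^r$ (since $h$ is Lipschitz on $[0,1]$). Putting these ingredients together, each coefficient function multiplying $\strain(\varphi_k)$ in the expression of $\partial_u \F(t_k,u_k,z_k)[\varphi_k]$ converges strongly in $L^2$; then weak convergence $\strain(\varphi_k) \weakto \strain(\varphi_\infty)$ in $L^2$ is enough to pass to the limit in the integrals.

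With this continuity in hand, the liminf inequality is immediate: for any fixed $\varphi \in \U$ with $\|\varphi\|_{H^1}\le 1$, applying the above to the constant sequence $\varphi_k \equiv \varphi$ gives $-\partial_u \F(t_k,u_k,z_k)[\varphi] \to -\partial_u \F(t,u,z)[\varphi]$, so
\begin{displaymath}
	\liminf_{k\to\infty} |\partial_u \F|(t_k,u_k,z_k) \;\ge\; -\partial_u \F(t,u,z)[\varphi],
\end{displaymath}
and taking the supremum over admissible $\varphi$ yields $\liminf \ge |\partial_u \F|(t,u,z)$ by \eqref{e.slope3}.

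For the limsup inequality, by \eqref{e.slope3} the maximum is attained: pick $\varphi_k \in \U$ with $\|\varphi_k\|_{H^1} \le 1$ and $|\partial_u \F|(t_k,u_k,z_k) = -\partial_u \F(t_k,u_k,z_k)[\varphi_k]$ (existence follows from Riesz representation on the Hilbert space $\U$). Up to a (not relabelled) subsequence, $\varphi_k \weakto \varphi_\infty$ in $\U$ with $\|\varphi_\infty\|_{H^1} \le 1$ by weak lower semicontinuity of the norm. The continuity property above applied to $\varphi_k$ gives
\begin{displaymath}
	\lim_{k\to\infty} |\partial_u \F|(t_k,u_k,z_k) \;=\; -\partial_u \F(t,u,z)[\varphi_\infty] \;\le\; |\partial_u \F|(t,u,z),
\end{displaymath}
by \eqref{e.slope3} applied to $\varphi_\infty$. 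A standard subsequence-of-subsequences argument upgrades this to the full $\limsup$, completing the proof.

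The main obstacle is ensuring that no term in $\partial_u \F(t_k,u_k,z_k)[\varphi_k]$ requires strong convergence of $\varphi_k$: this is guaranteed precisely because $u_k \to u$ is assumed \emph{strongly} in $H^1$ (unlike in Lemma \ref{l.2}), which, together with the compact $H^1 \hookrightarrow L^r$ embedding in dimension two for $z_k$, makes every coefficient multiplying $\strain(\varphi_k)$ strongly $L^2$-convergent.
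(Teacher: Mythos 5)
Your proposal is correct and follows essentially the same route as the paper's proof: the liminf inequality via the variational characterization \eqref{e.slope3} tested with fixed $\varphi$, and the limsup inequality by taking maximizers $\varphi_k$, extracting a weak $H^1$ limit, and passing to the limit using the strong $L^2$ convergence of $\partial_{\strain}W(z_k,\strain(u_k+g(t_k)))$ (which is exactly the paper's key step, relying on the strong $H^1$ convergence of $u_k$, the regularity of $g$, and the compactness of $z_k$ in $L^r$ with $0\le z_k\le 1$). No gaps.
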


\begin{proof}
By Remark~\ref{p.1}, for every $\varphi\in\U$ with~$\|\varphi\|_{H^1}\leq 1$ we have that
\begin{equation}\label{e.21}
\begin{split}
|\partial_{u}\F|(t_{k},u_{k},z_{k}) \geq -\int_{\Om}  \partial_{\strain} W( z_{k}, \strain (u_{k} + g(t_{k}) ){\,:\,} \strain (\varphi) \,\di x .
\end{split}
\end{equation}
Remember that $\partial_{\strain} W(z,\strain ) = 2 h(z) \big(  \mu \strain_{d} + \kappa \strain_{v}^{+} \big) - 2\kappa \strain_{v}^{-}$.
Since $u_{k}\to u$ in~$H^1(\Omega, \R^2)$ and since $g$ belongs to $W^{1,q}([0,T];W^{1,p}(\Om;\R^{2}))$, we have that $\strain_{d}(u_{k}+g(t_{k}))\to \strain_{d}(u+g(t))$ and $\strain^{\pm}_{v}(u_{k}+g(t_{k}))\to \strain_{v}^{\pm}(u+g(t))$ in $L^{2}(\Om;\mathbb{M}^{2})$. Being $0\leq z_{k}\leq 1$ and $z_{k}\to z$ in~$L^{2}(\Om)$, we have that $h(z_{k})(\mu\strain_{d}(u_{k}+g(t_{k}))+\kappa\strain^{+}_{v}(u_{k}+g(t_{k})))$ converges to $h(z)(\mu\strain_{d}(u+g(t))+\kappa\strain_{v}^{+}(u+g(t)))$ in~$L^{2}(\Om;\mathbb{M}^{2})$. Therefore, $ \partial_{\strain} W( z_{k}, \strain (u_{k} + g(t_{k}) )$ converges to $ \partial_{\strain} W( z, \strain (u + g(t) )$ in~$L^{2}(\Om;\mathbb{M}^{2})$ and, passing to the liminf in~\eqref{e.21}, we obtain
\begin{displaymath}
\begin{split}
\liminf_{k\to\infty}\,|\partial_{u}\F|(t_{k},u_{k},z_{k})\geq - \int_{\Om} & \ \partial_{\strain} W( \strain (u_{} + g(t_{}))) {\,:\,} \strain(\varphi) \, \di x = - \partial_{u}\F(t,u,z)[\varphi]\,.
\end{split}
\end{displaymath}
Passing to the supremum over $\varphi\in\U$ with $\|\varphi\|_{H^1}\leq 1$, we deduce that
\begin{equation}\label{e.22}
|\partial_{u}\F|(t,u,z)\leq \liminf_{k\to\infty}\,|\partial_{u}\F|(t_{k},u_{k},z_{k})\,.
\end{equation}

As for the opposite inequality, for every~$k$ let $\varphi_{k}\in\U$ with $\|\varphi_{k}\|_{H^1}\leq 1$ be such that $|\partial_{u}\F|(t_{k},u_{k},z_{k})=-\partial_{u}\F(t_{k}.u_{k},z_{k})[\varphi_{k}]$. Up to a subsequence, we have that $\varphi_{k}\rightharpoonup \varphi$ weakly in~$H^1(\Omega ; \R^2)$ for some $\varphi\in\U$ with $\|\varphi\|_{H^1}\leq 1$. Hence, by the strong convergence of $\partial_{\strain} W( z_{k}, \strain (u_{k} + g(t_{k}) )$, we get that
\begin{equation}\label{e.23} 
\begin{split}
\limsup_{k\to\infty} \, | \partial_{u} \F | ( t_{k}, u_{k}, z_{k} ) & = - \partial_{u} \F ( t_{k}, u_{k}, z_{k} ) [ \varphi_{k} ]  \\
&= \limsup_{k\to\infty} \, - \int_{\Om}\partial_{\strain} W(\strain (u_{k} + g(t_{k}) ) ) {\,:\,}\strain (\varphi_{k} ) \, \di x  \\
& = - \int_{\Om}  \partial_{\strain} W( z, \strain (u + g(t) ) {\,:\,} \strain (\varphi) \, \di x  = - \partial_{u} \F ( t, u, z ) [\varphi] \leq | \partial_{u} \F | ( t, u, z ) \,. 
\end{split}
\end{equation}
This concludes the proof of the proposition. \qed

\end{proof}


%
%


\section{Auxiliary gradient-flows} \label{s.gf}

In this section we present some auxiliary results for two gradient flows which will be employed in the interpolation of the discrete evolutions obtained by alternate minimization. 

\subsection[An $H^1$-gradient flow for the displacement field]{An \boldmath{$H^1$}-gradient flow for the displacement field}

Given, $t\in[0,T]$ and $z\in \Z$, we start with recalling some results about the system
$$ 
\left\{\begin{array}{ll}
u'(l)=-\nabla_{u}\F(t,u(l),z)=-\nabla_{u}\E(t,u(l),z)\,,\\
u(0)=u^{0}\,,
\end{array}\right.
$$
where $u^0 \in \U$ and $\nabla_u  \F ( t, u , z)$ denotes the $H^1$-element representing, by Riesz Theorem, the functional $\partial_u \F ( t, u ,z)$, i.e., $\partial_u \F ( t,u,z) [\phi] =  \langle \nabla_u \F ( t,u,z) ,  \phi \rangle$ for every $\phi\in \U$. Note that $\| \nabla_u \F ( t,u,z) \|_{H^1} = | \partial_u \F | ( t,u,z)$. 

\begin{theorem}\label{t.2}
Let $(t, u^{0}, z)\in [0,T] \times \U \times \Z$. Then, there exists a unique evolution $u \colon [0,+\infty)\to \U$ such that the following facts hold:
\begin{itemize}
\item[$(a)$] $u\in W^{1,\infty}([0,+\infty); H^{1}(\Om;\R^{2}))$ and $u'\in  L^{2}([0,+\infty); H^{1}(\Om;\R^{2}))$;

\item[$(b)$] $u(0)=u^0$ and for a.e.~$l \in[0,+\infty)$ we have $u'(l)=-\nabla_{u}\F(t,u(l),z)$;

\item[$(c)$] for every $\ell \in[0,+\infty)$
\begin{equation}\label{e.38}
\F(t,u(\ell),z)=\F(t,u^0,z)-\tfrac{1}{2}\int_{0}^{\ell}|\partial_{u}\F|^{2}(t,u(l),z)+\|u'(l)\|^{2}_{H^{1}}\,\di l\,;
\end{equation} 

\item[$(d)$] $u(l)$ converges strongly to $\overline{u}$ in~$H^{1}(\Om;\R^{2})$ as $l \to+\infty$, where $\overline{u}=\argmin\,\{\F(t,u,z):\,u\in\U\}$.   Moreover, 
\begin{align}
& \F(t,\overline{u},z) =\F(t,u^0,z)-\tfrac{1}{2}\int_{0}^{+\infty}|\partial_{u}\F|^{2}(t,u(l),z)+\|u'(l)\|^{2}_{H^{1}}\,\di l\,, \label{e.41}\\[1mm]
& \|u(l)-\overline{u}\|_{H^{1}}  \leq e^{-cl}\|u^0-\overline{u}\|_{H^{1}}\,,\label{e.40}
\end{align}
where $c$ depends only on the constant appearing in (a) of Lemma \ref{l.HMWw}.
\end{itemize}
\end{theorem}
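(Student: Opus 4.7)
The plan is to view the evolution as a Cauchy problem in the Hilbert space $\U$ equipped with the $H^1$-inner product and exploit the strong convexity and $C^{1,1}$ regularity of $\F(t,\cdot,z)$. By Lemma~\ref{l.HMWw}, the bilinear form $\partial_{u}\F(t,\cdot,z)$ is Lipschitz continuous on $\U$, so its Riesz representative $\nabla_{u}\F(t,\cdot,z)\colon\U\to\U$ is globally Lipschitz with some constant $L$ depending only on the constant in Lemma~\ref{l.HMWw}$(b)$. The classical Cauchy-Lipschitz theorem in Hilbert spaces therefore yields a unique global solution $u\in C^{1}([0,+\infty);\U)$ of the Cauchy problem, proving $(b)$. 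The uniform bound $\|u'(l)\|_{H^{1}}=\|\nabla_{u}\F(t,u(l),z)\|_{H^{1}}\le L\|u(l)-\overline u\|_{H^{1}}$ combined with the exponential contraction proved at the end (below) gives $u\in W^{1,\infty}$, while $u'\in L^{2}$ will follow from the energy identity in~$(c)$.

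For $(c)$, the $C^{1}$ regularity of $\F(t,\cdot,z)$ and the absolute continuity of $l\mapsto u(l)$ permit the chain rule, so for a.e.~$l$
\begin{equation*}
	\tfrac{d}{dl}\F(t,u(l),z)=\langle\nabla_{u}\F(t,u(l),z),u'(l)\rangle_{H^{1}}=-\|u'(l)\|_{H^{1}}^{2}=-|\partial_{u}\F|^{2}(t,u(l),z),
\end{equation*}
using Remark~\ref{p.1} for the identification $\|\nabla_{u}\F\|_{H^{1}}=|\partial_{u}\F|$. Integrating on $[0,\ell]$ and writing the integrand symmetrically as $\tfrac12\|u'\|_{H^{1}}^{2}+\tfrac12|\partial_{u}\F|^{2}$ produces~\eqref{e.38}. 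Since $\F\ge 0$, the right-hand side of~\eqref{e.38} is uniformly bounded in $\ell$, which delivers $u'\in L^{2}([0,+\infty);H^{1})$ and completes $(a)$.

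For $(d)$, Remark~\ref{r.sepstrconv} supplies a constant $c>0$ (depending only on the constant in Lemma~\ref{l.HMWw}$(a)$) such that
\begin{equation*}
	\langle\nabla_{u}\F(t,u,z)-\nabla_{u}\F(t,v,z),u-v\rangle_{H^{1}}\ge c\|u-v\|_{H^{1}}^{2}\qquad\text{for every }u,v\in\U.
\end{equation*}
In particular, $\F(t,\cdot,z)$ admits a unique minimizer $\overline u$, characterized by $\nabla_{u}\F(t,\overline u,z)=0$. Choosing $v=\overline u$ and using the equation in $(b)$ we obtain
\begin{equation*}
	\tfrac{d}{dl}\|u(l)-\overline u\|_{H^{1}}^{2}=-2\langle\nabla_{u}\F(t,u(l),z),u(l)-\overline u\rangle_{H^{1}}\le -2c\|u(l)-\overline u\|_{H^{1}}^{2},
\end{equation*}
and Gr\"onwall's inequality yields~\eqref{e.40}. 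Strong convergence $u(l)\to\overline u$ in $H^{1}$ follows, and passing to the limit $\ell\to+\infty$ in~\eqref{e.38}, using continuity of $\F(t,\cdot,z)$ along strongly $H^{1}$-convergent sequences (a consequence of Lemma~\ref{l.lscFE} together with the dominated convergence of the elastic integrand, uniform in $z$), produces~\eqref{e.41}.

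I do not foresee a genuine obstacle: the argument is the standard Hilbert-space gradient-flow theory for $C^{1,1}$ strongly convex functionals, the only mild care being needed to recognize that the slope, the $H^{1}$-norm of the Riesz gradient, and the $H^{1}$-norm of $u'$ all coincide along the trajectory, so that the dissipation terms in~\eqref{e.38} and~\eqref{e.41} can be written in the symmetric form used for the later parametrized evolutions.
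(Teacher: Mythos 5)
Your argument is correct, but it follows a different route from the paper. The paper proves Theorem~\ref{t.2} by invoking the Brezis theory of semigroups generated by maximal monotone operators: it checks that $\mathcal{A}=\nabla_u\E(t,\cdot,z)$ is maximal monotone and, via $(a)$ of Lemma~\ref{l.HMWw} and Korn's inequality, strongly monotone, and then cites \cite[Theorems~3.1, 3.9 and Lemma~3.3]{Brezis_73} to obtain at once existence and uniqueness, $u'\in L^\infty$, the energy identity $(c)$, the convergence $u(l)\to\overline u$ and the exponential decay~\eqref{e.40}, finishing with monotone convergence for~\eqref{e.41}. You instead exploit that, by $(b)$ of Lemma~\ref{l.HMWw}, the Riesz gradient $\nabla_u\F(t,\cdot,z)$ is globally Lipschitz on $\U$, so the classical Cauchy--Lipschitz theorem in the Hilbert space $\U$ gives the unique global $C^1$ solution; the energy balance then comes from a direct chain rule (legitimate since $\F(t,\cdot,z)$ is $C^{1}$ with Lipschitz gradient and $u\in C^1$), and the decay~\eqref{e.40} from strong monotonicity (Remark~\ref{r.sepstrconv}) plus Gr\"onwall, with $u'\in L^2$ and the $W^{1,\infty}$ bound read off from~\eqref{e.38} and from $\|u'(l)\|_{H^1}\le L\|u(l)-\overline u\|_{H^1}$. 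Your approach is more elementary and self-contained, at the price of using the global Lipschitz continuity of the gradient, which happens to hold here; the paper's route through maximal monotone operator theory is less hands-on but would survive for merely convex, non-smooth energies and packages the asymptotics for free. Two cosmetic points: your appeal to Lemma~\ref{l.lscFE} plus dominated convergence for $\F(t,u(\ell),z)\to\F(t,\overline u,z)$ is heavier than needed (strong $H^1$-continuity of the elastic energy is immediate from $(c)$ of Lemma~\ref{l.HMWw}, or one can use monotone convergence on the right-hand side of~\eqref{e.38} as the paper does), and, like the paper, you quote Lemma~\ref{l.HMWw} which is stated for $z\in[0,1]$ while the theorem allows any $z\in\Z$; the bounds persist for bounded $z$ since $h\ge h(0)>0$ everywhere and $h\in C^{1,1}_{\mathrm{loc}}$, but the constant in the Lipschitz bound then depends on $\|z\|_\infty$.
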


\begin{proof} We invoke \cite[Theorem 3.1, Lemma 3.3, and Theorem 3.9]{Brezis_73} for the operator $\mathcal{A}:= \nabla_{u} \E (t, \cdot, z )\colon \U \to \U$. Indeed, $\mathcal{A}$ is maximal monotone, by convexity and continuity of~$\E (t,\cdot, z)$. Moreover, by~$(a)$ of Lemma~\ref{l.HMWw} and by Korn inequality, the operator~$\mathcal{A}$ is strongly monotone, that is,
\begin{equation}\label{e.40.1}
 \langle \mathcal{A} u - \mathcal{A} v , u - v \rangle \geq c  \| u - v \|_{H^{1}}^{2} \,.
\end{equation}
Therefore, we are in a position to apply~\cite[Theorem 3.1 and Theorem 3.9]{Brezis_73} which, put together, prove~$(b)$, that $u'\in L^{\infty}([0,+\infty); H^{1}(\Om;\R^{2}))$, and that~$u(l)$ admits the limit~$\overline{u}=\argmin\,\{\F(t,u,z):\,u\in\U\}$ in~$H^{1}(\Om;\R^{2})$ as $l\to+\infty$, and the exponential decay~\eqref{e.40}, where the constant~$c$ coincides with the ellipticity constant of~\eqref{e.40.1}. In view of~\cite[Lemma~3.3]{Brezis_73} we get~$(c)$ and the uniform boundedness of~$u(\cdot)$ in~$H^{1}(\Om;\R^{2})$. Passing to the limit in~\eqref{e.38} as~$\ell\to+\infty$ and applying monotone convergence theorem, we deduce~\eqref{e.41} and that $u'\in L^{2}([0,+\infty); H^{1}(\Om;\R^{2}))$.  \qed
\end{proof}

Moreover, by a \L ojasiewicz \cite{Lojasiewicz_AIF93} argument we have the following result on the length of the flow.

\begin{theorem}\label{t.2,5} 
Let~$u$ be the solution of the above gradient flow. Then, either $u(l) \equiv u^0$ or $\| u' (l) \|_{H^1} \neq 0$ in $[0, +\infty)$. Moreover, there exists a constant~$\overline{C}$ (independent of $t$, $u^0$, and $z$) 
such that
\begin{equation}\label{e.lengthu}
\int_{0}^{+\infty}\|u'(l)\|_{H^{1}}\,\di l\leq \overline{C} \|u^0-\overline{u}\|_{H^{1}}\,,
\end{equation}
\end{theorem}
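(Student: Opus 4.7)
Set $\phi(l)\coloneq\F(t,u(l),z)-\F(t,\overline u,z)\ge 0$; by $(c)$ of Theorem~\ref{t.2} one has $\phi'(l)=-\|u'(l)\|_{H^{1}}^{2}=-|\partial_{u}\F|^{2}(t,u(l),z)$. My plan has three steps: first, settle the dichotomy via backward uniqueness of the gradient-flow ODE; second, derive a Polyak--\L ojasiewicz inequality from the strong convexity of $\F(t,\cdot,z)$ and differentiate $\sqrt{\phi}$ to bound the arc-length in terms of $\sqrt{\phi(0)}$; third, control $\phi(0)$ by $\|u^{0}-\overline u\|_{H^{1}}^{2}$ using the $C^{1,1}_{\mathrm{loc}}$ regularity of $W$.

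For the dichotomy, Remark~\ref{r.sepstrconv} ensures that $\overline u$ is the unique zero of $\phi$. If $u^{0}=\overline u$, the constant curve solves the ODE, which is the first alternative. Otherwise, assume by contradiction that $u(l_{0})=\overline u$ for some $l_{0}>0$. Lemma~\ref{l.HMWw}$(b)$ combined with Korn's inequality shows that $\nabla_{u}\F(t,\cdot,z)$ is Lipschitz on bounded subsets of $\U$, so the Cauchy--Lipschitz theorem applies both forward and backward in $l$. Since the constant curve $\overline u$ also solves $v'=-\nabla_{u}\F(t,v,z)$ with value $\overline u$ at $l=l_{0}$, backward uniqueness forces $u\equiv\overline u$ on $[0,l_{0}]$, contradicting $u^{0}\ne\overline u$. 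Hence $\|u'(l)\|_{H^{1}}=|\partial_{u}\F|(t,u(l),z)>0$ for every $l\in[0,+\infty)$.

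For the arc-length bound, Remark~\ref{r.sepstrconv} provides a strong-convexity constant $c>0$ (uniform in $t$ and $z$), and minimizing the quadratic lower bound $\F(t,\overline u,z)\ge\F(t,u(l),z)+\langle\nabla_{u}\F(t,u(l),z),\overline u-u(l)\rangle_{H^{1}}+\tfrac{c}{2}\|\overline u-u(l)\|_{H^{1}}^{2}$ over the first argument yields the Polyak--\L ojasiewicz inequality
\begin{equation*}
\phi(l)\;\le\;\tfrac{1}{2c}\,|\partial_{u}\F|^{2}(t,u(l),z)\;=\;\tfrac{1}{2c}\,\|u'(l)\|_{H^{1}}^{2}.
\end{equation*}
In the nontrivial case $\phi>0$, I differentiate $\sqrt{\phi}$ and estimate
\begin{equation*}
\tfrac{d}{dl}\sqrt{\phi(l)}\;=\;-\,\frac{\|u'(l)\|_{H^{1}}^{2}}{2\sqrt{\phi(l)}}\;\le\;-\,\tfrac{\sqrt{2c}}{2}\,\|u'(l)\|_{H^{1}},
\end{equation*}
then integrate on $[0,L]$ and let $L\to+\infty$, using $\phi(L)\to 0$ from $(d)$ of Theorem~\ref{t.2}, to obtain $\int_{0}^{+\infty}\|u'(l)\|_{H^{1}}\,\di l\le\sqrt{2/c}\,\sqrt{\phi(0)}$.

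Finally, using $\partial_{u}\F(t,\overline u,z)=0$ and integrating $\partial_{u}\F(t,\cdot,z)$ along the segment from $\overline u$ to $u^{0}$, Lemma~\ref{l.HMWw}$(b)$ with Korn's inequality gives $\phi(0)\le\tfrac{C}{2}\|u^{0}-\overline u\|_{H^{1}}^{2}$, and the theorem follows with $\overline C=\sqrt{C/c}$. The main obstacle is ensuring that both constants $c$ and $C$ are independent of $(t,u^{0},z)$; this is available for $z$ taking values in $[0,1]$ (the regime relevant for the evolution), where the ellipticity and Lipschitz estimates in Lemma~\ref{l.HMWw}$(a)$--$(b)$ hold with uniform constants. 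The backward-uniqueness step in the dichotomy is the only non-routine ingredient, and it reduces to standard Cauchy--Lipschitz theory once the $H^{1}$-Lipschitz character of $\nabla_{u}\F(t,\cdot,z)$ on bounded sets is noted.
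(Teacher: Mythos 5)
Your proposal is correct and follows essentially the same route as the paper: the arc-length bound comes from the same \L ojasiewicz/Polyak--\L ojasiewicz inequality $(\F(t,u(l),z)-\F(t,\overline u,z))^{1/2}\le C\,\|\nabla_u\F(t,u(l),z)\|_{H^1}$ (obtained from strong convexity, i.e.\ Lemma~\ref{l.HMWw}$(a)$ plus Korn), differentiation of $\sqrt{\phi}$ along the flow, and a bound of the initial energy gap by $C\|u^0-\overline u\|_{H^1}^2$ via criticality of $\overline u$ and Lemma~\ref{l.HMWw}$(b)$. Your dichotomy step via backward uniqueness of the Lipschitz ODE is just a repackaging of the paper's Gronwall estimate $s'(l)\ge -C's(l)$ for $s(l)=\tfrac12\|u(l)-\overline u\|^2_{H^1}$, so the two arguments coincide in substance.
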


\begin{proof} 
If $u^{0}=\overline{u}$, then $u(l)\equiv \underline{u}$, since $\underline{u}=\argmin\,\{\F(t,u,z):\, u\in \U\}$. Let us therefore assume that $u^{0}\neq \overline{u}$. In what follows we denote by~$C$ a generic positive constant which could change from line to line. Let $s(l) = \frac12 \| u(l) - \overline{u} \|^2_{H^1}$. Then, being $\nabla_u \F ( t, \overline{u} , z) = 0$ and by $(b)$ in Lemma \ref{l.HMWw}
$$
	s'(l) = \langle u(l) - \overline{u} , u' (l) \rangle = \langle u(l) - \overline{u}  , - \nabla_u \F( t , u(l) , z) + \nabla_u \F ( t , \overline{u} , z) \rangle \ge - C \| u(l) - \overline{u} \|^2_{H^1} = - C' s(l)  \,.
$$
It follows that $s(l) \ge s(0) e^{-C'l} >0$ and then $u(l) \neq \overline{u}$ for every $l \in [0,+\infty)$. As a consequence $u '(l) = - \nabla_u \F (t, u(l),z) \neq 0$ for a.e.~$l \in [0,+\infty)$. 

We now prove the bound \eqref{e.lengthu}. By convexity, for every $l\in [0,+\infty)$ we have
\begin{equation}\label{e.18.09}
0 \leq \F (t, u(l), z) - \F (t, \overline{u}, z) \leq \langle \nabla_{u} \F (t, u(l), z) , u(l) - \overline{u} \rangle \leq \| \nabla_{u} \F (t, u(l), z)\|_{H^1} \| u(l) - \overline{u} \|_{H^1} \,.
\end{equation} 
By $(a)$ of Lemma \ref{l.HMWw} and by Korn inequality, we get
\begin{displaymath}
\begin{split}
C \| u (l) - \overline{u}\|_{H^{1}}^{2}&\leq \left\langle \nabla_{u}\F ( t, \overline{u}, z ) - \nabla_{u} \F ( t, u(l), z) , \overline{u} - u(l) \right\rangle = - \left\langle \nabla_{u} \F ( t, u(l), z) , \overline{u} - u(l) \right\rangle \\
&\leq \| \nabla_{u} \F ( t, u(l), z) \|_{H^1} \| u(l) - \overline{u} \|_{H^1} \,,
\end{split}
\end{displaymath}
which implies
\begin{equation}\label{e.18.07}
 C \| u(l) - \overline{u} \|_{H^1} \leq \| \nabla_{u} \F ( t, u(l), z) \|_{H^1}\,.
\end{equation}
 Combining \eqref{e.18.09} and \eqref{e.18.07} we deduce that
\begin{equation}\label{e.18.10}
C (\F (t, u(l), z) - \F (t, \overline{u}, z))^{\frac{1}{2}} \leq \| \nabla_u \F (t, u(l), z) \|_{H^1} \,.
\end{equation}
We now apply a \L ojasiewicz argument: in view of $(b)$ of Theorem \ref{t.2} and of the monotonicity of $l\mapsto \F(t, u(l), z)$, for a.e. $l\in [0,+\infty)$ we have
\begin{equation}\label{e.18.36}
\begin{split}
-2\frac{\di}{\di l}\big( \F ( t, u(l), z ) - \F ( t, \overline{u}, z ) \big)^{\frac12} & = - \big( \F (t, u(l), z ) - \F ( t,\overline{u}, z ) \big)^{-\frac12} \langle \nabla_{u} \F (t, u(l), z), u'(l) \rangle \\ 
& = \big( \F (t, u(l), z ) - \F ( t,\overline{u}, z ) \big)^{-\frac12} \| \nabla_{u}\F ( t, u(l) , z )\|_{H^1}^{2} \\
& \geq C \| \nabla_{u}\F ( t, u(l) , z )\|_{H^1} = C \| u'(l) \|_{H^{1}} \,.
\end{split}
\end{equation}
Hence, inequality~\eqref{e.18.36} implies that for every~$\ell\in[0,+\infty)$
\begin{equation}\label{e.18.37}
C\int_{0}^{\ell} \| u'(l) \|_{H^{1}} \, \di l \leq - 2 \int_{0}^{\ell} \frac{\di}{\di l} \big ( \F (t, u(l), z) - \F (t ,\overline{u}, z ) \big)^{\frac12} \, \di l \leq 2 \big( \F (t, u^{0}, z ) - \F ( t, \overline{u}, z ) \big)^{\frac12} \,.
\end{equation}
In the limit as~$\ell\to+\infty$ in~\eqref{e.18.37} we obtain by monotone convergence theorem
\begin{equation}\label{e.18.38}
C\int_{0}^{+\infty} \| u'(l) \|_{H^{1}} \, \di l \leq 2 \big( \F (t, u^{0}, z ) - \F ( t, \overline{u}, z ) \big)^{\frac{1}{2}} \,.
\end{equation}
By convexity and minimality of~$\overline{u}$ we have that 
\begin{align}
 \big( \F ( t, u^{0}, z ) - \F ( t, \overline{u}, z ) \big)^{\frac12} & \leq \big( -\left\langle \nabla_{u} \F ( t, u^{0}, z ) , \overline{u} - u^0 \right\rangle \big)^{\frac12} \nonumber \\ 
	& = \big( \left\langle \nabla_{u} \F ( t, \overline{u}, z ) , \overline{u} - u^0 \right\rangle - \left\langle \nabla_{u} \F ( t, u^{0}, z ) , \overline{u} - u^0 \right\rangle \big)^{\frac12}  \,,\label{e.18.43}  \\
		& \leq C \| u^{0} - \overline{u} \|_{H^{1}} \nonumber \,,
\end{align}
where in the last inequality we applied~$(b)$ of Lemma~\ref{l.HMWw}. Combining~\eqref{e.18.37} and~\eqref{e.18.43} we conclude~\eqref{e.lengthu}. In particular, we notice that all the constants appearing in~\eqref{e.18.07}-\eqref{e.18.43} do not depend on~$t$,~$u^{0}$,~$z$, and $l$.\qed 
\end{proof} 

As a corollary of Theorems~\ref{t.2} and~\ref{t.2,5}, we define a suitable reparametrization of $l\in[0,+\infty)$ which makes the gradient flows computed in Theorem~\ref{t.2} $1$-Lipschitz continuous. This reparametrization will be exploited in Section~\ref{s.prooft1} for the proof of Theorem~\ref{t.1}.

\begin{corollary}\label{c.gfu}
Let~$t$,~$u^{0}$, and~$z$ be as in the statement of Theorem~\ref{t.2}. Let~$u$ be the gradient flow computed in Theorem~\ref{t.2} with initial condition~$u^{0}$. Let us assume that $u^{0} \neq \overline{u}:= \argmin\,\{ \F(t, u, z):\, u\in\U \}$, and let us set
\begin{displaymath}
L (u) :=\int_{0}^{+\infty} \| u' (l) \|_{H^{1}}\,\di l \,,  \qquad  \lambda( \ell ):=\int_{0}^{\ell} \| u' (l) \|_{H^{1}}\,\di l \,, \quad \text{for $\ell \in [0, +\infty ]$} .   \\
\end{displaymath}
Moreover, let $\rho\colon [0,L(u)]\to [0,+\infty]$ be defined by $\rho:= \lambda^{-1}$. Then, the function $\omega:= u\circ \rho$ belongs to $W^{1,\infty} ([0,L(u)]; H^{1}(\Om;\R^{2}) )$, $\| \omega' (s) \|_{H^1} = 1$ for a.e.~$s \in [0, L(u)]$, $\omega(0)=u^{0}$, $\omega(L(u)) = \overline{u}$, and
\begin{equation}\label{e.enbal}
\F( t, \omega(s), z) = \F( t, u^{0}, z)-\int_{0}^{s} |\partial_{u} \F|(t, \omega( \sigma ), z ) \| \omega' ( \sigma ) \|_{H^1}  \, \di \sigma \qquad \text{for every $s \in [0, L(u) ]$}\,.
\end{equation}
\end{corollary}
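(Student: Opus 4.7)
The key structural fact, which I would use throughout, is that by Theorem~\ref{t.2,5} the hypothesis $u^0 \neq \overline{u}$ gives $\|u'(l)\|_{H^1} > 0$ for a.e.\ $l\in[0,+\infty)$ and $L(u) \leq \overline{C}\|u^0 - \overline{u}\|_{H^1} < +\infty$. Hence $\lambda$ is absolutely continuous, with $\lambda'(\ell) = \|u'(\ell)\|_{H^1} > 0$ a.e., so it is strictly increasing from $[0,+\infty)$ onto $[0,L(u))$, and extends continuously to $\lambda(+\infty) = L(u)$. Its inverse $\rho\colon [0,L(u)] \to [0,+\infty]$ is therefore a continuous, strictly increasing bijection with $\rho(0)=0$ and $\rho(L(u))=+\infty$.

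Next I would show the Lipschitz regularity of $\omega$ by a direct estimate rather than through $\rho'$. For $0 \leq s_1 \leq s_2 < L(u)$, the absolute continuity of $u$ together with $\lambda\circ\rho = \mathrm{id}$ yields
\begin{displaymath}
\|\omega(s_2) - \omega(s_1)\|_{H^1} = \|u(\rho(s_2)) - u(\rho(s_1))\|_{H^1} \leq \int_{\rho(s_1)}^{\rho(s_2)} \|u'(l)\|_{H^1}\,\di l = s_2 - s_1 \,.
\end{displaymath}
Combined with $(d)$ of Theorem~\ref{t.2}, which gives $u(l) \to \overline{u}$ in $H^1$ as $l \to +\infty$, the map $\omega$ extends continuously to $[0,L(u)]$ with $\omega(L(u))=\overline{u}$, and is $1$-Lipschitz there; in particular $\omega \in W^{1,\infty}([0,L(u)];H^1(\Omega;\R^2))$ with $\|\omega'(s)\|_{H^1} \leq 1$ a.e. To upgrade this to equality, I would integrate and use reparametrization invariance of length:
\begin{displaymath}
\int_0^{L(u)} \|\omega'(s)\|_{H^1}\,\di s \;\geq\; \text{length}(\omega) \;=\; \text{length}(u) \;=\; L(u),
\end{displaymath}
forcing $\|\omega'(s)\|_{H^1} = 1$ for a.e.\ $s$.

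For the energy balance, I would first observe that since $u'(l) = -\nabla_u \F(t,u(l),z)$, one has $\|u'(l)\|_{H^1} = |\partial_u \F|(t,u(l),z)$ for a.e.\ $l$, so the integrand in~\eqref{e.38} reduces to $|\partial_u \F|(t,u(l),z)\,\|u'(l)\|_{H^1}$. Then the change of variable $l = \rho(\sigma)$ (valid because $\lambda$ is absolutely continuous and non-negative-monotone, so $\di\lambda(\ell) = \|u'(\ell)\|_{H^1}\,\di\ell$) converts~\eqref{e.38} into
\begin{displaymath}
\F(t,u(\ell),z) = \F(t,u^0,z) - \int_0^{\lambda(\ell)} |\partial_u \F|(t,\omega(\sigma),z)\,\di\sigma \,,
\end{displaymath}
and setting $s = \lambda(\ell)$, together with the identity $\|\omega'(\sigma)\|_{H^1} = 1$ a.e.\ from the previous step, gives~\eqref{e.enbal} on $[0,L(u))$. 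The case $s = L(u)$ follows by passing to the limit using $\omega(L(u))=\overline{u}$ and the continuity provided by~\eqref{e.41}.

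The main technical point I expect to need care with is the change of variable in the energy identity: one must verify that $\ell \mapsto |\partial_u \F|(t,u(l),z)\,\|u'(l)\|_{H^1}$ is integrable on $[0,+\infty)$ (which follows from~\eqref{e.41} combined with $\|u'\|_{H^1} = |\partial_u\F|$) and that the inverse transformation is compatible with Lebesgue integration, but since $\lambda$ is monotone and absolutely continuous the standard change-of-variable formula applies directly. Everything else is a fairly routine bookkeeping exercise based on Theorems~\ref{t.2} and~\ref{t.2,5}.
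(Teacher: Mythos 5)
Your proposal is correct, and its overall architecture coincides with the paper's own proof: both use Theorem~\ref{t.2,5} to know that $\|u'(l)\|_{H^1}\neq 0$ and $L(u)<+\infty$ when $u^0\neq\overline u$, so that $\lambda$ is strictly increasing and $\rho=\lambda^{-1}$ is well defined; both rewrite the identity \eqref{e.38}--\eqref{e.41} with integrand $|\partial_u\F|(t,u(l),z)\,\|u'(l)\|_{H^1}$ (via $\|u'(l)\|_{H^1}=\|\nabla_u\F(t,u(l),z)\|_{H^1}=|\partial_u\F|(t,u(l),z)$ a.e.); and both conclude \eqref{e.enbal} by the change of variable $l=\rho(\sigma)$, treating $s=L(u)$ through \eqref{e.41} and $\omega(L(u))=\overline u$. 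The one place where your route genuinely differs is the proof that $\omega\in W^{1,\infty}$ with $\|\omega'(s)\|_{H^1}=1$ a.e.: the paper argues that $\rho$ is Lipschitz on every $[0,s]$ with $s<L(u)$ (which ultimately rests on the exponential lower bound $\|u(l)-\overline u\|_{H^1}\gtrsim e^{-Cl}$ obtained inside the proof of Theorem~\ref{t.2,5}, not in its statement) and then uses the chain rule $\omega'(\sigma)=u'(\rho(\sigma))\rho'(\sigma)$, whereas you get the $1$-Lipschitz bound directly from $\|\omega(s_2)-\omega(s_1)\|_{H^1}\le \int_{\rho(s_1)}^{\rho(s_2)}\|u'(l)\|_{H^1}\,\di l = s_2-s_1$ and then upgrade $\|\omega'\|_{H^1}\le 1$ to equality a.e.\ by comparing $\int_0^{L(u)}\|\omega'\|_{H^1}\,\di s$ with the length of the curve, which is invariant under the monotone reparametrization and equals $L(u)$. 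Your variant buys a more self-contained argument, using only the stated conclusions of Theorems~\ref{t.2} and~\ref{t.2,5} (plus, as in the paper, a.e.\ differentiability of Lipschitz curves into the Hilbert space $H^1(\Om;\R^2)$), while the paper's chain-rule route additionally yields the explicit formula $\omega'(\sigma)=u'(\rho(\sigma))/\|u'(\rho(\sigma))\|_{H^1}$; both are sound, and your handling of the integrability and measurability issues in the change of variables is adequate.
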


\begin{proof}
We notice that the function $\rho \colon [0,L(u)]\to [0,+\infty]$ is well defined since~$\lambda$ is monotone increasing thanks to Theorem~\ref{t.2,5}, and hence invertible. As a consequence, also~$\omega\colon [0,L(u)] \to \U$ is well defined, continuous, with $\omega(0)= u^{0}$ and $\omega(L(u))= \overline{u}$. Moreover, by Theorem~\ref{t.2,5}~$\rho$ is Lipschitz continuous in~$[0,s]$ for every $s< L(u)$. Thanks to Theorem~\ref{t.2}, we have that $\omega \in W^{1,\infty}([0,s] ; H^{1}(\Om;\R^{2}))$ with $\| \omega' ( \sigma ) \|_{H^{1}} = 1$ for a.e.~$\sigma\in [0, s]$ and every~$s\in[0,L(u))$. Hence, we deduce that $\omega\in W^{1,\infty}([0,L(u)]; H^{1}(\Om;\R^{2}))$. Furthermore, by~$(b)$-$(d)$ of Theorem~\ref{t.2} we know that 
\begin{displaymath}
\F(t,u(\ell),z)=\F(t,u^0,z) - \int_{0}^{\ell} |\partial_{u}\F| ( t, u(l), z ) \| u' (l) \|_{H^{1}}\,\di l \qquad\text{for every $\ell \in [0,+\infty]$}\,.
\end{displaymath}
By the change of variable $l = \rho(\sigma)$ for~$\sigma \in [0,L(u)]$ we deduce~\eqref{e.enbal}. \qed 
\end{proof}

\begin{remark}\label{r.3}
In the notation of Corollary~\ref{c.gfu}, we notice that, as a consequence of Theorem~\ref{t.2,5}, $L(u)\leq \overline{C} \|u^{0}-\overline{u}\|_{H^{1}}$.
\end{remark}

We now prove a continuity property of the gradient flows w.r.t.~the data.

\begin{proposition}\label{p.continuitygradflow}
Let $(t_{m}, u^{0}_{m}, z_{m}) \in [0,T] \times \U \times \Z$ be such that $t_{m} \to t_{\infty}$ in~$[0,T]$, $u^{0}_{m} \to u^{0}_{\infty}$ in~$H^1(\Omega ; \R^2)$, and $z_{m}\rightharpoonup z_{\infty}$ weakly in~$H^{1}(\Om)$. Let $u_{m}, u_{\infty}\colon [0,+\infty)\to \U$ be the gradient flows computed in Theorem~\ref{t.2} with initial data $u_{m}(0)=u^{0}_{m}$ and $u_{\infty}(0)= u_{\infty}^{0}$ and parameters~$(t_{m},z_{m})$ and~$(t_{\infty},z_{\infty})$, respectively. 

Then,~$u_{m}$ converges strongly to~$u_{\infty}$ uniformly in $[0,+\infty)$, i.e.,~in $C( [0,+\infty);  H^{1}(\Om;\R^{2}))$. Moreover, if~$l_{m}\to +\infty$ as $m\to\infty$ and $\overline{u}_{\infty}:= \argmin\, \{ \F(t_{\infty}, u, z_{\infty} ):\, u\in\U \}$ then $u_{m}(l_{m}) \to \overline{u}_{\infty}$ in~$H^1(\Omega ; \R^2)$.
\end{proposition}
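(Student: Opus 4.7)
\textbf{Proof plan for Proposition~\ref{p.continuitygradflow}.}
The plan is to combine a Grönwall estimate (driven by the strong monotonicity of $\nabla_u \E(t,\cdot,z)$) with a uniform-in-$l$ convergence statement for the ``frozen'' gradients $\nabla_u \F(t_m,u_\infty(l),z_m)$ towards $\nabla_u \F(t_\infty,u_\infty(l),z_\infty)$ on the compact curve $\{u_\infty(l):l\in[0,+\infty]\}$. Set $v_m(l) \coloneq u_m(l) - u_\infty(l)$ and
\begin{equation*}
r_m(l) \coloneq \|\nabla_u \F(t_\infty,u_\infty(l),z_\infty) - \nabla_u \F(t_m,u_\infty(l),z_m)\|_{H^1}.
\end{equation*}
Writing $u_m'(l)-u_\infty'(l) = -\nabla_u\F(t_m,u_m,z_m)+\nabla_u\F(t_\infty,u_\infty,z_\infty)$ and adding/subtracting $\nabla_u\F(t_m,u_\infty,z_m)$, the strong monotonicity granted by Lemma~\ref{l.HMWw}$(a)$ and Korn's inequality (see \eqref{e.40.1}) yields, with a constant $c>0$ independent of $m$,
\begin{equation*}
\tfrac12\tfrac{\di}{\di l}\|v_m(l)\|_{H^1}^2 \leq -c\|v_m(l)\|_{H^1}^2 + \|v_m(l)\|_{H^1}\,r_m(l).
\end{equation*}
A standard Young/Grönwall argument then gives
\begin{equation*}
\|v_m(l)\|_{H^1}^2 \leq e^{-cl}\|u^0_m-u^0_\infty\|_{H^1}^2 + \tfrac{1}{c^2}\bigl(\sup_{s\geq 0} r_m(s)\bigr)^2.
\end{equation*}

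The main task is to show $\eta_m \coloneq \sup_{l\geq 0} r_m(l) \to 0$ as $m\to\infty$. By Theorem~\ref{t.2}$(d)$ the map $\tilde u_\infty\colon[0,+\infty]\to H^1(\Omega;\R^2)$ obtained by setting $\tilde u_\infty(+\infty)=\overline u_\infty$ is continuous on a compact, so its image $K$ is compact in $H^1(\Omega;\R^2)$. Define $\Phi_m(v) \coloneq \nabla_u\F(t_m,v,z_m)$, $\Phi_\infty(v)\coloneq \nabla_u\F(t_\infty,v,z_\infty)$. From Lemma~\ref{l.HMWw}$(b)$ and Korn's inequality (using that $0\leq z_m \leq 1$), one has $\|\Phi_m(v_1)-\Phi_m(v_2)\|_{H^1} \leq C\|v_1-v_2\|_{H^1}$ with $C$ independent of $m$, i.e.~the family $\{\Phi_m\}$ is equi-Lipschitz. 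For pointwise convergence $\Phi_m(v)\to\Phi_\infty(v)$ at fixed $v\in K$, decompose $\partial_\strain W(z,\strain) = 2h(z)(\mu\strain_d+\kappa\strain_v^+) - 2\kappa\strain_v^-$; Rellich's theorem gives $z_m\to z_\infty$ in $L^r(\Omega)$ for every $r<+\infty$, whence $h(z_m)\to h(z_\infty)$ in every $L^r(\Omega)$, while continuity of $g$ yields $g(t_m)\to g(t_\infty)$ in $W^{1,p}(\Omega;\R^2)$. By H\"older and the continuity of $\strain\mapsto\strain_v^\pm$ one concludes that $\partial_\strain W(z_m,\strain(v+g(t_m)))\to \partial_\strain W(z_\infty,\strain(v+g(t_\infty)))$ in $L^2(\Omega;\mathbb M^2)$, which is exactly $\Phi_m(v)\to\Phi_\infty(v)$ in $H^1(\Omega;\R^2)$. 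Pointwise convergence together with equi-Lipschitz continuity on the compact $K$ upgrades to uniform convergence (Arzel\`a--Ascoli-type argument): thus $\eta_m\to 0$.

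Combining the two steps with $\|u_m^0-u_\infty^0\|_{H^1}\to 0$ yields $\sup_{l\geq 0}\|u_m(l)-u_\infty(l)\|_{H^1}\to 0$, which is the uniform convergence $u_m\to u_\infty$ in $C([0,+\infty);H^1(\Omega;\R^2))$. Finally, if $l_m\to+\infty$,
\begin{equation*}
\|u_m(l_m)-\overline u_\infty\|_{H^1} \leq \|u_m(l_m)-u_\infty(l_m)\|_{H^1} + \|u_\infty(l_m)-\overline u_\infty\|_{H^1},
\end{equation*}
and both terms vanish in the limit, the first by the uniform convergence just established and the second by estimate \eqref{e.40} in Theorem~\ref{t.2}$(d)$. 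The delicate point is the \emph{uniform} convergence $\eta_m\to 0$: this is exactly where the compactness of the orbit $K$ (including its limit at $+\infty$) and the $m$-uniform Lipschitz bound on $\Phi_m$ are essential, whereas for convergence on any fixed bounded interval $[0,L]$ a plain continuity argument would suffice.
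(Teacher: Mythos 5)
Your argument is correct, but it follows a genuinely different route from the paper. The paper treats $A_m=\nabla_u\E(t_m,\cdot,z_m)$ as maximal monotone operators and invokes the abstract semigroup convergence theorem of Brezis ([Brezis\_73, Theorem~3.16]); the real work there is showing convergence of the resolvents $(\mathrm{I}+\lambda A_m)^{-1}w\to(\mathrm{I}+\lambda A_\infty)^{-1}w$, which is done variationally (a $\liminf$/$\limsup$ comparison of the penalized minimum problems, then strong convexity to upgrade weak to strong convergence), and since that theorem only gives uniform convergence on compact subsets of $[0,+\infty)$, the tail is handled separately via the uniform exponential decay \eqref{e.40} together with Proposition~\ref{p.6} (which gives $\overline{u}_m\to\overline{u}_\infty$). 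Your proof is instead a direct, quantitative Gr\"onwall estimate on $v_m=u_m-u_\infty$, powered by the $m$-uniform strong monotonicity \eqref{e.40.1}, with the whole difficulty shifted to the uniform-in-$l$ smallness of $r_m(l)$; you obtain that from compactness of the orbit $K=\{u_\infty(l):l\in[0,+\infty]\}$ (which already encodes the behavior at $l=+\infty$ through Theorem~\ref{t.2}$(d)$), equi-Lipschitz continuity of $\Phi_m$, and pointwise convergence $\Phi_m(v)\to\Phi_\infty(v)$. This buys you an elementary, self-contained proof valid on all of $[0,+\infty)$ in one stroke, and it even yields an explicit rate in terms of $\|u^0_m-u^0_\infty\|_{H^1}$ and $\eta_m$; the paper's route avoids the orbit-compactness argument by outsourcing the dynamics to Brezis's theorem, at the price of the resolvent analysis and the separate tail argument. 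Note that, exactly as in the paper's own proof (which uses Proposition~\ref{p.6}), your equi-Lipschitz bound tacitly requires $0\le z_m\le 1$ (or at least a uniform $L^\infty$ bound), which is the standing situation in which the proposition is applied.

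One step should be justified more carefully: in the pointwise convergence $\Phi_m(v)\to\Phi_\infty(v)$ you appeal to H\"older, but for a fixed $v\in K$ one only has $\strain(v)\in L^2$, so the term $\bigl(h(z_m)-h(z_\infty)\bigr)\bigl(\mu\strain_d(v+g(t_\infty))+\kappa\strain^+_v(v+g(t_\infty))\bigr)$ cannot be estimated by pairing $h(z_m)-h(z_\infty)$ in $L^r$, $r<\infty$, against an $L^2$ function ($L^\infty$ convergence of $h(z_m)$ is not available, and $v$ need not enjoy higher integrability along the flow). The statement is nevertheless true: split $\partial_\strain W(z_m,\strain(v+g(t_m)))-\partial_\strain W(z_\infty,\strain(v+g(t_\infty)))$ into the part where $z$ is frozen (controlled in $L^2$ by Lemma~\ref{l.HMWw}$(b)$ and $g(t_m)\to g(t_\infty)$ in $W^{1,p}$) and the part $\bigl(h(z_m)-h(z_\infty)\bigr)$ times a fixed $L^2$ field, which tends to $0$ in $L^2$ by dominated convergence, using $0\le h(z_m),h(z_\infty)\le \max_{[0,1]}h$ and a.e.\ convergence along subsequences (the same device used in Step~2 of the proof of Proposition~\ref{p.3}). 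With this patch the proposal is complete.
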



\begin{proof}
To prove the desired convergence we want to apply~\cite[Theorem~3.16]{Brezis_73}. In the notation of~\cite{Brezis_73}, we consider the operators~$A_{m} := \nabla_{u} \F ( t_{m}, \cdot, z_{m} ) = \nabla_{u}\E( t_{m}, \cdot, z_{m} )$, and $A_{\infty} :=  \nabla_{u} \F ( t_{\infty}, \cdot, z_{\infty} ) = \nabla_{u}\E( t_{\infty}, \cdot, z_\infty )$. In view of the hypotheses on~$h$ and~$W$, the operators~$A_{m}$ and~$A_{\infty}$ defined on the Hilbert space~$\U$ (endowed with the~$H^{1}$-norm) are maximal monotone. For~$\lambda>0$ and~$w\in \U$, let us denote with~$\varphi_{m}(\lambda,w)$ the solution of
\begin{equation}\label{e.gf2}
\min\,\{\tfrac{1}{2} \|\varphi \|^{2}_{H^{1}} + \lambda \E( t_{m}, \varphi, z_{m} ) - \langle w, \varphi \rangle:\, \varphi\in \U\}\,,
\end{equation}
where $\langle \cdot , \cdot \rangle$ is the usual duality pairing in~$\U$.
By strict convexity of~$\E$ in~$\U$,~$\varphi_{m}(\lambda, w)$ is well-defined, since the solution of the minimum problem~\eqref{e.gf2} is unique. Moreover,~$\varphi_{m}( \lambda, w )$ solves the equation
\begin{equation}\label{e.gf3}
\varphi_{m}( \lambda, w) + \lambda A_{m} \varphi_{m}(\lambda, w)= w \,,
\end{equation}
so that $\varphi_{m}(\lambda, w)=(\mathrm{I} + \lambda A_{m})^{-1} w$. In the same way, we can define~$\varphi_{\infty}(\lambda, w)$ as the solution of~\eqref{e.gf2} where we replace~$(t_{m},z_{m})$ with~$(t_{\infty},z_{\infty})$. Again, we have $\varphi_{\infty}(\lambda, w) = ( \mathrm{I} + \lambda A_{\infty})^{-1} w$.

To make use of~\cite[Theorem~3.16]{Brezis_73}, we have to show that for every~$\lambda>0$ and every~$w\in \U$ the function~$\varphi_{m}(\lambda, w)$ converges to~$\varphi_{\infty}(\lambda, w)$ in~$H^{1}(\Om;\R^{2})$. Using~\eqref{e.gf2} it is easy to see that the sequence~$\varphi_{m} (\lambda, w)$ is bounded in~$H^{1}(\Om;\R^{2})$, so that, up to a subsequence, we may assume that~$\varphi_{m}( \lambda, w ) \rightharpoonup \overline{\varphi}$ weakly in~$H^{1}(\Om;\R^{2})$ for some~$\overline{\varphi}\in \U$. We now show that $\overline{\varphi} = \varphi_{\infty}(\lambda, w )$. Indeed, by~\eqref{e.gf2} and by Lemma \ref{l.lscFE} for every $\varphi \in \U$ we have that
\begin{equation}\label{e.gf4}
\begin{split}
\tfrac{1}{2} \| \overline{\varphi} \|_{H^{1}}^{2} &+ \lambda \E(t_{\infty}, \overline{\varphi}, z_{\infty})  - \langle w,  \overline{\varphi}\rangle \\
&  \leq\liminf_{m\to\infty}\,\tfrac{1}{2} \|\varphi_{m} (\lambda, w) \|^{2}_{H^{1}} + \lambda \E( t_{m}, \varphi_{m} (\lambda, w), z_{m} ) - \langle w, \varphi_{m} (\lambda, w) \rangle\\
&\leq \limsup_{m\to\infty}\, \tfrac{1}{2} \|\varphi_{m} (\lambda, w) \|^{2}_{H^{1}} + \lambda \E( t_{m}, \varphi_{m} (\lambda, w), z_{m} ) - \langle w, \varphi_{m} (\lambda, w) \rangle\\
& \leq \limsup_{m\to\infty}\, \tfrac{1}{2} \|\varphi \|^{2}_{H^{1}} + \lambda \E( t_{m}, \varphi, z_{m} ) - \langle w, \varphi \rangle  =\tfrac{1}{2} \|\varphi \|^{2}_{H^{1}} + \lambda \E( t_{\infty}, \varphi, z_{\infty} ) - \langle w, \varphi \rangle\,,
\end{split}
\end{equation}
which implies that $\overline{\varphi}= \varphi_{\infty}(\lambda, w)$ by uniqueness of minimizer. Repeating the argument of~\eqref{e.gf4} with $\varphi= \overline{\varphi}$, we also deduce that
\begin{equation}\label{e.gf5}
\begin{split}
\lim_{m\to\infty}\,  \tfrac{1}{2} \|\varphi_{m} (\lambda, w) \|^{2}_{H^{1}} & + \lambda \E( t_{m}, \varphi_{m} (\lambda, w), z_{m} ) - \langle w, \varphi_{m} (\lambda, w) \rangle \\
& =  \tfrac{1}{2} \|\varphi_{\infty} (\lambda, w) \|^{2}_{H^{1}} + \lambda \E( t_{\infty}, \varphi_{\infty} (\lambda, w), z_{\infty} ) - \langle w, \varphi_{\infty} (\lambda, w) \rangle \,.
\end{split}
\end{equation}
As a consequene of~$(a)$ in Lemma~\ref{l.HMWw}, 
 there exists a constant~$\beta=\beta(\lambda)>0$ such that for every~$t,s \in[0,T]$, every~$z\in\Z$, and every~$u_{1},u_{2}\in\U$ we have
 \begin{displaymath}
 \begin{split}
  \lambda \E(t, u_{1}, z )  - \lambda \E(s , u_{2}, z ) & \geq \lambda \left\langle \nabla_{u} \E(s , u_{2}, z) , u_{1} +g(t)- u_{2} - g(s) \right\rangle \\
& \qquad + \beta \|\strain (u_{1} + g(t)) - \strain( u_{2} + g(s)) \|_{L^2}^{2}\,.
 \end{split}
 \end{displaymath}
 Therefore, for every~$m$ we can write
\begin{equation}\label{e.gf6}
\begin{split}
\tfrac{1}{2} \| \varphi_{\infty}&( \lambda, w ) \|_{H^{1}}^{2} +  \lambda \E(t_{\infty}, \varphi_{\infty} (\lambda, w), z_{m} )  \ - \ \tfrac{1}{2} \| \varphi_{m}( \lambda, w ) \|_{H^{1}}^{2} - \lambda \E(t_{m}, \varphi_{m} (\lambda, w), z_{m} ) \\
&  \geq \langle \varphi_{m}( \lambda, w), \varphi_{\infty}(\lambda, w) - \varphi_{m}(\lambda, w)\rangle + \lambda \left\langle \nabla_{u} \E(t_{m}, \varphi_{m}(\lambda, w ), z_{m}) ,\varphi_{\infty} ( \lambda, w) - \varphi_{m} (\lambda, w) \right\rangle \\
& \qquad + \beta \|\strain (\varphi_{m}(\lambda, w) + g(t_{m})) - \strain( \varphi_{\infty}(\lambda, w) + g(t_{\infty})) \|_{L^2}^{2} \\
&=  \beta \|\strain (\varphi_{m}(\lambda, w) + g(t_{m})) - \strain( \varphi_{\infty}(\lambda, w) + g(t_{\infty})) \|_{L^2}^{2} \,,
\end{split}
\end{equation}
where, in the last inequality, we have used the minimality of~$\varphi_{m}(\lambda, w)$. We now pass to the limit in~\eqref{e.gf6} as $m\to\infty$. In view of~\eqref{e.gf5} and of the convergences of~$t_{m}$ and~$z_{m}$, the left-hand side of~\eqref{e.gf6} tends to~$0$, so that $\strain (\varphi_{m}(\lambda, w) + g(t_{m}))$ converges to $ \strain( \varphi_{\infty}(\lambda, w) + g(t_{\infty}))$ in~$L^{2}(\Om;\mathbb{M}^{2}_{s})$. By Korn inequality, we get that $\varphi_{m}(\lambda, w) \to \varphi_{\infty} (\lambda, w)$ in~$H^1(\Omega; \R^2)$.

Therefore, we are in a position to apply~\cite[Theorem~3.16]{Brezis_73}, from which we deduce the convergence of~$u_{m}$ to~$u_{\infty}$ uniformly in~$H^1(\Omega; \R^2)$ on compact subsets of~$[0,+\infty)$. To show the convergence in $L^{\infty}([0,+\infty); H^{1}(\Om;\R^{2}))$ it remains to control what happens in a neighborhood of~$\infty$. Let us fix~$\delta>0$. By~\eqref{e.40}, for every $l\in[0,+\infty)$ and for every $m\in\mathbb{N}\cup\{\infty\}$ we have
\begin{equation}\label{e.11.38}
 \| u_{m}(l) - \overline{u}_{m} \|_{H^{1}} \leq e^{-c l} \|u^{0}_{m} - \overline{u}_{m} \|_{H^{1}}\,,
\end{equation}
where the constant~$c>0$ does not depend on~$m$. By hypothesis $u^{0}_{m} \to u^{0}_{\infty}$, while applying Proposition~\ref{p.6} we get that $\overline{u}_{m} \to \overline{u}_{\infty}$ in~$H^1(\Omega; \R^2)$ as $m\to\infty$, which implies that $u^{0}_{m} - \overline{u}_{m}$ is bounded in~$H^1(\Omega; \R^2)$. Hence, by~\eqref{e.11.38} there exists $\ell_{\delta}\in[0,+\infty)$ such that $\| u_{m}(l) - \overline{u}_{m} \|_{H^{1}} \leq \tfrac{\delta}{4}$ for every $l \geq \ell_{\delta}$ and every~$m\in\mathbb{N}\cup\{\infty\}$. By triangle inequality, for every $l \geq \ell_{\delta}$ we have
\begin{displaymath}
\| u_{m}(l) - u_{\infty}(l) \|_{H^{1}} \leq \|u_{m} (l) - \overline{u}_{m} \|_{H^{1}} + \|\overline{u}_{m} - \overline{u}_{\infty} \|_{H^{1}} + \|\overline{u}_{\infty} - u_{\infty} (l) \|_{H^{1}} \leq \tfrac{\delta}{2} + \|\overline{u}_{m} - \overline{u}_{\infty} \|_{H^{1}}\,,
\end{displaymath}
from which we deduce that there exists~$m_{\delta}\in\mathbb{N}$ such that
\begin{displaymath}
\| u_{m}(l) - u_{\infty}(l) \|_{H^{1}}\leq \delta \qquad \text{for every~$m\geq m_{\delta}$ and every~$l\geq \ell_{\delta}$}\,.
\end{displaymath}
Combining the previous estimate with the uniform convergence of~$u_{m}$ to~$u_{\infty}$ on compact subsets of~$[0,+\infty)$ we conclude that $u_{m}\to u_{\infty}$ uniformly in $[0,+\infty)$.

Finally, the last part of the thesis follows from~\eqref{e.11.38} and from the convergence of~$\overline{u}_{m}$ to~$\overline{u}_{\infty}$ in~$H^{1}(\Om;\R^{2})$. \qed
\end{proof}

As a corollary of Proposition~\ref{p.continuitygradflow}, we deduce a convergence result for the reparametrized functions defined in Corollary~\ref{c.gfu}.

\begin{corollary}\label{c.5}
Let $(t_{m}, u^{0}_{m}, z_{m})$, $(t_{\infty}, u^{0}_{\infty}, z_{\infty})$,~$u_{m}$,~$u_{\infty}$, and $\overline{u}_{\infty}$ be as in Proposition~\ref{p.continuitygradflow}. Let $L(u_{m})$, $\omega_{m}$, and~$\rho_{m}$ be as in Corollary~\ref{c.gfu}. Then, for every $s_{m}\in [0,L(u_{m})]$ such that $\rho_{m}(s_{m}) \to \overline{\rho} \in[0,+\infty]$, we have that $\omega_{m}(s_{m})\to u_{\infty}(\overline{\rho})$ in~$H^1(\Omega; \R^2)$, where we intend $u_{\infty}(+\infty) = \overline{u}_{\infty}$.
\end{corollary}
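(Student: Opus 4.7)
The plan is to split the argument according to whether $\overline{\rho}$ is finite or infinite, and in each case reduce the statement directly to the convergence results already established in Proposition~\ref{p.continuitygradflow}. Recall that by construction $\omega_m(s_m) = u_m(\rho_m(s_m))$, so the question is really about the behaviour of $u_m$ evaluated at the sequence of times $l_m \coloneq \rho_m(s_m)$.

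First I would treat the case $\overline{\rho} \in [0,+\infty)$. By Proposition~\ref{p.continuitygradflow}, $u_m \to u_\infty$ in $C([0,+\infty); H^1(\Om;\R^2))$, so in particular uniformly on bounded subsets of $[0,+\infty)$; since $\rho_m(s_m) \to \overline{\rho}$, the values $\rho_m(s_m)$ lie eventually in a compact interval $[0,R]$, hence
\begin{displaymath}
\|u_m(\rho_m(s_m)) - u_\infty(\rho_m(s_m))\|_{H^1} \le \sup_{l \in [0,R]} \|u_m(l) - u_\infty(l)\|_{H^1} \to 0 .
\end{displaymath}
Combining this with the continuity of $u_\infty \colon [0,+\infty) \to H^1(\Om;\R^2)$ (which is even Lipschitz by Theorem~\ref{t.2}$(a)$), we get $u_\infty(\rho_m(s_m)) \to u_\infty(\overline{\rho})$, and the triangle inequality yields $\omega_m(s_m) \to u_\infty(\overline{\rho})$ in $H^1$.

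For the case $\overline{\rho} = +\infty$, the sequence $l_m = \rho_m(s_m)$ diverges, so we are exactly in the setting of the last statement of Proposition~\ref{p.continuitygradflow}, which gives $u_m(l_m) \to \overline{u}_\infty$ in $H^1(\Om;\R^2)$. By the convention $u_\infty(+\infty) = \overline{u}_\infty$ this is the desired conclusion, and the proof is complete.

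I do not foresee any real obstacle here: the corollary is essentially a diagonal statement combining uniform convergence of the flows on $[0,+\infty)$ with the exponential decay to the common limit $\overline{u}_\infty$, both of which are built into Proposition~\ref{p.continuitygradflow}. The only mild point of care is to handle the dichotomy $\overline{\rho}$ finite vs.\ infinite uniformly; the split above does this cleanly, and the Lipschitz regularity of $u_\infty$ ensures continuity of the evaluation map at $\overline{\rho}$ in the finite case.
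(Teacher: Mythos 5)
Your proposal is correct and follows essentially the same route as the paper: set $\ell_m := \rho_m(s_m)$, observe $\omega_m(s_m) = u_m(\ell_m)$, and invoke Proposition~\ref{p.continuitygradflow} (uniform convergence of $u_m$ on $[0,+\infty)$ plus continuity of $u_\infty$ when $\overline{\rho}<+\infty$, and the last statement of that proposition when $\overline{\rho}=+\infty$). The paper's proof is just a terser version of your case split.
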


\begin{proof}
Let~$\rho_{m}$ and $\rho_{\infty}$ be as in Corollary~\ref{c.gfu}. For every~$m$ let $\ell_{m}:=\rho_{m}(s_{m})$, so that $\omega_{m}(s_{m})=u_{m}\circ \rho_{m} (s_{m}) = u_{m}(\ell_{m})$. By assumption $\ell_{m}\to \overline{\rho}$. Hence, the thesis follows by applying Proposition~\ref{p.continuitygradflow}. \qed
\end{proof}

\subsection[A unilateral $L^2$-grandient flow for the phase field]{A unilateral \boldmath{$L^2$}-grandient flow for the phase field}


A result similar to Theorem~\ref{t.2} holds also for the phase field~$z$ when we consider the time~$t\in[0,T]$ and the displacement $u\in\U$ as fixed parameters. In this case, however, we will need a unilateral gradient flow in the topology of~$L^{2}(\Om)$, mainly to take care of the irreversibility condition imposed on the phase field. For this reason, the following result, similar in nature to Theorem~\ref{t.2}, needs to be proven.

\begin{theorem}\label{t.3}
Let $\tilde{p}\in(2,+\infty)$ be as in Lemma~\ref{l.HMWTh}, and let $(t, u, z^{0}) \in [0,T] \times W^{1,\tilde{p}}(\Om;\R^{2}) \times \Z$ with $0\leq z^0\leq 1$. Then, there exists an evolution $z\colon[0,+\infty)\to\Z$ satisfying the following conditions:
\begin{itemize}

\item[$(a)$] $z\in L^{\infty}([0,+\infty);H^{1}(\Om))$ and $z'\in L^{1}([0,+\infty); L^{2}(\Om))\cap L^{2}([0,+\infty); L^{2}(\Om))$;

\item[$(b)$] $z(0) = z^0$, $z$ is non-increasing, $0 \le z \le 1$,  $\|z'(l)\|_{L^2}=|\partial_{z}^{-}\F|(t,u,z(l))$ for a.e.~$l\in[0,+\infty)$; 

\item[$(c)$] for every $\ell \in[0,+\infty)$ it holds
\begin{equation}\label{e.42}
\F(t,u,z(\ell))=\F(t,u,z^0)-\tfrac{1}{2}\int_{0}^{\ell} |\partial_{z}^{-}\F|^{2}(t,u,z(l))+\|z'(l)\|_{L^2}^{2}\,\di l\,;
\end{equation}

\item[$(d)$] $z(l)$ converges to $\overline{z}$ strongly in $H^1 (\Omega)$ as $l \to+\infty$, where $\overline{z}=\argmin\,\{\F(t,u,z):\, z\in\Z,\, z\leq z^0\}$. Moreover,
\begin{align}
\F(t,u,\overline{z})&=\F(t,u,z^0)-\tfrac{1}{2}\int_{0}^{+\infty}|\partial_{z}^{-}\F|^{2}(t,u,z(l))+\|z'(l)\|_{L^2}^{2}\,\di l \,; \label{e.44}
\end{align}

\item[$(e)$] there exists $\overline{\ell}\in [0, +\infty]$ such that $\| z'(\ell) \|_{L^2} \neq 0$ for a.e.~$\ell <  \overline{\ell}$ and $\| z'(\ell)\|_{L^2}=0$ for a.e.~$\ell \geq \overline{\ell}$;

\item[$(f)$] there exists a constant $\overline{C}>0$ such that
\begin{equation}\label{e.lengthz}
\int_{0}^{+\infty}\|z'(l)\|_{L^2}\,\di l\leq \overline{C}(1+\|u+g(t)\|_{W^{1,\tilde{p}}}) \|z^0-\overline{z}\|_{H^{1}}\,.
\end{equation}
\end{itemize}
\end{theorem}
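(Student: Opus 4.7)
The plan is to construct the flow by a De~Giorgi minimizing movements scheme adapted to the monotonicity constraint, and to derive the length bound~(f) by a \L ojasiewicz-type argument in the spirit of Theorem~\ref{t.2,5}. Direct use of Brezis' semigroup theory on $\F(t,u,\cdot)+\chi_{\{v\le z^{0}\}}$ would yield an $L^{2}$-flow, but it would not automatically enforce pointwise monotonicity of $l\mapsto z(l)$, which is the reason for preferring a variational discretization. Fix $\tau>0$, set $z^{\tau}_{0}\coloneq z^{0}$, and inductively define
\[
z^{\tau}_{n+1}\coloneq \argmin\bigl\{\F(t,u,v) + \tfrac{1}{2\tau}\|v-z^{\tau}_{n}\|_{L^{2}}^{2}\,:\,v\in\Z,\; v\le z^{\tau}_{n}\bigr\}.
\]
Existence and uniqueness follow from the $H^{1}$-strong convexity of $\F(t,u,\cdot)$ (Remark~\ref{r.sepstrconv}); truncation to $[0,\infty)$ does not increase $\F$ (because $h\ge h(0)$ and $f$ attains its minimum at~$1$), hence $0\le z^{\tau}_{n+1}\le z^{\tau}_{n}\le 1$. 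Testing with $v=z^{\tau}_{n}$ gives the discrete energy inequality, while the constrained Euler--Lagrange inequality (test $\psi\le 0$) combined with~\eqref{e.slope4} produces the discrete slope bound $|\partial_{z}^{-}\F|(t,u,z^{\tau}_{n+1})\le\tau^{-1}\|z^{\tau}_{n+1}-z^{\tau}_{n}\|_{L^{2}}$.

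Summing yields a uniform $L^{2}((0,+\infty);L^{2})$ bound on the time-derivative of the piecewise-affine interpolant $z^{\tau}$ and a uniform $L^{\infty}$-in-$l$ bound in $H^{1}$. By an Ascoli argument (using the ensuing $1/2$-H\"older continuity in $L^{2}$) together with compact Sobolev embeddings, up to a subsequence $z^{\tau}$ converges to a non-increasing curve $z\colon[0,+\infty)\to\Z$, strongly in $C_{\mathrm{loc}}([0,+\infty);L^{2})$ and pointwise weakly in $H^{1}$. Lower semicontinuity of the slope (Lemma~\ref{l.2}) and of $\F$ (Lemma~\ref{l.lscFE}) pass to the limit in the discrete EDI and give the ``$\le$'' part of~(c); the matching lower bound follows from Young's inequality together with the metric chain rule $-\tfrac{d}{dl}\F(t,u,z(l))\le|\partial_{z}^{-}\F|(t,u,z(l))\,\|z'(l)\|_{L^{2}}$, which holds because $z'(l)\le 0$ a.e.~and $\F(t,u,\cdot)$ is convex. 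This simultaneously proves (a), (c) and the identification $\|z'(l)\|_{L^{2}}=|\partial_{z}^{-}\F|(t,u,z(l))$ in~(b). Strong $H^{1}$-convergence $z(l)\to\overline{z}$ as $l\to+\infty$ follows from lower semicontinuity and uniqueness of the constrained minimizer, while (e) is immediate: vanishing of the slope at some $\ell$ forces $z(\ell)=\overline{z}$ by strong convexity, and then $z$ remains stationary.

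The decisive point for~(f) is the first-order identity $\partial_{z}\F(t,u,\overline{z})[z^{0}-\overline{z}]=0$: since the line $\epsilon\mapsto \overline{z}+\epsilon(z^{0}-\overline{z})=(1-\epsilon)\overline{z}+\epsilon z^{0}$ is admissible for all $\epsilon$ in a neighborhood of~$0$, $\overline{z}$ is a local minimizer along this direction. Writing $\F(t,u,z^{0})-\F(t,u,\overline{z})=\int_{0}^{1}[\partial_{z}\F(t,u,z_{s})-\partial_{z}\F(t,u,\overline{z})][z^{0}-\overline{z}]\,ds$ with $z_{s}=\overline{z}+s(z^{0}-\overline{z})$ and using the $H^{1}$-to-$H^{-1}$ Lipschitz estimate
\[
\bigl|[\partial_{z}\F(t,u,z_{1})-\partial_{z}\F(t,u,z_{2})][\psi]\bigr|\le C\bigl(1+\|u+g(t)\|_{W^{1,\tilde p}}^{2}\bigr)\|z_{1}-z_{2}\|_{H^{1}}\|\psi\|_{H^{1}},
\]
obtained by H\"older with $\Psi_{+}(\strain(u+g(t)))\in L^{\tilde p/2}$ and the 2D Sobolev embedding $H^{1}\hookrightarrow L^{r}$ for every $r<+\infty$, produces the \emph{quadratic} energy bound $\F(t,u,z^{0})-\F(t,u,\overline{z})\le C(1+\|u+g(t)\|_{W^{1,\tilde p}}^{2})\|z^{0}-\overline{z}\|_{H^{1}}^{2}$. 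On the other hand, exploiting $\overline{z}\le z(l)$, the variational inequality at $\overline{z}$, Remark~\ref{r.sepstrconv} and~\eqref{e.slope4} give
\[
C\|z(l)-\overline{z}\|_{H^{1}}^{2}\le \partial_{z}\F(t,u,z(l))[z(l)-\overline{z}]\le|\partial_{z}^{-}\F|(t,u,z(l))\,\|z(l)-\overline{z}\|_{L^{2}},
\]
whence the \L ojasiewicz inequality $C^{1/2}(\F(t,u,z(l))-\F(t,u,\overline{z}))^{1/2}\le|\partial_{z}^{-}\F|(t,u,z(l))=\|z'(l)\|_{L^{2}}$. Combining with $-\tfrac{d}{dl}\F=\|z'\|_{L^{2}}^{2}$ gives $-2\tfrac{d}{dl}(\F-\F_{\overline{z}})^{1/2}\ge C^{1/2}\|z'(l)\|_{L^{2}}$, and integration together with $(1+x^{2})^{1/2}\le 1+x$ yields exactly~\eqref{e.lengthz}. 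The main technical obstacle is the identification $\|z'(l)\|_{L^{2}}=|\partial_{z}^{-}\F|(t,u,z(l))$: since the flow is parametrized in the \emph{weak} $L^{2}$ topology and the slope is \emph{unilateral} (test $\psi\le 0$), Lemma~\ref{l.2} must be applied carefully along non-increasing trajectories, and the metric chain rule invoked above has to be justified without the full $H^{1}$-regularity available in Theorem~\ref{t.2}.
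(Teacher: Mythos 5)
Your overall strategy coincides with the paper's: a constrained minimizing-movement scheme, the discrete slope estimate, lower semicontinuity plus a convexity argument for the energy identity (your ``metric chain rule'' obtained by testing convexity with the non-positive increment and using \eqref{e.slope4} is essentially the paper's Riemann-sum step, and is fixable along those lines), and a \L ojasiewicz argument for the length bound. However, there is one genuine gap: you never prove the comparison property $z(l)\ge \overline{z}$ along the flow, and your argument uses it in three places. First, in (f) you write ``exploiting $\overline z\le z(l)$'' to put $\psi=\overline z-z(l)\le 0$ into the unilateral slope; without that sign information the estimate $\F(t,u,z(l))-\F(t,u,\overline z)\le |\partial_z^-\F|(t,u,z(l))\,\|z(l)-\overline z\|_{L^2}$ and hence the \L ojasiewicz inequality are unavailable. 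Second, the identification of the long-time limit with $\overline z$ and your claim for (e) that ``vanishing of the slope forces $z(\ell)=\overline z$ by strong convexity'' are false without it: vanishing of the \emph{unilateral} slope only says that $z(\ell)$ minimizes $\F(t,u,\cdot)$ over $\{z\le z(\ell)\}$, and a unilateral critical point below $z^0$ need not be $\overline z$ even for a strongly convex functional. A two-dimensional model shows this: for $F(x,y)=(x-1)^2+(y-1)^2$, $z^0=(0,2)$, the constrained minimizer is $(0,1)$, yet every point with $x\le 1$, $y\le 1$ (e.g.\ $(0,\tfrac12)\le z^0$) has zero unilateral slope. Uniqueness of the constrained minimizer only helps once you know $z(\ell)\ge\overline z$, because then $\overline z$ is admissible for the problem at level $z(\ell)$ and the two energies can be compared.

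The paper supplies exactly this missing ingredient at the discrete level: by induction it shows $z^{k}_{i}\ge\overline z$ for the minimizing-movement iterates, comparing the candidate minimizer with $\hat z=\max\{z^k_{i+1},\overline z\}$ and $\check z=\min\{z^k_{i+1},\overline z\}$ and using minimality of $\overline z$ on $\{z\le z^0\}$ together with uniqueness of the incremental minimizer (the same truncation device you use to get $z\ge 0$, but run against $\overline z$ rather than against the constant $0$). This comparison passes to the limit curve, gives $\widetilde z\ge\overline z$ for the $\omega$-limit, and then (d), (e) and the slope estimates in (f) follow as you intend. So your proof becomes complete once you add this discrete comparison lemma; as written, (d), (e) and (f) all rest on an unproved monotone barrier.
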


\begin{proof}
We set $\overline{z}=\argmin\,\{\F(t,u,z):\,z\in\Z,\,z\leq z^0\}$. 
In order to construct a gradient flow $l \mapsto z(l)$ as in the statement of the theorem, we proceed by time-discretization. For $k \in \mathbb{N}\setminus\{0\}$, and every $i\in\mathbb{N}$ we set $l^{k}_{i}\coloneq i / k$ and we solve iteratively the minimum problem
\begin{equation} \label{e.1306}
\min\,\{\F(t,u,z)+\tfrac{k}{2}\|z-z^{k}_{i}\|_{L^2}^{2}:\,z\in \Z,\, z\leq z^{k}_{i}\}\,,
\end{equation}
where $z^{k}_{0}\coloneq z^0$. First, let us prove that $z^{k}_{i}\geq\overline{z}$ for every $k,i$. We proceed by induction w.r.t.~$i$. A similar proof is contained in~\cite{NegriKimura}. By definition $z^0 \ge \overline{z}$. Let $z^k_{i} \ge \overline{z}$. Let us introduce the sets $\Omega^+ = \{ z^k_{i+1} \ge \overline{z} \}$, $\Omega^- = \{  z^k_{i+1} < \overline{z} \}$, 
and the corresponding energies
$$
	\F_{|\Omega^\pm} ( t, u , z ) = \int_{\Omega^\pm} W\big(z,\strain(u+g(t))\big)\,\di x   +   \int_{\Omega^\pm} | \nabla z |^2 + f(z) \, \di x . \\
$$
Let 
$$	\hat{z} := \max \{   z^k_{i+1}  , \overline{z} \}	= \begin{cases}   \overline{z}   & \text{ in $\Omega^-$} , \\
 z^k_{i+1} & \text{ in $\Omega^+$,} \end{cases} 
 \qquad 
 \check{z} := \min \{   z^k_{i+1}  , \overline{z} \}	= \begin{cases}  z^k_{i+1}     & \text{ in $\Omega^-$} , \\
  \overline{z} & \text{ in $\Omega^+$.} \end{cases} 
$$
By minimality of $\overline{z}$ we can write 
$$
  \F ( t , u , \check{z} ) = \F_{|\Omega^+} ( t , u , \overline{z} )  + \F_{|\Omega^-} ( t , u , z^k_{i+1} ) \ge \F ( t, u , \overline{z}) = \F_{|\Omega^+} ( t, u , \overline{z}) + \F_{|\Omega^-} ( t, u , \overline{z}) \, ,
$$
from which we deduce that  $\F_{|\Omega^-} ( t , u , z^k_{i+1} ) \ge \F_{|\Omega^-} ( t, u , \overline{z})$. Since $z^k_{i+1} < \overline{z} \le z^k_i$ in the set $\Omega^-$ we can write
\begin{align*}
	 \F ( t , u , \hat{z} ) + \tfrac{k}{2}  \| \hat{z} - z^k_i \|^2_{L^{2}} & =  \F_{|\Omega^+} ( t , u , z^k_{i+1} ) + \F_{|\Omega^-} ( t , u , \overline{z} ) + \tfrac{k}{2}  \| \hat{z} - z^k_i \|^2_{L^2} \\
	& \le \F ( t , u , z^k_{i+1} )  + \tfrac{k}{2}  \| z^k_{i+1} - z^k_i \|^2_{L^2} .
\end{align*}
Hence $\hat{z}$ is the minimizer of \eqref{e.1306}. By uniqueness it implies that $z^k_{i+1} = \hat{z} \ge \overline{z}$. 

Defining the usual piecewise affine interpolant $z^k$, we get a sequence $z^k$ bounded in $H^{1}_{\mathrm{loc}}([0,+\infty),L^{2}(\Om))$ and in $L^{\infty}([0,+\infty);H^1(\Omega))$ with $z^k (l) \ge \overline{z}$ for every $l \in [0,+\infty)$. Passing to the limit (up to subsequences) we identify  a limit function $z \in H^{1}_{\mathrm{loc}}([0,+\infty);L^{2}(\Om))\cap L^{\infty}([0,+\infty); H^{1}(\Om))$, satisfying  $z (l) \ge \overline{z}$ for every $l \in [0,+\infty)$ and 
\begin{equation}\label{e.60}
\F(t,u,z(\ell))\leq \F(t,u,z^0)-\tfrac{1}{2}\int_{0}^{\ell}|\partial_{z}^{-}\F|^{2}(t,u,z(l))+\|z'(l)\|_{L^2}^{2}\,\di l 
\end{equation}
for every $\ell \in[0,+\infty)$. With the usual Riemann sum argument we can show that in~\eqref{e.60} the equality holds, see e.g.~\cite{Negri_ACV}, we deduce that
\begin{displaymath}
\F(t,u,z(\ell))\geq \F(t,u,z^0)-\tfrac{1}{2}\int_{0}^{\ell}|\partial_{z}^{-} \F | ( t, u, z(l) ) \|z'(l)\|_{L^2} \,\di l \,,
\end{displaymath}
which implies, by Young inequality, the energy equality in~\eqref{e.42} and the following identities, valid for a.e.~$l\in[0,+\infty)$:
\begin{equation}\label{e.66}
\|z'(l)\|_{L^2}=|\partial_{z}^{-}\F|(t,u,z(l)) \qquad \text{and} \qquad \frac{\di}{\di l}\F(t,u,z(l))=-|\partial_{z}^{-}\F|(t,u,z(l)) \|z'(l)\|_{L^2}\,.
\end{equation}

Since $l \mapsto z(l)$ is decreasing, there exists a limit $\widetilde{z}\in\Z$, as $l \to +\infty$, weakly in $H^{1}(\Om)$ and strongly in~$L^{2}(\Om)$. In particular, $\widetilde{z}\geq\overline{z}$; we want to show that equality holds. To this aim, passing to the liminf as $l \to+\infty$ in~\eqref{e.42} we easily obtain that
\begin{equation}\label{e.6.16}
\F(t,u,\widetilde{z})\leq\F(t,u,z^0)-\tfrac{1}{2}\int_{0}^{+\infty}|\partial_{z}^{-}\F|^{2}(t,u,z(l))+\|z'(l)\|_{L^2}^{2}\,\di l\,.
\end{equation}
Coupling~\eqref{e.66} with~\eqref{e.6.16} we get that $z'\in L^{2}([0,+\infty); L^{2}(\Om))$. Moreover, being~$\F\geq0$, from~\eqref{e.6.16} we obtain that
\begin{equation}\label{e.62}
\liminf_{l \to+\infty}\,|\partial^{-}_{z}\F|(t,u,z(l))=0\,.
\end{equation}
By Lemma~\ref{l.2} we have that $|\partial_{z}^{-}\F|(t,u,\widetilde{z})=0$, that is,~$\widetilde{z}$ is a solution of
\begin{equation}\label{e.61}
\min\,\{\F(t,u,z):\,z\in\Z, \, z\leq\widetilde{z}\}\,.
\end{equation}
Since $\overline{z}\leq\widetilde{z}$, by uniqueness of solution of~\eqref{e.61} we get that $\overline{z}=\widetilde{z}$.

Now, we show that $z(l)\to\overline{z}$ strongly in~$H^1(\Omega)$. Indeed, for every $l \in[0,+\infty)$ we have, by convexity of $z\mapsto\F(t,u,z)$,
\begin{equation}\label{e.64}
0 \le \F(t,u,z(l))-\F(t,u,\overline{z})\leq -\partial_{z}\F(t,u,z(l))[\overline{z}-z(l)]\leq|\partial_{z}^{-}\F|(t,u,z(l))\|z(l)-\overline{z}\|_{L^2}\,,
\end{equation}
where, in the last inequality, we have used the characterization~\eqref{e.slope2} of the slope w.r.t.~$z$. By~\eqref{e.62},  we know that along a suitable subsequence $l_{j}\to+\infty$ we have $|\partial_{z}^{-}\F|(t,u,z(l_{j}))\to0$, so that $\F(t,u,z(l_{j}))\to\F(t,u,\overline{z})$. By monotonicity of $l\mapsto \F(t,u,z(l))$, we therefore get that $\F(t,u,z(l)) \to \F(t,u,\overline{z})$ as $l\to+\infty$. Hence, we deduce that $\| \nabla z(l) \|_{L^2} \to \| \nabla \overline{z} \|_{L^2}$, which in turn implies the convergence of~$z(l)$ to~$\overline{z}$ in~$H^1(\Omega)$. 

In order to prove~$(e)$, we define $\overline{\ell}:=\inf\,\{l \geq 0:\, \F(t,u,z(l))= \F(t,u, \overline{z})\}$. If $\overline{\ell} < +\infty$ then, being $\overline{z}$ the unique minimizer of $\{\F(t,u,z):\, z\in\Z,\, z\leq z^0\}$, we have $z(l) = \overline{z}$ for every $l \ge \ell$. 
In general, for a.e.~$l < \overline{\ell}$, we claim that $|\partial_{z}^{-} \F|(t,u, z(l))\neq 0$. By contradiction, if $|\partial_{z}^{-} \F|(t,u, z(l)) = 0$, then $z(l) = \argmin\,\{\F(t,u,z):\, z\in\Z,\, z\leq z(l)\}$. Since $\overline{z}\leq z(l)$, we would get that $z(l)=\overline{z}$, which contradicts the assumption $l< \overline{\ell}$. Therefore, $|\partial_{z}^{-}\F| (t, u, z(l)) \neq0$ for a.e.~$l<\overline{\ell}$. This implies, together with~\eqref{e.66}, that $\| z'(l) \|_{L^2}\neq 0$ for a.e.~$l<\overline{\ell}$.

The proof of property~$(f)$ is similar to the proof of~\eqref{e.lengthu} in Theorem~\ref{t.2,5}, but we have to take care of the monotonicity of~$l\mapsto z(l)$ and of the different norm of the gradient flow. By strong convexity, see~\eqref{e.strconv-z}, there exists a positive constant~$c$ independent of~$z$,~$u$, and~$t$, such that
\begin{displaymath}
\begin{split}
c\| \overline{z} - z(l) \|_{H^{1}}^{2}&\leq (\partial_{z}\F(t,u,\overline{z})-\partial_{z}\F(t,u,z(l)))[\overline{z}-z(l)] \le -\partial_{z}\F(t,u,z(l))[\overline{z}-z(l)]\\
&\leq|\partial_{z}^{-}\F|(t,u,z(l))\| \overline{z} - z(l) \|_{L^2} \leq|\partial_{z}^{-}\F|(t,u,z(l))\| \overline{z} - z(l) \|_{H^{1}}\,,
\end{split}
\end{displaymath}
which implies 
\begin{equation}\label{e.63}
\| \overline{z} - z(l) \|_{H^{1}}\leq C|\partial_{z}^{-}\F|(t,u,z(l))
\end{equation}
for some positive constant~$C$. Combining~\eqref{e.63} with~\eqref{e.64} we get
\begin{equation}\label{e.65}
(\F(t,u,z(l))-\F(t,u,\overline{z}))^{1/2}\leq C |\partial_{z}^{-}\F|(t,u,z(l))\,.
\end{equation}

Exploiting~\eqref{e.65}, we can now perform a \L ojasiewicz argument: by~\eqref{e.66},~\eqref{e.65}, and by the monotonicity and absolute continuity of $l\mapsto \F(t,u, z(l))$, for a.e.~$l \in[0, \overline{\ell})$ we have
\begin{equation}\label{e.67}
\begin{split}
-2\frac{\di}{\di l}\big( \F(t,u,z(l)) - \F(t,u,\overline{z})\big)^{\frac12} & =\big( \F(t,u,z(l)) - \F(t,u,\overline{z})\big)^{-\frac12} |\partial_{z}^{-}\F|(t,u,z(l))\|z'(l)\|_{L^2}\\
& \geq C \|z'(l)\|_{L^2}\,.
\end{split}
\end{equation}
Therefore, inequality~\eqref{e.67} implies that for every $\ell\in[0,\overline{\ell})$
\begin{displaymath}
\int_{0}^{ \ell }\|z'(l)\|_{L^2}\,\di l \leq -2 C \int_{0}^{ \ell }\frac{\di}{\di l}\big( \F(t,u,z(l)) - \F(t,u,\overline{z})\big)^{\frac12}\,\di l \leq 2C \big( \F(t,u,z^0) - \F(t,u,\overline{z})\big)^{\frac12}\,.
\end{displaymath}
In the limit as~$\ell\to+\infty$, from the previous inequality we get
\begin{equation}\label{e.68}
\int_{0}^{ +\infty }\|z'(l)\|_{L^2}\,\di l \leq  2C \big( \F(t,u,z^0) - \F(t,u,\overline{z})\big)^{\frac12}\,.
\end{equation}
By convexity, we have that 
\begin{align}
 \big( \F(t,u,z^0) - \F(t,u,\overline{z})\big)^{\frac12} & \leq \big( -\partial_{z}\F(t,u,z^0)[\overline{z}-z^0]\big)^{\frac12} \nonumber \\ 
	& \le \big( \partial_{z}\F(t,u,\overline{z})[\overline{z}-z^0] -\partial_{z}\F(t,u,z^0)[\overline{z}-z^0]\big)^{\frac12}\,,\label{e.69}
\end{align}
where, in the last inequality, we have used the fact that $\partial_{z}\F(t,u,\overline{z})[z^0 - \overline{z}] = 0$, by minimality of~$\overline{z}$.

\separe

The right hand side of~\eqref{e.69} is
\begin{displaymath}
\begin{split}
\big(\partial_{z}\F(t,u,\overline{z})-\partial_{z}\F(t,u,z^0)\big)[\overline{z}-z^0]&=\int_{\Om}(h'(\overline{z})-h'(z^0))(\overline{z}-z^0)(\mu|\strain_{d}(u+g(t))|^{2}+\kappa|\strain_{v}^{+}(u+g(t))|^{2})\,\di x\\
&\qquad+\int_{\Om} \big(f' (\overline{z} ) - f'( z^{0} ) \big)( \overline{z} - z^{0} )\,\di x + \|\nabla {\overline{z}} - \nabla {z^0} \|_{L^2}^{2}\,.
\end{split}
\end{displaymath}
Applying H\"older inequality to the first term of the right-hand side of previous inequality with $\tfrac{1}{\nu} + \tfrac{2}{\tilde{p}}=1$ 
and recalling that $0 \le z(l) \le z^0 \le 1$, and that $h,f \in C^{1,1}([0,1])$, we deduce that
\begin{equation}\label{e.71}
\begin{split}
\Big|\big(\partial_{z}\F&(t,u,\overline{z})-\partial_{z}\F(t,u,z^0)\big)[\overline{z}-z^0]\Big|\\
& \leq\int_{\Om}|h'(\overline{z})-h'(z^0)||\overline{z}-z^0|(\mu|\strain_{d}(u+g(t))|^{2}+\kappa|\strain_{v}^{+}(u+g(t))|^{2})\,\di x + C\| \overline{z} - z^{0} \|_{H^{1}}^{2}\\
& \leq C \int_{\Om} | \overline{z} - z^0 |^{2} (\mu |\strain_{d}(u+g(t))|^{2} + \kappa |\strain_{v}^{+}(u+g(t))|^{2}) \,\di x + C\| \overline{z} - z^{0} \|_{H^{1}}^{2} \\
& \leq C \|\overline{z}-z^0\|_{2\nu}^{2} \|u+g(t)\|^{2}_{W^{1,\tilde{p}}} + C\| \overline{z} - z^{0} \|_{H^{1}}^{2} \leq C( 1 + \|u+g(t)\|^{2}_{W^{1,\tilde{p}}} )  \|\overline{z}-z^0\|_{H^{1}}^{2}  \,.
\end{split}
\end{equation} 
Thus, combining inequalities~\eqref{e.68}-\eqref{e.71} we get
\begin{displaymath}
\int_{0}^{+\infty}\|z'(l)\|_{L^2}\,\di s\leq C(1+\|u+g(t)\|^{2}_{W^{1,\tilde{p}}})^{\frac12}\|\overline{z}-z^0\|_{H^{1}}\,.
\end{displaymath}
This concludes the proof of the theorem. \qed
\end{proof}

As in Corollary~\ref{c.gfu}, we define here a reparametrization of~$l\in[0,+\infty)$ which makes the gradient flow of Theorem~\ref{t.3} $1$-Lipschitz. Again, this reparametrization will be used in Section~\ref{s.prooft1}.

\begin{corollary}\label{c.gfz}
Let $\tilde{p}\in(2,+\infty)$ be as in Lemma~\ref{l.HMWTh},  let $(t, u, z^{0}) \in [0,T]\times W^{1,\tilde{p}} (\Omega; \mathbb{R}^2) \times \Z$ with $0\leq z^{0} \leq 1$, let $\overline{z}:=\argmin\,\{\F(t,u,z):\, z\in\Z, \, z\leq z^{0}\}$, and let~$z$ be the gradient flow computed in Theorem~\ref{t.3} with initial condition~$z^{0}$ and parameters~$t$ and~$u$. Given~$\overline{\ell}\in[0,+\infty]$ as in~$(e)$ of Theorem~\ref{t.3}, let us set
\begin{displaymath}
L(z) := \int_{0}^{\overline{\ell}} \| z'(l) \|_{L^2}\,\di l \qquad \text{and} \qquad \lambda(\ell) := \int_{0}^{\ell} \| z'(l) \|_{L^2}\,\di l \quad \text{for $\ell \in [ 0, \overline{\ell} ]$}\,.
\end{displaymath}
Moreover, let $\rho \colon [0, L(z)]\to[0,\overline{\ell}]$ be defined by $\rho := \lambda^{-1}$.  Then, the function $\zeta:= z\circ \rho$ belongs to the space $W^{1,\infty} ([0,L(z)]; L^{2}(\Om) )$ with $\| \zeta' (s) \|_{L^2} =1$ a.e.~in $[0, L(z)]$, $\zeta(0)=z^{0}$, $\zeta(L(z)) = \overline{z}$, and
\begin{equation}\label{e.enbal2}
\F(t,u, \zeta( s) ) = \F ( t, u, z^{0} ) - \int_{0}^{s} |\partial_{z}^{-} \F| (t, u, \zeta(\sigma )) \|\zeta'(\sigma)\|_{L^{2}} \, \di \sigma \,.
\end{equation}
\end{corollary}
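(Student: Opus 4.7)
The plan is to mimic the strategy of Corollary \ref{c.gfu}, with adjustments for the $L^2$ topology and the possibility that $\overline{\ell}$ is finite. First, I would check that $\rho = \lambda^{-1}$ is well-defined and continuous on $[0,L(z)]$. By item (a) of Theorem \ref{t.3}, $\lambda$ is absolutely continuous on $[0, \overline{\ell}]$ with $\lambda'(\ell) = \|z'(\ell)\|_{L^2}$ a.e., and item (e) ensures that this derivative is positive a.e.\ on $[0, \overline{\ell}]$. Hence $\lambda \colon [0,\overline{\ell}]\to[0,L(z)]$ is strictly increasing and bijective (with $L(z) < +\infty$ by (f)), so $\rho$ is well-defined, continuous, and non-decreasing, with $\rho(0)=0$ and $\rho(L(z)) = \overline{\ell}$.

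Next, I would show that $\zeta = z \circ \rho \in W^{1,\infty}([0,L(z)]; L^2(\Om))$ with $\|\zeta'(s)\|_{L^2}=1$ a.e. The key estimate is
\[
\|\zeta(s_2)-\zeta(s_1)\|_{L^2} \leq \int_{\rho(s_1)}^{\rho(s_2)} \|z'(l)\|_{L^2}\,\di l = \lambda(\rho(s_2)) - \lambda(\rho(s_1)) = s_2-s_1,
\]
which immediately gives $\|\zeta'\|_{L^2} \le 1$ a.e. The converse inequality (hence equality a.e.) follows from the fact that the total variation of $\zeta$ in $L^2$ on $[0,L(z)]$ equals exactly $L(z)$. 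The endpoint identifications $\zeta(0)=z^0$ and $\zeta(L(z)) = \overline{z}$ are immediate: the first from $\rho(0)=0$, the second either because $z$ is constant beyond $\overline{\ell}$ when that index is finite, or from the convergence $z(\ell) \to \overline{z}$ in $H^1$ (hence in $L^2$) granted by part (d) of Theorem \ref{t.3} when $\overline{\ell} = +\infty$.

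Finally, I would derive the energy balance \eqref{e.enbal2} from \eqref{e.42} by change of variables. Using part (b) of Theorem \ref{t.3}, i.e.\ $\|z'(l)\|_{L^2} = |\partial_z^- \F|(t,u,z(l))$ a.e., \eqref{e.42} rewrites as
\[
\F(t,u,z(\ell)) = \F(t,u,z^0) - \int_0^\ell |\partial_z^- \F|(t,u,z(l))\,\|z'(l)\|_{L^2}\,\di l,
\]
and substituting $\ell=\rho(s)$, $l = \rho(\sigma)$, together with the pushforward identity $\|z'(\rho(\sigma))\|_{L^2}\,\di\rho(\sigma) = \di\sigma = \|\zeta'(\sigma)\|_{L^2}\,\di\sigma$, yields \eqref{e.enbal2}.

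The main technical subtlety I expect lies precisely in this change of variables: because $\|z'\|_{L^2}$ may degenerate near the endpoint (and possibly $\overline{\ell}=+\infty$), $\rho$ need not be globally Lipschitz on $[0,L(z)]$, so pointwise differentiation of $\zeta$ at $s = L(z)$ is delicate. However, the argument sidesteps this obstacle by working through the total variation of $\zeta$ rather than through pointwise derivatives of $\rho$, so the absolute continuity of $\lambda$ combined with a monotone convergence argument on $[0,s]$ for $s \nearrow L(z)$ suffices to transfer the energy identity to the reparametrized curve.
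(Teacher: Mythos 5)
Your proposal is correct and follows essentially the same route as the paper: well-definedness of $\rho=\lambda^{-1}$ from item $(e)$ of Theorem~\ref{t.3}, the unit-speed Lipschitz property of $\zeta$, identification of the endpoints via $(d)$, and the change of variables $l=\rho(\sigma)$ applied to \eqref{e.42} rewritten through $(b)$. The paper's own proof is terser (it simply asserts $\|\zeta'\|_{L^2}=1$ a.e.), while you justify it via the total-variation argument, but this is a filling-in of detail rather than a different method.
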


\begin{proof}
We notice that~$\rho=\lambda^{-1}$ is well defined in view of~$(e)$ of Theorem~\ref{t.3}. As a consequence, also $\zeta\colon[0,L(z)]\to \Z$ is well defined and satisfies $\zeta(0)=z^{0}$, $\zeta(L(z)) = \overline{z}$, and $\|\zeta'(s) \|_{L^2}=1$ for a.e.~$s\in[0,L(z)]$. By $(b)$-$(d)$ of Theorem~\ref{t.3} we have that
\begin{displaymath}
\F(t, u, z(\ell) ) = \F(t, u, z^{0}) - \int_{0}^{\ell} |\partial_{z}^{-} \F|(t, u, z(l)) \|z'(l)\|_{L^2}\,\di l\qquad\text{for $\ell\in[0,\overline{\ell}]$}\,.
\end{displaymath}
By the change of coordinate~$l= \rho(\sigma)$ for $\sigma\in[0,L(z)]$ we deduce~\eqref{e.enbal2}.\qed
\end{proof}

\begin{remark}\label{r.4}
In the notation of Corollary~\ref{c.gfz}, we notice that, as a consequence of Theorem~\ref{t.3}, 
\begin{displaymath}
L(z)\leq \overline{C}(1+\|u+g(t)\|_{W^{1,\tilde{p}}})\| z^{0} - \overline{z} \|_{H^{1}}\,.
\end{displaymath}
\end{remark}

\section{Proof of the convergence result} \label{s.prooft1}

We develop in this section the proof of Theorem~\ref{t.1}.  We follow the main structure of~\cite{KneesNegri_M3AS17}. We start with constructing a time-discrete evolution by an alternate minimization algorithm. Next we interpolate between all the steps of the scheme w.r.t.~an arc-length parameter in a suitable norm. Since the energy~$\F$ is not separately quadratic, in this context there are no intrinsic norms stemming out from the functional, as it happens in~\cite{KneesNegri_M3AS17}; in our framework, instead, it is natural to use the $H^{1}$-norm for the displacement field~$u$ and the $L^{2}$-norm for the phase field~$z$. The latter technical choice is due to the existence of a unilateral $L^{2}$-gradient flow (see Theorem~\ref{t.3}) which in turn is related to the irreversibility of~$z$ along the whole algorithm.

In Proposition~\ref{p.compactness} we prove compactness of the discrete parametrized evolutions. We characterize the limit evolution in terms of \emph{equilibrium} and \emph{energy-dissipation balance} (see~$(d)$ and~$(e)$ of Theorem~\ref{t.1}). The proof of equilibrium and of the lower energy-dissipation inequality follows from lower semicontinuity of the functional~$\F$ and of the slopes~$|\partial_{u} \F|$ and~$|\partial_{z}^{-} \F|$. The technically hard part comes with the upper energy-dissipation inequality (see Section~\ref{s.inequality}). Comparing with~\cite{KneesNegri_M3AS17}, here we can not employ a chain rule argument, since the evolution~$z$ is qualitatively the reparametrization of an $L^{2}$-gradient flow, instead of an $H^{1}$-gradient flow. For this reason, we need to exploit a Riemann sum argument (see, e.g.,~\cite{MR2186036, Negri_ACV}). In this respect, the starting point would be the summability of $|\partial_{z}^{-} \F| (t(\cdot), u(\cdot), z(\cdot))$, which does not follow from the energy estimates, since we are only able to control $|\partial_{z}^{-} \F| (t(\cdot), u(\cdot), z(\cdot)) \| z'(\cdot) \|_{L^{2}}$. Nevertheless, we can show that~$|\partial_{z}^{-}\F|(t(\cdot), u(\cdot), z(\cdot)) $ belongs to $L^1$ in the set where~$\|z' \|_{L^{2}} \neq 0$. At this point, we can apply a Riemann sum argument in an auxiliary reparametrized setting which, roughly speaking, concentrates the intervals where~$\|z'\|_{L^{2}}= 0$ to an at most countable set of points, at the price of introducing discontinuities in the displacement evolution, which, however, can be controlled a posteriori via chain rule.


\subsection{Parametrization and discrete energy estimate} \label{s.4.1}

For $k \in \mathbb{N}$, $k \neq 0$ let $\tau_{k}:=T/k$ and $t^{k}_{i}:= i\tau_{k}$ for $i=0,\ldots, k$. We define the discrete evolutions~$u^k_i$ and~$v^k_i$ (in the time nodes $t^k_i$) by induction. We set $u^{k}_{0}\coloneq u_{0}$ and $z^{k}_{0}\coloneq z_{0}$. Given $u^k_{i-1}$ and $z^k_{i-1}$ we define~$u^k_{i}$ and~$z^k_{i}$ with the aid of two auxiliary sequences~$u^{k}_{i,j}$ and~$z^{k}_{i,j}$ defined as follows:  let $u^{k}_{i,0}:= u^{k}_{i-1}$ and $z^{k}_{i,0}:=z^{k}_{i-1}$, then for every $j\in\mathbb{N}$ let 
\begin{eqnarray}
&&\displaystyle u^{k}_{i,j+1}:=\argmin\,\{\F(t^{k}_{i},u,z^{k}_{i,j}):\,u\in \U \}\,,\label{e.minu}\\[2mm]
&&\displaystyle z^{k}_{i,j+1}:=\argmin\,\{\F(t^{k}_{i},u^{k}_{i,j + 1},z):\, z\in \Z,\, z\leq z^{k}_{i,j}\}\,.\label{e.minz}
\end{eqnarray}
Note that~$0\leq z^{k}_{i,j} \leq 1$ for every $k,i,j$ and that the sequence $z^k_{i,j}$ is bounded in $H^1(\Om)$ and non-increasing w.r.t.~$j$; hence, in the limit as $j \to\infty$, $z^{k}_{i,j} \rightharpoonup z^{k}_{i}$ weakly in~$H^{1}(\Om)$ and, by Proposition~\ref{p.6}, $u^{k}_{i,j} \to u^{k}_{i}$ in~$W^{1,\beta}(\Om;\R^{2})$ for $\beta\in[2,\tilde{p})$, where $u^k_i$ solves 
\begin{equation}\label{e.minu2}
\min\,\{\F(t^{k}_{i},u,z^{k}_{i}):\, u\in\U\}\,.
\end{equation}
Moreover, being $g\in W^{1,q} ([0,T];W^{1,p}(\Om;\R^{2}))$, by Lemma~\ref{l.HMWTh} we deduce that $u^{k}_{i,j}$ is bounded in~$W^{1,\tilde{p}}(\Om;\R^{2})$, uniformly w.r.t.~$k,i,j$. By \eqref{e.minz} we know that $| \partial_z^- \F | (t^{k}_{i},u^{k}_{i,j},z^k_{i,j}) = 0$ for $j\geq 1$. Since $u^k_{i,j} \to u^k_i$ in~$W^{1,\beta}(\Om;\R^{2})$ and $z^{k}_{i,j}\rightharpoonup z^{k}_{i}$ in $H^1(\Om)$ , as a consequence of Lemma~\ref{l.2} we deduce that $| \partial_z^- \F | (t^{k}_{i},u^{k}_{i},z^k_{i}) = 0$. Hence, $z^{k}_{i}$ is the solution of
\begin{equation}\label{e.minz2}
\min\,\{\F(t^{k}_{i},u^{k}_{i},z):\, z\in\Z,\, z\leq z^{k}_{i}\}\,.
\end{equation}

\begin{remark}
In general it may happen that the alternate minimization algorithm~\eqref{e.minu}-\eqref{e.minz} converges after a finite number of iterations. This case is anyway a special case of the above scheme and it will not be treated separately. 
\end{remark}

\separe

Recalling the results and the notation of Theorem~\ref{t.2} and Corollary~\ref{c.gfu}, for every $k,i,j$ there exists an auxiliary parametrized gradient flow $\omega^{k}_{i,j}\in W^{1,\infty}([0,L( \omega^{k}_{i,j})]; H^{1}(\Om;\R^{2}))$ such that $\omega^{k}_{i,j}(0) = u^{k}_{i,j}$, $\omega^{k}_{i,j}(L( \omega^{k}_{i,j}) ) = u^{k}_{i,j+1}$, $\| (\omega^{k}_{i,j})'(s) \|_{H^{1}} = 1$ for a.e.~$s\in [0,L( \omega^{k}_{i,j})]$, and
\begin{equation}\label{e.72}
\F(t^{k}_{i}, \omega^{k}_{i,j} (s), z^{k}_{i,j} ) = \F ( t^{k}_{i}, u^{k}_{i,j}, z^{k}_{i,j} ) - \int_{0}^{s} | \partial_{u} \F | ( t^{k}_{i}, \omega^{k}_{i,j} ( \sigma ), z^{k}_{i,j} ) \| (\omega^{k}_{i,j})'(\sigma) \|_{H^{1}}\, \di \sigma 
\end{equation}
 for every $s\in[0,L(w^{k}_{i,j})]$. Moreover, in view of Theorem~\ref{t.2,5} and Remark~\ref{r.3},
 \begin{equation}\label{e.15.40}
  L( \omega^{k}_{i,j}) \leq \bar{C} \|u^{k}_{i,j} - u^{k}_{i,j+1}\|_{H^{1}}
 \end{equation}
 for some positive constant~$\bar{C}$ independent of~$k,i,j$.

 In a similar way, by Theorem~\ref{t.3} there exists an auxiliary parametrized gradient flow $\zeta^{k}_{i,j}$ belonging to $W^{1,\infty}([0, L( \zeta^{k}_{i,j})]; L^{2}(\Om) )$ such that $\zeta^{k}_{i,j}(0) = z^{k}_{i,j}$, $\zeta^{k}_{i,j}( L( \zeta^{k}_{i,j}) ) = z^{k}_{i,j+1}$, $ (\zeta^{k}_{i,j})'(s) \le0 $ and $\| (\zeta^{k}_{i,j})'(s)\|_{L^2} = 1$ for a.e.~$s\in[0, L(\zeta^{k}_{i,j})]$, and 
 \begin{equation}\label{e.73}
 	\F ( t^{k}_{i}, u^{k}_{i,j+1}, \zeta^{k}_{i,j} (s)) = \F ( t^{k}_{i}, u^{k}_{i,j+1}, z^{k}_{i,j} ) - \int_{0}^{s} | \partial_{z}^{-} \F | ( t^{k}_{i}, u^{k}_{i,j+1}, \zeta^{k}_{i,j}( \sigma )) \|(\zeta^{k}_{i,j})'(\sigma) \|_{L^{2}} \,\di \sigma 
\end{equation}
for every $s\in [0, L(\zeta^{k}_{i,j})]$. Furthermore, by Theorem~\ref{t.3} and Remark~\ref{r.4} we have
\begin{equation}\label{e.15.52}
 L(\zeta^{k}_{i,j}) \leq \bar{C} (1 + \|u^{k}_{i,j} + g(t^{k}_{i}) \|_{1,\tilde{p}} ) \|z^{k}_{i,j} - z^{k}_{i,j+1}\|_{H^{1}}\,,
\end{equation}
for some positive constant~$\bar{C}$ independent of~$k,i,j$. In view of the uniform boundedness of~$u^{k}_{i,j}$ in~$W^{1,\tilde{p}}(\Om;\R^{2})$ (see Corollary~\ref{c.3}) and of the regularity of the boundary datum~$g$, inequality~\eqref{e.15.52} can be rewritten as
\begin{equation}\label{e.16.06}
L( \zeta^{k}_{i,j}) \leq \tilde{C} \|z^{k}_{i,j} - z^{k}_{i,j+1}\|_{H^{1}}
\end{equation}
for some positive constant~$\tilde{C}$ independent of~$k,i,j$.
 


We now start showing a uniform bound on the arc-length of the alternate minimization scheme~\eqref{e.minu}-\eqref{e.minz}. This is done by estimating the term
\begin{equation}\label{e.Sk}
S_{k}\coloneq \sum_{i=1}^{k}\sum_{j=0}^{\infty} L ( \omega^{k}_{i,j}) + L ( \zeta^{k}_{i,j})
\end{equation}
uniformly w.r.t.~$k\in\mathbb{N}$.

\begin{proposition}\label{p.finitelength}
There exists $\overline{S}\in(0,+\infty)$ such that $S_{k}\leq \overline{S}$ for every index~$k$. 
\end{proposition}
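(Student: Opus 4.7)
My plan is to bound $S_k$ via the length estimates for the auxiliary gradient flows coupled with strong convexity, and then sum up using a telescoping argument. The starting point is the pair of bounds \eqref{e.15.40}--\eqref{e.16.06}, which give
\begin{equation*}
L(\omega^k_{i,j}) \leq \bar{C}\,\|u^k_{i,j+1} - u^k_{i,j}\|_{H^1}, \qquad L(\zeta^k_{i,j}) \leq \tilde{C}\,\|z^k_{i,j+1} - z^k_{i,j}\|_{H^1}.
\end{equation*}
The separate strong convexity of $\F$ (Remark \ref{r.sepstrconv}), combined with the minimality of $u^k_{i,j+1}$ in the unconstrained $u$-subproblem (so that $\partial_u\F(t^k_i, u^k_{i,j+1}, z^k_{i,j}) = 0$) and with the variational inequality for $z^k_{i,j+1}$ tested against the admissible direction $z^k_{i,j} - z^k_{i,j+1} \geq 0$, then yields $L(\omega^k_{i,j})^2 \leq C\, a^{k,i}_j$ and $L(\zeta^k_{i,j})^2 \leq C\, b^{k,i}_j$, where
\begin{equation*}
a^{k,i}_j \coloneq \F(t^k_i, u^k_{i,j}, z^k_{i,j}) - \F(t^k_i, u^k_{i,j+1}, z^k_{i,j}), \quad b^{k,i}_j \coloneq \F(t^k_i, u^k_{i,j+1}, z^k_{i,j}) - \F(t^k_i, u^k_{i,j+1}, z^k_{i,j+1}).
\end{equation*}

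Since $a^{k,i}_j + b^{k,i}_j = \F(t^k_i, u^k_{i,j}, z^k_{i,j}) - \F(t^k_i, u^k_{i,j+1}, z^k_{i,j+1})$, summing in $j$ telescopes to $\F(t^k_i, u^k_{i-1}, z^k_{i-1}) - \F(t^k_i, u^k_i, z^k_i)$, and summing in $i$ using \eqref{e.pw} together with the uniform $W^{1,\tilde{p}}$-bound of Corollary \ref{c.3} on $u^k_i$ and the regularity $\dot g \in L^q([0,T];W^{1,p}(\Om;\R^{2}))$ (which makes $\P$ uniformly integrable in $s$) yields
\begin{equation*}
\sum_{i=1}^k\sum_{j=0}^\infty \bigl(L(\omega^k_{i,j})^2 + L(\zeta^k_{i,j})^2\bigr) \leq C\Bigl(\F(0,u_0,z_0) + \int_0^T |\P(s,\cdot,\cdot)|\,\di s\Bigr)
\end{equation*}
uniformly in $k$.

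The main obstacle lies in upgrading this $\ell^2$-type control to the $\ell^1$-bound required for $S_k$. I plan to address this by establishing a discrete Polyak--\L ojasiewicz-type inequality guaranteeing geometric decay $e^{k,i}_{j+1} \leq \alpha\,e^{k,i}_j$ with $\alpha\in(0,1)$ uniform in $k,i,j$, for the tail energy $e^{k,i}_j \coloneq \F(t^k_i, u^k_{i,j}, z^k_{i,j}) - \F(t^k_i, u^k_i, z^k_i)$. Such a bound should follow by combining the strong-convexity lower bound $e^{k,i}_j \geq c\bigl(\|u^k_{i,j} - u^k_i\|_{H^1}^2 + \|z^k_{i,j} - z^k_i\|_{H^1}^2\bigr)$ (obtained from $(u^k_i, z^k_i)$ being a critical point of $\F(t^k_i,\cdot,\cdot)$) with the cross-Lipschitz estimates of Corollary \ref{c.3} and Proposition \ref{p.5}, which control $\|u^k_{i,j+1} - u^k_i\|$ and $\|z^k_{i,j+1} - z^k_i\|$ in terms of the previous iterate's distance to the limit. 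Once geometric decay is in hand, one has $L(\omega^k_{i,j}) + L(\zeta^k_{i,j}) \leq C\sqrt{a^{k,i}_j + b^{k,i}_j} \leq C\alpha^{j/2}\sqrt{e^{k,i}_0}$, so the inner series converges to $C(1-\sqrt\alpha)^{-1}\sqrt{e^{k,i}_0}$; the outer summation over $i$ is then completed by bounding $\sum_i \sqrt{e^{k,i}_0}$, using that the time-step contribution to $e^{k,i}_0$ from $\P$ scales appropriately in $\tau_k$ thanks to the $L^q$-in-time regularity of $\dot g$ and H\"older's inequality on $\int_{t^k_{i-1}}^{t^k_i}\P\,\di s$.
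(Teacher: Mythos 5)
Your opening moves are fine (the bounds \eqref{e.15.40} and \eqref{e.16.06}, strong convexity giving $L(\omega^k_{i,j})^2+L(\zeta^k_{i,j})^2\lesssim$ energy decrement, telescoping plus the power term to get a uniform $\ell^2$-bound), but the two steps you invoke to upgrade $\ell^2$ to $\ell^1$ are genuine gaps. First, the asserted discrete Polyak--\L ojasiewicz inequality $e^{k,i}_{j+1}\le \alpha\, e^{k,i}_j$ with $\alpha<1$ uniform in $k,i,j$ is not proven and does not follow from the ingredients you cite. The lower bound $e^{k,i}_j\ge c\bigl(\|u^k_{i,j}-u^k_i\|_{H^1}^2+\|z^k_{i,j}-z^k_i\|_{H^1}^2\bigr)$ would require \emph{joint} convexity of $\F(t^k_i,\cdot,\cdot)$ (or that $(u^k_i,z^k_i)$ be a joint minimizer), whereas the coupling $h(z)\Psi_+(\strain(u))$ is not jointly convex and $(u^k_i,z^k_i)$ is only a separate critical point (\eqref{e.minu2}--\eqref{e.minz2}); Remark~\ref{r.sepstrconv} gives only separate strong convexity. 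Moreover, chaining Corollary~\ref{c.3} and Proposition~\ref{p.5} produces estimates of the form $\|z^k_{i,j+1}-z^k_{i}\|_{H^1}\le C\|z^k_{i,j}-z^k_i\|_{L^\nu}$ with a constant $C$ that is in no way smaller than $1$, and with mismatched norms, so no contraction rate comes out of them; note also that $z^k_i$ solves a problem with a \emph{different} constraint set than the inner iterates, so Proposition~\ref{p.5} does not directly compare $z^k_{i,j+1}$ with $z^k_i$. The actual proof never needs any convergence rate for the inner loop.

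Second, even granting geometric decay, the outer summation fails: you only control $\sum_i e^{k,i}_0\le C$, and $\sum_i\sqrt{e^{k,i}_0}$ is not bounded uniformly in $k$. Indeed the power/boundary-load contribution makes $e^{k,i}_0$ generically of order $\tau_k$, so $\sum_{i=1}^k\sqrt{e^{k,i}_0}\sim k\sqrt{\tau_k}=\sqrt{Tk}\to\infty$; by Cauchy--Schwarz this $\sqrt{k}$ loss is exactly what the square root costs. This is why the paper does not pass through energy decrements at all for the first inner step: for $j=0$ it uses the Lipschitz-in-data estimate of Corollary~\ref{c.3} to get $\|u^k_{i,1}-u^k_{i,0}\|_{H^1}\le C\|g(t^k_i)-g(t^k_{i-1})\|_{W^{1,p}}$, whose sum over $i$ is controlled by $\|\dot g\|_{L^1([0,T];W^{1,p})}$, and for $j\ge1$ it uses Proposition~\ref{p.5} and \eqref{e.25} to obtain the recursion $\|z^k_{i,j}-z^k_{i,j+1}\|_{H^1}\le C\|z^k_{i,j}-z^k_{i,j-1}\|_{L^\nu}$, which is then summed as in \cite[Theorem~4.1]{KneesNegri_M3AS17} by interpolating $L^\nu$ between a weak norm and $L^\infty$, exploiting the monotonicity of $z^k_{i,j}$ (so the weak-norm increments telescope) and absorbing via Young's inequality. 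To repair your argument you would have to replace both the uniform contraction claim and the $\sqrt{e^{k,i}_0}$ bookkeeping by estimates of this type; as written, the proof does not go through.
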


\begin{proof} In this proof we denote with~$C$ a generic positive constant, which could change from line to line.

Thanks to \eqref{e.15.40} and \eqref{e.16.06} 
we deduce that
\begin{equation}\label{e.74}
S_{k}\leq C\sum_{i=1}^{k}\sum_{j=0}^{\infty}\big(\|z^{k}_{i,j}-z^{k}_{i,j+1}\|_{H^1}+\|u^{k}_{i,j}-u^{k}_{i,j+1}\|_{H^{1}}\big)\,,
\end{equation}
for~$C$ independent of~$k$. Therefore, it is sufficient to prove that the right-hand side of~\eqref{e.74} is uniformly bounded.

For $j=0$, applying Corollary~\ref{c.3}, recalling that $z^k_{i,0}=z^k_{i-1}$ and that the boundary datum varies, 
we have that
\begin{displaymath}
\|u^{k}_{i,1}-u^{k}_{i,0}\|_{H^{1}} \leq C \|g(t^{k}_{i})-g(t^{k}_{i-1})\|_{W^{1,p}} \,.
\end{displaymath}
Moreover, by Proposition~\ref{p.5},
\begin{displaymath}
\|z^{k}_{i,1}-z^{k}_{i,0}\|_{H^{1}} \leq C( \|u^{k}_{i,1}-u^{k}_{i,0}\|_{H^{1}} + \|g(t^{k}_{i})-g(t^{k}_{i-1})\|_{H^{1}} ) \leq C \|g(t^{k}_{i})-g(t^{k}_{i-1})\|_{W^{1,p}}\,.
\end{displaymath}

For $j\geq 1$, in view of Proposition~\ref{p.5}, we have
\begin{equation}\label{e.2.07}
\|z^{k}_{i,j}-z^{k}_{i,j+1}\|_{H^{1}} \leq C \|u^{k}_{i,j}-u^{k}_{i,j+1}\|_{H^{1}}\,.
\end{equation}
By~\eqref{e.25} in Corollary~\ref{c.3}, recalling that for fixed $k$ the boundary datum does not change, we can continue in~\eqref{e.2.07} with
\begin{equation}\label{e.2.09}
\|z^{k}_{i,j}-z^{k}_{i,j+1}\|_{H^{1}} \leq C \|z^{k}_{i,j}-z^{k}_{i,j-1}\|_{L^\nu} ,
\end{equation}
for some exponent $\nu \gg 1$. 
The rest of the proof works as in~\cite[Theorem~4.1]{KneesNegri_M3AS17}. \qed
\end{proof}

At this point we are ready to define a new parametrization of the graph of the evolution in terms of the arc-length of the curves $\omega^k_{i,j}$ and $\zeta^k_{i,j}$, connecting all the intermediate steps of the alternate minimization scheme \eqref{e.minu}-\eqref{e.minz}.

We set~$s^{k}_{0}\coloneq 0$, $t_{k}(0)\coloneq 0$, $u_{k}(0)\coloneq u_{0}$, and $z_{k}(0)\coloneq z_{0}$. For every $i\geq 1$, assume to know $s^{k}_{i-1}$ and let us construct~$s^{k}_{i}$. We define $s^{k}_{i,-1}\coloneq s^{k}_{i-1}$ and, for $j\geq0$,
\begin{equation}\label{e.esse}
s^{k}_{i,0}\coloneq s^{k}_{i,-1}+\tau_{k}\,,\qquad s^{k}_{i,j+\frac{1}{2}}\coloneq s^{k}_{i,j}+L ( \omega^{k}_{i,j}) \,, \qquad s^{k}_{i,j+1} \coloneq s^{k}_{i,j+\frac{1}{2}}+L(\zeta^{k}_{i,j}) \,.
\end{equation}
In view of Proposition~\ref{p.finitelength}, we have that there exists a finite limit~$s^{k}_{i}$ of the sequence~$s^{k}_{i,j}$ as $j\to \infty$. For every $s\in[s^{k}_{i,-1},s^{k}_{i,0}]$ we define
\begin{equation}\label{e.interpolantt}
 t_{k}(s) \coloneq t^{k}_{i-1}+ (s - s^{k}_{i,-1})\,,\qquad u_{k}(s)\coloneq u^{k}_{i-1}\,,\qquad z_{k}(s)\coloneq z^{k}_{i-1}\,.
\end{equation}
For $j\geq 0$ and $s\in[s^{k}_{i,j}, s^{k}_{i,j+\frac{1}{2}}]$ we set
\begin{eqnarray} 
&& t_{k}(s)\coloneq t^{k}_{i} = t_{k}(s^{k}_{i,j})\,, \nonumber \\[1mm]
&&u_{k}(s)\coloneq \left\{\begin{array}{ll}
 \omega^{k}_{i,j} (s-s^{k}_{i,j})  & \text{if $u^{k}_{i,j}\neq u^{k}_{i,j+1}$}\,, \label{e.interpolantu} \\ [2mm]
 u^{k}_{i,j}=u^{k}_{i,j+1} &\text{otherwise}\,,
\end{array}\right.\\[1mm]
&& z_{k}(s) \coloneq  z^{k}_{i, j} = z_{k}(s^{k}_{i,j}) \,.\nonumber 
\end{eqnarray}
Finally, for $s \in [ s^{k}_{i,j+\frac{1}{2}}, s^{k}_{i,j+1} ]$ we define
\begin{eqnarray}
&& t_{k}(s) \coloneq t^{k}_{i} = t_{k} ( s^{k}_{i,j+\frac{1}{2}} )  \,, \nonumber \\ [1mm]
&& u_{k}(s) \coloneq u^{k}_{i,j+1} = u_{k} ( s^{k}_{i,j+\frac{1}{2}})  \,, \label{e.interpolantz} \\ [1mm]
&& z_{k}(s) \coloneq \left\{ \begin{array}{ll}
\zeta^{k}_{i,j} ( s - s^{k}_{i,j+\frac{1}{2}} ) & \text{if $z^{k}_{i,j}\neq z^{k}_{i,j+1}$} \,, \\ [2mm]
z^{k}_{i,j}=z^{k}_{i,j+1} & \text{otherwise}\,. \nonumber
\end{array}\right.
\end{eqnarray}
In the limit as~$s \to s^k_i$, we have that $t_{k}(s) \to t_{k}(s^{k}_{i})=t^{k}_{i}$ and $z_{k} (s) \weakto z^{k}_{i}=: z_{k}(s^{k}_{i})$ in $H^1(\Om)$. As for~$u_{k}$, by Proposition~\ref{p.6} we know that $u^{k}_{i,j} \to u^{k}_{i}$ in~$W^{1,\beta}(\Om;\R^{2})$ for every $\beta\in[2,\tilde{p})$. As a consequence of the exponential decay~\eqref{e.40} in Theorem~\ref{t.2}, we also deduce that $u_k ( s) \to u^{k}_{i} =: u_{k}(s^{k}_{i})$ in~$H^{1}(\Om;\R^{2})$.

In view of Proposition~\ref{p.finitelength}, we may assume that there exists~$0\leq S <+\infty$ such that, up to a constant extension, for every~$k\in\mathbb{N}$ the triple~$(t_{k},u_{k},z_{k})$ is well defined on the interval $[0,S]$, takes values in $[0,T]\times \U\times \Z$, and satisfies $t_{k}(S)=T$. We notice that since~$u^{k}_{i,j}$ are uniformly bounded in~$W^{1,\tilde{p}}(\Om;\R^{2})$, Theorem~\ref{t.2} implies that $u_{k}(s)$ is bounded in~$H^{1}(\Om;\R^{2})$ uniformly w.r.t.~$k$ and~$s$. It follows that also~$z^{k}_{i,j}$ are bounded in~$H^{1}(\Om)$ and hence, by Theorem~\ref{t.3},~$z_{k}(s)$ is bounded in~$H^{1}(\Om)$ uniformly w.r.t.~$k$ and~$s$. Moreover, as a consequence of Corollaries~\ref{c.gfu} and~\ref{c.gfz} and of the above construction, we have
\begin{equation}\label{e.boundlip}
t'_{k}(s) + \| u_{k}'(s) \|_{H^{1}} + \| z_{k}'(s) \|_{L^2} \leq 1\qquad\text{for a.e.~$s\in[0,S]$}\,,
\end{equation}
so that the triple $(t_{k},u_{k},z_{k})\in W^{1,\infty}([0,S]; [0,T]\times H^{1}(\Om;\R^{2}) \times L^{2}(\Om))$ is bounded. We notice that~$t_{k}$, $u_{k}$, and $z_{k}$ coincide with their Lipschitz continuous representatives.

We collect in the following proposition the equilibrium properties and a discrete energy-dissipation inequality satisfied by the triple~$(t_{k},u_{k},z_{k})$.

\begin{proposition}\label{p.7}
For every $k,i$ it holds
\begin{equation}\label{e.77}
|\partial_{u}\F|(t_{k}(s^{k}_{i}),u_{k}(s^{k}_{i}),z_{k}(s^{k}_{i}))=0\qquad\text{and}\qquad |\partial_{z}^{-}\F|(t_{k}(s^{k}_{i}),u_{k}(s^{k}_{i}),z_{k}(s^{k}_{i}))=0\,.
\end{equation}
Moreover, for every $s\in[0,S]$ we have
\begin{equation}\label{e.78}
\begin{split}
\F(t_{k}(s),&u_{k}(s), z_{k}(s))\leq\F(0,u_{0},z_{0})-\int_{0}^{s}|\partial_{u}\F|(t_{k}( \sigma ),u_{k}( \sigma ),z_{k}( \sigma ))\,\| u'_k (s) \|_{H^1} \, \di  \sigma \\
&\quad-\int_{0}^{s} |\partial_{z}^{-}\F|(t_{k}( \sigma ),u_{k}( \sigma ),z_{k}( \sigma ))\,\| z'_k(\sigma) \|_{L^2}\,\di  \sigma + \int_{0}^{s} \P(t_{k}( \sigma ),u_{k}( \sigma ),z_{k}( \sigma )) \,t'_{k}( \sigma )\,\di  \sigma \,,
\end{split}
\end{equation}
where we intend $ |\partial_{z}^{-}\F|(t_{k}( \sigma ),u_{k}( \sigma ),z_{k}( \sigma ))\,\| z'_k(\sigma) \|_{L^2}=0$ whenever $\|z'_{k}(\sigma) \|_{L^{2}}=0$.
\end{proposition}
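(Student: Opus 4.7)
The identities \eqref{e.77} follow directly from the minimum problems \eqref{e.minu2}--\eqref{e.minz2} satisfied by the limit configurations $u^k_i$ and $z^k_i$. Since $u^k_i$ is an unconstrained minimizer of $\F(t^k_i,\cdot,z^k_i)$ over $\U$, the Euler--Lagrange equation $\partial_u\F(t^k_i,u^k_i,z^k_i)[\varphi]=0$ holds for every $\varphi\in\U$, and the characterization \eqref{e.slope3} immediately yields $|\partial_u\F|(t^k_i,u^k_i,z^k_i)=0$. Similarly, $z^k_i$ minimizes $\F(t^k_i,u^k_i,\cdot)$ subject to $z\leq z^k_i$, so the one-sided variational inequality $\partial_z\F(t^k_i,u^k_i,z^k_i)[\psi]\geq 0$ holds for every admissible test $\psi\in\Z$ with $\psi\leq 0$; by \eqref{e.slope4} this is equivalent to $|\partial_z^-\F|(t^k_i,u^k_i,z^k_i)=0$.

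For \eqref{e.78}, the plan is to establish an energy balance piecewise on each segment of the parametrization and then telescope. On $[s^k_{i,-1},s^k_{i,0}]$, where $u_k$ and $z_k$ are constant and only $t_k$ varies, \eqref{e.pw} and the fundamental theorem of calculus yield the equality
\[
\F(t_k(s),u_k(s),z_k(s))-\F(t^k_{i-1},u^k_{i-1},z^k_{i-1})=\int_{s^k_{i,-1}}^{s}\P(t_k(\sigma),u_k(\sigma),z_k(\sigma))\,t_k'(\sigma)\,\di\sigma.
\]
On each piece $[s^k_{i,j},s^k_{i,j+\frac12}]$, identity \eqref{e.72} applied to the reparametrized gradient flow $\omega^k_{i,j}$, together with the translation $\sigma\mapsto\sigma-s^k_{i,j}$, gives
\[
\F(t^k_i,u_k(s),z^k_{i,j})-\F(t^k_i,u^k_{i,j},z^k_{i,j})=-\int_{s^k_{i,j}}^{s}|\partial_u\F|(t_k,u_k,z_k)\,\|u_k'\|_{H^1}\,\di\sigma,
\]
while $t_k'=0$ and $\|z_k'\|_{L^2}=0$ on this piece makes the remaining potential contributions trivially zero. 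The analogous equality, based on \eqref{e.73} applied to $\zeta^k_{i,j}$, holds on $[s^k_{i,j+\frac12},s^k_{i,j+1}]$ with $z_k$ in place of $u_k$.

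Telescoping these identities over $j=0,\ldots,J-1$ produces, on $[s^k_{i,-1},s^k_{i,J}]$, the equality
\[
\F(t^k_i,u^k_{i,J},z^k_{i,J})=\F(t^k_{i-1},u^k_{i-1},z^k_{i-1})+\int_{s^k_{i,-1}}^{s^k_{i,J}}\bigl(\P\,t_k'-|\partial_u\F|\,\|u_k'\|_{H^1}-|\partial_z^-\F|\,\|z_k'\|_{L^2}\bigr)\,\di\sigma,
\]
with the convention that the $|\partial_z^-\F|$-factor is discarded wherever $\|z_k'\|_{L^2}=0$. The decisive step, and the only point at which equality is lost, is the passage $J\to\infty$: since $u^k_{i,J}\to u^k_i$ strongly in $H^1$ (indeed in $W^{1,\beta}$) and $z^k_{i,J}\weakto z^k_i$ weakly in $H^1$, Lemma \ref{l.lscFE} provides $\F(t^k_i,u^k_i,z^k_i)\leq\liminf_J\F(t^k_i,u^k_{i,J},z^k_{i,J})$, while monotone convergence extends the two nonnegative dissipation integrals to $[s^k_{i,-1},s^k_i]$ (the power term is unaffected, since $t_k'\equiv 0$ on $[s^k_{i,0},s^k_i]$). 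This delivers \eqref{e.78} on $[s^k_{i,-1},s^k_i]$; iterating over $i$ and applying the same piecewise equality on the final segment $[s^k_{I,-1},s]$, where $s\in[s^k_{I-1},s^k_I]$, yields \eqref{e.78} on all of $[0,S]$.
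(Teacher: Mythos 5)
Your proposal is correct and follows essentially the same route as the paper: equilibrium from the minimality properties \eqref{e.minu2}--\eqref{e.minz2} via the slope characterizations, and the energy inequality by concatenating the power identity on $[s^{k}_{i,-1},s^{k}_{i,0}]$ with the gradient-flow balances \eqref{e.72}--\eqref{e.73} on the half-steps, then passing to the limit at the accumulation node $s^{k}_{i}$ using Lemma \ref{l.lscFE} (which is exactly where equality degrades to an inequality), and iterating in $i$. The only cosmetic difference is that you telescope over finitely many $j$ and let $J\to\infty$, whereas the paper writes the identity for $s\in[s^{k}_{i-1},s^{k}_{i})$ and lets $s\to s^{k}_{i}$; these are the same argument.
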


\begin{proof}
The equilibrium equalities in~\eqref{e.77} follow from the construction \eqref{e.interpolantt}-\eqref{e.interpolantz} of the interpolation functions~$t_{k}$,~$u_{k}$, and~$z_{k}$ and from the minimality properties of~$u_{k}(s^{k}_{i}) = u^k_i$ and $z_{k}(s^{k}_{i})=z^k_i$ summarized in~\eqref{e.minu2}-\eqref{e.minz2}.

Let us show~\eqref{e.78}. Without loss of generality, we consider $s\in[0,S_{k}]$, where~$S_{k}$ is defined in~\eqref{e.Sk}. Let~$k$ and~$i\in\{1,\ldots,k\}$ be fixed. For $j=-1$, for every $s\in[s^{k}_{i,-1},s^{k}_{i,0}] = [s^{k}_{i-1},s^{k}_{i,0}]$ we have $u_{k}'(s)= z_{k}'(s)=0$ and, therefore,
\begin{align}
\F(t_{k}(s),& \,u_{k}(s),z_{k}(s)) = \F(t_{k}(s^{k}_{i-1}),u_{k}(s^{k}_{i-1}),z_{k}(s^{k}_{i-1}))+\int_{s^{k}_{i-1}}^{s} \!\!\! \partial_{t}\F(t_{k}( \sigma ),u_{k}( \sigma ),z_{k}( \sigma )) \, t'_{k}( \sigma )\,\di \sigma \nonumber \\
& =   \F(t_{k}(s^{k}_{i-1}),u_{k}(s^{k}_{i-1}),z_{k}(s^{k}_{i-1}))+\int_{s^{k}_{i-1}}^{s} \!\!\! \P (t_{k}( \sigma ),u_{k}( \sigma ),z_{k}( \sigma ))\,t'_{k}( \sigma )\,\di \sigma \label{e.80} \\
& \quad   - \int_{s^{k}_{i-1}}^{s} \!\!\! |\partial_{u}\F|(t_{k}( \sigma ),u_{k}( \sigma ),z_{k}( \sigma ))\,\| u'_k(s) \|_{H^1}\,\di  \sigma - \int_{s^{k}_{i-1}}^{s} \!\!\! |\partial_{z}^{-}\F|(t_{k}( \sigma ),u_{k}( \sigma ),z_{k}( \sigma ))\,\| z'_k(s) \|_{L^2}\,\di  \sigma \nonumber \,,
\end{align}
where, in the last equality, we have used the definition of the power functional~$\P$ in~\eqref{e.power} and~\eqref{e.pw}. For every $j\geq 0$, we distinguish between  $s\in[s^{k}_{i,j},s^{k}_{i,j+\frac{1}{2}}]$ and $s\in[s^{k}_{i,j+\frac{1}{2}},s^{k}_{i,j+1}]$. In the first case we have $t_{k}'(s)=z_{k}'(s)=0$ while $\| u'_k(s) \|_{H^1}=1$ for a.e.~$s \in[s^{k}_{i,j},s^{k}_{i,j+\frac{1}{2}}]$; then, in view of~\eqref{e.72},
\begin{align}
\F(t_{k}(s), & \, u_{k}(s),z_{k}(s)) = \F(t_{k}(s^{k}_{i,j}),u_{k}(s^{k}_{i,j}),z_{k}(s^{k}_{i,j}))-\int_{s^{k}_{i,j}}^{s} \!\! |\partial_{u}\F|(t_{k}( \sigma ),u_{k}( \sigma ),z_{k}( \sigma ))\, \| u'_k(s) \|_{H^1}\, \di  \sigma \nonumber \\
& = \F(t_{k}(s^{k}_{i,j}),u_{k}(s^{k}_{i,j}),z_{k}(s^{k}_{i,j}))  - \int_{s^{k}_{i,j}}^{s} \!\! |\partial_{u}\F|(t_{k}( \sigma ),u_{k}( \sigma ),z_{k}( \sigma )) \,\| u'_k(s) \|_{H^1} \,\di  \sigma \label{e.81} \\
& \quad   - \int_{s^{k}_{i,j}}^{s} \!\! |\partial_{z}^{-}\F|(t_{k}( \sigma ),u_{k}( \sigma ),z_{k}( \sigma ))\,\| z'_k(s) \|_{L^2} \,\di  \sigma +\int_{s^{k}_{i,j}}^{s} \!\!\! \P (t_{k}( \sigma ),u_{k}( \sigma ),z_{k}( \sigma )) \, t'_{k}( \sigma )\,\di \sigma \nonumber \,.
\end{align}
In the second case we have $t_{k}'(s)=u_{k}'(s)=0$ while $\| z'_k(s) \|_{L^2}=1$ for a.e.~$s \in[s^{k}_{i,j+\frac12},s^{k}_{i,j+1}]$; then, by~\eqref{e.73},
\begin{align}
\F(t_{k}(s), & \, u_{k}(s),z_{k}(s))  = \F(t_{k}(s^{k}_{i,j+\frac{1}{2}}),u_{k}(s^{k}_{i,j+\frac{1}{2}}),z_{k}(s^{k}_{i,j+\frac{1}{2}})) - \int_{s^{k}_{i,j+\frac{1}{2}}}^{s} \!\!\!\!\!\!\! |\partial_{z}^{-}\F|(t_{k}( \sigma ),u_{k}( \sigma ),z_{k}( \sigma )) \,\| z'_k(s) \|_{L^2} \,\di  \sigma  \nonumber \\
& = \F(t_{k}(s^{k}_{i, j+\frac{1}{2}}),u_{k}(s^{k}_{i+ \frac{1}{2}}),z_{k}(s^{k}_{i+\frac{1}{2}})) - \int_{s^{k}_{i,j+\frac{1}{2}}}^{s} \!\!\!\!\!\!\! |\partial_{z}^{-}\F|(t_{k}( \sigma ),u_{k}( \sigma ),z_{k}( \sigma ))\,\| z'_k(s) \|_{L^2} \,\di  \sigma \label{e.82} \\
& \quad   - \int_{s^{k}_{i,j+\frac{1}{2}}}^{s} \!\!\!\! |\partial_{u}\F|(t_{k}( \sigma ),u_{k}( \sigma ),z_{k}( \sigma ))\, \| u'_k(s) \|_{H^1} \,\di  \sigma  +\int_{s^{k}_{i, j+\frac{1}{2}}}^{s} \!\!\!\! \P (t_{k}( \sigma ),u_{k}( \sigma ),z_{k}( \sigma )) \, t'_{k}( \sigma )\,\di \sigma \nonumber  \,.
\end{align}
Summing up~\eqref{e.80}-\eqref{e.82}, we deduce that for every $s\in[s^{k}_{i-1},s^{k}_{i})$ it holds
\begin{equation*}\label{e.83}
\begin{split}
\F(t_{k}(s), & \, u_{k}(s),z_{k}(s)) =\F(t_{k}(s^{k}_{i-1}),u_{k}(s^{k}_{i-1}),z_{k}(s^{k}_{i-1}))-\int_{s^{k}_{i-1}}^{s} \!\!\! |\partial_{u}\F|(t_{k}( \sigma ),u_{k}( \sigma ),z_{k}( \sigma )) \, \| u'_k(s) \|_{H^1} \,\di  \sigma  \\
& - \int_{s^{k}_{i-1}}^{s} \!\!\!\! |\partial_{z}^{-}\F|(t_{k}( \sigma ),u_{k}( \sigma ),z_{k}( \sigma ))\ \, \| z'_k(s) \|_{L^2} \,\di  \sigma  + \int_{s^{k}_{i-1}}^{s} \!\!\!\! \P (t_{k}( \sigma ),u_{k}( \sigma ),z_{k}( \sigma ))\, t_{k}' \, ( \sigma )\,\di  \sigma \,.
\end{split}
\end{equation*}
Passing to the limit as $s\to s^{k}_{i}$ by Lemma \ref{l.lscFE} we get 
\begin{equation*}\label{e.84}
\begin{split}
\F(t_{k}(s^{k}_{i}), & \, u_{k}(s^{k}_{i}),  z_{k}(s^{k}_{i})) \leq \F(t_{k}(s^{k}_{i-1}),u_{k}(s^{k}_{i-1}),z_{k}(s^{k}_{i-1}))-\int_{s^{k}_{i-1}}^{s^{k}_{i}} \!\!\! |\partial_{u}\F|(t_{k}( \sigma ),u_{k}( \sigma ),z_{k}( \sigma ))\, \| u'_k(s) \|_{H^1} \,\di  \sigma  \\
& - \int_{s^{k}_{i-1}}^{s^{k}_{i}} \!\!\! |\partial_{z}^{-}\F|(t_{k}( \sigma ),u_{k}( \sigma ),z_{k}( \sigma )) \, \| z'_k(s) \|_{L^2} \,\di  \sigma  + \int_{s^{k}_{i-1}}^{s^{k}_{i}} \!\!\! \P ( t_{k}( \sigma ), u_{k}( \sigma ), z_{k}( \sigma ) ) \, t_{k}'( \sigma ) \, \di  \sigma \,,
\end{split}
\end{equation*}
observing that the passage to the limit in the power integral is straightforward since~$t'_{k}(\sigma)=0$ for $\sigma\in(s^{k}_{i,0}, s^{k}_{i})$. 

Finally, iterating the previous estimate w.r.t.~$i$ and combining again~\eqref{e.80}-\eqref{e.82} we deduce~\eqref{e.78}. 
\qed
\end{proof}

\subsection{Compactness and lower energy inequality}

In the following proposition we show the compactness of the sequence $(t_{k},u_{k},z_{k})$.

\begin{proposition}\label{p.compactness}
There exist a subsequence of~$(t_{k}, u_{k}, z_{k})$ and a triple $(t,u,z)\in W^{1,\infty} ( [0,S] ; [0,T] \times H^{1}(\Om;\R^{2}) \times L^{2}(\Om) )$ such that for every sequence~$s_{k}$ converging to~$s\in[0,S]$ we have
\begin{displaymath}
t_{k}(s_{k})\to t(s)\,,\qquad u_{k}(s_{k})\to u(s) \text{ in~$H^1(\Omega; \R^2)$,}\qquad z_{k}(s_{k}) \rightharpoonup z(s) \text{ weakly in~$H^{1}(\Om)$}.
\end{displaymath}
 Moreover,
\begin{equation}\label{e.boundlip2}
|t'(s)|+\|u'(s)\|_{H^{1}}+\|z'(s)\|_{L^2}\leq 1\qquad\text{for a.e.~$s\in[0,S]$}\,.
\end{equation}
In particular, $s \mapsto t(s)$ is non-decreasing and $t(S) = T$.
\end{proposition}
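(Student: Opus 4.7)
Plan: The proof decomposes into three independent compactness arguments, one for each component, plus a check on the velocity bound. The main obstacle is the upgrade from pointwise weak $H^1$-convergence to strong $H^1$-convergence for $u_k$: along $u$-phases the curve $u_k$ is a mere gradient flow in $H^1$, without uniform higher-regularity estimates that would provide automatic compactness.

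For $t_k$: the functions are 1-Lipschitz into $[0,T]$, so Ascoli--Arzel\`a yields a subsequence converging uniformly to a 1-Lipschitz non-decreasing $t$ with $t(0)=0$ and $t(S)=T$. For $z_k$: the 1-Lipschitz bound in $L^{2}$ gives equicontinuity, while the uniform pointwise $H^{1}$-bound on $z_{k}(s)$ (inherited from the construction of Theorem \ref{t.3} and the $W^{1,\tilde p}$-estimates of Corollary \ref{c.3}), combined with the compact embedding $H^{1}\hookrightarrow L^{2}$, gives pointwise relative compactness in $L^2$. Ascoli--Arzel\`a in $C([0,S];L^{2})$ then produces a subsequence $z_{k}\to z$ uniformly in $L^{2}$; the uniform $H^{1}$-bound and uniqueness of the $L^{2}$-limit force $z_{k}(s)\weakto z(s)$ weakly in $H^{1}$ along the same subsequence, and so $z\in L^{\infty}([0,S];H^{1})$.

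For $u_{k}$, the $W^{1,\infty}([0,S];H^{1})$-bound gives a weak* limit $u$, and Ascoli--Arzel\`a applied in the (metrizable) weak $H^{1}$-topology on bounded sets provides pointwise weak convergence $u_{k}(s)\weakto u(s)$. To upgrade to strong $H^{1}$, I exploit the piecewise structure: on each $u$-phase $[s^{k}_{i,j},s^{k}_{i,j+\frac12}]$, $u_{k}$ is a reparametrized gradient flow of $\F(t^{k}_{i},\cdot,z^{k}_{i,j})$ joining $u^{k}_{i,j}$ to the minimizer $u^{k}_{i,j+1}$, while on $z$- and time-phases $u_{k}$ is constant and equal to such a minimizer. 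At every $s$ where $u_k(s)$ coincides with a minimizer (endpoints of $u$-phases, and all of $z$- and time-phases), Proposition \ref{p.6} yields strong $W^{1,\beta}$- (hence $H^{1}$-)convergence to the minimizer at the limit parameters $(t(s),z(s))$. For $s$ in the interior of a $u$-phase, Corollary \ref{c.5} propagates the strong convergence along the parametrized gradient flow, provided the local arc-length parameter converges in $[0,+\infty]$ (guaranteed by a further diagonal extraction, using the bound on $S_{k}$ from Proposition \ref{p.finitelength}). Strong pointwise convergence on the dense subset $D\subset[0,S]$ consisting of all such reachable points, combined with the 1-Lipschitz equicontinuity of both $u_{k}$ and $u$, extends the strong $H^{1}$-convergence to every $s$ via
\[
\|u_{k}(s)-u(s)\|_{H^{1}}\leq 2|s-s^{*}|+\|u_{k}(s^{*})-u(s^{*})\|_{H^{1}} \qquad (s^{*}\in D).
\]
The same triangle estimate, applied with $s^{*}=s$, upgrades to $u_{k}(s_{k})\to u(s)$ in $H^{1}$ for every sequence $s_{k}\to s$.

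Finally, the velocity bound \eqref{e.boundlip2} follows from weak* lower semicontinuity of the $W^{1,\infty}$-seminorms applied to the discrete bound \eqref{e.boundlip}; monotonicity and surjectivity of $t$ onto $[0,T]$ are preserved under uniform convergence and follow from $t(0)=0$, $t(S)=T$. The hard part is the strong $H^{1}$-convergence for $u_{k}$: no uniform higher-regularity holds along the interior of $u$-phases, so no standard compact embedding applies. The construction of Section \ref{s.gf} is tailored precisely for this purpose---Corollary \ref{c.5} transfers strong convergence of initial data, parameters, and arc-length into strong uniform convergence along the gradient-flow pieces.
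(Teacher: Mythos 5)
Your proposal is correct and follows essentially the same route as the paper: weak* compactness in $W^{1,\infty}$ giving pointwise weak limits, then upgrading $u_k$ to strong $H^1$-convergence by distinguishing the phases — Proposition~\ref{p.6} at points where $u_k$ equals a minimizer, and Proposition~\ref{p.continuitygradflow}/Corollary~\ref{c.5} (with an extracted limit of the arc-length parameter in $[0,+\infty]$ and strongly convergent initial data) in the interior of $u$-phases — and finally the $1$-Lipschitz bound for $u_k(s_k)\to u(s)$. The only cosmetic difference is your ``dense subset $D$ plus equicontinuity'' extension, which is unnecessary: the case analysis already covers every $s$, and since each strong sub-limit is forced to equal the weak limit $u(s)$, convergence at every point holds along the original subsequence.
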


\begin{proof}
In view of~\eqref{e.boundlip}, we have that there exists a triple $(t,u,z)\in W^{1,\infty}( [0,S] ; [0,T] \times H^{1}(\Om;\R^{2}) \times L^{2}(\Om))$ such that, up to a subsequence, $(t_{k},u_{k},z_{k}) \rightharpoonup (t,u,z)$ weakly* in $W^{1,\infty} ( [0,S] ; [0,T] \times H^{1}(\Om;\R^{2}) \times L^{2}(\Om))$. In particular,  for every $s\in[0,S]$ if $s_k \to s$ we have
\begin{equation}\label{e.1106}
t_{k}(s_k)\to t(s)\,, \qquad u_{k}(s_k)\rightharpoonup u(s) \text{ in~$H^1(\Omega;\R^2)$}\,,\qquad z_{k}(s_k)\rightharpoonup z(s) \text{ in~$H^{1}(\Om)$}\,,
\end{equation} 
the latter being a consequence of the boundedness of~$z_{k}(\sigma)$ in~$H^{1}(\Om)$ uniformly for~$\sigma\in[0,S]$. Inequality~\eqref{e.boundlip2} can be obtained from~\eqref{e.boundlip} by integration and by weak lower semicontinuity of the norms. 
It is easy to check that $s \mapsto t(s)$ is non-decreasing and that $t_k(S) = T \to t(S) = T$.

It remains to show that, along the same subsequence,~$u_{k}$ converges strongly in~$H^1(\Omega; \R^2)$ pointwise  in $[0,S]$. Let us fix~$s\in[0,S]$. For every~$k$, let $i_{k}\in\{1,\ldots,k\}$ and~$j_{k}\in\mathbb{N} \cup \{-1\}$ be such that $s \in [s^{k}_{i_{k},j_{k}},s^{k}_{i_{k},j_{k}+1})$. We have to distinguish between three different cases: up to a further (non-relabelled) subsequence, either $s\in[s^{k}_{i_{k},-1},s^{k}_{i_{k},0})$, or $s\in[s^{k}_{i_{k},j_{k}},s^{k}_{i_{k},j_{k}+\frac{1}{2}})$, or $s\in[s^{k}_{i_{k},j_{k}+\frac{1}{2}},s^{k}_{i_{k},j_{k}+1})$ for every~$k$, see~\eqref{e.interpolantt}-\eqref{e.interpolantz}.

In the first case,  for every index~$k$ we have that $u_{k}(s)=u^{k}_{i_{k}-1}= u_{k}(s^{k}_{i_{k}-1})$ and $z_{k}(s)=z^{k}_{i_{k}-1} = z_{k}(s^{k}_{i_{k}-1})$. If $i_k =1$ for infinitely many~$k$, then $u_{k}(s^{k}_{i_{k}-1})= u_{k}(0)=u_{0}$ and there is nothing to show. Let us therefore assume that $i_{k}\geq 2$ for every~$k$. Hence, by \eqref{e.minu2} we have
\begin{displaymath}
u_{k}(s) = u^k_{i_k-1}=\argmin\,\{\F(t^{k}_{i_{k}-1},u,z^{k}_{i_k-1}):\, u\in\U\} = \argmin\,\{\F(t^{k}_{i_{k}-1},u,z_{k}(s)):\, u\in\U\} .
\end{displaymath}
Since $s^{k}_{i_{k},0}-s^{k}_{i_{k}-1}=\tau_{k}\to0$ as $k\to \infty$ and $t_{k}(s)=t^{k}_{i_{k}-1}+(s-s^{k}_{i_{k}-1})$, we have that $t_{i_{k}-1}\to t(s)$. Moreover, $z_{k}(s)\rightharpoonup z(s)$ weakly in~$H^{1}(\Om)$ by~\eqref{e.1106}. Thus, applying Proposition~\ref{p.6} we deduce that $u_{k}(s) \to \bar{u}$ strongly in~$H^1(\Omega;\R^2)$ where $\bar{u} \in \argmin\,\{\F(t(s) ,u,z(s)):\, u\in\U\}$. Since $u_k (s) \weakto u(s)$ by \eqref{e.1106}, it follows that $\bar{u} = u (s)$ and that the $u_{k}(s) \to u (s)$ strongly in~$H^1(\Omega;\R^2)$.

In the second case we have $s\in[s^{k}_{i_{k},j_{k}},s^{k}_{i_{k},j_{k}+\frac{1}{2}})$ for every~$k$. Here we want to apply Proposition~\ref{p.continuitygradflow}, and we use explicitly the parametrization~$\rho_{k}$ of the gradient flow~$\omega^{k}_{i_{k},j_{k}}$ from Corollary~\ref{c.gfu}. 
As a first step, we show that, up to a subsequence, the initial condition $\omega^{k}_{i_{k},j_{k}} (0) = u^k_{i_k,j_k}$ converges to some $u^*$ strongly in $H^1(\Omega;\R^2)$. If, up to a further subsequence, $j_{k}=0$ for every $k$, we have $\omega^{k}_{i_{k}, 0} (0) = u^k_{i_k,0} =  u^k_{i_k-1}$, thus by \eqref{e.minu2} we know that 
\begin{equation*} 
	u^{k}_{i_{k}-1} = \argmin\,\{ \F ( t^{k}_{i_{k}-1}, u, z^{k}_{i_{k}-1}):\, u\in\U\} \,.
\end{equation*} 
Being $ t^{k}_{i_{k}-1} = t_{k}(s) - \tau_k$, we have that $t^{k}_{i_{k}-1}\to t(s)$ as $k\to\infty$, while, along a subsequence, we have $z^{k}_{i_{k}-1} \rightharpoonup z^*$ weakly in~$H^{1}(\Om)$ for some $z^*\in \Z$. Therefore, again by Proposition~\ref{p.6} we get that $u^{k}_{i_{k}, 0} \to u^*$ in~$H^{1}(\Om;\R^{2})$. 
%
%
In a similar way, if~$j_{k}\geq 1$ for every $k$ large enough, we have~$\omega^{k}_{i_{k}, j_{k}}(0)= u^{k}_{i_{k}, j_{k}}$ and, using~\eqref{e.minu}, we get 
\begin{equation*} 
u^{k}_{i_{k},j_{k}} = \argmin\,\{ \F ( t^{k}_{i_{k}}, u, z^{k}_{i_{k},j_{k}-1}):\, u\in\U\} \,.
\end{equation*} 
Then $t^{k}_{i_{k}} =t_{k}(s) \to t(s)$ as $k\to\infty$, while, up to a subsequence,  $z^{k}_{i_{k},j_{k}-1} \rightharpoonup z^*$ weakly in~$H^{1}(\Om)$ for some $z^*\in \Z$. As above, by Proposition~\ref{p.6} we conclude that $u^{k}_{i_{k}, j_{k}} \to u^{*}$ in~$H^{1}(\Om;\R^{2})$. 
In all the cases, we have that the initial condition of the reparametrized gradient flow~$\omega^{k}_{i_{k},j_{k}}$ converges in~$H^1(\Omega;\R^2)$ to some~$u^*$. Now, let us consider the parametrization $\rho_{k}(s - s^{k}_{i_{k},j_{k}})\in [0,+\infty)$. Up to a subsequence, we may assume that~$\rho_{k}(s - s^{k}_{i_{k},j_{k}})\to \overline{\rho}\in[0,+\infty]$. Since $t_{k}(s)\to t(s)$, $z_{k}(s)\rightharpoonup z(s)$ weakly in~$H^{1}(\Om)$, and $\omega^{k}_{i_{k},j_{k}}(0) \to u^*$ in~$H^1(\Omega;\R^2)$, from Corollary~\ref{c.5} we deduce that $u_{k}(s) = \omega^{k}_{i_{k},j_{k}}(s - s^{k}_{i_{k},j_{k}})$ admits a strong limit~$\widetilde{u}$ in~$H^{1}(\Om;\R^{2})$. In view of~\eqref{e.1106} $\widetilde{u}= u(s)$ and $u_{k}(s) \to u (s)$ strongly in~$H^1(\Omega;\R^2)$.

Finally, let us consider the case $s\in[s^{k}_{i_{k},j_{k}+\frac{1}{2}},s^{k}_{i_{k},j_{k}+1})$ for every~$k$. Then, $t_{k}(s)=t^{k}_{i_{k}}$ and $u_{k}(s)=u^{k}_{i_{k},j_{k}+1}$. By construction of~$u^{k}_{i_{k},j_{k}+1}$ in~\eqref{e.minu}, we have that
\begin{displaymath}
u_{k}(s)=u^{k}_{i_{k},j_{k}+1}=\argmin\,\{\F(t^{k}_{i_{k}},u,z^{k}_{i_{k},j_{k}}):\, u\in\U\}\,.
\end{displaymath}
Again, we know that $t^{k}_{i_k} \to t(s)$ and that, up to subsequence, $z^{k}_{i_{k},j_{k}}\rightharpoonup z^*$ weakly in~$H^{1}(\Om)$ for some~$z^* \in\Z$. We are in a position to apply again Proposition~\ref{p.6}, which implies the strong convergence of~$u_{k}(s)$ to~$u(s)$ in~$H^1(\Omega;\R^2)$.

Combining the three cases described above, we have shown that every subsequence of~$u_{k}(s)$ admits a further subsequence converging to~$u(s)$ in~$H^1(\Omega;\R^2)$. Hence, the whole sequence~$u_{k}(s)$ converges to~$u(s)$ in~$H^1(\Omega;\R^2)$ for every $s\in [0,S]$. Noticing that, by~\eqref{e.boundlip}, $\| u_{k}( s_{k}) - u_{k}(s) \|_{H^{1}}  \leq |s_{k} - s|$, we conclude the proof.\qed

\end{proof}

%

We are now in a position to prove the lower energy-dissipation inequality for the triple~$(t,u,z)$.

\begin{proposition}\label{p.3}
Let $g$, $u_{0}$, and~$z_{0}$ be as in Theorem \ref{t.1}. Let $(t,u,z)\colon[0,S]\to[0,T]\times\U\times\Z$ be given by Proposition \ref{p.compactness}. Then $(t,u,z)$ satisfies~$(a)$-$(d)$ of Theorem \ref{t.1} and for every~$s\in[0,S]$ it holds 
\begin{equation}\label{e.3e}
\begin{split}
\F(t(s),u(s),z(s)) \leq&  \,\F(0,u_{0},z_{0})-\int_{0}^{s}|\partial_{u}\F|(t(\sigma),u(\sigma),z(\sigma))\, \| u' (s) \|_{H^1} \,\di\sigma \\
&-\int_{0}^{s}|\partial^-_{z}\F|(t(\sigma),u(\sigma),z(\sigma))\, \| z' (s) \|_{L^2} \,\di\sigma +\int_{0}^{s}\P(t(\sigma),u(\sigma),z (\sigma)) \,t'(\sigma)\,\di\sigma \,.
\end{split}
\end{equation}
\end{proposition}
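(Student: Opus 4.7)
My approach is to pass to the $\liminf$ in the discrete inequality \eqref{e.78} supplied by Proposition \ref{p.7}, exploiting the convergence properties established in Proposition \ref{p.compactness} together with the lower semicontinuity results of Section \ref{prop:slopes} and Lemma \ref{l.lscFE}. I split the argument according to the properties $(a)$--$(d)$ to be verified and the inequality \eqref{e.3e}.

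First, items $(a)$ and $(b)$ are immediate from Proposition \ref{p.compactness}: weak-$\ast$ compactness in $W^{1,\infty}$ yields the regularity, lower semicontinuity of norms under weak convergence preserves the bound $|t'|+\|u'\|_{H^1}+\|z'\|_{L^2}\le 1$, and the uniform $H^1$-bound on $z_k(\sigma)$ gives $z\in L^\infty([0,S];H^1(\Omega))$; monotonicity of $t$ and $t(S)=T$ pass to the limit as well. Item $(c)$ follows from the fact that each $z_k(\cdot)$ is pointwise non-increasing and takes values in $[0,1]$, combined with the pointwise strong $L^2$-convergence $z_k(s)\to z(s)$ guaranteed by Proposition \ref{p.compactness} (via weak $H^1$-convergence and Rellich).

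For item $(d)$ fix a continuity point $s$ of $(t,u,z)$; by Definition \ref{d.continuitypoint} there exists a sequence $\bar s_n\to s$ with $t(\bar s_n)\neq t(s)$, say WLOG $t(\bar s_n)>t(s)$. For each $n$, for all $k$ sufficiently large the discrete time $t_k$ must strictly advance in the interval between $s$ and $\bar s_n$, so there is an index $i_n^k$ with $s_{i_n^k}^k\in[s\wedge \bar s_n,s\vee\bar s_n]$; a diagonal extraction yields $s_{i_k}^k\to s$. By Proposition \ref{p.7}, both slopes vanish at $(t_k(s_{i_k}^k),u_k(s_{i_k}^k),z_k(s_{i_k}^k))$; using the strong $H^1$-convergence of $u_k(s_{i_k}^k)\to u(s)$, the weak $H^1$-convergence of $z_k(s_{i_k}^k)\to z(s)$, and $t_k(s_{i_k}^k)\to t(s)$, Lemma \ref{l.3} yields $|\partial_u\F|(t(s),u(s),z(s))=0$ by continuity, while Lemma \ref{l.2} yields $|\partial_z^-\F|(t(s),u(s),z(s))=0$ by lower semicontinuity.

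For the inequality \eqref{e.3e} I rewrite \eqref{e.78} as $\F(t_k(s),u_k(s),z_k(s))+D_k^u(s)+D_k^z(s)\le \F(0,u_0,z_0)+P_k(s)$, where $D_k^u$, $D_k^z$ are the two dissipation integrals and $P_k$ is the power integral. Since $\liminf(A_k+B_k+C_k)\ge \liminf A_k+\liminf B_k+\liminf C_k$ when all sequences are bounded below, it suffices to establish: (i) $\F(t(s),u(s),z(s))\le\liminf_k \F(t_k(s),u_k(s),z_k(s))$, which is Lemma \ref{l.lscFE}; (ii) $P_k(s)\to \int_0^s\P(t,u,z)\,t'\,d\sigma$, which I plan to prove by first converting the integral via the substitution $\tau=t_k(\sigma)$ on the pieces where $t_k'=1$ (the only ones contributing, since $u_k,z_k$ are constant there), thus obtaining a Riemann-type sum over the original time variable $[0,t_k(s)]$ whose convergence follows from the strong $H^1$-convergence of $u_k$, weak $H^1$-convergence of $z_k$, continuity of $\partial_\strain W$ (Lemma \ref{l.HMWw}), and the $L^q$-regularity of $\dot g$; (iii) the two lower semicontinuity estimates for the dissipation integrals.

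For (iii), the $u$-dissipation is the easier of the two: Lemma \ref{l.3} gives $|\partial_u\F|(t_k,u_k,z_k)(\sigma)\to|\partial_u\F|(t,u,z)(\sigma)$ pointwise in $\sigma$, the sequence is uniformly bounded (by (c) of Lemma \ref{l.HMWw} together with the uniform $W^{1,\tilde p}$-bound of Corollary \ref{c.3}), and $u_k'\weakto u'$ weakly in $L^2([0,S];H^1)$, so the convexity of $v\mapsto \int_0^s \alpha(\sigma)\|v(\sigma)\|_{H^1}d\sigma$ (with $\alpha\ge 0$) yields the required weak lower semicontinuity. The main obstacle is the corresponding estimate for the $z$-dissipation, because $|\partial_z^-\F|$ is only lower semicontinuous (Lemma \ref{l.2}) and may take the value $+\infty$, so neither dominated convergence nor the same direct convexity argument applies. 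I plan to circumvent this by using the dual characterization \eqref{e.slope4}: for any admissible $\psi\in\Z$ with $\psi\le 0$, $\|\psi\|_{L^2}\le 1$, one has $|\partial_z^-\F|(t_k,u_k,z_k)\ge -\partial_z\F(t_k,u_k,z_k)[\psi]$, and the right-hand side is continuous under our convergences and uniformly bounded; hence the weak lower semicontinuity argument above applies to $-\partial_z\F(t_k,u_k,z_k)[\psi]\,\|z_k'\|_{L^2}$, and a final supremum over $\psi$ (justified by the monotone convergence / measurable-selection structure of \eqref{e.slope4}) produces the desired lower bound. Combining (i)--(iii) yields \eqref{e.3e}.
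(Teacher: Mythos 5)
Your proposal follows the paper's skeleton: items $(a)$--$(c)$ from Proposition~\ref{p.compactness}, item $(d)$ by extracting a sequence of nodes converging to the continuity point, using the discrete equilibria \eqref{e.77} and passing to the limit via Lemmata~\ref{l.2} and~\ref{l.3}, and \eqref{e.3e} by taking the liminf in \eqref{e.78}. Where you genuinely diverge is in the two technical blocks. For the dissipation integrals the paper does not split them into a frozen-weight convexity argument plus duality: it treats the joint integrand $l(x,\xi)=|\partial_u\F|\,\|u'\|_{H^1}+|\partial_z^-\F|\,\|z'\|_{L^2}$ (defined with a case distinction when the unilateral slope is $+\infty$) and invokes Balder's lower semicontinuity theorem, with $x_k=(t_k,u_k,z_k)$ converging in measure and $\xi_k=(u_k',z_k')$ converging weakly in $L^1$. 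For the power term the paper does not change variables in time; it proves that $\partial_{\strain}W(z_k,\strain(u_k+g(t_k)))$ converges strongly in $L^{q'}([0,S];L^2)$ and that $\strain(\dot g(t_k))\,t_k'$ converges weakly in $L^q([0,S];L^2)$ (by a density argument), and pairs the two. Your argument for the $u$-dissipation (pointwise convergence of the slope by Lemma~\ref{l.3}, uniform bound, freeze the weight, weak lower semicontinuity of $v\mapsto\int\alpha\|v\|_{H^1}$) is correct and is a hands-on substitute for Balder on that term; your Riemann-sum-in-time route for the power term can be made to work, but identifying the limit as $\int_0^s\P(t,u,z)\,t'\,\di\sigma$ requires the measure-theoretic bookkeeping (a.e.\ single-valuedness of $t^{-1}$ on the image of $\{t'>0\}$, identification of the limits of the piecewise-constant-in-time interpolants, domination by $\|\dot g(\tau)\|_{W^{1,p}}\in L^q$), which is comparable in effort to the paper's pairing argument.

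The one real soft spot is the final step of your $z$-dissipation estimate. For a fixed test function $\psi$ your argument yields $\liminf_k\int_0^s|\partial_z^-\F|(t_k,u_k,z_k)\,\|z_k'\|_{L^2}\,\di\sigma\ \ge\ \int_0^s\bigl(-\partial_z\F(t,u,z)[\psi]\bigr)_+\|z'\|_{L^2}\,\di\sigma$, and ``a final supremum over $\psi$'' then gives only $\sup_\psi$ of these integrals, which is in general strictly smaller than the needed $\int_0^s\sup_\psi(\cdots)\,\di\sigma=\int_0^s|\partial_z^-\F|\,\|z'\|_{L^2}\,\di\sigma$: the interchange of supremum and integral is precisely where the content lies (it is what the citation of Balder buys in the paper), and it is not automatic because the near-optimal $\psi$ depends on $\sigma$. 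The gap is closable along your lines, but it must be written out: by separability in $H^1$ of the sets $\{\psi\in\Z:\ \psi\le0,\ \|\psi\|_{L^2}\le1,\ \|\psi\|_{L^\infty}\le M\}$, $M\in\N$, one finds a single countable family $\{\psi_n\}$ with $|\partial_z^-\F|(t(\sigma),u(\sigma),z(\sigma))=\sup_n\bigl(-\partial_z\F(t(\sigma),u(\sigma),z(\sigma))[\psi_n]\bigr)$ for every $\sigma$; then apply your frozen-weight argument to the finite maxima $G_N:=\max_{n\le N}(-\partial_z\F[\psi_n])_+$ (the discrete counterparts $G_N^k$ converge pointwise and are uniformly bounded), and finally let $N\to\infty$ by monotone convergence. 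The $+\infty$ convention causes no trouble here, since it only matters where $\|z'\|_{L^2}=0$, where the integrand vanishes anyway. With that step supplied your proof is complete; as stated, it is asserted rather than proved.
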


\begin{proof}
We have already seen that the function~$s\mapsto t(s)$ is non decreasing and such that~$t(S)=T$. Thus, condition~$(b)$ is satisfied.

From inequality~\eqref{e.boundlip2} we get~$(a)$. Moreover, being $s\mapsto z_{k}(s)$ non-increasing for every $k\in\mathbb{N}$, it is clear that the pointwise limit $s\mapsto z(s)$ is non-increasing, so that~$(c)$ holds.

Let us now show property~$(d)$. For every $s\in[0,S]$ of continuity for~$(t,u,z)$ we can find a sequence $s_{m}\in[0,S]$ such that $s_{m}\to s$ and $t(s_{m})\neq t(s)$ for every~$m$. Without loss of generality, we may assume that $s_{m} \leq s$. Since~$t_{k}$ converges pointwise to~$t$, we can construct a subsequence~$k_{m}$ such that~$t_{k_{m}}(s_{m}) \neq t_{k_{m}}(s)$ for every~$m$. By construction of the interpolation functions~$t_{k_{m}}$ (see~\eqref{e.interpolantt}-\eqref{e.interpolantz}), there exists a sequence of indexes~$i_{m}\in\{1,\ldots , k_{m}\}$ such that, up to a further subsequence, one of the following conditions is satisfied:
\begin{displaymath}
s \in [s^{k_{m}}_{i_{m}-1} , s^{k_{m}}_{i_{m}, 0}) \qquad \text{or}\qquad 
s_{m} \leq s^{k_{m}}_{i_{m}-1} < s^{k_{m}}_{i_{m}, 0} \leq s \qquad\text{or}\qquad
 s^{k_{m}}_{i_{m}-1} < s_{m}\leq s^{k_{m}}_{i_{m}, 0} \leq s\,.
\end{displaymath}
In any case, since $|s^{k_{m}}_{i_{m}, 0} - s^{k_{m}}_{i_{m}-1}| \leq \tau_{k_{m}}$ and $s_{m} \to s$, we have that $s^{k_{m}}_{i_{m}-1} \to s$ as $m\to\infty$. In view of~\eqref{e.77} of Proposition~\ref{p.7}, we know that
\begin{equation}\label{e.88}
\begin{split}
& |\partial_{u}\F| \big (t_{k_{m}}(s^{k_{m}}_{i_{k_{m}-1}}), u_{k_{m}}(s^{k_{m}}_{i_{k_{m}-1}}),z_{k_{m}}(s^{k_{m}}_{i_{k_{m}-1}}) \big ) = 0 \,,\\
& |\partial_{z}^{-}\F| \big( t_{k_{m}}(s^{k_{m}}_{i_{k_{m}-1}}), u_{k_{m}}(s^{k_{m}}_{i_{k_{m}-1}}),z_{k_{m}}(s^{k_{m}}_{i_{k_{m}-1}}) \big) = 0 \,.
\end{split}
\end{equation}
By Proposition~\ref{p.compactness} we know that $t_{k_{m}}(s^{k_{m}}_{i_{m}-1}) \to t(s)$ in~$[0,T]$, $u_{k_{m}}(s^{k_{m}}_{i_{m}-1})\to u(s)$ in~$H^1(\Omega; \R^2)$, and $z_{k_{m}}(s^{k_{m}}_{i_{k_{m}}-1})\rightharpoonup z(s)$ weakly in~$H^{1}(\Om)$. Hence, applying Lemmata~\ref{l.2} and~\ref{l.3} and passing to the limit in~\eqref{e.88} as~$m\to \infty$ we get the equilibrium conditions~$(d)$.


\separe

The proof of the lower energy-dissipation inequality~\eqref{e.3e} is divided into two steps.
Clearly, the starting point is \eqref{e.78}, i.e.,
\begin{equation}\label{e.78bis}
\begin{split}
\F(t_{k}(s),&u_{k}(s), z_{k}(s))\leq\F(0,u_{0},z_{0})-\int_{0}^{s}|\partial_{u}\F|(t_{k}( \sigma ),u_{k}( \sigma ),z_{k}( \sigma ))\,\| u'_k (s) \|_{H^1} \, \di  \sigma \\
&\quad-\int_{0}^{s}|\partial_{z}^{-}\F|(t_{k}( \sigma ),u_{k}( \sigma ),z_{k}( \sigma ))\,\| z'_k(s) \|_{L^2}\,\di  \sigma + \int_{0}^{s} \P(t_{k}( \sigma ),u_{k}( \sigma ),z_{k}( \sigma )) \,t'_{k}( \sigma )\,\di  \sigma \,.
\end{split}
\end{equation}

{\bf Step 1: Slopes.}
By \eqref{e.1106} and Lemma \ref{l.lscFE} we get 
$$
	\F ( t(s) , u(s) , z(s) ) \le \liminf_{k \to \infty}  \F ( t_k (s) , u_k(s) , z_k(s)) .
$$
Let us take the limsup in the right hand side of \eqref{e.78bis}. 
The inequality
\begin{align}
	\int_{0}^{s}|\partial_{u}\F|& (t(\sigma),u(\sigma),z(\sigma))\, \| u' (s) \|_{H^1} +  |\partial^-_{z}\F|(t(\sigma),u(\sigma),z(\sigma))\, \| z' (s) \|_{L^2}   \,\di\sigma \label{e.balder4} \\
	& \le
	\liminf_{k \to \infty}
	\int_{0}^{s}|\partial_{u}\F|(t_k (\sigma),u_k(\sigma),z_k(\sigma))\, \| u'_k (s) \|_{H^1}  + |\partial^-_{z}\F|(t_k(\sigma),u_k(\sigma),z_k(\sigma))\, \| z'_k (s) \|_{L^2} \,\di\sigma \nonumber
\end{align}
follows, for instance, from \cite[Theorem 3.1]{Balder_RCMP85}. Let us see how our setting fits into the 
framework and the notation of \cite{Balder_RCMP85}. We set $X = [0,T] \times H^1(\Om;\R^{2})
\times \{z\in H^{1}(\Om;[0,1]):\, \|z\|_{H^{1}}\leq R\}$ and $\Xi = H^1  (\Omega ; \R^2)  \times L^2 (\Omega)$. 
The space~$X$ is endowed with the strong topology in~$[0,T]\times H^{1}(\Om;\R^{2})$ and the weak topology in the ball of~$H^{1}(\Om)$. Being the latter metrizable,~$X$ is a metric space. The space~$\Xi$ is endowed with the weak topology. 
For $x = ( t , u , z)$ and for $\xi = (  u'  , z' )$ the integrand is
$$
l ( x , \xi) = \left\{ \begin{array}{ll}
| \partial_u \F |  ( t , u ,z)  \ \| u' \|_{H^1}  + | 
\partial_z^- \F | ( t , u ,z) \ \| z' \|_{L^2} & \text{if $\|z'\|_{L^{2}} \neq 0$}\,,\\[1mm]
| \partial_u \F |  ( t , u ,z)  \ \| u' \|_{H^1} & \text{if $\|z'\|_{L^{2}} =0$}\,.
\end{array}\right.
$$
Clearly $l \ge 0$. Let us check that $l ( x ,\cdot)=l(t,u,z,\cdot ,\cdot)$ is convex in $\Xi = H^1(\Om;\R^2) \times L^2(\Om)$. If $| \partial_z^- \F| (t,u,z)=+\infty$, then
$$
l ( x , \xi)  = \left\{ \begin{array}{ll}
+\infty & \text{if $\|z'\|_{L^{2}} \neq 0$}\,,\\[1mm]
| \partial_u \F |  ( t , u ,z)  \ \| u' \|_{H^1} & \text{if $\|z'\|_{L^{2}} =0$}
\end{array}\right. = | \partial_u \F |  ( t , u ,z)  \ \| u' \|_{H^1} + \chi_{\{ z' = 0\} }\,,
$$
where~$\chi$ denotes the indicator function. Hence, $l(x,\cdot)$ is convex, since it is the sum of convex functions. If~$|\partial_{z}^{-}\F|(t,u,z)<+\infty$, then
$$
l ( x , \xi) = | \partial_u \F |  ( t , u ,z)  \ \| u' \|_{H^1}  + | \partial_z^- \F | ( t , u ,z) \ \| z' \|_{L^2} 
$$
which is convex w.r.t~$(u',z')$. We now show that $l ( \cdot , \cdot)$ is sequentially lower semicontinuous in $X \times \Xi$. Let $(u'_{k}, z'_{k}) \rightharpoonup (u', z')$ (weakly) in~$\Xi$ and let $(t_{k},u_{k}, z_{k}) \to (t, u, z)$ in the metric of~$X$, that is, $t_{k}\to t$, $u_{k} \to u$ in~$H^{1}(\Om;\R^{2})$, and $z_{k}\rightharpoonup z$ in~$H^{1}(\Om)$. We notice that by Lemma~\ref{l.3} and the fact that~$|\partial_{u} \F|<+\infty$ on~$X$,
\begin{equation}\label{e.balder1}
|\partial_{u} \F| (t,u,z)\, \|u'\|_{H^{1}} \leq \liminf_{k\to\infty} \, |\partial_{u} \F| (t_{k}, u_{k}, z_{k}) \, \|u'_{k}\|_{H^{1}}\,.
\end{equation}
 If~$\| z' \|_{L^{2}} =0$, then~\eqref{e.balder1} is enough to show lower semicontinuity of~$l$. If~$\|z'\|_{L^{2}} \neq 0$ and~$|\partial_{z}^{-} \F|( t,u,z)=+\infty$, by the lower semicontinuity of the~$L^{2}$-norm we have that $\|z'_{k}\|_{L^{2}}> \delta>0$ for some positive~$\delta$ and for every~$k$ sufficiently large. Thus,
\begin{equation}\label{e.balder2}
\liminf_{k\to\infty} \, |\partial_{z}^{-} \F| (t_{k}, u_{k}, z_{k})\, \|z'_{k}\|_{L^{2}} \geq \delta \liminf_{k\to\infty} \,  |\partial_{z}^{-} \F| (t_{k}, u_{k}, z_{k}) =+\infty\,,
\end{equation}
where the last equality follows from Lemma~\ref{l.2}. If, instead,~$\|z'\|_{L^{2}} \neq 0$ and $|\partial_{z}^{-} \F|( t,u,z) < +\infty$, then Lemma~\ref{l.2} implies
\begin{equation}\label{e.balder3}
|\partial_{z}^{-} \F| (t,u,z)\, \|z'\|_{L^{2}} \leq \liminf_{k\to\infty} \, |\partial_{z}^{-} \F| (t_{k}, u_{k}, z_{k}) \, \|z'_{k}\|_{L^{2}} \,.
\end{equation}
Collecting inequalities~\eqref{e.balder1}-\eqref{e.balder3} we deduce the lower semicontinuity of~$l$.

By Proposition \ref{p.compactness} we know that $x_k = ( t_k , u_k , z_k)$ converges pointwise in~$[0,S]$ to $x = ( t , u , z)$ w.r.t.~the metric of $X$, and thus in 
measure. Moreover, again by Proposition~\ref{p.compactness}, we have 
that $\xi_k = ( u'_k , z'_k ) $ converges to $ \xi = (u' , z')$ weakly* in 
$L^\infty ( (0,S) ; H^1 (\Omega; \R^2)  \times L^2 (\Omega) )$ and thus weakly in~$L^1 ( (0,S) ; H^1 (\Omega; \R^2)  \times L^2 (\Omega) )$. Hence,~\eqref{e.balder4} holds.

{\bf Step 2: Power.} We claim that 
\begin{equation}\label{e.91}
\int_{0}^{s} \P ( t( \sigma ), u( \sigma ), z( \sigma ) )\, t'( \sigma )\,\di \sigma = \lim_{k\to\infty}\,\int_{0}^{s} \P ( t_{k}( \sigma ), u_{k}( \sigma ), z_{k}( \sigma ) )\, t'_{k}( \sigma )\,\di \sigma \,.
\end{equation}


Let us fix $s\in[0,S]$. By definition~\eqref{e.power} of~$\P$ we have that
\begin{displaymath}
\int_{0}^{s} \P(t_{k}(\sigma), u_{k}(\sigma), z_{k}(\sigma) )\, t'_{k}(\sigma) \,\di\sigma = \int_{0}^{s}  \int_{\Om} \partial_{\strain} W \big( z_{k}(\sigma) , \strain (u_{k}(\sigma) + g(t_{k}(\sigma)) ) \big) {\,:\,} \strain ( \dot{g}(t_{k} (\sigma)) )\, t'_{k}(\sigma) \,\di x\,\di\sigma\,.
\end{displaymath}
In order to show~\eqref{e.91}, we will prove that $\strain ( \dot{g}(t_{k} (\cdot)) ) t'_{k}(\cdot) \rightharpoonup \strain ( \dot{g}(t (\cdot)) ) t'(\cdot)$ in $L^{q}([0,S]; L^{2}(\Om;\mathbb{M}^{2}_{s}))$ and that $  \partial_{\strain} W ( z_{k}(\cdot) , \strain (u_{k}(\cdot) + g(t_{k}(\cdot)) )) \to \partial_{\strain} W ( z(\cdot) , \strain (u(\cdot) + g(t(\cdot)) ))$ in $L^{q'}([0,S]; L^{2}(\Om;\mathbb{M}^{2}_{s}))$.

Let us start with the latter. Remember that 
$$ \partial_{\strain} W \big( z , \strain (u + g ) \big)  =  2 h(z) \big(  \mu \strain_{d} ( u +g) + \kappa \strain_{v}^{+} (u+g) \big) - 2\kappa \strain_{v}^{-} ( u + g) .  $$ 
For every $\sigma\in[0,S]$, by Lemma~\ref{l.HMWw} and since~$t_{k}\to t$,~$u_{k}\to u$ in~$H^{1}(\Om;\R^{2})$, and~$z_{k} \to z$ in~$L^{2}(\Om)$, we have that, up to a not relabelled subsequence, $ \partial_{\strain} W ( z_{k}(\sigma) , \strain (u_{k}(\sigma) + g(t_{k}(\sigma)) )) \to \partial_{\strain} W ( z(\sigma) , \strain (u(\sigma) + g(t(\sigma)) ))$ a.e.~in~$\Om$. Since~$z_{k}(\sigma)$ takes values in~$[0,1]$, we can apply~$(c)$ of Lemma~\ref{l.HMWw} to deduce that 
\begin{equation}\label{e.pow2}
	\big| \partial_{\strain} W \big( z_{k}(\sigma) , \strain (u_{k}(\sigma) + g(t_{k}(\sigma)) )\big)\big|\leq C |\strain (u_{k}(\sigma)+ g(t_{k}(\sigma)) ) |
\end{equation}
for some positive constant~$C$ independent of~$k$. Therefore, by dominated convergence we get that $\partial_{\strain} W ( z_{k}(\sigma) , \strain (u_{k}(\sigma) + g(t_{k}(\sigma)) ))$ converges to~$\partial_{\strain} W ( z(\sigma) , \strain (u(\sigma) + g(t(\sigma)) ))$ strongly in~$L^{2}(\Om;\mathbb{M}^{2}_{s})$. Moreover, being~$u_{k}$ and~$g\circ t_{k}$ bounded in $L^{\infty}([0,S];H^{1}(\Om;\R^{2}))$, in view of~\eqref{e.pow2} and of the previous convergence we deduce that $ \partial_{\strain} W ( z_{k}(\cdot) , \strain (u_{k}(\cdot) + g(t_{k}(\cdot)) ))$ converges to $ \partial_{\strain} W ( z(\cdot) , \strain (u(\cdot) + g(t(\cdot)) ))$ strongly in $L^{q'}([0,S]; L^{2}(\Om;\mathbb{M}^{2}_{s}))$ (actually, in $L^{\nu}([0,S]; L^{2}(\Om;\mathbb{M}^{2}_{s}))$ for every $\nu<+\infty$).

As for $\strain ( \dot{g}(t_{k} (\cdot)) ) \,t'_{k}(\cdot)$, we proceed by a density argument. Indeed, by density of $C^{1}_{c}([0,T]; L^{2}(\Om;\mathbb{M}^{2}_{s}))$ in $L^{q}([0,T]; L^{2}(\Om;\mathbb{M}^{2}_{s}))$, for every $\delta>0$ there exists~$E\in C^{1}_{c}([0,T]; L^{2}(\Om;\mathbb{M}^{2}_{s}))$ such that 
\begin{equation}\label{e.pow3}
\| E - \strain ( \dot{g} ) \|_{L^{q}([0,T]; L^{2}(\Om;\mathbb{M}^{2}_{s}))}\leq \delta\,.
\end{equation}
Using a change of variables, that~$t'_{k}(\sigma)\leq 1$ for a.e.~$\sigma\in[0,S]$, and~\eqref{e.pow3}, we have that
\begin{equation}\label{e.pow4}
\begin{split}
\int_{0}^{S}  \| E (t_{k} (\sigma))\,t'_{k}(\sigma) - \strain (\dot{g} ( t_{k}(\sigma)))\, t'_{k}(\sigma)\|_{L^2}^{q}\,\di\sigma &\leq
\int_{0}^{S} \| E (t_{k} (\sigma))  - \strain (\dot{g} ( t_{k}(\sigma)))\|_{L^2}^{q} \, t'_{k}(\sigma)\,\di\sigma\\
& \leq \int_{0}^{T} \|E (t) - \strain (\dot{g}(t))\|_{L^2}^{q}\,\di t \leq \delta^{q}\,.
\end{split}
\end{equation}
The same inequality holds for $E(t(\cdot))\,t'(\cdot) - \strain(\dot{g}(t(\cdot)))\, t'(\cdot)$.

Let us now fix $\varphi \in L^{q'}([0,S]; L^{2}(\Om;\mathbb{M}^{2}_{s}))$. Simply by adding and subtracting $(E\circ t_{k})\, t'_{k}$ and~$(E\circ t)\, t'$, we have that
\begin{equation}\label{e.pow5}
\begin{split}
\int_{0}^{S} \int_{\Om}   \Big( \strain (\dot{g} ( t_{k}(\sigma)))\, t'_{k}(\sigma) & - \strain (\dot{g} ( t(\sigma)))\, t'(\sigma) \Big) {\,:\,} \varphi\,\di x\,\di\sigma \\ 
& = \int_{0}^{S} \int_{\Om} \Big( \strain (\dot{g} ( t_{k}(\sigma)))\, t'_{k}(\sigma)  - E (t_{k} (\sigma))\,t'_{k}(\sigma)\Big) {\,:\,} \varphi\,\di x\,\di\sigma\\
& \quad + \int_{0}^{S}\int_{\Om} \Big( E (t_{k} (\sigma))\,t'_{k}(\sigma) - E(t(\sigma))\,t'(\sigma) \Big ) {\,:\,} \varphi\,\di x\,\di\sigma  \\
& \quad + \int_{0}^{S} \int_{\Om} \Big( E(t(\sigma))\,t'(\sigma) - \strain (\dot{g} ( t(\sigma)))\, t'(\sigma) \Big) {\,:\,} \varphi\,\di x\,\di\sigma\,.
\end{split}
\end{equation}
By~\eqref{e.pow4}, the first term on the right-hand side of~\eqref{e.pow5} can be estimated by
\begin{displaymath}
 \int_{0}^{S} \int_{\Om} \Big( \strain (\dot{g} ( t_{k}(\sigma)))\, t'_{k}(\sigma)  - E (t_{k} (\sigma))\,t'_{k}(\sigma)\Big) {\,:\,} \varphi\,\di x\,\di\sigma \leq \delta\,  \|\varphi\|_{L^{q'}([0,S]; L^{2}(\Om;\mathbb{M}^{2}_{s}))}\,.
\end{displaymath}
The same estimate holds for the third term on the right-hand side of~\eqref{e.pow5}. Recalling that $t_{k} \rightharpoonup t$ weakly* in~$W^{1,\infty}(0,S)$ and that $E\in C^{1}_{c}([0,T]; L^{2}(\Om;\mathbb{M}^{2}_{s}))$, we get that $E \circ t_{k} \to E\circ t$ strongly in $L^{q}([0,S]; L^{2}(\Om;\mathbb{M}^{2}_{s}))$ and $t'_{k}\varphi \rightharpoonup t' \varphi$ weakly in $L^{q'}([0,S]; L^{2}(\Om;\mathbb{M}^{2}_{s}))$, so that
\begin{displaymath}
\lim_{k\to\infty}\, \int_{0}^{S}\int_{\Om} \Big( E (t_{k} (\sigma))\,t'_{k}(\sigma) - E(t(\sigma))\,t'(\sigma) \Big ) {\,:\,} \varphi\,\di x\,\di\sigma = 0\,.
\end{displaymath}
Collecting the above inequalities, taking the modulus of~\eqref{e.pow5} and passing to the limsup as $k\to \infty$ we obtain
\begin{displaymath}
\limsup_{k\to\infty}\, \left| \int_{0}^{S} \int_{\Om}   \Big( \strain (\dot{g} ( t_{k}(\sigma)))\, t'_{k}(\sigma)  - \strain (\dot{g} ( t(\sigma)))\, t'(\sigma) \Big) {\,:\,} \varphi\,\di x\,\di\sigma \right| \leq 2\delta\,  \|\varphi\|_{L^{q'}([0,S]; L^{2}(\Om;\mathbb{M}^{2}_{s}))}\,.
\end{displaymath}
Hence, passing to the limit as $\delta\to 0$, by the arbitrariness of~$\varphi\in L^{q'}([0,S]; L^{2}(\Om;\mathbb{M}^{2}_{s}))$ we deduce that $\strain ( \dot{g}(t_{k} (\cdot)) ) \,t'_{k}(\cdot)$ converges to $\strain ( \dot{g}(t (\cdot)) ) \,t'(\cdot)$ weakly in $L^{q}([0,S]; L^{2}(\Om;\mathbb{M}^{2}_{s}))$, and this concludes the proof of~\eqref{e.91}.\qed
\end{proof}

\subsection{Upper energy-dissipation inequality} \label{s.inequality}

This section is devoted to the proof of the inequality
\begin{displaymath}
\begin{split}
\F(t(s),u(s),z(s)) \geq &  \,\F(0,u_{0},z_{0})-\int_{0}^{s}|\partial_{u}\F|(t(\sigma),u(\sigma),z(\sigma))\, \| u' (s) \|_{H^1} \,\di\sigma \\
&-\int_{0}^{s}|\partial^-_{z}\F|(t(\sigma),u(\sigma),z(\sigma))\, \| z' (s) \|_{L^2} \,\di\sigma +\int_{0}^{s}\P(t(\sigma),u(\sigma),z (\sigma)) \,t'(\sigma)\,\di\sigma 
\end{split}
\end{displaymath}
for the triple~$(t,u,z)$ defined in Proposition~\ref{p.compactness}.

The function~$z$ belongs to~$W^{1,\infty}([0,S]; L^{2}(\Om)) \cap L^{\infty}([0,S];H^{1}(\Om))$. Therefore,~$z$ is differentiable a.e.~in~$(0,S)$ with~$z'(s)\in L^{2}(\Om)$. We set $z'(s)=0$ for every~$s\in(0,S]$ of non-differentiability for~$z$. Clearly, this does not change the differentiability properties of~$z$, the representation
\begin{displaymath}
z(s)=z_{0}+\int_{0}^{s} z'(\sigma)\,\di\sigma,
\end{displaymath}
and the energy-dissipation inequality above.
In what follows we need the following auxiliary piecewise constant interpolation functions 
\begin{eqnarray}
&& \underline{t}_{k}(\sigma):= \left\{
\begin{array}{ll}
t^{k}_{i-1} & \text{if $\sigma \in [s^{k}_{i,-1}, s^{k}_{i,\frac{1}{2}})$}\,, \label{e.undert}\\ [2mm]
t^{k}_{i} &\text{if $\sigma \in [ s^{k}_{i,\frac{1}{2}}, s^{k}_{i} )$} \,,
\end{array} \right. \\[2mm]
&& \underline{u}_{k}(\sigma):= \left\{
\begin{array}{lll}
u^{k}_{i-1} & \text{if $\sigma\in[s^{k}_{i,-1},s^{k}_{i,\frac{1}{2}})$}\,,\\[2mm] 
u^{k}_{i,j+1} & \text{if $\sigma\in[s^{k}_{i,j+\frac{1}{2}}, s^{k}_{i,j+\frac{3}{2}})$, $j\geq 0$}\,,
\end{array}\right. \label{e.underu} 
\end{eqnarray}
where~$t^{k}_{i}$,~$u^{k}_{i,j}$,~$s^{k}_{i,j}$, and~$s^{k}_{i}$, $j, k\in\mathbb{N}$, $i=0,\ldots, k$, have been defined in~Section~\ref{s.4.1}.

We now discuss the convergence of~$\underline{t}_{k}$ and~$\underline{u}_{k}$.

\begin{lemma}\label{r.t}
The sequence~$\underline{t}_{k}$ converges pointwise in $[0,S]$ and weakly* in $L^{\infty}(0,S)$  to the function~$t(\cdot)$ defined in Proposition~\ref{p.compactness}.
\end{lemma}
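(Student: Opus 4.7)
The plan is to compare $\underline{t}_k$ directly with the continuous interpolant $t_k$ from Proposition~\ref{p.compactness} and exploit the uniform $\mathcal{O}(\tau_k)$ bound on their difference. First, I would observe that on each subinterval $[s^k_{i,-1},s^k_{i,0})$ one has $\underline{t}_k(\sigma)=t^k_{i-1}$ while, by \eqref{e.interpolantt}, $t_k(\sigma)=t^k_{i-1}+(\sigma-s^k_{i,-1})\in[t^k_{i-1},t^k_i]$; on $[s^k_{i,0},s^k_{i,1/2})$ one has $\underline{t}_k(\sigma)=t^k_{i-1}$ and $t_k(\sigma)=t^k_i$; and on $[s^k_{i,1/2},s^k_i)$ the two interpolants coincide. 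Since $s^k_{i,0}-s^k_{i,-1}=\tau_k$, in every case $|\underline{t}_k(\sigma)-t_k(\sigma)|\le \tau_k = T/k$, uniformly in $\sigma\in[0,S]$.

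Next, I would recall that the sequence $t_k$ is equi-Lipschitz (since $t'_k\le 1$ a.e.~by~\eqref{e.boundlip}) and converges to $t$ pointwise in $[0,S]$ by Proposition~\ref{p.compactness}. By Arzel\`a--Ascoli, the convergence $t_k\to t$ is in fact uniform on $[0,S]$. Combining this with the estimate $\|\underline{t}_k-t_k\|_{L^\infty(0,S)}\le \tau_k\to 0$ via the triangle inequality gives $\underline{t}_k\to t$ uniformly, and in particular pointwise, on $[0,S]$.

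Finally, the weak$^*$ convergence in $L^\infty(0,S)$ follows immediately: uniform convergence yields convergence in $L^1(0,S)$, and together with the uniform bound $\|\underline{t}_k\|_{L^\infty(0,S)}\le T$ this implies $\underline{t}_k\weakstarto t$ in $L^\infty(0,S)$. No step here is technically difficult; the entire argument rests on the elementary observation that $\underline{t}_k$ and $t_k$ differ only on intervals of length at most $\tau_k$ and by an amount bounded by $\tau_k$.
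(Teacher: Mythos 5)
Your proof is correct and follows essentially the same route as the paper: the key observation in both is that $\underline{t}_k$ coincides with the interpolant $t_k$ on $[s^k_{i,\frac12},s^k_i]$ and differs from it by at most $\tau_k$ on $[s^k_{i,-1},s^k_{i,\frac12}]$, so the convergence is inherited from Proposition~\ref{p.compactness}. Your additional Arzel\`a--Ascoli upgrade to uniform convergence of $t_k$ is fine but not needed; pointwise convergence plus the uniform bound $\|\underline{t}_k\|_{L^\infty}\le T$ already gives both conclusions.
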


\begin{proof}
Recalling the definition of the affine interpolation function~$t_{k}$ in~\eqref{e.interpolantt}-\eqref{e.interpolantz}, we have that $\underline{t}_{k}(\sigma) = t_{k}(\sigma)$ if~$\sigma\in [s^{k}_{i,\frac{1}{2}}, s^{k}_{i}]$, while $|\underline{t}_{k}(\sigma) - t_{k}(\sigma)| \leq \tau_{k}$ if $\sigma \in [s^{k}_{i,-1}, s^{k}_{i,\frac{1}{2}}]$.\qed
\end{proof}

\begin{lemma}\label{p.nonstationaryz}
Let $\sigma\in (0,S)$ be such that $z'(\sigma)\neq 0$. Then, $\underline{u}_{k}(\sigma)\to u(\sigma)$ in $H^{1}(\Om;\R^{2})$.
\end{lemma}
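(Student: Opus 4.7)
The plan is to exploit the structure of the parametrization intervals. Recall from \eqref{e.esse}--\eqref{e.interpolantz} that $[0,S]$ decomposes into three types of subintervals: the \emph{t-active} intervals $[s^k_{i,-1}, s^k_{i,0})$, on which $t_k$ evolves and both $u_k$, $z_k$ are frozen; the \emph{u-active} intervals $[s^k_{i,j}, s^k_{i,j+\frac{1}{2}})$, on which $u_k$ follows the reparametrized gradient flow $\omega^k_{i,j}$ while $z_k$ is frozen; and the \emph{z-active} intervals $[s^k_{i,j+\frac{1}{2}}, s^k_{i,j+1})$, on which $z_k$ follows $\zeta^k_{i,j}$ while $u_k$ is frozen. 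Up to extracting a subsequence I would assume that $\sigma$ lies in the same type of subinterval for every $k$.

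If $\sigma$ belongs to a t-active or z-active interval, then a direct comparison of \eqref{e.interpolantt}--\eqref{e.interpolantz} with \eqref{e.underu} yields $\underline{u}_k(\sigma) = u_k(\sigma)$: on both types $u_k$ coincides with the most recent $u$-minimizer, which is precisely the value assigned to $\underline{u}_k$. The conclusion then follows from the strong $H^1$-pointwise convergence $u_k(\sigma) \to u(\sigma)$ provided by Proposition~\ref{p.compactness}, and the assumption $z'(\sigma) \neq 0$ is not used in this case.

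The delicate case is when $\sigma$ sits in a u-active interval $[s^k_{i_k, j_k}, s^k_{i_k, j_k + \frac{1}{2}})$ for every $k$, where $\underline{u}_k(\sigma) = \omega^k_{i_k, j_k}(0)$ while $u_k(\sigma) = \omega^k_{i_k, j_k}(\sigma - s^k_{i_k, j_k})$. Since $\omega^k_{i_k, j_k}$ is $1$-Lipschitz in $H^1$, one has
\begin{displaymath}
	\| \underline{u}_k(\sigma) - u_k(\sigma) \|_{H^1} \ \leq \ \sigma - s^k_{i_k, j_k} \ \leq \ L(\omega^k_{i_k, j_k}),
\end{displaymath}
so it suffices to show that $L(\omega^k_{i_k, j_k}) \to 0$. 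This is the only step where the hypothesis $z'(\sigma) \neq 0$ enters and where the main difficulty lies.

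To prove $L(\omega^k_{i_k, j_k}) \to 0$ I would argue by contradiction. Assume, along a subsequence, that the u-active interval $[s^k_{i_k, j_k}, s^k_{i_k, j_k+\frac{1}{2}})$ converges (as closed subsets of $[0,S]$) to some $[a^*, b^*]$ with $b^* > a^*$, necessarily $a^* \leq \sigma \leq b^*$. Since $z_k$ is constant on each such interval and $z_k(s) \to z(s)$ strongly in $L^2(\Omega)$ for every $s$ (a consequence of Proposition~\ref{p.compactness} combined with Rellich's compact embedding $H^1 \hookrightarrow L^2$), passing to the limit in the identity $z_k(s^1) = z_k(s^2)$ for $s^1, s^2 \in (a^*, b^*)$ gives that $z$ is constant on $(a^*, b^*)$ and, by $L^2$-Lipschitz continuity, also on $[a^*, b^*]$. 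The (possibly one-sided) difference quotients of $z$ at $\sigma$ then vanish in $L^2$, forcing $z'(\sigma) = 0$ and contradicting the hypothesis. Once $L(\omega^k_{i_k, j_k}) \to 0$ has been established, combining the displayed inequality with $u_k(\sigma) \to u(\sigma)$ from Proposition~\ref{p.compactness} concludes the proof.
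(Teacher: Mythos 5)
Your proof is correct, and while the easy cases ($t$- and $z$-active blocks, where $\underline{u}_k(\sigma)=u_k(\sigma)$ and Proposition~\ref{p.compactness} applies directly) are handled exactly as in the paper, your treatment of the $u$-active case runs in the opposite direction from the paper's and is essentially its contrapositive. The paper works forward: from $z'(\sigma)\neq 0$ it extracts points $\sigma^{-}_m\nearrow\sigma$ at which the discrete phase field still moves; these lie in $z$-active blocks preceding the $u$-active block containing $\sigma$, so the left node of that block is squeezed to $\sigma$, and since $\underline{u}_{k_m}(\sigma)=u_{k_m}(s^{k_m}_{\lambda_m,\gamma_m})$ the conclusion follows from Proposition~\ref{p.compactness} applied along these \emph{moving} evaluation points. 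You work backward: if the $u$-active block containing $\sigma$ did not shrink, the constancy of $z_k$ on it would pass to the limit (weak $H^1$, hence strong $L^2$, pointwise convergence) and make $z$ constant on a nondegenerate interval $[a^*,b^*]\ni\sigma$, contradicting $z'(\sigma)\neq 0$; your endpoint discussion is sound because the convention $z'=0$ at non-differentiability points means the hypothesis entails genuine differentiability, so one-sided difference quotients suffice even when $\sigma=a^*$ or $\sigma=b^*$. The unit-speed bound $\|\underline{u}_k(\sigma)-u_k(\sigma)\|_{H^1}\le L(\omega^k_{i_k,j_k})$ then closes the argument using only the fixed-point convergence $u_k(\sigma)\to u(\sigma)$. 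Your route yields the slightly stronger fact that the block length vanishes and avoids the node-ordering bookkeeping, at the price of a compactness/contradiction step; the paper's construction is direct and is reused almost verbatim in Lemma~\ref{p.almoststationaryz}. Like the paper, you only exhibit convergence along subsequences, so you should add the standard one-line remark (made explicitly by the paper in the next lemma) that the argument applies to every subsequence with the same limit $u(\sigma)$, whence the full sequence converges.
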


\begin{proof}
We actually show that there exist a subsequence~$k_{m}$ and a sequence $\sigma_{m} \to \sigma$ such that $\sigma_{{m}} \leq \sigma$ and $u_{k_m}(\sigma_{m})=\underline{u}_{k_{m}}(\sigma)$. From this property and Proposition~\ref{p.compactness} the thesis follows.

Since~$ z'(\sigma) \neq 0$, there exists~$\delta>0$ such that $z(s)\neq z(\sigma)$ for every~$s\in[\sigma-\delta, \sigma)$. Let us fix a sequence $\delta_{m}\searrow 0$. Since, by Proposition~\ref{p.compactness}, $z_{k}$ converges to~$z$ in~$L^{2}(\Om)$ pointwise in~$[0,S]$, for every~$m$ we can find~$k_{m}> k_{m-1}$  such that
\begin{displaymath}
z_{k_{m}}(\sigma - \delta_{m}) \neq z_{k_{m}}(\sigma). 
\end{displaymath}
We deduce that there exists a point~$\sigma^{-}_{m}\in (\sigma - \delta_{m}, \sigma)$ with $z'_{k_{m}}(\sigma_{m}^{-})\neq 0$. Moreover, by definition of the interpolation function~$z_{k}$ in~\eqref{e.interpolantt}-\eqref{e.interpolantz}, $z_{k_{m}}$ changes only in intervals of the form~$[s^{k_{m}}_{i,j+\frac{1}{2}} , s^{k_{m}}_{i,j+1})$. Therefore, there exist suitable indexes $i_{m},j_{m}$ such that $\sigma_{m}^{-}\in [s^{k_{m}}_{i_{m}, j_{m}+\frac{1}{2}} , s^{k_{m}}_{i_{m}, j_{m}+1})$. 

For every~$m$, there exists two indexes~$\lambda_{m}, \gamma_{m}$ such that $\sigma\in[s^{k_{m}}_{\lambda_{m},\gamma_{m}}, s^{k_{m}}_{\lambda_{m}, \gamma_{m}+1})$. We now distinguish three different cases, according to the value of~$\gamma_{m}$ (along an infinite sequence of indexes $m_{n}$ not explicitly indicated):
\begin{itemize}
\item if~$\gamma_{m} = -1$, then $\sigma\in [s^{k_{m}}_{\lambda_{m}-1}, s^{k_{m}}_{\lambda_{m},0})$ and $\underline{u}_{k_{m}}(\sigma)= u^{k_{m}}_{\lambda_{m}-1} = u_{k_{m}}(\sigma)$, so that we could simply set $\sigma_{{m}}:=\sigma$;

\item if~$\gamma_{m}\geq 0$ and~$\sigma\in[s^{k_{m}}_{\lambda_{m},\gamma_{m}+\frac{1}{2}}, s^{k_{m}}_{\lambda_{m},\gamma_{m}+1})$, then $\underline{u}_{k_{m}}(\sigma) = u^{k_{m}}_{\lambda_{m},\gamma_{m}+1} = u_{k_{m}}(\sigma)$, and, as before, we set $\sigma_{m} := \sigma$;

\item if ~$\gamma_{m}\geq 0$ and~$\sigma\in [ s^{k_{m}}_{\lambda_{m},\gamma_{m}}, s^{k_{m}}_{\lambda_{m}, \gamma_{m}+\frac{1}{2}})$, then~$\underline{u}_{k_{m}}(\sigma) = u^{k_{m}}_{\lambda_{m},\gamma_{m}} = u_{k_{m}}(s^{k_{m}}_{\lambda_{m},\gamma_{m}})$. Since~$\sigma_{m}^{-} < \sigma$ with $\sigma_{m}^{-}\in [s^{k_{m}}_{i_{m},j_{m} + \frac{1}{2}}, s^{k_{m}}_{i_{m},j_{m}+1})$, we have that either~$i_{m}<\lambda_{m}$ or~$i_{m}=\lambda_{m}$ and $j_{m}<\gamma_{m}$; in any case
\begin{displaymath}
\sigma_{m}^{-} \leq s^{k_{m}}_{i_{m},j_{m}+1} \leq s^{k_{m}}_{\lambda_{m}, \gamma_{m}} \leq \sigma\,.
\end{displaymath}
Since~$\sigma_{m}^{-}\to \sigma$, we also deduce that $s^{k_{m}}_{\lambda_{m},\gamma_{m}} \to \sigma$, so that we set $\sigma_{m} := s^{k_{m}}_{\lambda_{m},\gamma_{m}}$.\qed
\end{itemize}
\end{proof}


\begin{lemma} \label{p.almoststationaryz}
Let $\sigma \in (0,S]$. Assume that there exists a sequence $\sigma_{m}\nearrow \sigma$ such that $\sigma_{m}< \sigma$ and $z'(\sigma_{m})\neq 0$ for every~$m$. Then, $\underline{u}_{k}(\sigma)\to u(\sigma)$ in~$H^{1}(\Om;\R^{2})$.
\end{lemma}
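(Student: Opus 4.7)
The strategy is to argue by contradiction, combining the structure of the piecewise-constant interpolation $\underline{u}_{k}$ with Lemma~\ref{p.nonstationaryz} applied at the points $\sigma_{m}$. Suppose that $\underline{u}_{k}(\sigma) \not\to u(\sigma)$ in $H^{1}(\Om;\R^{2})$; then, up to a subsequence (not relabelled), there exists $\varepsilon > 0$ with $\|\underline{u}_{k}(\sigma) - u(\sigma)\|_{H^{1}} \geq \varepsilon$ for every $k$.

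For each $k$, the definition~\eqref{e.underu} places $\sigma$ inside a half-open interval $[a_{k}, b_{k})$ of the partition of $[0,S]$ on which $\underline{u}_{k}$ is constant, with left endpoint $a_{k} \leq \sigma$ of the form $s^{k}_{i,-1}$ or $s^{k}_{i,j+\frac{1}{2}}$. A direct inspection of~\eqref{e.interpolantt}--\eqref{e.interpolantz} shows that at such points the piecewise-constant interpolant coincides with the affine one, i.e.~$\underline{u}_{k}(a_{k}) = u_{k}(a_{k})$, and since $\underline{u}_{k}$ is constant on $[a_{k}, b_{k})$ we get $\underline{u}_{k}(\sigma) = u_{k}(a_{k})$. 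Extracting a further subsequence, $a_{k} \to a_{\infty} \in [0, \sigma]$, and the crux of the proof is to show $a_{\infty} = \sigma$: once this is achieved, Proposition~\ref{p.compactness} yields $u_{k}(a_{k}) \to u(\sigma)$ in $H^{1}(\Om;\R^{2})$ and contradicts the standing assumption.

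To establish $a_{\infty} = \sigma$, I argue by contradiction again. If $a_{\infty} < \sigma$, the monotone convergence $\sigma_{m} \nearrow \sigma$ allows me to fix $m$ with $\sigma_{m} \in (a_{\infty}, \sigma)$. For $k$ large, $a_{k} < \sigma_{m} < \sigma < b_{k}$, so $\sigma_{m} \in [a_{k}, b_{k})$ and therefore $\underline{u}_{k}(\sigma_{m}) = \underline{u}_{k}(\sigma)$. Since $z'(\sigma_{m}) \neq 0$, Lemma~\ref{p.nonstationaryz} provides $\underline{u}_{k}(\sigma_{m}) \to u(\sigma_{m})$ in $H^{1}(\Om;\R^{2})$, hence $\underline{u}_{k}(\sigma) \to u(\sigma_{m})$ along the same subsequence. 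Passing to the limit in $\|\underline{u}_{k}(\sigma) - u(\sigma)\|_{H^{1}} \geq \varepsilon$ then forces $\|u(\sigma_{m}) - u(\sigma)\|_{H^{1}} \geq \varepsilon$ for every such $m$, which contradicts the continuity $u(\sigma_{m}) \to u(\sigma)$ granted by the regularity $u \in W^{1,\infty}([0,S];H^{1}(\Om;\R^{2}))$ of Proposition~\ref{p.compactness}. The main obstacle is not a computation but precisely this two-step contradiction: the assumption $z'(\sigma_{m}) \neq 0$ for $\sigma_{m} \nearrow \sigma$ is exactly what pins the left endpoint $a_{k}$ of the interval of constancy to $\sigma$ in the limit, unlocking the pointwise compactness of the arc-length interpolant $u_{k}$.
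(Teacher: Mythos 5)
Your proof is correct, and it rests on exactly the two ingredients the paper's own proof uses — Lemma~\ref{p.nonstationaryz} at points where $z'\neq 0$, and the pointwise compactness $u_{k}(s_{k})\to u(s)$ for $s_{k}\to s$ from Proposition~\ref{p.compactness} — but it organizes them differently. The paper first diagonalizes, choosing $k_{m}$ so that $\underline{u}_{k_{m}}(\sigma_{m})\to u(\sigma)$, and then runs a four-case analysis on the position of $\sigma$ and $\sigma_{m}$ relative to the nodes; you instead observe once and for all that $\underline{u}_{k}(\sigma)=u_{k}(a_{k})$, where $a_{k}\le\sigma$ is the left endpoint (of the form $s^{k}_{i,-1}$ or $s^{k}_{i,j+\frac{1}{2}}$) of the constancy interval of $\underline{u}_{k}$ containing $\sigma$ — a correct identity, since by \eqref{e.interpolantt} and \eqref{e.interpolantz} the interpolant $u_{k}$ takes the values $u^{k}_{i-1}$, resp.~$u^{k}_{i,j+1}$, at those nodes. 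This identity collapses the paper's cases in which $\underline{u}_{k}(\sigma)$ coincides with $u_{k}$ at a node into a single application of Proposition~\ref{p.compactness} (the branch $a_{k}\to\sigma$), while the remaining branch $a_{k}\to a_{\infty}<\sigma$ is excluded by fixing one $\sigma_{m}\in(a_{\infty},\sigma)$, noting that $\sigma_{m}$ then lies in the same constancy interval as $\sigma$ for $k$ large, and playing Lemma~\ref{p.nonstationaryz} against the Lipschitz continuity of $u$ in $H^{1}(\Om;\R^{2})$; this replaces the paper's diagonal extraction and its closing ``repeat for every subsequence'' step. The contradiction framing is slightly more indirect than the paper's direct case analysis, but every step is justified and the mechanism is the same; if anything, the left-endpoint identity makes the bookkeeping a bit cleaner.
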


\begin{proof}
For every~$\sigma_{m}$ we have $\underline{u}_{k}(\sigma_{m}) \to u(\sigma_{m})$ in~$H^{1}(\Om;\R^{2})$ for every $m\in\mathbb{N}$. Hence, we can extract a subsequence~$k_{m}$ such that $\underline{u}_{k_{m}}(\sigma_{m}) \to u(\sigma)$ in~$H^{1}(\Om;\R^{2})$ as $m\to\infty$.


To conclude that also $\underline{u}_{k_{m}}(\sigma)\to u(\sigma)$ we discuss the mutual position of~$\sigma_{m}$ and~$\sigma$. As in the previous Lemma, we could have:
\begin{itemize}
\item if $\sigma \in[s^{k_{m}}_{\lambda_{m},-1},s^{k_{m}}_{\lambda_{m},0})$, then $\underline{u}_{k_{m}}(\sigma) = u^{k_{m}}_{\lambda_{m}-1}= u_{k_{m}}(\sigma)$ and $u_{k_{m}}(\sigma)\to u(\sigma)$ in~$H^{1}(\Om;\R^{2})$;

\item if $\sigma \in[s^{k_{m}}_{\lambda_{m},\gamma_{m}+\frac{1}{2}},s^{k_{m}}_{\lambda_{m},\gamma_{m}+1})$, then $\underline{u}_{k_{m}}(\sigma)=u^{k_{m}}_{\lambda_{m}, \gamma_{m}+1} = u_{k_{m}}(\sigma)$ and $u_{k_{m}}(\sigma)\to u(\sigma)$ in~$H^{1}(\Om;\R^{2})$;

\item if $\sigma_{m},\sigma \in[s^{k_{m}}_{\lambda_{m},\gamma_{m}},s^{k_{m}}_{\lambda_{m},\gamma_{m}+\frac{1}{2}})$, then $\underline{u}_{k_{m}}(\sigma)=\underline{u}_{k_{m}}(\sigma_{m}) \to u(\sigma)$ in~$H^{1}(\Om;\R^{2})$;

\item if $\sigma \in[s^{k_{m}}_{\lambda_{m},\gamma_{m}}, s^{k_{m}}_{\lambda_{m},\gamma_{m}+\frac{1}{2}})$ and $\sigma_{m}\in [s^{k_{m}}_{i_{m},j_{m}}, s^{k_{m}}_{i_{m},j_{m}+1})$ with~$(i_{m},j_{m})\neq (\lambda_{m},\gamma_{m})$ then, being $\sigma_{m}<\sigma$, we have that either~$i_{m}<\lambda_{m}$ or~$i_{m}=\lambda_{m}$ and $j_{m}<\gamma_{m}$. In both cases we have $\sigma_{m} \leq s^{k_{m}}_{i_{m}, j_{m}+1} \leq s^{k_{m}}_{\lambda_{m},\gamma_{m}} \leq \sigma$. Thus, the sequence of nodes~$s^{k_{m}}_{\lambda_{m},\gamma_{m}}$ converges to~$\sigma$. We deduce that $\underline{u}_{k_{m}}(\sigma)= u_{k_{m}}(s^{k_{m}}_{\lambda_{m},\gamma_{m}}) \to u(\sigma)$ in~$H^{1}(\Om;\R^{2})$.
\end{itemize}

Repeating the above argument for any subsequence~$k_{j}$ of~$k$ we conclude the thesis. \qed
\end{proof}

Let us define the set
\begin{equation}\label{e.U}
U := \{ \sigma\in(0,S] : \,\text{there exists a sequence $\sigma_{m}\nearrow \sigma$ such that $\sigma_{m} \leq \sigma$ and $z'(\sigma_{m})\neq 0$} \} \,.
\end{equation}
In view of Lemmata~\ref{p.nonstationaryz} and~\ref{p.almoststationaryz}, for every~$\sigma\in U$ we have $\underline{u}_{k}(\sigma)\to u(\sigma)$ in~$H^{1}(\Om;\R^{2})$. Viceversa, we still have no information on the set $U^{c}:= (0,S]\setminus U$. In the following lemma we show the structure of~$U^{c}$.

\begin{lemma}\label{l.Uc}
%
%
There exist countably many $s^{-}_{i} < s^{+}_{i}$ in $[0,S]$, such that
\begin{displaymath}
U^{c} = \bigcup_{i\in\mathbb{N}} (s^{-}_{i},s^{+}_{i}]\,,
\end{displaymath} 
where the intervals $(s^{-}_{i},s^{+}_{i}]$ are pairwise disjoint.
\end{lemma}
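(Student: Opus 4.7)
The plan is to first reformulate $U^c$ in terms of constancy intervals of $z$, and then decompose $U^c$ into its connected components.

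First, I would observe that, by the convention $z'(s)=0$ at every point of non-differentiability and by the Lipschitz continuity of $z\colon[0,S]\to L^2(\Omega)$, the condition $\sigma\in U^c$ is equivalent to the existence of $\delta>0$ such that $z$ is constant on $[\sigma-\delta,\sigma]$. Indeed, $\sigma\in U^c$ means that no sequence $\sigma_m\le\sigma$ with $\sigma_m\to\sigma$ satisfies $z'(\sigma_m)\neq 0$; specializing to constant sequences yields $z'(\sigma)=0$ and to strictly increasing ones produces $\delta>0$ with $z'(s)=0$ for every $s\in(\sigma-\delta,\sigma]$, which by integration forces $z$ to be constant on $[\sigma-\delta,\sigma]$.

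Second, I would introduce the equivalence relation on $U^c$ given by $\sigma_1\sim\sigma_2$ if $z$ is constant on $[\min(\sigma_1,\sigma_2),\max(\sigma_1,\sigma_2)]$. Reflexivity and symmetry are evident; transitivity follows from the fact that two abutting intervals of constancy on which $z$ takes (by continuity) the same value merge into a single interval of constancy. Each equivalence class $E$ is thus a sub-interval of $(0,S]$ on which $z$ is constant, and the distinct classes are pairwise disjoint and exhaust $U^c$.

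The key step is to show that each equivalence class is of the form $(s^-,s^+]$ with $s^-<s^+$. Set $s^-=\inf E$ and $s^+=\sup E$. Choosing sequences $\sigma_{1,n},\sigma_{2,n}\in E$ with $\sigma_{1,n}\to s^-$ and $\sigma_{2,n}\to s^+$, the identity $z(\sigma)=z(\sigma_{1,n})$ for $\sigma\in[\sigma_{1,n},\sigma_{2,n}]$ passes to the limit by Lipschitz continuity, yielding $z$ constant on $[s^-,s^+]$. Consequently $z'(s)=0$ for every $s\in(s^-,s^+]$ (either $z$ is differentiable there with zero derivative or the convention applies), so $s^+\in U^c$; since $z$ is constant on $[\sigma,s^+]$ for every $\sigma\in E$, we have $s^+\sim\sigma$, hence $s^+\in E$. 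Next, if $s^->0$ and $s^-\in E\subset U^c$, there would be $\delta>0$ with $z$ constant on $[s^--\delta,s^-]$; by continuity at $s^-$ the common value matches that on $[s^-,s^+]$, so $z$ would be constant on $[s^--\delta,s^+]$, placing $s^--\delta/2$ in $E$ and contradicting $s^-=\inf E$. Thus $s^-\notin E$, and $E=(s^-,s^+]$. Non-degeneracy $s^-<s^+$ is immediate because any $\sigma\in U^c$ brings along a full left-interval $(\sigma-\delta,\sigma]\subset U^c$, which lies in the same class as $\sigma$.

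Finally, since the intervals $(s_i^-,s_i^+]$ are pairwise disjoint and non-degenerate, each contains a rational number, so the family is at most countable. The main obstacle I anticipate is precisely the initial rephrasing from the pointwise, sequence-based definition of $U^c$ to the concrete ``$z$ constant on a left-neighborhood'' description (especially making sure the convention $z'(s)=0$ at non-differentiability points is compatible with the Lipschitz integration argument); once this reformulation is in hand, the equivalence-class decomposition and the endpoint analysis are essentially elementary.
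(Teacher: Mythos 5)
Your proof is correct and follows essentially the same route as the paper: the decisive observation in both is that each $\sigma\in U^{c}$ has a left neighborhood on which $z$ is constant (hence contained in $U^{c}$), after which $U^{c}$ is split into maximal constancy intervals, shown to be half-open of the form $(s^{-}_{i},s^{+}_{i}]$, and countability follows because each component contains a nondegenerate interval. Your equivalence-class formulation is just a repackaging of the paper's decomposition into connected components, so the two arguments coincide in substance.
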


\begin{proof} 
First, note that $$U^{c} = \{ \sigma\in (0,S] : \, z'(\sigma)=0 \text{ and there is no sequence $\sigma_{m}\nearrow \sigma$ such that $z'(\sigma_{m}) \neq 0$} \}. $$
Clearly~$\sigma\in U^{c}$ if and only if there is no sequence~$\sigma_{m}\nearrow \sigma$ such that $z'(\sigma_{m})\neq 0$. This implies that $z$ is constant in a left neighborhood of $\sigma$. Then, if~$z$ is differentiable in~$\sigma$ we have~$z'(\sigma)=0$, if it is not differentiable in~$\sigma$ then~$z'(\sigma)=0$ by convention.

It follows that for every $\sigma\in U^{c}$ there exists a left-neighborhood~$U_{\sigma}$ of~$\sigma$ in~$(0,S]$ such that $U_{\sigma} \subseteq U^{c}$.
Indeed, for every~$\sigma \in U^{c}$ we have that~$z'$ has to vanish in a left-neighborhood of~$\sigma$ in~$(0,S]$. We denote this left-neighborhood with~$U_{\sigma}\subseteq U^{c}$.

We first write~$U^{c}$ as the union of its connected components
\begin{displaymath}
U^{c} = \bigcup_{ \alpha\in A} I_{\alpha}\,,
\end{displaymath}
where~$A$ is some set of indexes. From what we have seen above, each~$I_{\alpha}$ contains at least an interval. Therefore,~$U^{c}$ can be actually written as the union of countably many connected components:
\begin{displaymath}
U^{c} = \bigcup_{i\in\mathbb{N}} I_{i}\,.
\end{displaymath}
For every~$i\in\mathbb{N}$ there exist $s^{-}_{i}< s^{+}_{i}$ such that $(s^{-}_{i},s^{+}_{i})\subseteq I_{i}\subseteq [s^{-}_{i},s^{+}_{i}]$. Since every point in $U^c$ admits a left neighborhood contained in $U^c$, we deduce that~$s^{-}_{i}\notin I_{i}$. On the other hand, $s^{+}_{i} \in I_{i}$. Indeed, $z'(\sigma)=0$ for every~$\sigma\in I_{i}$, so that~$z$ is constant on~$I_{i}$. Hence, if~$z$ is differentiable in~$s^{+}_{i}$ we get~$z'(s^{+}_{i})=0$, if~$z$ is not differentiable in~$s^{+}_{i}$ then~$z'(s^{+}_{i})=0$ by convention. This implies that~$s^{+}_{i}\in I_{i}$. All in all, we have proved that each connected component~$I_{i}$ is of the form~$(s^{-}_{i},s^{+}_{i}]$ for suitable~$s^{-}_{i} < s^{+}_{i} \in [0,S]$.\qed 

\end{proof}

%

We set $R:= S-|U^c|$ and define the absolutely continuous function
\begin{equation}\label{e.beta}
\beta(s) := \int_{0}^{s} \mathbf{1}_{U}( \sigma ) \, \di \sigma\qquad \text{for every $s\in[0,S]$} \,.
\end{equation}

\begin{lemma}\label{l.beta}
The following facts hold:
\begin{itemize}
\item[$(a)$] $\beta \colon [0,S]\to[0,R]$ is $1$-Lipschitz continuous, non-decreasing, and surjective;
\item[$(b)$] let $B:=\{\sigma\in[0,S]:\, \text{$\beta$ is not differentiable in~$\sigma$ or $\beta'(\sigma)\neq \mathbf{1}_{U}(\sigma)$} \}\cup U^{c}$, then $|\beta(B)|=0$;
\item[$(c)$] if $\beta$ is constant in~$[a,b]$, then~$z$ is constant in~$[a,b]$ and~$(a,b]\subseteq U^{c}$.
\end{itemize}
\end{lemma}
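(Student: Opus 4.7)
The strategy is to verify the three items in turn, relying mainly on the definition~\eqref{e.beta} of~$\beta$ and on the structural description of~$U^c$ provided by Lemma~\ref{l.Uc}. Item~$(a)$ is immediate: the bounds $0\leq \mathbf{1}_U \leq 1$ give that~$\beta$ is $1$-Lipschitz continuous and non-decreasing, while $\beta(0)=0$ and $\beta(S) = |U| = S - |U^c| = R$ combined with continuity (intermediate value theorem) yield surjectivity onto $[0,R]$.

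For~$(b)$, I will split $B = B_1 \cup U^c$, where $B_1 := \{\sigma \in [0,S] :\ \beta \text{ is not differentiable at } \sigma \text{ or } \beta'(\sigma) \neq \mathbf{1}_U(\sigma)\}$. The Lebesgue differentiation theorem applied to the absolutely continuous function~$\beta$ gives $|B_1|=0$; since~$\beta$ is Lipschitz, it maps Lebesgue null sets to Lebesgue null sets, so $|\beta(B_1)|=0$. For $\beta(U^c)$ I will invoke Lemma~\ref{l.Uc}: on each component $(s_i^-, s_i^+]$ the integrand $\mathbf{1}_U$ vanishes, hence the non-decreasing continuous function~$\beta$ is constantly equal to $\beta(s_i^-)$ on the closed interval $[s_i^-, s_i^+]$. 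Therefore $\beta(U^c) \subseteq \{\beta(s_i^-) :\ i \in \mathbb{N}\}$ is countable and hence null.

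For~$(c)$, the hypothesis that $\beta$ is constant on $[a,b]$ translates into $|[a,b] \cap U| = 0$. The key intermediate step will be the identity $z'=0$ a.e.~on~$U^c$, which follows from Lemma~\ref{l.Uc}: on each connected component $(s_i^-, s_i^+]$ of~$U^c$ the function~$z$ is constant (a fact already extracted during the proof of that lemma from the left-neighborhood vanishing of $z'$ and the $L^2$-Lipschitz continuity of $z$). Using $z \in W^{1,\infty}([0,S];L^{2}(\Omega))$ I will then write $z(b)-z(a) = \int_a^b z'(\sigma)\,\di\sigma$ and split the integral along $U$ and $U^c$: the first piece vanishes because $[a,b]\cap U$ is Lebesgue negligible, the second because $z'=0$ a.e.~on~$U^c$. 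Applying the same reasoning to any subinterval of $[a,b]$ yields that $z$ is constant on $[a,b]$. Finally, for any $\sigma \in (a,b]$ a sequence $\sigma_m \nearrow \sigma$ eventually lies in $(a,\sigma) \subseteq (a,b)$, on which $z$ is constant, so $z'(\sigma_m)=0$ (either genuinely or by the convention at non-differentiability points); this forces $\sigma \in U^c$, completing the inclusion $(a,b] \subseteq U^c$.

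The only delicate step is the identity $z'=0$ a.e.~on~$U^c$, where Lemma~\ref{l.Uc} and the $L^2$-Lipschitz regularity of~$z$ must be combined carefully; the remaining manipulations in $(a)$--$(c)$ are elementary measure-theoretic facts.
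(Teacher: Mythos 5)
Your treatment of $(a)$ and $(b)$ coincides with the paper's proof: $1$-Lipschitz continuity, monotonicity and surjectivity of $\beta$ come straight from the definition, $|\beta(B_1)|=0$ because a Lipschitz map sends the null set $B_1$ to a null set, and $\beta(U^c)$ is countable (hence null) because $\beta$ is constant on each component $(s_i^-,s_i^+]$ furnished by Lemma~\ref{l.Uc}. For $(c)$ your route ($\beta$ constant on $[a,b]$ $\Rightarrow$ $|[a,b]\cap U|=0$ $\Rightarrow$ split the integral representation of $z$ over $U$ and $U^{c}$, where $z'=0$, $\Rightarrow$ $z$ constant on $[a,b]$, then conclude $(a,b]\subseteq U^{c}$) is essentially the paper's argument, phrased through the integral representation of $\beta$ rather than through $\beta'=\mathbf{1}_{U}$ a.e.

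The one step to tighten is the very last one. In the definition of $U$ the approximating sequence is only required to satisfy $\sigma_m\le\sigma$, so the (eventually) constant sequence $\sigma_m=\sigma$ is admissible: any point with $z'(\sigma)\neq0$ lies in $U$ outright. Consequently, the observation that every sequence $\sigma_m\nearrow\sigma$ eventually lies in $(a,\sigma)$, where $z'=0$, does not by itself yield $\sigma\in U^{c}$; you must also verify $z'(\sigma)=0$ for every $\sigma\in(a,b]$. For interior $\sigma$ this is immediate from the constancy of $z$ on $[a,b]$ (genuinely, or by the convention at non-differentiability points), but the endpoint $\sigma=b$ needs a word: if $z$ is differentiable at $b$, then $z'(b)$ equals the limit of the left difference quotients, which vanish because $z$ is constant on $[a,b]$; otherwise $z'(b)=0$ by convention. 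This is precisely the separate discussion of the point $b$ in the paper's proof. With that one-line addition your argument is complete.
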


\begin{proof}
It is clear that $\beta$ is non-decreasing and 1-Lipschitz continuous. Moreover,~$\beta(0)=0$ and~$\beta(S)=|U|= S - |U^{c}| = R$, so that $\beta$ is onto~$[0,R]$.

Since $|\{\sigma\in[0,S]:\, \text{$\beta$ is not differentiable in~$\sigma$ or $\beta'(\sigma)\neq \mathbf{1}_{U}(\sigma)$} \}|=0$ and~$\beta$ is Lipschitz, we have that
\begin{displaymath}
|\beta ( \{\sigma\in[0,S]:\, \text{$\beta$ is not differentiable in~$\sigma$ or $\beta'(\sigma)\neq \mathbf{1}_{U}(\sigma)$} \})|=0\,.
\end{displaymath}
As for $\beta(U^{c})$, we have that if~$s\in U^{c}$, then there exist $i\in\mathbb{N}$ and~$s^{-}_{i} < s^{+}_{i}$ such that $s\in (s^{-}_{i},s^{+}_{i}]\subseteq U^{c}$. Hence, $\beta(s)=\beta(s^{-}_{i})= \beta(s^{+}_{i})$ and $\beta (U^c) = \{  \beta ( s_i^-) \}_{i \in \N}$, thus $|\beta(U^{c})|=0$. All in all, we have shown that~$|\beta(B)|=0$, so that~$(b)$ holds.

As for~$(c)$, we have that if~$\beta$ is constant in~$[a,b]$ then~$\beta'=0$ in~$(a,b)$. Being~$\beta'(\sigma)=\mathbf{1}_{U}(\sigma)$ a.e.~in~$(0,S]$, we deduce that $|(a,b)\setminus U^{c}|=0$. Therefore, for a.e.~$\sigma\in(a,b)$ we have~$z'(\sigma)=0$, which, together with the continuity of~$z$ in~$[0,S]$, implies that~$z$ is constant on~$[a,b]$. From this we get that~$z'(\sigma)=0$ for every~$\sigma\in (a,b)$. 
Hence~$(a,b)\subseteq U^{c}$. As for the point~$b$, the only possibility to have~$b\in U$ is that~$z$ is differentiable in~$b$ with~$z'(b) \neq 0$. But this can not happen, since~$z$ is constant on~$[a,b]$. Thus,~$b\in U^{c}$.  \qed
\end{proof}

We now introduce the ``right-inverse'' of~$\beta$:
\begin{equation}\label{e.alpha}
\alpha(r):= \min\,\{ s\in[0,S]:\beta(s)=r\}\,.
\end{equation}
The function~$\alpha$ is well-defined on~$[0,R]$ because of the continuity of~$\beta$. 
Its main properties are listed in the next lemma, where we denote with~$\alpha^{\pm}$ the left and right limit of~$\alpha$, where they exist.

\begin{lemma}\label{l.alpha}
The following facts hold:
\begin{itemize}
\item[$(a)$] $\alpha$ is strictly increasing and left-continuous;

\item[$(b)$]  $\beta( \alpha (r)) = \beta(\alpha^{+}(r)) = r$ for every~$r\in[0,R]$, $\alpha(\beta(s))\leq s$ for every $s\in[0,S]$ and $\alpha(\beta(s))=s$ for every $s\in U$;

\item[$(c)$]$\alpha$ is differentiable in $(0,R)\setminus \beta(B)$ with $\alpha'(r) = 1$.
\end{itemize}
\end{lemma}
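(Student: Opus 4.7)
My plan is to derive the three properties directly from the structure of the level sets of $\beta$ described in Lemma~\ref{l.beta}, together with the explicit description $U^c = \bigcup_i (s_i^-,s_i^+]$ from Lemma~\ref{l.Uc}.

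For part~$(a)$, strict monotonicity follows at once from the definition $\alpha(r)=\min\{s:\beta(s)=r\}$: if $r_1<r_2$ and $\alpha(r_1)\ge\alpha(r_2)$, then by monotonicity of $\beta$ we would get $r_1=\beta(\alpha(r_1))\ge\beta(\alpha(r_2))=r_2$. For left-continuity, given $r_n\nearrow r$ the sequence $\alpha(r_n)$ is non-decreasing and bounded, hence it converges to some $s^\star\le\alpha(r)$; continuity of $\beta$ yields $\beta(s^\star)=r$, and minimality of $\alpha(r)$ forces $s^\star=\alpha(r)$. For part~$(b)$, the identity $\beta(\alpha(r))=r$ is built into the definition, and $\beta(\alpha^+(r))=r$ follows taking $r_n\searrow r$ and using continuity of $\beta$. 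The inequality $\alpha(\beta(s))\le s$ is trivial. For the equality on $U$, I will argue by contradiction: if $s\in U$ but $\alpha(\beta(s))=s'<s$, then $\beta$ is constant on $[s',s]$, and Lemma~\ref{l.beta}$(c)$ forces $(s',s]\subseteq U^c$, contradicting $s\in U$.

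The heart of the argument is part~$(c)$. The key observation is that, if $r\in(0,R)\setminus\beta(B)$, then $\alpha(r)\notin B$ (otherwise $r=\beta(\alpha(r))\in\beta(B)$); in particular $\alpha(r)\in U$ and $\beta$ is differentiable at $\alpha(r)$ with $\beta'(\alpha(r))=\mathbf{1}_U(\alpha(r))=1$. The second and main step is to show that $\alpha$ is actually continuous at such $r$, i.e.\ $\alpha^+(r)=\alpha(r)$. I will argue again by contradiction: a jump $\alpha^+(r)>\alpha(r)$ would make $\beta$ constant and equal to $r$ on $[\alpha(r),\alpha^+(r)]$, so by Lemmata~\ref{l.Uc} and~\ref{l.beta}$(c)$ the point $\alpha(r)$ would coincide with some left endpoint $s_i^-$; but at $s_i^-$ the right derivative of $\beta$ is $0$ whereas $\mathbf{1}_U(s_i^-)=1$, so $\beta$ is not differentiable at $s_i^-$, giving $s_i^-\in B$ and hence $r=\beta(s_i^-)\in\beta(B)$, a contradiction.

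Once continuity of $\alpha$ at $r$ is established, the differentiability with $\alpha'(r)=1$ is a routine inverse-function computation: for $r_n\to r$ with $r_n\neq r$, set $s_n:=\alpha(r_n)$; then $s_n\to\alpha(r)$ by the continuity just proved, and
\[
\frac{s_n-\alpha(r)}{r_n-r}=\frac{s_n-\alpha(r)}{\beta(s_n)-\beta(\alpha(r))}\longrightarrow\frac{1}{\beta'(\alpha(r))}=1.
\]
The only subtle point I anticipate is the jump-exclusion argument, since it requires combining in a single step the structure of $U^c$ given by Lemma~\ref{l.Uc} with the non-differentiability of $\beta$ at each endpoint $s_i^-$; the remaining verifications are essentially measure-theoretic bookkeeping.
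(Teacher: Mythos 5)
Your proof is correct and follows essentially the same route as the paper: parts $(a)$ and $(b)$ coincide with the paper's argument, and for $(c)$ you use the same inverse difference-quotient computation after first excluding a jump of $\alpha$ at $r\notin\beta(B)$. The only divergence is the jump-exclusion step, where you detour through Lemma~\ref{l.Uc} and the behavior of $\beta$ at the left endpoint $s_i^-$, whereas the paper concludes more directly from $(\alpha(r),\alpha^{+}(r)]\subseteq U^{c}$ that $r\in\beta(U^{c})\subseteq\beta(B)$; your variant is still valid (even if $\beta$ were differentiable at $s_i^-$ with derivative $0$, the mismatch with $\mathbf{1}_{U}(s_i^-)=1$ already places $s_i^-$ in $B$ by the definition of $B$).
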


\begin{proof}
The function~$\alpha$ is strictly increasing since~$\beta$ is increasing. Hence, left and right limits of $\alpha$ exist in every point of $(0,R)$. 

In order to prove the left-continuity of~$\alpha$, we first notice that, by construction, we have $\beta(\alpha(r))=r$ for $r\in[0,R]$. Since~$\alpha$ is strictly increasing, it is clear that $\alpha^{-}( \bar{r} ) = \lim_{r\nearrow \bar{r}} \alpha(r) \leq \alpha(\bar{r})$. To show the opposite inequality, we consider the equality $\beta(\alpha(r)) = r$ and pass to the limit as~$r\nearrow \bar{r}$, which gives $\beta(\alpha^{-}( \bar{r})) = \bar{r}$. From the definition of~$\alpha$ we deduce that $\alpha(\bar{r})\leq \alpha^{-}(\bar{r})$. Therefore, $\alpha^{-}(\bar{r}) = \alpha(\bar{r})$ and~$\alpha$ is left-continuous.

Let us now prove~$(b)$. The equality~$\beta(\alpha(r))=r$ has been already shown while the equality~$\beta(\alpha^{+}(r))=r$ follows by definition of~$\alpha^{+}(r)$ and the continuity of~$\beta$. For every~$s\in[0,S]$, it is clear by construction that~$\alpha(\beta(s)) \leq s$. Let us now consider $s\in U$. By contradiction, let us assume that $\alpha(\beta(s))<s$. Then, the function~$\beta$ is constant in the interval~$[ \alpha(\beta(s)), s]$. By~(c) of Lemma~\ref{l.beta} we have that $z$ is constant in the interval~$[\alpha(\beta(s)), s]$ and $(\alpha(\beta(s)), s]\in U^{c}$, which is a contradiction. Therefore, it has to be~$\alpha(\beta(s))=s$.

Let us now show~$(c)$.
%
We start by proving that every~$\bar{r} \in (0,R)\setminus \beta(U^{c})$ is of continuity for~$\alpha$. In view of~$(a)$, we only have to show that $\alpha(r) \to \alpha(\bar{r})$ for $r\searrow \bar{r}$. By contradiction, let us assume that
\begin{displaymath}
\alpha( \bar{r}) < \lim_{r\searrow \bar{r}}\, \alpha(r) = \alpha^{+}( \bar{r} ) \,.
\end{displaymath}
Then by~$(b)$ and by monotonicity of~$\beta$ we have that~$\beta$ is constant in the interval $[\alpha(\bar{r}) , \alpha^{+} (\bar{r} )]$. From~$(c)$ of Lemma~\ref{l.beta}, we deduce that~$z$ is constant on the same interval and $(\alpha( \bar{r} ) , \alpha^{+} (\bar{r} ) ] \subseteq U^{c}$. Therefore, $\bar{r}\in \beta(U^{c})$, which is a contradiction. Hence,~$\alpha$ is continuous in~$\bar{r}$. 
In view of~$(a)$, we already know that $\alpha\in BV(0,R)$. We now prove that every $\bar{r} \in (0,R) \setminus \beta ( B )$ is of differentiability for~$\alpha$ with~$\alpha'( \bar{r} )=1$. By the previous argument,~$\bar{r}$ is of continuity for~$\alpha$. For $h\in\R$ with~$|h|$ small enough, let us write
\begin{displaymath}
\frac{ \alpha(\bar{r}+h)-\alpha( \bar{r})}{h} = \frac{\alpha(\bar{r}+h)-\alpha( \bar{r})}{(\bar{r} + h) - \bar{r}} = \frac{\alpha(\bar{r}+h)-\alpha( \bar{r})}{\beta(\alpha(\bar{r}+h)) - \beta(\alpha( \bar{r}))}\,.
\end{displaymath}
As~$h\to0$ we have, by continuity of~$\alpha$ in~$\bar{r}$, that~$\alpha (\bar{r}+h) \to \alpha(\bar{r})$. Hence, passing to the limit in the previous equality we get
\begin{displaymath}
\lim_{h\to 0}\, \frac{ \alpha(\bar{r}+h)-\alpha( \bar{r})}{h}  = \frac{1}{\beta'(\alpha(\bar{r}))} = 1\,,
\end{displaymath}
where we have used the fact that $\bar{r} \notin \beta(B)$, so that $\alpha( \bar{r}) \notin B$ and~$\beta$ is differentiable in~$\alpha( \bar{r} )$ with~$\beta'(\alpha(\bar{r})) = \mathbf{1}_{U}( \alpha( \bar{r} ))=1$. Thus, we have proved that~$\alpha$ is differentiable at every~$\bar{r}\in(0,R)\setminus\beta(B)$ and~$\alpha'(\bar{r})=1$. \qed

\end{proof}

We now consider the reparametrized functions
\begin{displaymath}
\tilde{t}:= t\circ \alpha\,,\qquad \tilde{z}:=z\circ \alpha\,, \qquad \tilde{u}:=u\circ\alpha \,.
\end{displaymath}

\begin{lemma}\label{l.tildez}
$\tilde{z}\in W^{1,\infty}([0,R]; L^{2}(\Om))$ with Lipschitz constant~$1$. Moreover, $\tilde{z}'=z'\circ\alpha$ a.e.~in $[0,R]$.
\end{lemma}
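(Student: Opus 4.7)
The plan has two steps: first I would establish the $1$-Lipschitz bound on $\tilde z$ by exploiting the fact that $z'$ vanishes off the set $U$ (which is precisely what the parameter $\alpha$ was built to skip over), and then I would derive the chain-rule identity $\tilde z' = z'\circ\alpha$ at almost every point.

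For the Lipschitz bound, fix $0\le r_1 < r_2 \le R$ and set $s_i \coloneq \alpha(r_i)$, so that $s_1 < s_2$ by strict monotonicity of $\alpha$ (Lemma~\ref{l.alpha}$(a)$). Since $z \in W^{1,\infty}([0,S];L^2(\Om))$ with $\|z'(s)\|_{L^2}\le 1$ a.e.\ (cf.~\eqref{e.boundlip2}), we can write
\begin{displaymath}
\tilde z(r_2) - \tilde z(r_1) = z(s_2) - z(s_1) = \int_{s_1}^{s_2} z'(\sigma)\,\di\sigma \quad \text{in } L^2(\Om).
\end{displaymath}
By the definition of $U^c$ in Lemma~\ref{l.Uc} and our convention setting $z'(\sigma)=0$ at non-differentiability points, $z'(\sigma)=0$ for every $\sigma\in U^c$; hence $z'(\sigma)= z'(\sigma)\mathbf{1}_U(\sigma)$ a.e. Using $\|z'(\sigma)\|_{L^2}\le 1$, together with $\beta(\alpha(r_i))=r_i$ from Lemma~\ref{l.alpha}$(b)$ and the definition~\eqref{e.beta} of $\beta$, I obtain
\begin{displaymath}
\|\tilde z(r_2)-\tilde z(r_1)\|_{L^2} \le \int_{s_1}^{s_2}\|z'(\sigma)\|_{L^2}\mathbf{1}_U(\sigma)\,\di\sigma \le \int_{s_1}^{s_2}\mathbf{1}_U(\sigma)\,\di\sigma = \beta(s_2)-\beta(s_1) = r_2-r_1 .
\end{displaymath}
This gives $\tilde z\in W^{1,\infty}([0,R];L^2(\Om))$ with Lipschitz constant $1$, so $\tilde z$ is differentiable at a.e.\ $r\in(0,R)$.

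For the identity $\tilde z' = z'\circ\alpha$, let $N\subset[0,S]$ denote the set of non-differentiability points of $z$; then $|N|=0$, and since $\beta$ is Lipschitz we also have $|\beta(N)|=0$. Combined with $|\beta(B)|=0$ from Lemma~\ref{l.beta}$(b)$, the set $E\coloneq \beta(N)\cup\beta(B)$ is negligible in $[0,R]$. Fix any $r\in(0,R)\setminus E$. Because $r\notin\beta(N)$ and $\beta(\alpha(r))=r$, we deduce $\alpha(r)\notin N$, so $z$ is differentiable at $\alpha(r)$; because $r\notin\beta(B)$, Lemma~\ref{l.alpha}$(c)$ gives that $\alpha$ is differentiable at $r$ with $\alpha'(r)=1$. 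For any $h\neq 0$ with $r+h\in[0,R]$, strict monotonicity of $\alpha$ yields $\alpha(r+h)\neq\alpha(r)$, so I can write the standard chain-rule decomposition
\begin{displaymath}
\frac{\tilde z(r+h)-\tilde z(r)}{h} = \frac{z(\alpha(r+h))-z(\alpha(r))}{\alpha(r+h)-\alpha(r)}\cdot\frac{\alpha(r+h)-\alpha(r)}{h}.
\end{displaymath}
Continuity of $\alpha$ at $r$ (which follows from its differentiability there) gives $\alpha(r+h)\to\alpha(r)$; since $z$ is differentiable at $\alpha(r)$, the first factor converges in $L^2(\Om)$ to $z'(\alpha(r))$, while the second factor tends to $\alpha'(r)=1$. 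Therefore $\tilde z'(r)=z'(\alpha(r))$ for a.e.\ $r\in(0,R)$, which concludes the proof.

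The only subtlety is to make sure the negligible set $E$ captures all the bad points and that the product-rule passage to the limit is legitimate for a Banach-valued map; both are handled by the observations above since $L^2(\Om)$ is a Hilbert space and $z$ is Lipschitz, hence almost everywhere (Fr\'echet) differentiable. \qed
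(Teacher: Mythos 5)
Your proposal is correct and follows essentially the same route as the paper's proof: the $1$-Lipschitz bound via $z'=0$ on $U^{c}$, $\|z'\|_{L^2}\le 1$, and $\beta(\alpha(r))=r$, and then the chain-rule identity on the complement of the negligible set $\beta(B)\cup\beta(\{\text{non-differentiability points of }z\})$, using $\alpha'(\bar r)=1$ from Lemma~\ref{l.alpha}$(c)$ and strict monotonicity of $\alpha$ to split the difference quotient. No gaps to report.
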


\begin{proof} 
Let $\rho<r\in[0,R]$. Being~$z\in W^{1,\infty}([0,S]; L^{2}(\Om))$, we have that
\begin{displaymath}
\| \tilde{z} (r) - \tilde{z} (\rho) \|_{L^2}  = \|z( \alpha (r) ) - z ( \alpha ( \rho ) ) \|_{L^2} \leq \int_{ \alpha ( \rho ) }^{\alpha ( r ) } \| z'( \sigma ) \|_{L^2} \, \di \sigma\,.
\end{displaymath}
Since~$z' = 0$ in~$U^{c}$ and~$\|z'(s)\|_{L^2} \leq 1$ for a.e.~$s\in[0,S]$ by~\eqref{e.boundlip2}, we can continue in the previous chain of inequalities with
\begin{displaymath}
\| \tilde{z} (r) - \tilde{z} (\rho) \|_{L^2}  \leq \int_{ \alpha ( \rho ) }^{ \alpha ( r ) } \| z'( \sigma ) \|_{L^2} \mathbf{1}_{U}( \sigma ) \, \di \sigma
\leq \int_{ \alpha ( \rho ) }^{ \alpha ( r ) } \mathbf{1}_{ U } ( \sigma ) \, \di \sigma = \beta(\alpha( r) ) - \beta ( \alpha( \rho) ) =  r - \rho\,,
\end{displaymath}
where we have used the definition~\eqref{e.beta} of~$\beta$ and~$(b)$ of Lemma~\ref{l.alpha}.

Let us denote with~$C:= \{s\in[0,S]:\, \text{$z$ is not differentiable in~$s$}\}$. Since~$|C|=0$ and~$\beta$ is Lipschitz continuous, we have that $|\beta ( C ) | = 0$. Let us show that~$\tilde{z}$ is differentiable in every~$\bar{r}\in(0,R)\setminus(\beta(B) \cup \beta(C))$. Indeed, we notice that for such~$\bar{r}$ we have, by~$(c)$ in Lemma~\ref{l.alpha}, that~$\alpha$ is differentiable in~$\bar{r}$ with~$\alpha' ( \bar{r} ) = 1$. Moreover, since~$\bar{r}\notin \beta(C)$, from the definition~\eqref{e.alpha} of~$\alpha$ we deduce that~$\alpha(\bar{r})\notin C$, so that~$z$ is differentiable in~$\alpha( \bar{r} )$. Therefore, for~$r \neq  \bar{r}$ we can write
\begin{equation} \label{e.tildezdiff}
\frac{ z ( \alpha ( r )) - z ( \alpha ( \bar{r} ) )} {r - \bar{r}} = \frac{ z ( \alpha ( r )) - z ( \alpha ( \bar{r} ) )} {\alpha(r) - \alpha(\bar{r})}\,\frac{\alpha(r) - \alpha(\bar{r})}{r - \bar{r}}\,,
\end{equation}
since~$\alpha$ is strictly increasing by~$(a)$ of Lemma~\ref{l.alpha}. In view of the previous considerations, we can pass to the limit in~\eqref{e.tildezdiff} as $r\to\bar{r}$ obtaining
\begin{displaymath}
\lim_{ r\to\bar{r}}\, \frac{ z ( \alpha ( r )) - z ( \alpha ( \bar{r} ) )} {r - \bar{r}} = z'(\alpha(\bar{r})) \, \alpha'(\bar{r}) = z'(\alpha(\bar{r}))\,.
\end{displaymath}
In conclusion, we have shown that~$\tilde{z}$ is differentiable in every~$\bar{r}\in(0,R)\setminus (\beta(B)\cup \beta(C))$ with~$\tilde{z}'(\bar{r}) = z'(\alpha(\bar{r}))$. Since~$|\beta(B) \cup \beta(C)|=0$, we get that~$\tilde{z}' = z' \circ \alpha$ a.e.~in~$[0,R]$, and this concludes the proof of the proposition. \qed
\end{proof}

We now go back to the proof of the upper-energy inequality. In the following two lemmata we further investigate the summability of the unilateral slope~$|\partial_{z}^{-}\F|$ on the set~$U$.

\begin{lemma}\label{l.L1-1} 
The function $s \mapsto | \partial_z^- \F | ( t(s), u(s) , z(s) ) \, \mathbf{1}_{U} (s)$ belongs to $L^1(0,S)$.
 \end{lemma}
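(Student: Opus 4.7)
The plan is to bound $\int_0^S|\partial_z^-\F|(\underline t_k,\underline u_k,z_k)\,d\sigma$ uniformly in $k$ and then pass to the limit on $U$ via Fatou's lemma together with the lower semicontinuity of the slope.

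For the discrete bound, I decompose $[0,S]$ according to the interval structure defined in Section~\ref{s.4.1}. On a $z$-step $[s^k_{i,j+1/2},s^k_{i,j+1})$, identity~\eqref{e.73} combined with $\|(\zeta^k_{i,j})'\|_{L^2}=1$ turns the slope integral into the energy drop $\F(t^k_i,u^k_{i,j+1},z^k_{i,j})-\F(t^k_i,u^k_{i,j+1},z^k_{i,j+1})$; summing over all $i,j$ and using the monotonicity of the energy along the alternate minimization together with the uniform $W^{1,\tilde p}$-bound on $u^k_{i,j+1}$ of Corollary~\ref{c.3} (which controls the power contributions) yields a uniform bound on this part. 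On a $t$-step, or on a $u$-step with $j\ge 1$, the triple $(\underline t_k,\underline u_k,z_k)$ coincides with either $(t^k_{i-1},u^k_{i-1},z^k_{i-1})$ or $(t^k_i,u^k_{i,j},z^k_{i,j})$, and in both cases the minimality encoded in~\eqref{e.minz} forces the unilateral slope to vanish. The only surviving contribution comes from the first $u$-step $[s^k_{i,0},s^k_{i,1/2})$, where the argument is $(t^k_i,u^k_{i-1},z^k_{i-1})$: by Corollary~\ref{c.3} the interval length is $O(\|g(t^k_i)-g(t^k_{i-1})\|_{W^{1,p}})$, and starting from the previous-node equilibrium $|\partial_z^-\F|(t^k_{i-1},u^k_{i-1},z^k_{i-1})=0$ I bound the slope itself by the same quantity via a H\"older estimate; summing over $i$ and using the absolute continuity of $g$ in $t$ the total contribution vanishes as $\tau_k\to 0$.

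Once $\int_0^S|\partial_z^-\F|(\underline t_k,\underline u_k,z_k)\,d\sigma\le C$ is established, the conclusion follows from Fatou's lemma. Indeed, for every $\sigma\in U$, Lemma~\ref{r.t} gives $\underline t_k(\sigma)\to t(\sigma)$, Lemmata~\ref{p.nonstationaryz} and \ref{p.almoststationaryz} give $\underline u_k(\sigma)\to u(\sigma)$ strongly in $H^1(\Omega;\R^2)$, and Proposition~\ref{p.compactness} gives $z_k(\sigma)\rightharpoonup z(\sigma)$ weakly in $H^1(\Omega)$ with $0\le z_k\le 1$; Lemma~\ref{l.2} then yields the pointwise lower bound $|\partial_z^-\F|(t(\sigma),u(\sigma),z(\sigma))\le\liminf_k|\partial_z^-\F|(\underline t_k(\sigma),\underline u_k(\sigma),z_k(\sigma))$ on $U$, and integrating against Fatou delivers $\int_U|\partial_z^-\F|(t,u,z)\,d\sigma\le C$.

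The main technical difficulty lies in the estimate of the slope on the first $u$-step intervals: the term $\int\nabla z^k_{i-1}\cdot\nabla\psi\,dx$ inside $\partial_z\F$ is not $L^2$-controlled for $\|\psi\|_{L^2}\le 1$, so one must genuinely exploit the vanishing of $|\partial_z^-\F|$ at $(t^k_{i-1},u^k_{i-1},z^k_{i-1})$ to cancel it, being left only with the time-difference of the $h(z^k_{i-1})\Psi_+(\strain(u^k_{i-1}+g(t)))$ term, which can be handled in $L^2$ via H\"older using the $W^{1,\tilde p}$-bound on $u^k_{i-1}$ and the regularity $g\in W^{1,q}([0,T];W^{1,p}(\Omega;\R^2))$.
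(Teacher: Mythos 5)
Your overall route is the same as the paper's: establish a discrete energy-type bound in which the unilateral slope $|\partial_z^-\F|(\underline t_k,\underline u_k,z_k)$ appears \emph{without} the velocity factor (using that on the $z$-steps $\|z_k'\|_{L^2}=1$ and that elsewhere the slope should vanish by minimality), and then pass to the limit on $U$ by Fatou combined with Lemma~\ref{l.2}, Lemma~\ref{r.t} and Lemmata~\ref{p.nonstationaryz}--\ref{p.almoststationaryz}. The $z$-step bookkeeping and the Fatou step are fine.

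The gap is in your treatment of the first $u$-step $[s^k_{i,0},s^k_{i,\frac12})$. You evaluate the interpolant there at $(t^k_i,u^k_{i-1},z^k_{i-1})$, i.e.\ new time with old displacement and phase field, and then try to bound $|\partial_z^-\F|(t^k_i,u^k_{i-1},z^k_{i-1})$ by comparison with the vanishing slope at $(t^k_{i-1},u^k_{i-1},z^k_{i-1})$. The cancellation of the $\int_\Omega\nabla z^k_{i-1}\cdot\nabla\psi$ and $f'$ terms is legitimate, but the surviving term requires, for $\psi\le 0$ with $\|\psi\|_{L^2}\le 1$, that
\[
\big(|\strain(u^k_{i-1}+g(t^k_i))|+|\strain(u^k_{i-1}+g(t^k_{i-1}))|\big)\,\big|\strain\big(g(t^k_i)-g(t^k_{i-1})\big)\big|\in L^2(\Omega),
\]
i.e.\ a H\"older condition of the form $\tfrac12+\tfrac1{\tilde p}+\tfrac1p\le 1$, since $u^k_{i-1}$ is only bounded in $W^{1,\tilde p}$ and $g(t)$ only lies in $W^{1,p}$. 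Lemma~\ref{l.HMWTh} gives no lower bound on $\tilde p$ away from $2$, and $p>2$ is arbitrary, so this exponent condition can fail; with only an $L^2$-normalized test function there is no room to absorb the deficit, and it is not even clear that the unilateral $L^2$-slope at $(t^k_i,u^k_{i-1},z^k_{i-1})$ is finite. So this step, as stated, is not justified in the paper's generality.

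The difficulty is self-inflicted: the paper's piecewise constant interpolant $\underline t_k$ in \eqref{e.undert} keeps the \emph{old} time $t^k_{i-1}$ on the whole interval $[s^k_{i,-1},s^k_{i,\frac12})$, switching to $t^k_i$ only at $s^k_{i,\frac12}$. With that choice the triple on the first $u$-step is exactly $(t^k_{i-1},u^k_{i-1},z^k_{i-1})$, whose unilateral slope is zero by the node equilibrium \eqref{e.77}, so no estimate is needed there at all; and since $|\underline t_k-t_k|\le\tau_k$ (Lemma~\ref{r.t}), the pointwise convergence needed for the Fatou/lower-semicontinuity step on $U$ is unaffected. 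Replacing your interpolant by the paper's removes the gap and the rest of your argument goes through.
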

 
 \begin{proof}
 To prove this property, we slightly modify the energy inequality~\eqref{e.78} of Proposition~\ref{p.7} making use of the piecewise constant interpolation function~$\underline{u}_{k}$ defined in~\eqref{e.underu} on the interval~$[0,S]$.
 
 Let $k$ and~$i\in\{1,\ldots,k\}$ be fixed. For $j=-1$, for every $s\in[s^{k}_{i-1},s^{k}_{i,0}]$ we have $u_{k}'(s)= z_{k}'(s)=0$ and, by~\eqref{e.77},  $|\partial_{z}^{-}\F|(\underline{t}_{k}(s), \underline{u}_{k}(s), z_{k}(s)) = 0$. Therefore,
\begin{align}
\F(t_{k}(s), u_{k}(s), & \ z_{k}(s)) =
 \F(t_{k}(s^{k}_{i-1}), u_{k}(s^{k}_{i-1}),z_{k}(s^{k}_{i-1})) - \int_{s^{k}_{i-1}}^{s} \!\!\! |\partial_{u}\F|(t_{k}(\sigma), u_{k}( \sigma ),z_{k}( \sigma ))\,\| u'_k(\sigma) \|_{H^1}\,\di  \sigma \nonumber \\
&    - \int_{s^{k}_{i-1}}^{s} \!\!\! |\partial_{z}^{-}\F|( \underline{t}_{k}(\sigma), \underline{u}_{k}( \sigma ),z_{k}( \sigma )) \,\di  \sigma + \int_{s^{k}_{i-1}}^{s} \mathcal{P} ( t_{k}(\sigma), u_{k}(\sigma), z_{k}(\sigma) ) \, t'_{k}(\sigma) \, \di \sigma  \,. \label{e.100} 
\end{align}
For every $j\geq 0$, we distinguish between  $s\in[s^{k}_{i,j},s^{k}_{i,j+\frac{1}{2}}]$ and $s\in[s^{k}_{i,j+\frac{1}{2}},s^{k}_{i,j+1}]$. In the first case we have $t_{k}'(s)=z_{k}'(s)=0$ and $\| u'_k(s) \|_{H^1}=1$ for a.e.~$s \in[s^{k}_{i,j},s^{k}_{i,j+\frac{1}{2}}]$. For $j=0$ we have $\underline{t}_{k}(s) = t^{k}_{i-1}$, $\underline{u}_{k}(s)= u^{k}_{i-1}$, $z_{k}(s) = z^{k}_{i-1}$, and $|\partial_{z}^{-} \F| ( \underline{t}_{k}(s), \underline{u}_{k}(s), z_{k}(s))=0$ again by~\eqref{e.77}. If $j\geq 1$, then $\underline{t}_{k}(s) = t^{k}_{i}$, $\underline{u}_{k}(s) = u^{k}_{i,j}$, $z_{k}(s)= z^{k}_{i,j}$, and $|\partial_{z}^{-} \F | ( \underline{t}_{k}(s), \underline{u}_{k}(s), z_{k}(s))=0$ by~\eqref{e.minz}. Hence, we rewrite~\eqref{e.72} as
\begin{align}
\F  & (t_{k}(s), u_{k}(s), z_{k}(s))  = \F ( t_{k}(s^{k}_{i,j}), u_{k} ( s^{k}_{i,j} ) , z_{k} ( s^{k}_{i,j}) ) - \int_{s^{k}_{i,j}}^{s} \!\! | \partial_{u} \F | ( t_{k}(\sigma), u_{k}( \sigma ), z_{k}( \sigma ) ) \, \| u'_{k}(\sigma) \|_{H^{1}}\, \di   \sigma  \nonumber  \\
& = \F ( t_{k}(s^{k}_{i,j}), u_{k}(s^{k}_{i,j}),z_{k}(s^{k}_{i,j}))  - \int_{s^{k}_{i,j}}^{s} \!\! |\partial_{u} \F | ( t_{k}(\sigma), u_{k} ( \sigma ), z_{k} ( \sigma ) ) \, \| u'_k(\sigma) \|_{H^1} \,\di  \sigma \label{e.101}  \\
&\quad - \int_{s^{k}_{i,j}}^{s} \!\! | \partial_{z}^{-} \F |( \underline{t}_{k}(\sigma), \underline{u}_{k}( \sigma ),z_{k}( \sigma ))  \,\di  \sigma + \int_{s^{k}_{i,j}}^{s} \mathcal{P} ( t_{k}(\sigma), u_{k}(\sigma), z_{k}(\sigma)) \, t'_{k}(\sigma) \,\di\sigma \nonumber \,. 
\end{align}
In the case $s\in [s^{k}_{i,j+\frac{1}{2}}, s^{k}_{i, j+1})$ we have $t_{k}'(s)=u_{k}'(s)=0$, $\| z'_k(s) \|_{L^2}=1$ for a.e.~$s \in[s^{k}_{i,j+\frac12},s^{k}_{i,j+1}]$, $\underline{t}_{k}(s) = t^{k}_{i}$, and~$\underline{u}_{k}(s) = u^{k}_{i,j+1}$. Then, we rewrite~\eqref{e.73} as
\begin{align}
\F & ( t_{k}(s), u_{k}(s), z_{k}(s))  = \F( t_{k}(s^{k}_{i,j+\frac{1}{2}}), u_{k}(s^{k}_{i,j+\frac{1}{2}}), z_{k}(s^{k}_{i,j+\frac{1}{2}}) ) - \! \int_{s^{k}_{i,j+\frac{1}{2}}}^{s} \!\!\!\!\!\!\!\!\! | \partial_{z}^{-} \F |( \underline{t}_{k}(\sigma), \underline{u}_{k}( \sigma ), z_{k}( \sigma ) ) \,\| z'_{k}(\sigma)\|_{L^{2}}\, \di  \sigma  \nonumber \\
& = \F( t_{k}( s^{k}_{i,j+\frac{1}{2}}), u_{k}(s^{k}_{i, j + \frac{1}{2}}), z_{k}(s^{k}_{i, j +\frac{1}{2}})) - \int_{s^{k}_{i,j+\frac{1}{2}}}^{s} \!\!\!\!\!\!\!\!\! |\partial_{u}\F| ( t_{k}(\sigma), u_{k}( \sigma ), z_{k}( \sigma ) ) \, \| u'_k(\sigma) \|_{H^1} \,\di  \sigma \label{e.102} \\
& \quad   -   \int_{s^{k}_{i,j+\frac{1}{2}}}^{s} \!\!\!\!\!\!\!\!\! | \partial_{z}^{-} \F |( \underline{t}_{k}(\sigma), \underline{u}_{k}( \sigma ),z_{k}( \sigma ) )\,\di  \sigma + \int_{s^{k}_{i, j+\frac{1}{2}}}^{s} \!\!\!\!\!\!\!\! \P ( t_{k}( \sigma ), u_{k}( \sigma ), z_{k}( \sigma ) ) \, t'_{k}( \sigma )\,\di \sigma \nonumber  \,.
\end{align}
Summing up~\eqref{e.100}-\eqref{e.102}, we deduce that for every $s\in[s^{k}_{i-1},s^{k}_{i})$ it holds
\begin{align*}
\F( t_{k}(s), u_{k}(s),  z_{k}(s)) = & \ \F( t_{k}(s^{k}_{i-1}), u_{k}(s^{k}_{i-1}), z_{k}(s^{k}_{i-1}) ) - \int_{s^{k}_{i-1}}^{s} \!\!\! |\partial_{u} \F | ( t_{k}(\sigma), u_{k}( \sigma ), z_{k}( \sigma )) \, \| u'_k(\sigma) \|_{H^1} \,\di  \sigma \\ 
& - \int_{s^{k}_{i-1}}^{s} \!\!\!\! |\partial_{z}^{-} \F | ( \underline{t}_{k}(\sigma), \underline{u}_{k}( \sigma ), z_{k}( \sigma )) \,\di  \sigma + \int_{s^{k}_{i-1}}^{s} \!\!\!\! \P ( t_{k}( \sigma ), u_{k}( \sigma ), z_{k}( \sigma ) ) \, t'_{k}( \sigma )\,\di \sigma  \,.
\end{align*}
Passing to the limit as $s\to s^{k}_{i}$ by Lemma \ref{l.lscFE} we get 
\begin{equation*}
\begin{split}
\F( t_{k}(s^{k}_{i}), u_{k}(s^{k}_{i}), & \ z_{k}(s^{k}_{i})) \leq \F ( t_{k}(s^{k}_{i-1}), u_{k}(s^{k}_{i-1}), z_{k}(s^{k}_{i-1})) - \int_{s^{k}_{i-1}}^{s^{k}_{i}} \!\!\! |\partial_{u} \F | ( t_{k}(\sigma), u_{k}( \sigma ), z_{k}( \sigma ))\, \| u'_k(s) \|_{H^1} \,\di  \sigma  \\
& - \int_{s^{k}_{i-1}}^{s^{k}_{i}} \!\!\! | \partial_{z}^{-} \F | ( \underline{t}_{k}(\sigma), \underline{u}_{k}( \sigma ), z_{k}( \sigma )) \,\di  \sigma + \int_{s^{k}_{i-1}}^{s^{k}_{i}} \!\!\!\! \P ( t_{k}( \sigma ), u_{k}( \sigma ), z_{k}( \sigma ) ) \, t'_{k}( \sigma )\,\di \sigma  \,.
\end{split}
\end{equation*}
Iterating the previous estimates we deduce for every $s\in[0,S]$
\begin{equation}\label{e.103}
\begin{split}
\F( t_{k}(s), u_{k}(s), & \ z_{k}(s)) \leq \F( 0, u_{0}, z_{0}) - \int_{0}^{s} \!\! |\partial_{u} \F | ( t_{k}(\sigma), u_{k}( \sigma ), z_{k}( \sigma ))\, \| u'_k(\sigma) \|_{H^1} \,\di  \sigma  \\
& - \int_{0}^{s} \!\! | \partial_{z}^{-} \F | ( \underline{t}_{k}(\sigma), \underline{u}_{k}( \sigma ), z_{k}( \sigma )) \,\di  \sigma + \int_{0}^{s} \!\! \P ( t_{k}( \sigma ), u_{k}( \sigma ), z_{k}( \sigma ) ) \, t_{k}'( \sigma ) \, \di  \sigma \,.
\end{split}
\end{equation} 
We take the liminf on the left-hand side of~\eqref{e.103} and use lower semicontinuity of the energy. We take the limsup on the right-hand side of~\eqref{e.103} and apply the same argument as in the proof of Proposition~\ref{p.3} for the first and the last integral, while we apply Fatou to the second integral. Thus we obtain
\begin{displaymath}
\begin{split}
\F( t(s), u (s),  z (s)) &\leq \F(0, u_{0}, z_{0}) - \liminf_{k\to\infty} \, \int_{0}^{s} \!\! |\partial_{u} \F | ( t_{k}(\sigma), u_{k}( \sigma ), z_{k}( \sigma ))\, \| u'_k(\sigma) \|_{H^1} \,\di  \sigma  \\
&\quad- \liminf_{k\to\infty}\, \int_{0}^{s} \!\!\! | \partial_{z}^{-} \F | (\underline{t}_{k}(\sigma), \underline{u}_{k}( \sigma ), z_{k}( \sigma )) \,\di  \sigma + \limsup_{k\to\infty} \, \int_{0}^{s} \!\! \mathcal{P}(t_{k}(\sigma), u_{k}(\sigma), z_{k}(\sigma))\,t'_{k}(\sigma) \, \di \sigma \\
& \leq \F( 0, u_{0}, z_{0}) - \int_{0}^{s} \!\! |\partial_{u} \F | ( t(\sigma), u ( \sigma ), z ( \sigma ))\, \| u' (\sigma) \|_{H^1} \,\di  \sigma \\
&\quad -  \int_{0}^{s} \!\!\! \liminf_{k\to\infty}\, | \partial_{z}^{-} \F | ( \underline{t}_{k}(\sigma), \underline{u}_{k}( \sigma ), z_{k}( \sigma )) \,\di \sigma  + \int_{0}^{s} \!\! \mathcal{P}(t (\sigma), u (\sigma), z (\sigma) )\,t' (\sigma) \, \di \sigma  \,.
\end{split}
\end{displaymath}
For every~$\sigma \in U$ we know thanks to Lemmata~\ref{p.nonstationaryz} and~\ref{p.almoststationaryz} that $\underline{u}_{k}(\sigma)\to u(\sigma)$ in~$H^{1}(\Om;\R^{2})$, while from Lemma~\ref{r.t} we get that $\underline{t}_{k} \to t$ pointwise in~$[0,S]$. Hence, by Lemma~\ref{l.2} we can continue in the previous inequality with
\begin{equation}\label{e.120}
\begin{split}
\F( t(s), & u (s),  z (s))  \leq  \F( 0, u_{0}, z_{0}) - \int_{0}^{s} \!\! |\partial_{u} \F | ( t (\sigma), u ( \sigma ), z ( \sigma ))\, \| u' (\sigma) \|_{H^1} \,\di  \sigma \\
& -  \int_{0}^{s} \!\!  | \partial_{z}^{-} \F | ( t(\sigma), u ( \sigma ), z( \sigma )) \, \mathbf{1}_{U}(\sigma) \, \di \sigma - \int_{0}^{s} \!\! \liminf_{k\to\infty} | \partial_{z}^{-} \F | ( \underline{t}_{k}(\sigma), \underline{u}_{k}( \sigma ), z_{k}( \sigma )) \, \mathbf{1}_{U^{c}} (\sigma) \, \di \sigma \\
& +\int_{0}^{s} \mathcal{P}(t(\sigma), u(\sigma), z(\sigma)) \, t'(\sigma) \, \di \sigma \,.
\end{split}
\end{equation}
Since $u \in W^{1,\infty} ([0,S]; H^{1}(\Om;\R^{2}))$, $g\in W^{1,q}([0,T]; W^{1,p}(\Om;\R^{2}))$ for some $p>2$ and $q>1$, and~$0 \leq z(s) \leq 1$ for every $s\in[0,S]$, the power functional $\mathcal{P} (t(\cdot), u(\cdot), z(\cdot))\, t'(\cdot)$ belongs to~$L^{1}(0,S)$. Therefore, being the energy functional~$\F$ and the slopes~$|\partial_{u} \F|$ and~$|\partial_{z}^{-} \F|$ positive, we deduce from~\eqref{e.120} that~$|\partial^{-}_{z} \F| ( t(\cdot), u(\cdot), z(\cdot))\,\mathbf{1}_{U}(\cdot) \in L^{1}(0,S)$.\qed
\end{proof}

\begin{lemma}{\bf (Riemann sum).} \label{l.L1-2} The function $r \mapsto | \partial_z^- \F | ( \tilde{t} (r), \tilde{u}(r) , \tilde{z}(r))$ belongs to $L^1(0,R)$. Moreover, for every $R' \in (0, R\,]$ there exists a sequence of subdivisions $\{ r^m_n , \text{ for $n=0,...,N_m$}\}$ with 
$$
	r^{m}_{0} = 0 , \quad  r^{m}_{N_{m}} = R' , \quad \lim_{m \to \infty} \big( \max_{n=0,...,N_m-1} \, ( r^m_{n+1} - r^m_{n} ) \,\big) \to 0 ,
$$
such that the simple functions
$$
	F^m (r) := \sum_{n=0}^{N_{m}-1} | \partial_z^- \F | ( \tilde{t}(r^m_n),  \tilde{u}(r^m_n) , \tilde{z}(r^m_n) ) 
	\left\| \frac{\tilde{z} (r^m_{n+1}) - \tilde{z} (r^m_n) }{r^m_{n+1} - r^m_{n}}   \right\|_{L^2} \! \mathbf{1}_{( r^m_n , \, r^m_{n+1})} (r)
$$
converge to $| \partial_z^- \F |( \tilde{t}(\cdot), \tilde{u}(\cdot) , \tilde{z}(\cdot)) \| \tilde{z}' (\cdot) \|_{L^2}$ strongly in $L^1(0,R')$ (as $m \to \infty$).
\end{lemma}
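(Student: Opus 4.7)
The plan is to split the proof into an integrability statement and a constructive Riemann sum argument.

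\emph{Part 1 (integrability).} First I would establish the $L^1$ bound via the change of variable $s=\alpha(r)$. Recall from Lemmata~\ref{l.beta} and~\ref{l.alpha} that $\alpha\colon[0,R]\to[0,S]$ is strictly increasing, a.e.~differentiable with $\alpha'=1$, and its image coincides, up to a countable set, with $U$. Using this change of variable and the fact that $\beta(B)$ and $\beta(U^c)$ are negligible, one obtains
\begin{displaymath}
\int_0^R |\partial_z^-\F|(\tilde t(r),\tilde u(r),\tilde z(r))\,\di r
 = \int_0^S |\partial_z^-\F|(t(\sigma),u(\sigma),z(\sigma))\,\mathbf{1}_U(\sigma)\,\di\sigma\,,
\end{displaymath}
which is finite by Lemma~\ref{l.L1-1}. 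Combined with the estimate $\|\tilde z'(r)\|_{L^2}\le 1$ from Lemma~\ref{l.tildez}, this yields that $g(r):=|\partial_z^-\F|(\tilde t(r),\tilde u(r),\tilde z(r))\,\|\tilde z'(r)\|_{L^2}$ belongs to $L^1(0,R')$.

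\emph{Part 2 (construction of the subdivisions).} Set $A(r):=|\partial_z^-\F|(\tilde t(r),\tilde u(r),\tilde z(r))$ and let $\mathcal N \subseteq (0,R')$ denote the set of points that are simultaneously Lebesgue points of $A$ and strong differentiability points of $\tilde z\colon[0,R']\to L^2(\Omega)$. Since $A\in L^1$ and $\tilde z$ is Lipschitz valued in the Hilbert space $L^2(\Omega)$, the complement $[0,R']\setminus \mathcal N$ is negligible. The plan is then to construct subdivisions $\{r^m_n\}_{n=0}^{N_m}$ with $r^m_0=0$, $r^m_{N_m}=R'$ and $\max_n(r^m_{n+1}-r^m_n)\to 0$, whose interior nodes lie in $\mathcal N$ and such that the piecewise-constant function $A^m(r):=\sum_n A(r^m_n)\mathbf 1_{(r^m_n,r^m_{n+1})}(r)$ converges to $A$ in $L^1(0,R')$; this can be arranged by a Vitali-type adaptive refinement around each Lebesgue point of $A$.

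\emph{Part 3 (convergence).} To control $F^m-g$ in $L^1$ I would use the decomposition
\begin{displaymath}
F^m(r) - g(r) = \bigl(A^m(r) - A(r)\bigr)\,B^m(r) + A(r)\,\bigl(B^m(r) - \|\tilde z'(r)\|_{L^2}\bigr),
\end{displaymath}
where $B^m(r):=\|(\tilde z(r^m_{n+1}) - \tilde z(r^m_n))/(r^m_{n+1}-r^m_n)\|_{L^2}$ on $(r^m_n,r^m_{n+1})$. The $L^1$-norm of the first summand is bounded by $\|A^m-A\|_{L^1(0,R')}$, which tends to $0$ by construction, since the Lipschitz bound on $\tilde z$ gives $B^m\le 1$. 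For the second summand, Bochner-space differentiability at the nodes yields $(\tilde z(r^m_{n+1})-\tilde z(r^m_n))/(r^m_{n+1}-r^m_n)\to \tilde z'(r)$ strongly in $L^2(\Omega)$ at a.e.~$r$, and hence $B^m\to\|\tilde z'\|_{L^2}$ pointwise a.e.; since $B^m\le 1$ and $A\in L^1$, dominated convergence gives $\int A\,|B^m-\|\tilde z'\|_{L^2}|\,\di r\to 0$.

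\emph{Main obstacle.} The delicate point is the construction in Part~2: $A$ is merely $L^1$ and may concentrate, while the differentiability of $\tilde z$ holds only almost everywhere, so one must simultaneously arrange that the sequence of partitions resolves the $L^1$-fine structure of $A$ and picks out nodes at which the vector-valued difference quotient converges strongly in $L^2(\Omega)$. This requires a careful adaptive refinement of Vitali type, along the lines of the Riemann sum arguments in~\cite{MR2186036, Negri_ACV}.
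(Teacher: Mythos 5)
Your route is the same as the paper's: the $L^1$ bound is obtained from the change of variables built on Lemmata~\ref{l.beta}--\ref{l.alpha} together with Lemma~\ref{l.L1-1}, the difference-quotient factor is handled by a.e.\ convergence plus the bound $\|\tilde z'\|_{L^2}\le 1$ from Lemma~\ref{l.tildez}, and the conclusion follows from the product decomposition and dominated convergence. The paper implements exactly this scheme, citing \cite[Lemma~4.12]{MR2186036} for the choice of the subdivisions and \cite[Lemma~D.1]{NegriKimura} for the difference quotients, so the comparison reduces to your Parts~2--3.

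Two remarks there. First, Part~2 is where the real work sits, and your sketch leaves it essentially unproven: the claim that one can choose partitions with nodes at Lebesgue points of $A$ so that $A^m\to A$ in $L^1(0,R')$ is precisely the content of the Riemann-sum selection lemma you would be re-proving. Being a Lebesgue point only controls averages of $|A-A(r^m_n)|$ over intervals shrinking around that node, with a rate that is not uniform in the node, so ``nodes are Lebesgue points'' does not by itself give $\sum_n\int_{r^m_n}^{r^{m}_{n+1}}|A(r^m_n)-A(r)|\,\di r\to 0$. A Vitali-type construction can be completed, but then you must tile $[0,R']$ exactly: after extracting finitely many good intervals you have to fill the leftover gaps and control the terms $A(\mathrm{node})\cdot|\mathrm{gap}|$ (e.g.\ by picking gap nodes where $A$ does not exceed its average on the subinterval), and none of this bookkeeping appears in your proposal; the standard proofs instead select the partitions by averaging over shifted grids. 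Citing the known lemma, as the paper does, is the clean fix. Second, a small logical slip in Part~3: the a.e.\ convergence $B^m(r)\to\|\tilde z'(r)\|_{L^2}$ does not come from differentiability of $\tilde z$ at the nodes, but from differentiability of $\tilde z$ at the point $r$ itself (which holds a.e.\ since $\tilde z$ is Lipschitz with values in the Hilbert space $L^2(\Omega)$), because the intervals $(r^m_n,r^m_{n+1})$ straddle $r$ and shrink; this holds for arbitrary partitions with vanishing mesh, which is what \cite[Lemma~D.1]{NegriKimura} supplies. Hence your restriction of the nodes to differentiability points of $\tilde z$ is unnecessary, and the justification as written is misplaced, although the conclusion $B^m\to\|\tilde z'\|_{L^2}$ a.e.\ with $0\le B^m\le 1$ is correct and the dominated-convergence step then goes through as in the paper.
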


\begin{proof}
Since~$\beta\colon[0,S]\to[0,R]$ is Lipschitz continuous, surjective, and $\beta' = \mathbf{1}_{U}$ a.e.~in~$[0,S]$, by the change of variable formula we have for every Borel measurable function~$g\colon [0,S]\to[0,+\infty]$ 
\begin{displaymath}
\int_{[0,S]} g(\sigma) \, \mathbf{1}_{U} (\sigma)\,\di\sigma = \int_{[0,R]} \sum_{\sigma\in \beta^{-1} ( r)} g(\sigma)\,\di r\,.
\end{displaymath}
If~$r\in[0,R]\setminus\beta(U^{c})$, then $\{\sigma \in \beta^{-1}(r)\} = \{\alpha(r)\}$. Indeed, $\beta(\alpha(r))= r$ and if there exists $s > \alpha(r)$ such that $\beta(s)=r$, then $(\alpha(r), s]\subseteq U^{c}$ by~$(c)$ of Lemma~\ref{l.beta} and~$r\in\beta(U^{c})$, which is a contradiction. Since~$|\beta(U^{c})|=0$, from the previous equality we obtain
\begin{equation}\label{e.changeofvariable}
\int_{[0,S]} g(\sigma) \, \mathbf{1}_{U} (\sigma)\,\di\sigma =  \int_{[0,R]} g(\alpha(r))\,\di r\,.
\end{equation}

We now apply~\eqref{e.changeofvariable} to the function~$|\partial_{z}^{-}\F|(t(\cdot), u(\cdot), z(\cdot))$:
\begin{displaymath}
	  \int_{0}^{S} | \partial_z^- \F | ( t(s), u (s) , z (s)) \, \mathbf{1}_{U}(s) \, \di s = \int_{0}^{R} | \partial_z^- \F | ( \tilde{t}(r), \tilde{u}(r) , \tilde{z}(r)) \, \di r \,.
\end{displaymath}
Hence $| \partial_z^- \F |( \tilde{t}(\cdot), \tilde{u}(\cdot) , \tilde{z}(\cdot)) $ belongs to~$L^1(0,R')$ for every $R' \in (0,R\,]$ by Lemma~\ref{l.L1-1}. Thus, by classical results 
(see, e.g.,~\cite[Lemma~4.12]{MR2186036}) there exists a sequence of subdivisions $\{ r^m_n \}$ with 
$$
	r^{m}_{0} = 0 , \quad  r^{m}_{N_{m}} = R' , \quad \lim_{m \to \infty} \big( \max_{n=0,...,N_m-1} \, ( r^m_{n+1} - r^m_{n} ) \,\big) \to 0 ,
$$ 
such that the simple functions
$$
	F^m (r) := \sum_{n=0}^{N_{m}-1} | \partial_z^- \F | ( \tilde{t} (r^m_n), \tilde{u}(r^m_n) , \tilde{z}(r^m_n) ) \,   \mathbf{1}_{( r^m_n , \, r^m_{n+1})} (r) 
$$
converge to $| \partial_z^- \F |( \tilde{t} (\cdot), \tilde{u}(\cdot) , \tilde{z}(\cdot)) $ strongly in~$L^1 (0,R')$.

Invoking for instance \cite[Lemma D.1]{NegriKimura}, for a.e.~$r \in (0,R')$ it holds 
$$
    \sum_{n=0}^{N_{m}-1} \left\| \frac{\tilde{z} (r^m_{n+1}) - \tilde{z} (r^m_n) }{r^m_{n+1} - r^m_{n}}   \right\|_{L^2} \!  \mathbf{1}_{( r^m_n , \, r^m_{n+1})} (r) \to  \| \tilde{z}' (r) \|_{L^2} \, .
$$
The thesis follows by dominated convergence, since~$\| \tilde{z}' (r) \|_{2} \leq 1$ for a.e.~$r\in[0,R']$.\qed 
\end{proof}

We are now in a position to prove the upper energy-dissipation inequality.

\begin{proposition} \label{p.R-sum} Let $s \in (0,S]$ and $(t,u,z)$ be the triple obtained in Proposition~\ref{p.compactness}. Then,
\begin{displaymath}
\begin{split}
	\F ( t(s), u (s) , z(s) ) \ge & \ \F(0, u_0 , z_0) - \int_0^{s} | \partial^-_z \F | ( t(\sigma), u(\sigma) , z(\sigma) ) \| z' (\sigma) \|_{L^2} \, \di \sigma \\
	&-\int_{0}^{s}  | \partial_u \F | ( t(\sigma), u(\sigma) , z(\sigma) ) \| u' (\sigma) \|_{H^1} \, \di \sigma +\int_{0}^{s}\mathcal{P}(t(\sigma), u(\sigma), z(\sigma) ) \, t'(\sigma)\,\di \sigma \,.
\end{split}
\end{displaymath}
\end{proposition}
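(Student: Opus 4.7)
The strategy I would adopt combines a Riemann-sum argument in the reparametrized variable $r = \beta(s)$ (to recover the $z$-dissipation) with a chain-rule argument in the $s$-variable on pieces where the phase field is locally constant (to recover the $u$-slope and the power term). Set $R' \coloneq \beta(s)$ and take a partition $\{r_n^m\}_{n=0}^{N_m}$ of $[0,R']$ as provided by Lemma~\ref{l.L1-2}; let $s_n^m \coloneq \alpha(r_n^m)$, so that $s_0^m = 0$, $s_{N_m}^m = \alpha(R') \leq s$, and $(\alpha(R'), s] \subseteq U^c$ by Lemma~\ref{l.beta}(c). For each $n$ I would split the telescope increment
\[
\F(\tilde t(r_{n+1}^m), \tilde u(r_{n+1}^m), \tilde z(r_{n+1}^m)) - \F(\tilde t(r_n^m), \tilde u(r_n^m), \tilde z(r_n^m)) = I_n^m + II_n^m ,
\]
where $I_n^m$ changes only $z$ (with $(t,u)$ kept frozen at $(\tilde t(r_n^m), \tilde u(r_n^m))$) and $II_n^m$ changes only $(t,u)$ (with $z$ kept frozen at $z(s_{n+1}^m) = \tilde z(r_{n+1}^m)$). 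Telescoping then gives $\F(\tilde t(R'), \tilde u(R'), \tilde z(R')) - \F(0, u_0, z_0) = \sum_n (I_n^m + II_n^m)$.

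The convexity of $\F(t,u,\cdot)$ in $z$, combined with the irreversibility $\tilde z(r_{n+1}^m) \leq \tilde z(r_n^m)$ and the characterization of the slope in Remark~\ref{p.1}, yields
\[
I_n^m \geq -|\partial_z^- \F|(\tilde t(r_n^m), \tilde u(r_n^m), \tilde z(r_n^m))\, \|\tilde z(r_{n+1}^m) - \tilde z(r_n^m)\|_{L^2} ,
\]
and the sum of the right-hand side over $n$ is exactly $-\int_0^{R'} F^m \,\di r$. By Lemma~\ref{l.L1-2}, $F^m \to |\partial_z^- \F|(\tilde t, \tilde u, \tilde z)\,\|\tilde z'\|_{L^2}$ in $L^1(0, R')$. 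A change of variable based on~\eqref{e.changeofvariable}, together with $\tilde z' = z' \circ \alpha$ from Lemma~\ref{l.tildez} and the fact that $\|z'\|_{L^2} \equiv 0$ on $U^c$, rewrites the limit as $-\int_0^s |\partial_z^- \F|(t,u,z)\,\|z'\|_{L^2}\,\di\sigma$, giving the desired $z$-dissipation term.

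For the increments $II_n^m$, since $t, u \in W^{1,\infty}([0,S])$ and $\F(\cdot, \cdot, z(s_{n+1}^m))$ is $C^1$ in $(t,u)$ with $\partial_t \F = \P$, the classical chain rule together with $\partial_u \F[u'] \geq -|\partial_u \F|\,\|u'\|_{H^1}$ gives
\[
II_n^m \geq -\int_{s_n^m}^{s_{n+1}^m} |\partial_u \F|(t, u, z(s_{n+1}^m))\,\|u'\|_{H^1} \,\di\sigma + \int_{s_n^m}^{s_{n+1}^m} \P(t, u, z(s_{n+1}^m))\, t' \,\di\sigma .
\]
Setting $z^m(\sigma) \coloneq z(s_{n+1}^m)$ for $\sigma \in (s_n^m, s_{n+1}^m]$, as the partition refines one has $z^m(\sigma) \to z(\sigma)$ in $L^r(\Omega)$ for every $r < \infty$ (by Lipschitz continuity of $z\colon [0,S] \to L^2$ and the bound $z^m \in [0,1]$) and $z^m(\sigma) \weakto z(\sigma)$ weakly in $H^1$; Lemma~\ref{l.3} then provides the pointwise convergence $|\partial_u \F|(t,u,z^m) \to |\partial_u \F|(t,u,z)$, while the linear growth of $\partial_\strain W$ in Lemma~\ref{l.HMWw}(c) together with the uniform $H^1$-bound on $u + g(t)$ supplies a dominating function independent of the partition, so dominated convergence applies. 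On the residual interval $(\alpha(R'), s] \subseteq U^c$ the phase field is identically $z(s)$, and one last application of the chain rule with $z = z(s)$ frozen supplies the missing $u$-slope and power contributions on this tail. Summing all contributions yields the stated inequality. The main obstacle is exactly the passage to the limit in $\sum_n II_n^m$, where the $z$-continuity of $|\partial_u \F|$ (for which Lemma~\ref{l.3} is tailor-made) and the construction of a dominating function uniform in the partition must be secured simultaneously.
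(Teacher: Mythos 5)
Your proposal follows essentially the same route as the paper's proof: the same telescoping decomposition (convexity of $\F(t,u,\cdot)$ with $(t,u)$ frozen at the left node, chain rule in $(t,u)$ with $z$ frozen at the right node), Lemma~\ref{l.L1-2} together with the change of variables~\eqref{e.changeofvariable} and $\tilde z'=z'\circ\alpha$ for the $z$-dissipation term, Lemma~\ref{l.HMWw}(c) (or Lemma~\ref{l.3}) plus dominated convergence for the $u$- and power terms, and a final chain rule with $z$ frozen on the tail contained in $U^c$. The only organizational difference is that the paper treats $s\in U$ and $s\in U^c$ as two separate steps, while you absorb the second case into the tail $(\alpha(\beta(s)),s]\subseteq U^c$; the content is identical.

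The one step whose justification does not hold as stated is the convergence $z^m(\sigma)\to z(\sigma)$, which you attribute to the Lipschitz continuity of $z\colon[0,S]\to L^2(\Om)$ ``as the partition refines''. The nodes in the $s$-variable are $s^m_n=\alpha(r^m_n)$, and since $\alpha$ jumps across the intervals of $U^c$, the mesh $\alpha(r^m_{n+1})-\alpha(r^m_n)$ need \emph{not} tend to zero even though $r^m_{n+1}-r^m_n\to0$; so Lipschitz continuity of $z$ in the variable $s$ alone gives no smallness. The correct estimate — the one the paper inserts at exactly this point — is $\|z(\sigma)-z(\alpha(r^m_{n}))\|_{L^2}\le\int_{\alpha(r^m_n)}^{\sigma}\|z'(\varsigma)\|_{L^2}\,\mathbf{1}_{U}(\varsigma)\,\di\varsigma\le \beta(\alpha(r^m_{n+1}))-\beta(\alpha(r^m_n))=r^m_{n+1}-r^m_n$ for $\sigma\in(\alpha(r^m_n),\alpha(r^m_{n+1})]$, which uses that $z'=0$ on $U^c$ and $\beta'=\mathbf{1}_U$ (equivalently, that $\tilde z$ is $1$-Lipschitz in $r$, Lemma~\ref{l.tildez}). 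With this replacement your argument is complete: the domination by $C|\strain(u(\sigma)+g(t(\sigma)))|$, the uniform $H^1$-bound on $z^m$ needed to invoke Lemma~\ref{l.3}, and the vanishing of the $z$-dissipation on the tail (where $\|z'\|_{L^2}=0$) are all correctly in place.
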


\proof We divide the proof in two steps.

{\bf Step 1: \boldmath{$s \in U $}.} Let $ R' = \beta(s)$. Since $s\in U$, then $R' > 0$. Let~$\{r^m_n\}$ a sequence of subdivision  of~$[0, R']$ provided by Lemma~\ref{l.L1-2}. We recall that $\tilde{u} (r) = u \circ \alpha( r)$ and $\tilde{z} (r) = z \circ \alpha( r)$. Thus, by the regularity of~$t$ and of~$u$ we can write by chain rule
\begin{align*}
	\F & ( \tilde{t}(r^{m}_{n+1}), \tilde{u} ( r^{m}_{n+1}) , \tilde{z} ( r^{m}_{n+1})   )  = \F ( t \circ \alpha (r^{m}_{n+1}), u \circ \alpha (r^{m}_{n+1}) , z \circ \alpha ( r^{m}_{n+1}) ) \\ 
	& = \F ( t \circ \alpha (r^m_n), u \circ \alpha (r^{m}_{n}) , z \circ \alpha ( r^{m}_{n+1}) )  + \int_{\alpha(r^m_n)}^{\alpha(r^{m}_{n+1})} \!\!\! \partial_u \F ( t(\sigma), u ( \sigma) , z \circ \alpha ( r^{m}_{n+1}  )  )  [ u' (\sigma) ] \, \di \sigma \\
	& \quad + \int_{\alpha(r^m_n)}^{\alpha(r^{m}_{n+1})} \mathcal{P}(t(\sigma), u(\sigma), z \circ \alpha (r^{m}_{n+1})) \, t'(\sigma) \, \di \sigma \\
	& = \F ( \tilde{t}(r^m_n), \tilde{u} (r^{m}_{n}) , \tilde{z} ( r^{m}_{n+1}) ) + \int_{\alpha(r^m_n)}^{\alpha(r^{m}_{n+1})} \!\!\! \partial_u \F ( t(\sigma), u ( \sigma) , z \circ \alpha ( r^{m}_{n+1}  )  )  [ u' (\sigma) ] \, \di \sigma\\
	& \quad + \int_{\alpha(r^m_n)}^{\alpha(r^{m}_{n+1})} \!\!\! \mathcal{P}(t(\sigma), u(\sigma), z \circ \alpha (r^{m}_{n+1})) \, t'(\sigma) \, \di \sigma \,.
\end{align*}
Using the convexity of the energy $\F ( t, u , \cdot)$ we can write 
\begin{align*}
	\F ( \tilde{t}(r^m_n), \tilde{u} ( r^{m}_{n}) , \tilde{z} ( r^{m}_{n+1}) ) 
			& \ge \F ( \tilde{t}(r^m_n), \tilde{u} ( r^m_n) , \tilde{z} ( r^{m}_{n}) ) 
			+ \partial_z \F ( \tilde{t}(r^m_n), \tilde{u} ( r^{m}_{n}) , \tilde{z} ( r^{m}_{n}) ) [  \tilde{z} (r^{m}_{n+1} )  -   \tilde{z} (r^{m}_{n} )  ] \\
			& \ge \F ( \tilde{t}(r^m_n), \tilde{u} ( r^m_n) , \tilde{z} ( r^{m}_{n}) ) 
			- | \partial^-_z \F | ( \tilde{t}(r^m_n), \tilde{u} ( r^{m}_{n}) , \tilde{z} ( r^{m}_{n}) ) \, \| \tilde{z} (r^{m}_{n+1} )  -   \tilde{z} (r^{m}_{n} ) \|_{L^2}  \\
			& = \F ( \tilde{t}(r^m_n), \tilde{u} ( r^m_n) , \tilde{z} ( r^{m}_{n}) ) 
			- \int_{r^m_n}^{r^{m}_{n+1}} \!\!\!\! | \partial^-_z \F | ( \tilde{t}(r^m_n), \tilde{u} ( r^{m}_{n}) , \tilde{z} ( r^{m}_{n}) ) \, \left\| \frac{ \tilde{z} (r^{m}_{n+1} )  -   \tilde{z} (r^{m}_{n} ) }{r^{m}_{n+1}-r^m_n} \right\|_{L^2} \!\!\!\! \di \rho  .
\end{align*}
In conclusion, for every index $n=0,...,N_{m}-1$ we have
\begin{align*}
   \F ( \tilde{t}(r^{m}_{n+1}),  \tilde{u} ( r^{m}_{n+1}) , \tilde{z} ( r^{m}_{n+1})   )  \ge & \ \F ( \tilde{t}(r^m_n), \tilde{u} ( r^m_n) , \tilde{z} ( r^{m}_{n}) ) \\
   & - \int_{r^m_n}^{r^{m}_{n+1}} \!\!\! | \partial^-_z \F | ( \tilde{t}(r^m_n), \tilde{u} ( r^{m}_{n}) , \tilde{z} ( r^{m}_{n}) ) \, \left\| \frac{ \tilde{z} (r^{m}_{n+1} )  -   \tilde{z} (r^{m}_{n} ) }{r^{m}_{n+1}-r^m_n} \right\|_{L^2} \!\!\!\! \di \rho \,  \\
	&  + \int_{\alpha(r^m_n)}^{\alpha(r^{m}_{n+1})} \!\! \partial_u \F ( t(\sigma), u ( \sigma) , z \circ \alpha ( r^{m}_{n+1}  )  )  [ u' (\sigma) ] \, \di \sigma \\
	&  + \int_{\alpha(r^m_n)}^{\alpha(r^{m}_{n+1})} \!\!\! \mathcal{P}(t(\sigma), u(\sigma), z \circ \alpha (r^{m}_{n+1})) \, t'(\sigma) \, \di \sigma  \,.
\end{align*}
Note that $\tilde{u} (r^m_0) = \tilde{u} (0) = u_0$ and that $\alpha(R') =  \alpha(\beta(s)) = s$ because $s \in U$. Thus, 
\begin{displaymath}
\tilde{u} (r^m_{N_{m}}) = \tilde{u} ( R' ) =  u \circ \alpha (R') = u ( s)\,.
\end{displaymath}
In a similar way, $\tilde{z} (r^m_0) =  z_0$ and $\tilde{z} ( r^m_{N_{m}})= z(s)$. Therefore, iterating the previous inequality for $n=0,...,N_{m}-1$ yields
\begin{align}
   \F ( t(s), u (s) , z(s) ) & \ge \F ( 0, u_0 , z_0 ) 
			- \sum_{n=0}^{N_{m}-1} \int_{r^m_n}^{r^{m}_{n+1}} | \partial^-_z \F | ( \tilde{t}(r^m_n), \tilde{u} ( r^{m}_{n}) , \tilde{z} ( r^{m}_{n}) ) \, \left\| \frac{ \tilde{z} (r^{m}_{n+1} ) -   \tilde{z} (r^{m}_{n} ) }{r^{m}_{n+1}-r^m_n} \right\|_{L^2} \!\!\!\! \di \rho  \nonumber \\
	& \quad + \sum_{n=0}^{N_{m}-1} \int_{\alpha(r^m_n)}^{\alpha(r^{m}_{n+1})} \!\! \partial_u \F ( t(\sigma), u ( \sigma) , z \circ \alpha ( r^{m}_{n+1}  )  )  [ u' (\sigma) ]  \, \di \sigma \label{e.marameo} \\
	& \quad + \sum_{n=0}^{N_{m}-1}  \int_{\alpha(r^m_n)}^{\alpha(r^{m}_{n+1})} \!\!\! \mathcal{P}(t(\sigma), u(\sigma), z \circ \alpha (r^{m}_{n+1})) \, t'(\sigma) \, \di \sigma \nonumber  \,.
\end{align}

We now pass to the limit as~$m\to\infty$. By Lemma \ref{l.L1-2} we know that the first sum in~\eqref{e.marameo} converges to 
$$
	\int_0^{R'} | \partial_z^- \F |( \tilde{t}(\rho), \tilde{u}( \rho ) , \tilde{z}(\rho)) \| \tilde{z}' (\rho) \|_{L^2} \, \di \rho .
$$
By the change of variable formula~\eqref{e.changeofvariable} with $g(\sigma)= |\partial_{z}^{-} \F| (t(\sigma), u(\sigma), z(\sigma)) \|z'(\sigma)\|_{L^{2}}$, and recalling the definition of~$\tilde{t}$,~$\tilde{u}$,~$\tilde{z}$ and that~$\tilde{z}'= z' \circ \alpha$ a.e.~in $[0,R]$ by Lemma~\ref{l.tildez}, we get
\begin{align}
	\int_0^{R'} | \partial_z^- \F |( \tilde{t}(\rho), \tilde{u}( \rho ) , \tilde{z}(\rho)) \| \tilde{z}' (\rho) \|_{L^2} \, \di \rho  & = \int_0^s | \partial_z^- \F |( t(\sigma), u ( \sigma ) , z (\sigma)) \| z' (\sigma) \|_{L^2} \,\mathbf{1}_{U} ( \sigma) \, \di \sigma  \nonumber \\
	 & = \int_0^s | \partial_z^- \F |( t(\sigma), u ( \sigma ) , z (\sigma)) \| z' (\sigma) \|_{L^2} \, \di \sigma \,,  \label{e.cucu}
\end{align}
where in the last equality we have used the fact that $z'=0$ in $[0,S] \setminus U$, and hence $| \partial_z^- \F | \| z' \|_{L^2}=0$. 

We claim that the second and the third sums in~\eqref{e.marameo} converge to
\begin{align}
	\int_0^s  \partial_u \F ( t(\sigma), u ( \sigma ) , z (\sigma)) [ u' (\sigma) ] \, \di \sigma \qquad \text{and} \qquad \int_{0}^{s} \mathcal{P}( t(\sigma), u(\sigma), z(\sigma))\, t'(\sigma) \, \di \sigma\,, \label{e.uccu}
\end{align}
respectively. We notice that if the claim holds, then, passing to the limit in~\eqref{e.marameo} as~$m\to\infty$ and using~\eqref{e.cucu} we would get
\begin{displaymath}
\begin{split}
	\F ( t(s), u (s) , z(s) ) \ge & \ \F( 0, u_0 , z_0) - \int_0^{s}  | \partial_u \F | ( t(\sigma), u(\sigma) , z(\sigma) ) \| u' (\sigma) \|_{H^1} \, \di \sigma \\
	& - \int_{0}^{s} | \partial^-_z \F | ( t(\sigma), u(\sigma) , z(\sigma) ) \|   z' (\sigma) \|_{L^2} \, \di \sigma + \int_{0}^{s} \mathcal{P}(t(\sigma), u(\sigma), z(\sigma)) \, t'(\sigma) \, \di \sigma \,.
\end{split}
\end{displaymath}
Let us prove the claim. Fix $\bar\sigma \in (0, s)$ and let $j$ (depending on $\bar{\sigma}$ and $k$) be such that $\bar\sigma \in [\alpha(r^m_n) , \alpha (r^{m}_{n+1}) )$. Note that, being $\alpha$ discontinuous, it may happen that $\alpha (r^{m}_{n+1}) - \alpha (r^m_n) \not\to 0$. However, we can write 
$$
	z ( \bar\sigma) = z ( \alpha (r^m_n) ) + \int_{\alpha(r^m_n)}^{\bar\sigma} z' ( \sigma) \, \di \sigma
$$
and thus 
\begin{align*}
	\| z ( \bar \sigma) - z ( \alpha (r^m_n) ) \|_{L^2} & \le \int_{\alpha(r^m_n)}^{\bar\sigma} \| z' ( \sigma) \|_{L^2} \, \di \sigma = \int_{\alpha(r^m_n)}^{\bar\sigma} \| z' ( \sigma) \|_{L^2}\, \mathbf{1}_{U}(\sigma) \, \di \sigma \\
	& \le \int_{\alpha(r^m_n)}^{\alpha(r^{m}_{n+1})} \,\mathbf{1}_{U}(\sigma) \, \di \sigma = \beta \circ \alpha(r^{m}_{n+1}) - \beta \circ \alpha(r^m_n)= r^{m}_{n+1} - r^m_n \to 0 ,
\end{align*}
where in the last limit we have used the property of the subdivision $r^m_n$. Arguing in the same way, we also prove that $\|z(\alpha(r^{m}_{n+1})) - z(\alpha(r^m_n))\|_{2} \to 0$ as $k\to \infty$, which implies that $\| z ( \bar{\sigma} ) - z ( \alpha ( r^{m}_{n+1} ) ) \|_{2} \to 0$ as well. Hence, we have shown that the sequence $\sum_{n=0}^{N_{m}-1} z(\alpha(r^{m}_{n+1}))\,\mathbf{1}_{[\alpha(r^m_n), \alpha(r^{m}_{n+1}))}$ converges pointwise to~$z$ in~$L^{2}(\Om)$.

Recall that 
$$
	\partial_{u} \F( t(\sigma), u (\sigma) , z \circ \alpha (r^{m}_{n+1})) [u'(\sigma)] = 
	\int_{\Om} \partial_{\strain} W \big(z \circ \alpha (r^{m}_{n+1}) ,\strain(u (\sigma) + g \circ t (\sigma) )  \big){\,:\,}\strain(u'(\sigma))\,\di x .
$$
By $(c)$ in Lemma \ref{l.HMWw} we have $| \partial_{\strain} W \big(z \circ \alpha (r^{m}_{n+1}),\strain(u (\sigma) + g \circ t (\sigma) )  \big) | \le  C | \strain(u (\sigma) + g \circ t (\sigma) )   | $. Let us consider a subsequence (not relabelled) such that $z \circ \alpha (r^{m}_{n+1}) \to z (\sigma)$ a.e.~in $\Omega$. Then, being $W$ of class $C^1$, 
$$
\partial_{\strain} W \big(z \circ \alpha (r^m_{n+1}) ,\strain(u (\sigma) + g \circ t (\sigma) )  \big) \to \partial_{\strain} W \big(z ( \sigma) ,\strain(u (\sigma) + g \circ t (\sigma) )  \big) \quad \text{a.e.~in $\Omega$.}
$$
By dominated convergence 
$$
\partial_{u}\F( t(\sigma), u (\sigma) ,z \circ \alpha (r^{m}_{n+1}) )[u'(\sigma)] \to \partial_{u}\F ( t(\sigma), u (\sigma) ,z (\sigma))[u'(\sigma)] \quad\text{for a.e.~$\sigma\in[0,s]$}.
$$

Applying again dominated convergence (in the integral over~$[0,s]$) we prove the first part of the claim~\eqref{e.uccu}, i.e.,
\begin{displaymath}
\lim_{m\to\infty}  \sum_{n=0}^{N_{m}-1} \int_{\alpha(r^m_n)}^{\alpha(r^{m}_{n+1})} \!\! \partial_u \F ( t(\sigma), u ( \sigma) , z \circ \alpha ( r^{m}_{n+1}  )  )  [ u' (\sigma) ]  \, \di \sigma = \int_{0}^{s} |\partial_{u} \F| (t(\sigma), u(\sigma), z(\sigma))\, [u'(\sigma)]\, \di \sigma\,.
\end{displaymath}

Following the proof of~\eqref{e.91} we also obtain the second part of the claim~\eqref{e.uccu}, that is,
\begin{displaymath}
\lim_{m\to\infty} \sum_{n=0}^{N_{m}-1}  \int_{\alpha(r^m_n)}^{\alpha(r^{m}_{n+1})} \!\!\! \mathcal{P}(t(\sigma), u(\sigma), z \circ \alpha (r^{m}_{n+1})) \, t'(\sigma) \, \di \sigma = \int_{0}^{s} \mathcal{P}(t(\sigma), u(\sigma), z(\sigma))\, t'(\sigma) \, \di \sigma \,.
\end{displaymath}

{\bf Step 2: \boldmath{ $s \in U^{c}$.}} In this case $s \in ( s^-_i , s^+_i]$ for some index $i \in \N$. 
In the interval $[ s^{-}_{i} , s]$ we have $z(\sigma) = z (s^-_i)$ and~$z'(\sigma)=0$, while~$t$ and~$u$ are of class $W^{1,\infty}$. Thus, we can write
\begin{align*}
	\F ( t(s), u ( s) , z (s)) & = \F ( t(s),  u(s) , z(s^-_i) ) = \F ( t(s^{-}_{i}) , u (s^-_i) , z (s^-_i) ) 
		+ \int_{s^-_i}^{s} \partial_u \F ( t(\sigma), u ( \sigma) , z(s^-_i)) [ u' ( \sigma) ] \, \di  \sigma  \\
		& \quad +\int_{s^{-}_{i}}^{s} \mathcal{P} (t(\sigma), u(\sigma), z(s^{-}_{i}))\, t'(\sigma)\, \di \sigma \\
		& \ge \F ( t(s^{-}_{i}), u (s^-_i) , z (s^-_i) ) 
		- \int_{s^-_i}^{s} | \partial_u \F | ( t(\sigma), u ( \sigma) , z(\sigma)) \, \| u' ( \sigma) \|_{H^1} \, \di \sigma \\
		&\quad -\int_{s^{-}_{i}}^{s} | \partial^-_z \F | ( t(\sigma), u(\sigma) , z(\sigma) ) \, \| z' (\sigma) \|_{L^2} \, \di \sigma + \int_{s^{-}_{i}}^{s} \mathcal{P}(t(\sigma), u(\sigma), z(\sigma)) \, t'(\sigma)\, \di \sigma \,. 
\end{align*}
Since $s^-_i \in U$ we can apply the previous step and we conclude the proof.
\qed


\appendix

\section{Comparing different parametrizations}

In this appendix we will compare, qualitatively, the evolutions of Theorem~\ref{t.1} with those of \cite[Theorem~4.2]{KneesNegri_M3AS17}, or, more precisely, we will compare the evolutions obtained here, employing $H^1$-norm for~$u$ and $L^2$-norm for~$z$, with those obtained employing energy norms. As we will see, the evolutions will be qualitatively the same (up to subsequences) even if these norms are not equivalent.

We need to consider the setting of~\cite{KneesNegri_M3AS17}, otherwise energy norms would not be defined. Let $\mathcal{J} \colon [0,T] \times H^1_0 (\Omega , \mathbb{R}^2) \times H^1 (\Omega; [0,1]) \to [0,+\infty)$ given by 
$$
	\mathcal{J} (t,u,z) = \tfrac12 \int_\Omega (z^2 + \eta) \stress (u + g(t)){\,:\,}  \strain(u + g(t)) \, \di x + \tfrac12 \int_\Omega |\nabla z |^2 + (z-1)^2 \, \di x ,
$$
where we assume that the boundary datum~$g$ belongs to $C^{1,1}([0,T]; W^{1,p}(\Om;\R^{2}))$, $p>2$.
Note that this energy is separately quadratic, thus it is natural, and technically convenient, to introduce a couple of energy (instrinsic) norms: 
$$
	\| u \|^2_{z} = \int_\Omega (z^2 + \eta) \stress(u){\,:\,} \strain(u)\, \di x ,
	\qquad
	\| z \|^2_{u} = \int_\Omega | \nabla z |^2 + z^2 ( 1 + \stress(u){\,:\,} \strain(u)) \, \di x ,
$$
which correspond, respectively, to the quadratic part of the energies $\mathcal{J}(t, \cdot, z)$ and $\mathcal{J}(t, u, \cdot)$. Accordingly, we employ the slopes
\begin{align*}
|\partial_{u} \mathcal{J}|_z (t,u,z) &= \max \,\{-\partial_{u}\mathcal{J}(t,u,z)[\varphi]:\,\varphi \in H^1_0(\Omega ; \R^2),\, \|\varphi\|_{z}\leq 1\}\,,\\[1mm]
|\partial_{z}^{-} \mathcal{J}|_u (t,u,z) &= \max \,\{-\partial_{z} \mathcal{J}(t,u,z)[\xi]:\,\xi \in H^1(\Omega),\,\xi\leq 0,\,\|\xi\|_{u}\leq 1\} \,.
\end{align*}

Let us consider again the alternate scheme (at time $t^k_i$)
\begin{eqnarray*}
&&\displaystyle u^{k}_{i,j+1}:=\argmin \{\mathcal{J}(t^{k}_{i},u,z^{k}_{i,j}):\,u\in \U \} , \\[2mm]
&&\displaystyle z^{k}_{i,j+1}:=\argmin \{\mathcal{J}(t^{k}_{i},u^{k}_{i,j + 1},z):\, z\in \Z,\, z\leq z^{k}_{i,j}\}.
\end{eqnarray*}
We remark that, given $\tau_k$, the families $u^{k}_{i,j}$ and $z^k_{i,j}$ are uniquely determined (by strict separate convexity of the energy). 
Following \cite{KneesNegri_M3AS17}, we interpolate and parametrize the discrete configurations $u^k_{i,j}$ and $z^k_{i,j}$ with respect to the energy norms $\| \cdot \|^2_{z}$ (for the displacement field) and $\| \cdot \|^2_{u}$ (for the phase field). We remark that in this case it is enough to consider piece-wise affine interpolation, which actually coincides, for both $u$ and $z$,  with the gradient flow in the energy norm. As a result, we get a sequence of arc-length parametrizations $ ( \bar{t}_k , \bar{u}_k, \bar{z}_k)$, bounded  in $W^{1,\infty} ( [ 0, R] ; [0,T] \times H^1_0(\Omega ; \mathbb{R}^2) \times H^1 (\Omega))$ 
and of uniformly finite length, i.e., with $R$ independent of $k \in \mathbb{N}$. 
Invoking \cite[Lemma 4.3]{KneesNegri_M3AS17}, there exists a subsequence (non relabelled) and a limit $( \bar{t}, \bar{u}, \bar{z})$ in $W^{1,\infty} ( [ 0, R] ; [0,T] \times H^1_0(\Omega ; \mathbb{R}^2) \times H^1 (\Omega))$ such that for every sequence~$r_{k}$ converging to~$r \in[0,R]$ we have
\begin{equation} \label{e.barconv}
\bar{t}_{k}(r_{k})\to \bar{t} (r)\,,\qquad \bar{u}_{k}(r_{k})\to \bar{u}(r) \text{ in~$H^1_0(\Omega; \R^2)$,}\qquad \bar{z}_{k}(r_{k}) \rightharpoonup \bar{z}(r) \text{ weakly in~$H^{1}(\Om)$}.
\end{equation}
Moreover, invoking~\cite[Theorem 4.2]{KneesNegri_M3AS17} the limit evolution satisfies the following properties:
\begin{itemize}
\item [$(\bar{a})$] \emph{Regularity}: $(\bar{t},\bar{u},\bar{z})\in W^{1,\infty}([0,R];[0,T]\times H^1_0 ( \Omega;\R^{2}) \times H^{1}(\Om; [0,1]))$, and for a.e.~$r\in[0,R]$ 
\begin{displaymath}
\bar{t\,}'(r)+\| \bar{u}'(r)\|_{ \bar{z}(r)}+\| \bar{z}'(r)\|_{\bar{u} (r)}\leq 1\,,
\end{displaymath}
here the symbol~$'$ denotes the derivative w.r.t.~the parametrization variable $r$;

\item [$(\bar{b})$] \emph{Time parametrization}: the function $\bar{t}\colon [0,R]\to[0,T]$ is non-decreasing and surjective; 

\item [$(\bar{c})$] \emph{Irreversibility}: the function $\bar{z}$ is non-increasing and $0 \le \bar{z}(r) \leq 1$ for every $0\leq r \leq R$;

\item [$(\bar{d})$] \emph{Equilibrium}: for every continuity point $r\in[0,R]$ of~$(\bar{t},\bar{u},\bar{z})$
\begin{displaymath}
|\partial_{u} \mathcal{J}|_{\bar{z}(r)} (\bar{t}(r),\bar{u}(r),\bar{z}(r))=0\qquad\text{and}\qquad|\partial_{z}^{-} \mathcal{J}|_{\bar{u}(r)} (\bar{t}(r),\bar{u}(r),\bar{z}(r))=0 \,;
\end{displaymath}

\item [$(\bar{e})$] \emph{Energy-dissipation equality}: for every $r \in[0,R]$
\begin{equation}\label{e.eneqR}
\begin{split}
\mathcal{J}(\bar{t}(r),& \ \bar{u}(r), \bar{z}(r))  =  \mathcal{J}(0,u_0,z_0)-\int_{0}^{r}|\partial_{u} \mathcal{J}|_{\bar{z}(\rho)} (\bar{t}(\rho),\bar{u}(\rho),\bar{z}(\rho))\, \| \bar{u}' (\rho) \|_{\bar{z}(\rho)} \di\rho\\
&-\int_{0}^{r} \!\! |\partial_{z}^{-} \mathcal{J}|_{\bar{u}(\rho)} (\bar{t}(\rho),\bar{u}(\rho),\bar{z}(\rho))\, \| \bar{z}' (\rho) \|_{\bar{u}(\rho)} \di\rho +\int_{0}^{r} \!\! \P( \bar{t}(\rho),\bar{u}(\rho), \bar{z}(\rho)) \,\bar{t\,} '(\rho)\,\di\rho\,.
\end{split}
\end{equation}
\end{itemize}
In~\cite{KneesNegri_M3AS17} the authors showed property~$(\bar{d})$ for every $r\in[0,R]$ with $\bar{t}'(r)>0$. However, it is not difficult to see that the same equilibrium condition is verified at continuity points.

\noindent Moreover, by \cite[Proposition 4.1]{KneesNegri_M3AS17} we have 
\begin{itemize}
\item [$(\bar{f})$] {\it Non-degeneracy:} there exists $C>0$ such that for a.e.~$r\in[0,R]$ 
\begin{equation} \label{e.nondegR}	
	C < \bar{t\,}'(r) + \| \bar{u}'(r) \|_{\bar{z}(r)} +  \| \bar{z}'(r) \|_{\bar{u}(r)} .
\end{equation}
\end{itemize}
Finally, note that, by the separate differentiability of the energy, the equilibrium conditions $(d)$ can be written in an equivalent ``norm-free'' fashion as 
\begin{itemize}
\item [$(\bar{d}')$] \emph{Equilibrium}: for every continuity point $r\in[0,R]$ of $(\bar{t},\bar{u},\bar{z})$
\begin{displaymath}
 	\partial_{u} \mathcal{J} (\bar{t}(r), \bar{u}(r), \bar{z}(r)) [ \varphi] = 0 , 
		\qquad \text{and} \qquad 
	\partial_{z} \mathcal{J} (\bar{t}(r),\bar{u}(r),\bar{z}(r)) [ \xi ] = 0 ,
\end{displaymath}
for every $\varphi \in H^1_0 ( \Omega ; \mathbb{R}^2)$ and every $\xi \in H^1(\Omega)$ with $\xi \leq 0$. 
\end{itemize}
At this point, consider the subsequence (not relabelled) converging to the limit~$(t,u,z)$ and let us re-interpolate the discrete configurations~$u^k_{i,j}$ and~$z^k_{i,j}$ with respect to the norms~$\| \cdot \|_{H^1}$ (for the displacement field) and $\| \cdot \|_{L^2}$ (for the phase field) as we did in Section~\ref{s.4.1}. In this way we get a new sequence of parametrizations~$(t_k , u_k ,z_k)$ bounded in $W^{1,\infty} ( [ 0, S] ; [0,T] \times H^1_0(\Omega ; \mathbb{R}^2) \times L^2 (\Omega))$. 
Clearly, we can apply Proposition~\ref{p.compactness} which provides (up to a further subsequence) a limit parametrization $(t , u ,z) \in W^{1,\infty} ( [ 0, S] ; [0,T] \times H^1_0(\Omega ; \mathbb{R}^2) \times L^2 (\Omega) )$ such that 
\begin{displaymath}
t_{k}(s_{k})\to t(s)\,,\qquad u_{k}(s_{k})\to u(s) \text{ in~$H^1(\Omega; \R^2)$,}\qquad z_{k}(s_{k}) \rightharpoonup z(s) \text{ weakly in~$H^{1}(\Om)$},
\end{displaymath}
for every sequence~$s_{k}$ converging to~$s\in[0,S]$. The limit $(t,u,z)$ satisfies properties $(a)$-$(e)$ of Theorem~\ref{t.1}. 

\separe

We recall that $(t_k, u_k, z_k)$ is defined in the points~$s^k_{i,j}$ and~$s^k_{i,j+\frac12}$ (see Section~\ref{s.4.1}). In a similar way, the interpolation $(\bar{t}_k, \bar{u}_k, \bar{z}_k)$ is defined in points of the form~$r^k_{i,j}$ and~$r^k_{i,j+\frac12}$ (see Section~4.3 in~\cite{KneesNegri_M3AS17}). Moreover, we notice that the interpolation nodes are different since the underlying parametrizations are different. However, the configurations computed by the alternate minimization scheme are the same. Therefore, we have that
\begin{displaymath}
t_{k}(s^{k}_{i,j}) = \bar{t}_{k}(r^{k}_{i,j})\, , \quad u_{k}(s^{k}_{i,j}) = \bar{u}_{k}(r^{k}_{i,j}) \,, \quad z_{k}(s^{k}_{i,j}) = \bar{z}_{k}(r^{k}_{i,j})\,.
\end{displaymath}
The same holds for nodes of the form $s^{k}_{i,j+\frac{1}{2}}$ and $r^{k}_{i,j+\frac{1}{2}}$.

Since~$(\bar{t}_k, \bar{u}_k , \bar{z}_k)$ is piecewise affine while $(t_k , u_k ,z_k)$ is not, a direct comparison of the triples~$(\bar{t}, \bar{u},\bar{z})$ and~$(t,u,z)$ is not immediate. Nevertheless, we can show the following ``equivalence'' of the reparametrizations.

\begin{lemma} \label{l.s<r}
There exist  two positive constants $C_{1}, C_{2}$ such that for every $k\in\mathbb{N}\setminus\{0\}$ and every $i\in\{1,\ldots, k\}$ 
\begin{equation} \label{e.s<r}
C_{1} (s^{k}_{i+1} - s^{k}_{i}) \leq r^{k}_{i+1} - r^{k}_{i} \leq C_{2} (s^{k}_{i+1} - s^{k}_{i}) \,.
\end{equation}
\end{lemma}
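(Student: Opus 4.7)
The plan is to reduce the comparison to a term-by-term inequality between the step-lengths of the two parametrizations. Both sequences $\{s^{k}_{i,j}\}$ and $\{r^{k}_{i,j}\}$ traverse the same discrete configurations $u^{k}_{i,j}$ and $z^{k}_{i,j}$ (only the way step-lengths are measured changes), so I would first write
\begin{displaymath}
s^{k}_{i+1} - s^{k}_{i} = \tau_{k} + \sum_{j=0}^{\infty} \bigl( L(\omega^{k}_{i+1,j}) + L(\zeta^{k}_{i+1,j}) \bigr)
\end{displaymath}
and, since in the quadratic scheme of~\cite{KneesNegri_M3AS17} the affine interpolation in the energy norm coincides with the gradient flow, the analogous
\begin{displaymath}
r^{k}_{i+1} - r^{k}_{i} = \tau_{k} + \sum_{j=0}^{\infty} \bigl( \| u^{k}_{i+1,j+1} - u^{k}_{i+1,j} \|_{z^{k}_{i+1,j}} + \| z^{k}_{i+1,j+1} - z^{k}_{i+1,j} \|_{u^{k}_{i+1,j+1}} \bigr) \,.
\end{displaymath}
Since the $\tau_{k}$ terms match exactly, it suffices to compare the $u$- and $z$-contributions term by term with constants uniform in $(k,i,j)$.

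As a preliminary step I would establish two uniform norm equivalences: (i) $\|\cdot\|_{z} \sim \|\cdot\|_{H^{1}}$ on $\U$, uniformly for $z \in [0,1]$, by Korn's inequality together with the pointwise bound $\eta \leq z^{2} + \eta \leq 1 + \eta$; (ii) $\|\cdot\|_{H^{1}} \leq \|\cdot\|_{u} \leq C \|\cdot\|_{H^{1}}$ on $\Z$, uniformly for $u$ bounded in $W^{1,\tilde{p}}$. The lower bound in (ii) is immediate from the definition of $\|\cdot\|_{u}$; the upper bound follows from H\"older's inequality (pairing $z^{2}$ with $\stress(u){\,:\,}\strain(u) \in L^{\tilde{p}/2}$) together with the 2D Sobolev embedding $H^{1} \hookrightarrow L^{r}$ for every $r < +\infty$. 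Uniform $W^{1,\tilde{p}}$-bounds on every $u^{k}_{i,j}$ are supplied by Corollary~\ref{c.3}, and $g$ is uniformly bounded in $W^{1,p}$ by hypothesis.

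For $j \geq 1$ the $u$-steps are uniformly equivalent by Theorem~\ref{t.2,5} (giving $L(\omega^{k}_{i+1,j}) \leq \bar{C}\|u^{k}_{i+1,j+1} - u^{k}_{i+1,j}\|_{H^{1}}$), the trivial inequality $\|u^{k}_{i+1,j+1} - u^{k}_{i+1,j}\|_{H^{1}} \leq L(\omega^{k}_{i+1,j})$ (arc length exceeds endpoint distance), and norm equivalence (i). For the $z$-steps the lower-bound direction follows from Theorem~\ref{t.3}(f) combined with (ii):
\begin{displaymath}
L(\zeta^{k}_{i+1,j}) \leq \bar{C}\bigl(1 + \|u^{k}_{i+1,j+1} + g(t^{k}_{i+1})\|_{W^{1,\tilde{p}}}\bigr) \|z^{k}_{i+1,j+1} - z^{k}_{i+1,j}\|_{H^{1}} \leq C \|z^{k}_{i+1,j+1} - z^{k}_{i+1,j}\|_{u^{k}_{i+1,j+1}} \,.
\end{displaymath}
The upper-bound direction is the key technical point: the $L^{2}$-arc length $L(\zeta^{k}_{i+1,j})$ does not directly control the $H^{1}$-distance of its endpoints. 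However, since both $z^{k}_{i+1,j}$ and $z^{k}_{i+1,j+1}$ are constrained minimizers at the same time $t^{k}_{i+1}$, I would invoke Proposition~\ref{p.5} with $t_{1} = t_{2} = t^{k}_{i+1}$ to obtain $\|z^{k}_{i+1,j+1} - z^{k}_{i+1,j}\|_{H^{1}} \leq C\|u^{k}_{i+1,j+1} - u^{k}_{i+1,j}\|_{H^{1}} \leq C\,L(\omega^{k}_{i+1,j})$; together with (ii), this shows that the $z$-increment in $r$ is actually dominated by the $u$-increment in $s$.

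Finally, the initial iteration $j = 0$ requires a separate argument since $u^{k}_{i+1,0} = u^{k}_{i}$ and $z^{k}_{i+1,0} = z^{k}_{i}$ are minimizers at the previous time $t^{k}_{i}$. Here I would apply Corollary~\ref{c.3} and Proposition~\ref{p.5} with $t_{1} = t^{k}_{i} \neq t_{2} = t^{k}_{i+1}$, together with the regularity assumption $g \in C^{1,1}([0,T]; W^{1,p})$, which yields $\|g(t^{k}_{i+1}) - g(t^{k}_{i})\|_{W^{1,p}} \leq C\tau_{k}$ and hence $\|u^{k}_{i+1,1} - u^{k}_{i}\|_{H^{1}} = O(\tau_{k})$ and $\|z^{k}_{i+1,1} - z^{k}_{i}\|_{H^{1}} = O(\tau_{k})$. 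Therefore both the $s$- and $r$-contributions from $j=0$ are $O(\tau_{k})$ and can be absorbed into the shared $\tau_{k}$ term after adjusting $C_{1}$ and $C_{2}$. The main obstacle throughout is precisely the one identified above: the mismatch between the $L^{2}$-arc-length structure of the $s$-parametrization and the $H^{1}$-type energy-norm structure of the $r$-parametrization for the phase field, which is bridged through the continuous-dependence estimate of Proposition~\ref{p.5}.
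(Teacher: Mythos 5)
Your proposal is correct and follows essentially the same route as the paper's proof: decompose $s^{k}_{i+1}-s^{k}_{i}$ and $r^{k}_{i+1}-r^{k}_{i}$ into the per-iteration steps, use the length estimates of Theorems~\ref{t.2,5} and~\ref{t.3}$(f)$ together with the uniform equivalences $\|\cdot\|_{z}\sim\|\cdot\|_{H^{1}}$ and $\|\cdot\|_{u}\sim\|\cdot\|_{H^{1}}$ for one inequality, and Proposition~\ref{p.5} with Corollary~\ref{c.3} (giving $\|z^{k}_{i,j}-z^{k}_{i,j+1}\|_{H^{1}}\leq C\|u^{k}_{i,j}-u^{k}_{i,j+1}\|_{H^{1}}$ for $j\geq 1$ and $O(\tau_{k})$ for $j=0$) for the other. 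The only difference is cosmetic: you spell out the norm-equivalence and $j=0$ details that the paper dispatches by citing \cite[Lemma~2.3]{KneesNegri_M3AS17} and its earlier estimates.
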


\begin{proof}
 Using the fact that $\| \cdot \|_z$ and $\| \cdot \|_{H^1}$ are equivalent, by Korn's inequality, while~$\|\cdot\|_{u}$ and~$\|\cdot\|_{H^{1}}$ are equivalent by~\cite[Lemma~2.3]{KneesNegri_M3AS17}, by \eqref{e.74} we can write 
\begin{align*}
	s^k_{i+1} - s^k_i 
		& = \tau_k + \sum_{j=0}^{\infty} L ( \zeta^k_{i,j} ) + L ( w^k_{i,j})  \le \tau_k + C \sum_{j=0}^{\infty} \| z^k_{i,j} - z^k_{i,j+1} \|_{H^1} + \| u^k_{i,j} - u^k_{i,j+1} \|_{H^1} \\
		& \le \tau_k + C' \sum_{j=0}^{\infty}  r^k_{i,j+1} - r^i_{i,j} \le C' ( r^k_{i+1} - r^k_i ) \,.
\end{align*}

On the other hand, by Proposition~\ref{p.5} and Corollary~\ref{c.3} we have that
\begin{displaymath}
\| z^k_{i,j} - z^k_{i,j+1} \|_{H^{1}} \leq \left\{ \begin{array}{ll}
 C \| u^k_{i,j} - u^k_{i,j+1} \|_{H^1} & \text{if $j\geq 1$}\,,\\[1mm]
 C\tau_{k} & \text{if $j=0$}\,.
\end{array}\right.
\end{displaymath} 
Hence, again by equivalence of norms we get
\begin{align*}
	r^k_{i+1} - r^k_i 
		& =  \tau_k + C \sum_{j=0}^{\infty} \| z^k_{i,j} - z^k_{i,j+1} \|_{u^{k}_{i,j+1}} + \| u^k_{i,j} - u^k_{i,j+1} \|_{z^{k}_{i,j}} \\
		& \le C' \Big( \tau_k + \sum_{j=0}^{\infty}  \| u^k_{i,j} - u^k_{i,j+1} \|_{H^{1}} \Big)\le C' ( s^k_{i+1} - s^k_i ) \,.
\end{align*}
This concludes the proof of~\eqref{e.s<r}.\qed
\end{proof}


 Let us consider an interval of the form $(s^k_{i_k,j_k+\frac12} , s^k_{i_k,j_k+1}) \subset (0,S)$ and the corresponding interval $(r^k_{i_k,j_k+\frac12} , r^k_{i_k,j_k+1}) \subset (0,R)$. By definition, we have 
$$
	t^k_{i_k} = \bar{t}_k (r) = t_k (s) 
	\quad \text{ and } \quad 
	u^k_{i_k,j_k+1} = \bar{u}_k (r) = u_k ( s) ,
$$
for every $r \in (r^k_{i_k,j_k+\frac12} , r^k_{i_k,j_k+1})$ and every $s \in (s^k_{i_k,j_k+\frac12} , s^k_{i_k,j_k+1})$. On the contrary, the phase field interpolations coincide only in the extrema, i.e.,
\begin{equation} \label{e.zuzzu}
	\bar{z}_k (r^k_{i_k,j_k+\frac12}) = z_k (s^k_{i_k,j_k+\frac12}) = z^k_{i,j}
	\quad \text{ and } \quad 
	\bar{z}_k (r^k_{i_k,j_k+1}) = z_k (s^k_{i_k,j_k+1}) = z^k_{i,j+1}.
\end{equation}

Now, up to subsequences (non relabelled), we can assume that, as $k \to \infty$,
$$
	s^k_{i_k,j_k+\frac12} \to s^- , \qquad s^k_{i_k,j_k+1} \to s^+ ,  \qquad r^k_{i_k,j_k+\frac12} \to r^- , \qquad r^k_{i_k,j_k+1} \to r^+ .
$$
Since parametrizations are different, in general we should distinguish between all the following cases: $s^- = s^+$, $s^- < s^+$, $r^- = r^+$, and $r^- < r^+$; however the situation is much simpler, thanks to the following lemma. 

\begin{lemma} \label{l.s=r} We have $r^- = r^+$ if and only if $s^- = s^+$.
\end{lemma}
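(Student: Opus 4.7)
The plan is to reduce both increments $r^+ - r^-$ and $s^+ - s^-$ to norms of the single quantity $z^{k}_{i_k,j_k+1} - z^{k}_{i_k,j_k}$, so that the two implications become statements about the relationship between the $H^1$ and $L^2$ norms of that difference; the non-trivial direction will then be handled by contradiction, invoking the non-degeneracy estimate $(\bar{f})$. Since $\bar{z}_k$ is affine on $(r^{k}_{i_k,j_k+\frac{1}{2}},\, r^{k}_{i_k,j_k+1})$ and parametrized by arc-length in the energy norm $\|\cdot\|_{u^{k}_{i_k,j_k+1}}$, I would first observe
$$
r^{k}_{i_k,j_k+1} - r^{k}_{i_k,j_k+\frac{1}{2}} \;=\; \bigl\| z^{k}_{i_k,j_k+1} - z^{k}_{i_k,j_k} \bigr\|_{u^{k}_{i_k,j_k+1}},
$$
which is comparable to $\|z^{k}_{i_k,j_k+1} - z^{k}_{i_k,j_k}\|_{H^{1}}$ uniformly in $k$, thanks to \cite[Lemma~2.3]{KneesNegri_M3AS17} and the uniform $W^{1,p}$-bound on $u^{k}_{i_k,j_k+1}$. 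In parallel, $s^{k}_{i_k,j_k+1} - s^{k}_{i_k,j_k+\frac{1}{2}} = L(\zeta^{k}_{i_k,j_k})$ lies between $\|z^{k}_{i_k,j_k+1} - z^{k}_{i_k,j_k}\|_{L^{2}}$ (from the elementary bound $\|\zeta(L)-\zeta(0)\|_{L^{2}}\le L$) and $C\,\|z^{k}_{i_k,j_k+1} - z^{k}_{i_k,j_k}\|_{H^{1}}$ (from the analog of~\eqref{e.16.06} for $\mathcal{J}$, which follows from the same \L ojasiewicz-type argument as in Theorem~\ref{t.3}). From these two chains the implication $r^{-} = r^{+} \Rightarrow s^{-} = s^{+}$ is immediate.

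For the converse $s^{-} = s^{+} \Rightarrow r^{-} = r^{+}$, I would argue by contradiction, assuming $r^{-} < r^{+}$. The hypothesis only yields $\|z^{k}_{i_k,j_k+1} - z^{k}_{i_k,j_k}\|_{L^{2}}\to 0$, which a priori is weaker than $H^{1}$-smallness; this is the main obstacle. To bypass it, I would exploit \eqref{e.barconv}: the sequences $z^{k}_{i_k,j_k} = \bar{z}_k(r^{k}_{i_k,j_k+\frac{1}{2}})$ and $z^{k}_{i_k,j_k+1} = \bar{z}_k(r^{k}_{i_k,j_k+1})$ converge weakly in $H^{1}(\Omega)$ to $\bar{z}(r^{-})$ and $\bar{z}(r^{+})$, respectively, so that the $L^{2}$-vanishing of their difference forces $\bar{z}(r^{-}) = \bar{z}(r^{+})$. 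For any $r \in (r^{-}, r^{+})$ and any sequence $r_k \to r$ with $r_k \in (r^{k}_{i_k,j_k+\frac{1}{2}},\, r^{k}_{i_k,j_k+1})$ for large $k$, $\bar{z}_k(r_k)$ is a convex combination of the two endpoints with coefficient in $[0,1]$; extracting a convergent subsequence for this scalar coefficient and passing to the weak $H^{1}$-limit yields $\bar{z}(r) = \bar{z}(r^{-})$. By continuity of $\bar{z}$, then, $\bar{z}$ is constant on the whole $[r^{-}, r^{+}]$.

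To close the contradiction, I would remark that $\bar{t}_k \equiv t^{k}_{i_k}$ and $\bar{u}_k \equiv u^{k}_{i_k,j_k+1}$ are constant in $r$ on $(r^{k}_{i_k,j_k+\frac{1}{2}},\, r^{k}_{i_k,j_k+1})$; combined with the pointwise convergence of $\bar{t}_k$ and the strong $H^{1}$-convergence of $\bar{u}_k$ provided by \eqref{e.barconv}, together with the continuity of $\bar{t}$ and $\bar{u}$, this forces $\bar{t}$ and $\bar{u}$ to be constant on $[r^{-}, r^{+}]$ as well. Consequently $\bar{t\,}'(r) + \|\bar{u}'(r)\|_{\bar{z}(r)} + \|\bar{z}'(r)\|_{\bar{u}(r)} = 0$ for a.e.~$r \in (r^{-}, r^{+})$, in contradiction with the non-degeneracy estimate~\eqref{e.nondegR}. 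Hence $r^{-} = r^{+}$, completing the proof.
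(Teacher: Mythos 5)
Your proof is correct and follows essentially the same route as the paper's: the forward implication via the length bound \eqref{e.16.06} combined with the equivalence of the energy norm and the $H^1$-norm, and the converse by contradiction with the non-degeneracy condition \eqref{e.nondegR} after showing that $(\bar{t},\bar{u},\bar{z})$ is constant on $(r^-,r^+)$. The only (harmless) differences are that you deduce $\bar{z}(r^-)=\bar{z}(r^+)$ from the $L^2$-smallness of $z^{k}_{i_k,j_k+1}-z^{k}_{i_k,j_k}$ rather than from \eqref{e.zuzzu}, and that you make explicit, via the affine structure of $\bar{z}_k$, the constancy of $\bar{z}$ in the interior of the interval, which the paper leaves implicit.
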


\begin{proof} Assume that $r^- = r^+$. By compactness, we know that $\bar{z}_k ( r^k_{i_k,j_k+\frac12} ) = z^k_{i_k,j_k} \weakto z ( r^-)$ weakly in~$H^{1}(\Om)$ and that $\bar{z}_k ( r^k_{i_k,j_k+1} ) = z^k_{i_k,j_k+1} \weakto z ( r^+)$ weakly in~$H^{1}(\Om)$. Since $r^- = r^+$, we have $\bar{z}(r^-) = \bar{z}(r^+)$ and 
$$\| z^k_{i_k,j_k} - z^k_{i_k,j_k+1} \|_{H^1} = \big( r^k_{i_k,j_k+1}  - r^k_{i_k,j_k+\frac12}  \big) \to 0 . $$
By \eqref{e.16.06} we know that 
\begin{align*}
   \big( s^k_{i_k,j_k+1}  - s^k_{i_k,j_k+\frac12} \big) = L (\zeta^k_{i,j})  \le
C \| z^k_{i_k,j_k} - z^k_{i_k,j_k+1} \|_{H^1} .
\end{align*}
Thus $s^- = s^+$. 

Assume that $s^- = s^+$. Hence, arguing as above, by \eqref{e.zuzzu} we have $z (s^-) = \bar{z} (r^-) = z(s^+) = \bar{z} (r^+)$. Moreover, being $\bar{t}_k$, $t_k$, $\bar{u}_k$ and $u_k$ constant in the corresponding intervals, in the limit we have $t (s^-) = \bar{t} (r^-) = t(s^+) = \bar{t} (r^+)$ and $u (s^-) = \bar{u} (r^-) = u(s^+) = \bar{u} (r^+
)$. Then, if $r^- < r^+$ we would contradict the non-degeneracy condition \eqref{e.nondegR}. \qed \end{proof} 


As a consequence of Lemma~\ref{l.s<r}, the solutions $(\bar{t}, \bar{u}, \bar{z})$ and $(t,u,z)$ coincide in continuity points.


\begin{proposition} \label{p.z=barz} Let $r$ be a continuity point for~$(\bar{t}, \bar{u}, \bar{z})$. Then, there exists a continuity point~$s$ for~$(t,u,z)$ such that $(\bar{t} (r), \bar{u} (r), \bar{z}(r) ) = (t (s), u (s), z (s))$.

Viceversa, if $s$ is a continuity point for~$(t,u,z)$, then there exists a continuity point~$r$ for~$(\bar{t},\bar{u},\bar{z})$ such that $(t (s), u (s), z (s)) = (\bar{t} (r), \bar{u} (r), \bar{z}(r) )$.
\end{proposition}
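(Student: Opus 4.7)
The plan is to transfer the point $r \in [0,R]$ to a corresponding point $s \in [0,S]$ via the sequences of outer interpolation nodes $r^k_i$ and $s^k_i$, at which the two parametrizations share the common discrete configurations $(t^k_i, u^k_i, z^k_i)$. The key equivalence of Lemma~\ref{l.s<r} ensures that the two sequences of nodes are comparable in density, so that continuity can be transferred. I will prove the forward direction in detail; the reverse is completely symmetric, upon exchanging the roles of the two parametrizations and invoking Lemma~\ref{l.s<r} in the opposite direction.

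First, I would observe that a continuity point $r$ of $(\bar{t}, \bar{u}, \bar{z})$ can be approximated by outer nodes $r^{k_n}_{i_n} \to r$. Indeed, by Definition~\ref{d.continuitypoint} and the monotonicity of $\bar{t}$, there exists $r_n \to r$ with $\bar{t}(r_n) \ne \bar{t}(r)$. Since $\bar{t}_k$ is piecewise affine with slope $1$ only on the short intervals $[r^k_{i-1},r^k_{i-1}+\tau_k]$ and constant elsewhere, the inequality $\bar{t}_k(r_n) \ne \bar{t}_k(r)$ (valid for all large $k$ by locally uniform convergence $\bar{t}_k \to \bar{t}$) forces the existence of an outer node lying at distance at most $\tau_k$ from the segment joining $r_n$ to $r$. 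A diagonal extraction then produces $(k_n, i_n)$ with $r^{k_n}_{i_n} \to r$, and, by compactness, $s^{k_n}_{i_n} \to s$ for some $s \in [0,S]$.

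Next, I would pass to the limit in the nodal identities $\bar{t}_k(r^k_i)=t_k(s^k_i)=t^k_i$, $\bar{u}_k(r^k_i) = u_k(s^k_i) = u^k_i$, $\bar{z}_k(r^k_i) = z_k(s^k_i) = z^k_i$ along the extracted subsequence, using \eqref{e.barconv} and Proposition~\ref{p.compactness}. This yields $(\bar{t}(r), \bar{u}(r), \bar{z}(r)) = (t(s), u(s), z(s))$, with strong convergence in $H^1$ for the displacement and weak convergence in $H^1$ for the phase field (so that uniqueness of limits applies). To verify that $s$ is a continuity point of $(t,u,z)$, I would re-apply the node-selection argument to a witness sequence $r'_m \to r$ with $\bar{t}(r'_m) \ne \bar{t}(r)$, producing outer indices $i'_m$ such that $r^{k_n}_{i'_m}$ converges to $r$ from the direction of $r'_m$. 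Lemma~\ref{l.s<r} yields the bound $|s^k_{i_n}-s^k_{i'_m}| \le C_1^{-1}|r^k_{i_n}-r^k_{i'_m}|$, so that along appropriate diagonals $s^{k_n}_{i'_m} \to s_m$ with $|s_m - s| \le C_1^{-1}|r - r'_m| \to 0$. The nodal identity $t_k(s^k_{i'_m})=\bar{t}_k(r^k_{i'_m})$ then gives $t(s_m) = \bar{t}(r'_m) \ne t(s)$, witnessing that $t$ is not locally constant at $s$.

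The main technical obstacle is the diagonal argument of the first step: for a generic continuity point $r$, the outer nodes of $[r^k_i,r^k_{i+1}]$ bracketing $r$ may converge to distinct limits $r^-<r^+$ straddling $r$, so that simply taking the largest node below $r$ does not produce a sequence converging to $r$. One must exploit the nearby variation of $\bar{t}_k$, guaranteed by the continuity of $r$, to diagonally select indices whose nodes genuinely converge to $r$. The other delicate point is to invoke Lemma~\ref{l.s<r} in the correct direction for each half of the proof, and to keep in mind that continuity is defined solely through the time variable, so that discontinuity of $t$ at the candidate $s$ must be ruled out rather than discontinuity of the full triple $(t,u,z)$.
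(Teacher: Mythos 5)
Your overall route is the paper's: transfer through nodes at which the two interpolations carry the same discrete configurations, pass to the limit using \eqref{e.barconv} and Proposition~\ref{p.compactness}, and control the distance between the corresponding $s$-nodes through Lemma~\ref{l.s<r}. The first half of your argument (selecting outer nodes $r^{k_n}_{i_n}\to r$ from the fact that $\bar{t}_k$ varies only on the initial segments of length $\tau_k$ of each block, then identifying $(\bar{t}(r),\bar{u}(r),\bar{z}(r))=(t(s),u(s),z(s))$ at the limit $s$ of the corresponding $s$-nodes) is sound and matches the paper, which uses half-integer nodes instead of outer nodes --- an inessential difference.

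The continuity-of-$s$ step, however, has a genuine gap. You produce the indices $i'_m$ by ``re-applying the node-selection argument'' between $r$ and the witness $r'_m$, and then assert $t(s_m)=\bar{t}(r'_m)$. What the nodal identity actually gives is $t(s_m)=\lim_k \bar{t}_k(r^k_{i'_m})=\bar{t}(\rho_m)$, where $\rho_m$ is the limit of the selected nodes; the selection you describe only guarantees that $\rho_m$ lies somewhere between $r$ and $r'_m$ (and, read literally, you even say the nodes converge to $r$, which would force $t(s_m)=\bar{t}(r)=t(s)$ and destroy the witness). Concretely, if $\bar{t}$ is constant on $[r,a]$ and increases only on $[a,r'_m]$, the node where $\bar{t}_k$ varies may converge to $a$ with $\bar{t}(a)=\bar{t}(r)$, so no contradiction to local constancy of $t$ at $s$ is obtained. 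The repair is exactly the device used in the paper: choose $i'_m$ by bracketing the witness, e.g.\ the largest outer node with $r^k_{i'_m}\le r'_m$ when $r'_m>r$ (the smallest node $\ge r'_m$ when $r'_m<r$), and use that $\bar{t}_k$ gains exactly $\tau_k$ between consecutive outer nodes, so that $|\bar{t}_k(r^k_{i'_m})-\bar{t}_k(r'_m)|\le\tau_k\to 0$ and hence $t(s_m)=\bar{t}(r'_m)\neq\bar{t}(r)=t(s)$, even though the nodes themselves need not converge to $r'_m$; combined with the Step-1 node near $r$ and the block-summed estimate from Lemma~\ref{l.s<r}, this also yields $|s_m-s|\le C\,|r'_m-r|+o(1)$, completing the witness. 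With this modification your argument coincides with the paper's proof.
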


\begin{proof} Fix $\delta >0$. Since $r$ is a continuity point, by monotonicity of~$\bar{t}$ we have $\bar{t}(r + \delta) > \bar{t} (r)$. Since~$\bar{t}_k$ converges pointwise to~$\bar{t}$, we have 
$ \bar{t}_k (r + \delta ) - \bar{t}_k (r) \ge \tfrac{1}{2} ( \bar{t}(r + \delta) - \bar{t} (r)) > 0$ for every $k$ sufficiently large. As~$\bar{t}_k$ changes only in parametrization intervals of the form $(r^k_{i,-1} , r^k_{i,0})$ and since $\tau_k \to 0$, there exist two  indexes~$i_k< i'_{k}$ such that, 
\begin{equation}\label{e.stolemma}
	r^{k}_{i_{k}-1}\leq r < r^k_{i_k} < r^k_{i_k,0}  \leq \ldots \leq r^{k}_{i'_{k}} \leq  r + \delta \leq  r^{k}_{i'_{k}+1}\,.
\end{equation}
Hence for every index $j \in \N$, we have 
$$
	r < r^k_{i_k,j+ \frac12} \leq  r^k_{i_k,j+1} \leq r^k_{i_k+1,0} < r + \delta .
$$
Since $\delta$ can be arbitrarily small, we can find a sequence $(i_{k},j_{k})$ such that
$$
	r^k_{i_k,j_k+ \frac12} \to r \quad \text{ and } \quad r^k_{i_k,j_k + 1} \to r \,.
$$
Then $\bar{z}_{k} ( r^k_{i_k,j_k + 1} ) \weakto \bar{z} (r)$ weakly in $H^1(\Omega)$. Since, by construction, $\bar{z}_{k} ( r^k_{i_k,j_k + 1} ) = z_{k} ( s^k_{i_k,j_k + 1} ) $ we have $z _{k}( s^k_{i_k,j_k + 1} ) \weakto \bar{z} (r)$ weakly in~$H^1(\Omega)$. Up to a subsequence (not relabelled) $s^k_{i_k,j_k + 1} \to s$ and thus $z_{k} ( s^k_{i_k,j_k + 1} ) \weakto z (s)$. We conclude that $z(s) = \bar{z} (r)$. In a similar way $\bar{t} (r) = t(s)$ and $\bar{u} ( r) = u (s)$. 

It remains to show that~$s$ is a continuity point for~$(t,u,z)$. To this aim, let us set, up to subsequence, $s_{\delta}\coloneq \lim_{k} s^{k}_{i'_{k}}$, where the indexes~$i'_{k}$ have been defined in~\eqref{e.stolemma}. Hence, applying Lemma~\ref{l.s<r} we deduce that
\begin{displaymath}
|s - s_{\delta}|  = \lim_{k\to\infty}\, |s^{k}_{i_{k}, j_{k}+1} - s^{k}_{i'_{k}}| \leq \lim_{k\to\infty}\, |s^{k}_{i_{k}} - s^{k}_{i'_{k}}| \leq C \lim_{k\to\infty}\, |r^{k}_{i_{k}} - r^{k}_{i'_{k}}| \leq C\delta\,.
\end{displaymath} 
By definition of~$s_{\delta}$ we have that $t(s_{\delta})= \lim_{k\to\infty} t_{k}(s^{k}_{i'_{k}}) = \lim_{k\to\infty}  \bar{t}_{k}(r^{k}_{i'_{k}})$. By~\eqref{e.stolemma} we get that $| \bar{t}_{k}(r^{k}_{i'_{k}}) - \bar{t}_{k}(r+\delta)|\leq \tau_{k}$, from which we deduce that $t(s_{\delta}) = \bar{t} ( r + \delta) > \bar{t}(r) = t(s)$. This implies that~$s$ is of continuity for~$(t,u,z)$.

The viceversa can be shown in a similar way. \qed \end{proof}

On the contrary, in discontinuity points the evolution $(t,u,z)$ and $(\bar{t}, \bar{u}, \bar{z})$ interpolate the same configurations but with different paths. To better understand, let us consider an interval of the form $(r^k_{i_{k},j_{k}+\frac12}, r^k_{i_{k},j_{k}+1})$ such that $r^k_{i_{k},j_{k}+\frac12} \to r^-$, $r^k_{i_{k},j_{k}+1} \to r^+$ with $r^- < r^+$.
As a consequence both $\bar{t}$ and $\bar{u}$ are constant in $(r^-, r^+)$ and thus every $r \in (r^-, r^+)$ is not a continuity point. In this case, we have 
$$
	\bar{z}_{k} ( r^k_{i_{k},j_{k}+\frac12} ) \weakto \bar{z} ( r^-) 
		\quad \text{ and } \quad  
	\bar{z}_{k} ( r^k_{i_{k},j_{k}+1} ) \weakto \bar{z} ( r^+) \,.
$$
By the non-degeneracy property of $(\bar{t}, \bar{u}, \bar{z})$, we deduce that $\bar{z} ( r^-) \neq \bar{z} ( r^+)$. Moreover, up to subsequence we have
$$
	z_{k} ( s^k_{i_{k},j_{k}+\frac12} ) \weakto z ( s^-) 
		\quad \text{ and } \quad  
	z_{k} ( s^k_{i_{k},j_{k}+1} ) \weakto z ( s^+) \,.
$$
Since $z\in W^{1,\infty}([0,S]; L^{2}(\Om))$, we get that $s^- \neq s^+$. Recalling that $t$ and $u$ are constant in $(s^-,s^+)$, the energy balance of Theorem \ref{t.1} reads
$$
	\mathcal{J} ( t(s^-) , u(s^-) , z(s^+) ) = \mathcal{J} ( t(s^-) , u(s^-) , z(s^-) ) - \int_{s^-}^{s^+} \| z' (\sigma) \|_{L^2} \, | \partial^-_v \mathcal{J}| ( t(s^-) , u(s^-) , z(\sigma) )  \, \di \sigma .
$$
Thus $z$ is a (normalized) unilateral gradient flow in $L^2$, with initial datum $z(s^-)$. On the contrary, $\bar{z}$ is the affine interpolation of $\bar{z} ( r^-)$ and $\bar{z} ( r^+)$. Thus, in general, $\bar{z}$ and $z$ do not coincide in the corresponding intervals $(r^-,r^+)$ and $(s^-,s^+)$ even if they coincide in the extrema.

\section{On the alternate behavior in discontiuity points \label{AppB}} 

Let us consider the set $U$ defined in \eqref{e.U}
and denote by $\mathbf{1}_{U}$ its characteristic function. From \eqref{e.120} we know that
\begin{align*}
\F( t(s), u (s),  z (s))  \leq  & \ \F( 0, u_{0}, z_{0}) - \int_{0}^{s} \!\! |\partial_{u} \F | ( t (\sigma), u ( \sigma ), z ( \sigma ))\, \| u' (\sigma) \|_{H^1} \,\di  \sigma \\
& -  \int_{0}^{s} \!\!  | \partial_{z}^{-} \F | ( t(\sigma), u ( \sigma ), z( \sigma )) \, \mathbf{1}_{U}(\sigma) \, \di \sigma - \int_{0}^{s} \!\! \liminf_{k\to\infty} | \partial_{z}^{-} \F | ( \underline{t}_{k}(\sigma), \underline{u}_{k}( \sigma ), z_{k}( \sigma )) \, \mathbf{1}_{U^{c}} (\sigma) \, \di \sigma .
\end{align*}
Note that $z' (\sigma) = 0$ for every $\sigma \in U^c$ (see Lemma \ref{l.Uc}), thus, being $\| z' \|_{L^2} \le 1$ a.e.~in $[0,S]$, we have $\| z' \|_{L^2} \le \mathbf{1}_{U}$ a.e.~in $[0,S]$; as a consequence the above estimate together with \eqref{e.eneq} yield
\begin{align*}
\F( t(s), u (s),  z (s)) 
 \leq  & \ \F( 0, u_{0}, z_{0}) - \int_{0}^{s} \!\! |\partial_{u} \F | ( t (\sigma), u ( \sigma ), z ( \sigma ))\, \| u' (\sigma) \|_{H^1} \,\di  \sigma \\
& -  \int_{0}^{s} \!\!  | \partial_{z}^{-} \F | ( t(\sigma), u ( \sigma ), z( \sigma )) \, \mathbf{1}_{U}(\sigma) \, \di \sigma  +\int_{0}^{s} \mathcal{P}(t(\sigma), u(\sigma), z(\sigma)) \, t'(\sigma) \, \di \sigma \,.  \\
 \leq & \ \F( 0, u_{0}, z_{0}) - \int_{0}^{s} \!\! |\partial_{u} \F | ( t (\sigma), u ( \sigma ), z ( \sigma ))\, \| u' (\sigma) \|_{H^1} \,\di  \sigma \\
& -  \int_{0}^{s} \!\!  | \partial_{z}^{-} \F | ( t(\sigma), u ( \sigma ), z( \sigma )) \, \| z' (\sigma) \|_{L^2} \, \di \sigma  +\int_{0}^{s} \mathcal{P}(t(\sigma), u(\sigma), z(\sigma)) \, t'(\sigma) \, \di \sigma   \\
= & \ \F ( t(s), u (s),  z (s)) .
\end{align*}
Therefore, all inequalities turn into equalitites and thus 
$$
0 \le  \int_{0}^{s} \!\!  | \partial_{z}^{-} \F | ( t(\sigma), u ( \sigma ), z( \sigma )) \, ( \mathbf{1}_{U}(\sigma) - \| z' (\sigma) \|_{L^2} ) \, \di \sigma = 0 ;
$$
hence $| \partial_{z}^{-} \F | ( t(\sigma), u ( \sigma ), z( \sigma )) \, ( \mathbf{1}_{U}(\sigma) - \| z' (\sigma) \|_{L^2} ) =0$ a.e.~in $[0,S]$. 

Therefore, if $| \partial_{z}^{-} \F | ( t(\sigma), u ( \sigma ), z( \sigma )) \neq 0$ for $\sigma \in (\sigma_1, \sigma_2) \subset U$ then $\| z' (\sigma) \|_{L^2} = 1$ a.e.~in $(\sigma_1, \sigma_2)$ and then both $t'(\sigma)=0$ and $u'(\sigma)=0$ a.e.~in $(\sigma_1, \sigma_2)$, by (a) in Theorem \ref{t.1}. 
This means that in the discontinuity interval $(\sigma_1, \sigma_2)$ only $z$ changes, following a normalized, unilateral $L^2$ gradient flow. In view of this observation, excluding the cases in which $| \partial_{z}^{-} \F | ( t(\sigma), u ( \sigma ), z( \sigma )) = 0$, we expect that in the presence of time-discontinuties the limit evolution is still alternate, and thus it is not simultaneous in $u$ and $z$. More precisely, consider a discontinuity at time $t$, with a transition from $(u^-,z^-)$ to $(u^+,z^+)$, parametrized in the interval $(\sigma^-, \sigma^+)$.  Being~$t$ the limit of continuity points, the left limit $(u^-,z^-)$ is an equilibrium configuration\ at time~$t$. We expect that the parametrizations of~$u$ and~$z$, in a right neighborhood of~$\sigma^-$, provide an alternate interpolation of sequences $z^-_m \nearrow z^-$, with $z^-_m \neq z^-$, and $u^-_m \to u^-$ such that 
$$  \quad \begin{cases}
	u^-_{m-1} \in \mbox{argmin} \, \big\{  \F ( t \,, u  , z^-_{m} ) \big\},
	\\
	z^-_{m-1}  \in \mbox{argmin} \, \big\{  \F ( t \, , u^-_{m-1}  \, , z \, ) : z \le z^-_{m}  \big\}.
\end{cases} $$
The non-degeneracy condition $z^-_m \neq z^-$ is due to the fact that~$(u^-,z^-)$ is an equilibrium configuration, and thus a separate minimizer of the energy $\F ( t , \cdot, \cdot)$. Indeed, if $z^-_m = z^-$, for some index $m \in \mathbb{N}$, then, by uniqueness of the minimizer, 
$u^-_{m-1} = u^-$ and then $z^-_{m-1} = z^-$. By induction and by monotonicity, $z^-_m=z^-$ for every index $m \in \mathbb{N}$ and then $u^-_m = u^-$ for every $m \in \mathbb{N}$; thus, there would be no transition between $(u^-,z^-)$ to $(u^+,z^+)$. In a similar way, we expect sequences $z^+_m \searrow z^+$ and $u^+_m \to u^+$ in a left neighborhood $\sigma^+$ such that 
$$  \quad \begin{cases}
	u^+_{m+1} \in \mbox{argmin} \, \big\{  \F ( t \,, u  , z^+_{m} ) \big\},
	\\
	z^+_{m+1}  \in \mbox{argmin} \, \big\{  \F ( t \, , u^+_{m+1}  \, , z \, ) : z \le z^+_{m}  \big\}.
\end{cases} $$
However, in this case we cannot exclude that $z^+_m = z^+$ for some index $m \in \mathbb{N}$. We remark that this qualitative behavior is confirmed by numerical computations.

\section*{Acknowledgments}
The work of S.A. was supported by the SFB TRR109. M.N. is member of the Gruppo Nazionale per l'Analisi Matematica, la Probabilit\`a e le loro Applicazioni (GNAMPA) of the Instituto Nazionale di Alta Matematica (INdAM).

\bibliographystyle{siam}
\bibliography{bibliography} 

\begin{thebibliography}{10}

\bibitem{Almi2017}
{\sc S.~Almi and S.~Belz}, {\em Consistent finite-dimensional approximation of
  phase-field models of fracture}, Annali di Matematica Pura ed Applicata,
  (2018).

\bibitem{A-B-N17}
{\sc S.~Almi, S.~Belz, and M.~Negri}, {\em Convergence of discrete and
  continuous unilateral flows for ambrosio-tortorelli energies and application
  to mechanics}, ESAIM: M2AN,  (2018).

\bibitem{MR3304294}
{\sc M.~Ambati, T.~Gerasimov, and L.~De~Lorenzis}, {\em A review on phase-field
  models of brittle fracture and a new fast hybrid formulation}, Comput. Mech.,
  55 (2015), pp.~383--405.

\bibitem{AGS}
{\sc L.~Ambrosio, N.~Gigli, and G.~Savar{\'e}}, {\em {Gradient flows in metric
  spaces and in the space of probability measures}}, {Lectures in Mathematics
  ETH Z{\"u}rich}, Birkh{\"a}user Verlag, Basel, 2005.

\bibitem{MR1075076}
{\sc L.~Ambrosio and V.~M. Tortorelli}, {\em Approximation of functionals
  depending on jumps by elliptic functionals via {$\Gamma$}-convergence}, Comm.
  Pure Appl. Math., 43 (1990), pp.~999--1036.

\bibitem{A-M-M09}
{\sc H.~Amor, J.-J. Marigo, and C.~Maurini}, {\em Regularized formulation of
  the variational brittle fracture with unilateral contact: numerical
  experiments}, J. Mech. Phys. Solids, 57 (2009), pp.~1209--1229.

\bibitem{MR3249813}
{\sc J.-F. Babadjian and V.~Millot}, {\em Unilateral gradient flow of the
  {A}mbrosio-{T}ortorelli functional by minimizing movements}, Ann. Inst. H.
  Poincar\'{e} Anal. Non Lin\'{e}aire, 31 (2014), pp.~779--822.

\bibitem{Balder_RCMP85}
{\sc E.~J. Balder}, {\em {An extension of {P}rohorov's theorem for transition
  probabilities with applications to infinite-dimensional lower closure
  problems}}, Rend. Circ. Mat. Palermo (2), 34 (1985), pp.~427--447 (1986).

\bibitem{MR2341850}
{\sc B.~Bourdin}, {\em Numerical implementation of the variational formulation
  for quasi-static brittle fracture}, Interfaces Free Bound., 9 (2007),
  pp.~411--430.

\bibitem{MR1745759}
{\sc B.~Bourdin, G.~A. Francfort, and J.-J. Marigo}, {\em Numerical experiments
  in revisited brittle fracture}, J. Mech. Phys. Solids, 48 (2000),
  pp.~797--826.

\bibitem{Brezis_73}
{\sc H.~Br\'ezis}, {\em Op\'erateurs maximaux monotones et semi-groupes de
  contractions dans les espaces de {H}ilbert}, North-Holland Publishing Co.,
  Amsterdam-London; American Elsevier Publishing Co., Inc., New York, 1973.
\newblock North-Holland Mathematics Studies, No. 5. Notas de Matem\'atica (50).

\bibitem{MR2669398}
{\sc S.~Burke, C.~Ortner, and E.~S\"uli}, {\em An adaptive finite element
  approximation of a variational model of brittle fracture}, SIAM J. Numer.
  Anal., 48 (2010), pp.~980--1012.

\bibitem{MR2074682}
{\sc A.~Chambolle}, {\em An approximation result for special functions with
  bounded deformation}, J. Math. Pures Appl. (9), 83 (2004), pp.~929--954.

\bibitem{MR3780140}
{\sc A.~Chambolle, S.~Conti, and G.~A. Francfort}, {\em Approximation of a
  brittle fracture energy with a constraint of non-interpenetration}, Arch.
  Ration. Mech. Anal., 228 (2018), pp.~867--889.

\bibitem{ChambolleCrismale_D}
{\sc A.~Chambolle and V.~Crismale}, {\em A density result in ${GSBD}^p$ with
  applications to the approximation of brittle fracture energies}, Arch.
  Ration. Mech. Anal.,  (2018).

\bibitem{ComiPerego_IJSS01}
{\sc C.~Comi and U.~Perego}, {\em Fracture energy based bi-dissipative damage
  model for concrete}, Int. J. Solids Structures, 38 (2001), pp.~6427 -- 6454.

\bibitem{MR3082250}
{\sc G.~Dal~Maso}, {\em Generalised functions of bounded deformation}, J. Eur.
  Math. Soc. (JEMS), 15 (2013), pp.~1943--1997.

\bibitem{MR2186036}
{\sc G.~Dal~Maso, G.~A. Francfort, and R.~Toader}, {\em Quasistatic crack
  growth in nonlinear elasticity}, Arch. Ration. Mech. Anal., 176 (2005),
  pp.~165--225.

\bibitem{Fonseca2007}
{\sc I.~Fonseca and G.~Leoni}, {\em Modern methods in the calculus of
  variations: {$L^p$} spaces}, Springer Monographs in Mathematics, Springer,
  New York, 2007.

\bibitem{MR2106765}
{\sc A.~Giacomini}, {\em Ambrosio-{T}ortorelli approximation of quasi-static
  evolution of brittle fractures}, Calc. Var. Partial Differential Equations,
  22 (2005), pp.~129--172.

\bibitem{HerzogMeyerWachsmuth_JMAA11}
{\sc R.~Herzog, C.~Meyer, and G.~Wachsmuth}, {\em Integrability of displacement
  and stresses in linear and nonlinear elasticity with mixed boundary
  conditions}, J. Math. Anal. Appl., 382 (2011), pp.~802--813.

\bibitem{MR3247391}
{\sc F.~Iurlano}, {\em A density result for {GSBD} and its application to the
  approximation of brittle fracture energies}, Calc. Var. Partial Differential
  Equations, 51 (2014), pp.~315--342.

\bibitem{PhysRevLett.87.045501}
{\sc A.~Karma, D.~A. Kessler, and H.~Levine}, {\em Phase-field model of mode
  {III} dynamic fracture}, Phys. Rev. Lett., 87 (2001), p.~045501.

\bibitem{KneesNegri_M3AS17}
{\sc D.~Knees and M.~Negri}, {\em Convergence of alternate minimization schemes
  for phase field fracture and damage}, Math. Models Methods Appl. Sci., 27
  (2017), pp.~1743--1794.

\bibitem{MR3021776}
{\sc D.~Knees, R.~Rossi, and C.~Zanini}, {\em A vanishing viscosity approach to
  a rate-independent damage model}, Math. Models Methods Appl. Sci., 23 (2013),
  pp.~565--616.

\bibitem{MR3332887}
{\sc D.~Knees, R.~Rossi, and C.~Zanini}, {\em A quasilinear differential
  inclusion for viscous and rate-independent damage systems in non-smooth
  domains}, Nonlinear Anal. Real World Appl., 24 (2015), pp.~126--162.

\bibitem{MR3893258}
{\sc D.~Knees, R.~Rossi, and C.~Zanini}, {\em Balanced viscosity solutions to a
  rate-independent system for damage}, European J. Appl. Math., 30 (2019),
  pp.~117--175.

\bibitem{Lojasiewicz_AIF93}
{\sc S.~{\L}ojasiewicz}, {\em Sur la g\'eom\'etrie semi- et sous-analytique},
  Ann. Inst. Fourier (Grenoble), 43 (1993), pp.~1575--1595.

\bibitem{MR2182832}
{\sc A.~Mielke}, {\em Evolution of rate-independent systems}, in Evolutionary
  equations. {V}ol. {II}, Handb. Differ. Equ., Elsevier/North-Holland,
  Amsterdam, 2005, pp.~461--559.

\bibitem{MR3531671}
{\sc A.~Mielke, R.~Rossi, and G.~Savar\'{e}}, {\em Balanced viscosity ({BV})
  solutions to infinite-dimensional rate-independent systems}, J. Eur. Math.
  Soc. (JEMS), 18 (2016), pp.~2107--2165.

\bibitem{MielkeRoubicek}
{\sc A.~Mielke and T.~Roub{\'i}\v{c}ek}, {\em Rate-Independent Systems: Theory
  and Application.}, vol.~193 of Applied Mathematical Sciences, Springer, New
  York, 2015.

\bibitem{Negri_ACV}
{\sc M.~Negri}, {\em A unilateral {$L^2$}-gradient flow and its quasi-static
  limit in phase-field fracture by alternate minimization}, Adv. Calc. Var., 12
  (2019), pp.~1--29.

\bibitem{NegriKimura}
{\sc M.~Negri and M.~Kimura}, {\em Weak solution of unilateral gradient flow},
  (to appear).

\bibitem{Thomas13}
{\sc M.~Thomas}, {\em {Quasistatic damage evolution with spatial
  {BV}-regularization}}, Discrete Contin. Dyn. Syst. Ser. S, 6 (2013),
  pp.~235--255.

\bibitem{Wu_JMPS17}
{\sc J.-Y. Wu}, {\em A unified phase-field theory for the mechanics of damage
  and quasi-brittle failure}, Journal of the Mechanics and Physics of Solids,
  103 (2017), pp.~72 -- 99.

\end{thebibliography}

\end{document}